\documentclass[11pt]{article}

 \usepackage[utf8]{inputenc}
\usepackage[english]{babel} 
\usepackage{amsfonts,amsmath,amsxtra,amsthm,latexsym,mathabx}
\usepackage{graphicx}
\usepackage{epstopdf}
\usepackage{amssymb} 
\usepackage{bbm}
\textwidth=150mm 
\textheight=220mm \hoffset=-5mm \voffset=-15mm
\pagestyle{myheadings}

\newtheorem{thm}{Theorem}[section]
 \newtheorem{cor}[thm]{Corollary}
 \newtheorem{lem}[thm]{Lemma}
 \newtheorem{prop}[thm]{Proposition}

 \theoremstyle{definition}
 \newtheorem{defn}{Definition}[section]
 \theoremstyle{remark}
 \newtheorem{rem}{Remark}[section]

 \newtheorem{ex}[thm]{Example}
 
 \numberwithin{equation}{section}

\DeclareMathOperator{\im}{Im}
\DeclareMathOperator{\re}{Re}
\DeclareMathOperator{\dom}{dom}

\DeclareMathOperator{\Bd}{bd}

\DeclareMathOperator{\Intr}{int}

\DeclareMathOperator{\Dr}{Dr}
\DeclareMathOperator*{\argmin}{arg\,min}
\DeclareMathOperator{\Arg}{Arg}
\DeclareMathOperator{\Ln}{Ln}
\DeclareMathOperator{\GL}{GL}
\DeclareMathOperator{\SL}{SL}

\def\RR{\mathbb R}
\def\CC{\mathbb C}
\def\NN{\mathbb N}
\def\ZZ{\mathbb Z}
\def\TT{\mathbb T}
\def\BB{\mathbb B}
\def\DD{\mathbb D}

\def\al{\alpha}

\def\Th{\Theta}
\def\vphi{\varphi}
\def\la{\lambda}
\def\de{\delta}
\def\vep{\varepsilon}
\def\ep{\epsilon}
\def\ga{\gamma}

\def\om{\omega}
\def\Om{\Omega}
\def\Si{\Sigma}
\def\si{\sigma}

\def\vka{\varkappa}
\def\ka{\kappa}
\def\vth{\vartheta}

\def\pa{\partial}

\def\ii{\mathrm{i}}
\def\ee{\mathrm{e}}
\def\dd{\mathrm{d}}

\def\wh{\widehat}
\def\wt{\widetilde}

\def\modn{\hspace{-8pt}\mod}


\def\loc{\mathrm{loc}}

\def\even{\mathrm{even}}
\def\odd{\mathrm{odd}}
\def\sym{\mathrm{sym}}

\def\cent{\mathrm{centr}}
\def\edge{\mathrm{edge}}
\def\Contr{\mathfrak{C}}
\def\Reach{\mathfrak{R}}
\def\Hp{\mathrm{Hp}}

\def\I{\mathcal{I}}

\def\J{\mathcal{J}}
\def\Ls{L}
\def\Li{\mathfrak{L}}
\def\Ws{W}
\def\A{\mathcal{A}}

\def\F{\mathbb{F}}

\def\E{\mathcal{E}}
\def\fq{q_1}
\def\M{\mathcal{M}}

\def\Sec{\mathrm{Sec}}
\def\p{p}
\def\tp{t^+}
\def\ret{\mathfrak{r}}
\def\Sim{\mathcal{S}}

\def\n{\mathfrak{n}}

\def\H{\mathcal{H}}
\def\Pd{\mathrm{P}}
\def\Jmf{\mathfrak{J}}
\def\CJ{\mathcal{C}}
\def\Pa{\mathfrak{P}}
\def\Lr{L}
\def\Lh{\ell}
\def\md{\mathrm{mod}}

\def\ChiCpl{\chi_{_{\scriptstyle \CC_+}}}
\def\<{\langle}
\def\>{\rangle}
\def\EM{\mbox{\large $-$}}
\def\EP{\mbox{\large $+$}}
\def\Tr{\mathrm{Tr}}
\def\ra{r}

\def\nl{\mathrm{nl}}

\begin{document}
\title{Pareto optimization of resonances \\  and minimum-time control}
\author{}
\date{}
\maketitle

{\center 
{\large 
Illya M. Karabash$^{\, \text{a,b,*}}$, Herbert Koch$^{\, \text{a}}$, Ievgen V. Verbytskyi$^{\, \text{c}}$ 
\\[4mm]
}
}

{\small \noindent
$^{\text{a}}$
Mathematical Institute, Rheinische Friedrich-Wilhelms Universität Bonn,
Endenicher Allee 60, D-53115 Bonn, Germany \\[1mm]
 $^{\text{b}}$ Institute of Applied Mathematics and Mechanics of NAS of Ukraine,
Dobrovolskogo st. 1, Slovyans'k 84100, Ukraine\\[1mm]
$^{\text{c}}$ National Technical University of Ukraine ''Igor Sikorsky Kyiv Polytechnic Institute'',
Department of Industrial Electronics, Faculty of Electronics, Politekhnichna st. 16, block 12,
03056 Kyiv, Ukraine \\[1mm] 
$^{\text{*}}$ Corresponding author: i.m.karabash@gmail.com\\[1mm]
E-mails: i.m.karabash@gmail.com, koch@math.uni-bonn.de,  verbitskiy@bigmir.net
}

\begin{abstract}
The aim of the paper is to reduce one spectral optimization problem, which involves  the minimization of 
the decay rate $|\im \ka |$
of a resonance $\ka$, to a collection of optimal control problems on the Riemann sphere $\wh \CC$. 
This reduction allows us to apply methods of extremal synthesis to the structural optimization of 
layered optical cavities.
We start from a dual problem of minimization of the resonator length and give
several reformulations of this problem that involve Pareto optimization of the modulus $|\ka|$ of a resonance, 
a minimum-time control problem on $\wh \CC$, and associated Hamilton-Jacobi-Bellman equations. 
Various types of controllability properties are studied in connection with the 
existence of optimizers and with the relationship between the Pareto optimal frontiers of minimal decay 
and minimal modulus. We give explicit examples of optimal resonances and describe qualitatively properties 
of the Pareto frontiers near them. A special representation of bang-bang controlled trajectories is combined with 
the analysis of extremals to obtain various bounds on optimal widths of layers.
We propose a new method of computation of optimal symmetric resonators based on minimum-time control and compute with high accuracy several Pareto optimal frontiers and high-Q resonators.
\end{abstract}

{
\small \noindent
MSC-classes: 
49N35, 
35B34, 
49L25, 
78M50, 
49R05, 
93B27 
\\[0.5ex]
Keywords:  optimal synthesis, photonic crystal, Euler–Lagrange equation, Regge problem,
 resonance free region, quarter-wave stack, abnormal extremal,  maximum principle, quasi-normal-eigenvalue, proximal solution, scattering pole,  wave equation, Karush-Kuhn-Tucker condition

\tableofcontents
}

\normalsize

\section{Introduction}
\label{s:introduction}

\subsection{Resonance optimization and motivations for its study}
\label{ss:ResOpt}

The mathematical study of the problem of optimization of an individual resonance 
was initiated in the pioneering paper \cite{HS86}  with the aims to obtain 
an optimal bound on the resonance width and to estimate resonances of random Schrödinger Hamiltonians.
For the time-independent 3-dimensional (3-D) Schrödinger equation, 
the resonance width, roughly speaking, can be measured via the negative of the 
imaginary part of a resonance in the second `nonphysical' sheet of the two-sheeted
 Riemann surface for $\sqrt{\cdot}$,
the first sheet of which is the `energy plane' (see, e.g., \cite{RS78IV}).

The problem of minimization of resonance width falls in the class of nonselfadjoint spectral optimization problems,
which include also other types of optimization of transmission properties \cite{LShV03} and 
of eigenvalues of nonselfadjoint operators or matrices  \cite{CZ95,BE17}. Such problems 
are much less studied in comparison with the selfadjoint spectral optimization, 
which go back to the Faber-Krahn solution of 
Lord Rayleigh's problem on the lowest tone of a drum.  
We refer to \cite{EFH07,GCh13,KL18,vdBI13} for reviews and more recent studies of variational problems 
for eigenvalues of  selfadjoint operators 
and would like to note that some of these studies \cite{EFH07,GCh13,KL18} are directly or 
indirectly connected with resonance optimization, in particular, because 
square roots $\ka \in \ii \RR_+ := \{\ii c : c \in \RR_+\}$ of nonpositive eigenvalues $\ka^2$ are often considered to be 
resonances \cite{Z17} for associated selfadjoint operators.

Nonselfadjointness brings new difficulties into eigenvalue optimization. The two of the difficulties
discovered in \cite{HS86} were
connected with the existence of optimizers and with appearance of 
multiple eigenvalues (see also the discussions of these points in \cite[p. 425]{KS08} and the introductions to \cite{K13,K14}). 
 
During the last two decades variation problems for transmission and resonance effects 
attracted considerable attention in connection with active studies of photonic crystals \cite{AFKRYZ18,DLPSW11}
and high quality-factor (high-Q) optical cavities \cite{AASN03,LShV03,NKT08,BPSZ11}. 
The problem of high-Q design  was partially motivated  by 
rapid theoretical  and experimental advances in cavity quantum electrodynamics \cite{VLMS02}, in particular, in connection with 
`Schrödinger cat'-type experiments with cats replaced by  photons \cite{HR06}.

For the idealized model involving a layered optical cavity  and normally passing electromagnetic (EM) waves,
the Maxwell system can be reduced to the wave equation of a nonhomogeneous string 
$\vep(s) \pa_t^2 v (s,t) = \pa_s^2 v (s,t)$,  \ $s \in \RR$, 
where 
$\vep(\cdot)  
$ 
is the spatially varying dielectric permittivity of layers 
(assuming that the speed of light in vacuum is normalized 
to be equal to 1).
It is a step function that can take several positive values 
$\wh \ep_1$, \dots, $\wh  \ep_{m}$, 
corresponding to the materials available for 
fabrication. 
Mathematically, it is convenient to assume that $\vep (\cdot)$ is a uniformly positive $L^\infty (\RR)$-function. Its varying part in a finite interval $s \in [s_-,s_+]$, 
represents the nonhomogeneous 
structure of the resonator. 

Outside of this interval $\vep (\cdot)$ equals to the constant permittivity $\ep_\infty = \n^2_\infty$ 
(where $\n_\infty >0$) of the homogeneous outer medium, i.e.,
\begin{equation} \label{e:Fbinf}
\text{$\vep(s) = \n^2_\infty$ for a.a. $s \in \RR \setminus [s_-,s_+]$.}
\end{equation}
(In the sequel, 
we will omit `almost all` (a.a.) and `almost everywhere' (a.e.) where 
these words are clearly expected from the context.)

Resonances $\ka$ associated with the coefficient $\vep (\cdot)$   can be defined in several equivalent ways.
One can define resonances as poles of 
the meromorphic extension of the cut-off version of the  resolvent $\chi_{(s_-,s_+)} \left( - \frac{1}{\vep (s)} \pa_s^2 - \la \right)^{-1} \chi_{(s_-,s_+)}$,
where $\la = \ka^2$ and the extension is done from 
the upper complex half-plane $\CC_+ := \{\ka \in \CC :  \im \ka >0 \}$ to 
the whole plane $\CC$ (see \cite{Z17} and references therein), and then to associate them with zeros of a specially constructed 
analytic function \cite{AH84,CZ95,HBKW08,K13,KLV17}. 

This definition is equivalent to the time-harmonic 
generalized eigenvalue  problem 
\begin{gather}
y''(s)  =  -\ka^2 \vep(s) y (s)   \quad \text{ for a.a. $s \in \RR$},  \label{e:ep} 
\end{gather}
equipped with $\ka$-dependent outgoing (radiation, or damping) boundary conditions
\begin{align}
\frac{y' (s)}{\ka}  =  \pm \ii \n_\infty \  y(s)  \quad \text{for $s=s_\pm$}.
\label{e:BCpm}
\end{align}
Thus, a resonance associated with $\vep (\cdot)$ is 
a complex number $\ka \neq 0$ such that the (generalized) eigenproblem consisting of (\ref{e:ep}) and the two conditions 
$y' (s_\pm)/\ka =  \pm \ii \n_\infty \  y(s_\pm)$ 
has a nontrivial solution $y \in \Ws_\loc^{2,\infty} (\RR)$ (nontrivial means that $y$ is not identically $0$ 
in the $\Ls^\infty (\RR)$-sense).
Such a solution $y$ is called a (resonant) \emph{mode} associated with $\ka$ and $\vep (\cdot)$.

\begin{rem}\label{r:extension}
Note that for a nontrivial solution $y(\cdot)$ to (\ref{e:ep}),
equality (\ref{e:BCpm}) is satisfied for $s=s_\pm$ if and only if 
it is satisfied for certain $s $ such that $\pm s > \pm s_\pm $.
Indeed, in the both cases, $y(s)  =   C_\pm \exp (\pm \ii \n_\infty \  \ka s) $ for all 
$\pm s \ge \pm s_\pm $ with certain constants $C_\pm \neq 0$.
\end{rem}


The present paper employs the usual convention \cite{DEL10,K13,KLV17} that the value $\ka = 0$ is 
explicitly excluded from \emph{the set of resonances $\Si (\vep)$ associated with} $\vep (\cdot)$ 
(the ``zero resonance'' corresponds to the background level of the EM field \cite{CZ95}, which can be put to zero by a change of coordinates).
Then it is well known \cite{CZ95,KLV17} that 
\begin{gather} \label{e:SSym}
\text{$\Si (\vep) $ is a subset of $\CC_- $ symmetric with respect to (w.r.t.) the imaginary axis $\ii \RR$},
\end{gather}
where $\CC_- := \{ z \ : \ \im z <0 \}$.
The constant value of $\vep (\cdot)$ in the outer semi-infinite intervals $(-\infty,s_-)$ and $(s_+,+\infty)$ will be fixed in 
the process of optimization, and so 
$\Si (\vep)$ depends only on the part of the function $\vep (\cdot)$ in $[s_-,s_+]$, to which we refer simply as 
\emph{the resonator}. 

Resonances and the associated resonant modes, roughly speaking, describe the behavior of the wave-field $v$ inside 
the cavity for large times $t$ (see \cite{CZ95,Z17}).
The real part $\alpha=\re \ka$ and the negative  of the imaginary part $\beta= -\im \ka$ of a resonance $\ka$
correspond to the (real angular) \emph{frequency} 
and the (exponential) \emph{decay rate}, resp., of the eigenoscillations $e^{-\ii \ka t} y(s)$.
The value $(-2 \im \ka)$ is called the \emph{bandwidth} of a resonance $\ka$.

Simulations for perspective designs of  high-Q optical cavities \cite{AASN03,NKT08} 
have lead to the mathematical question \cite{KS08,HBKW08} of minimization of 
the decay rate $\Dr (\ka; \vep) := - \im \ka$ (or maximization of the $Q$-factor 
$Q = \frac{|\re \ka|}{- 2 \im \ka}$) for  a resonance 
$\ka \in \Si (\vep)$ by structural changes of $\vep (\cdot)$ assuming that $\vep (\cdot)$ satisfies (\ref{e:Fbinf}) and certain constraints in $[s_-,s_+]$.

\subsection{Review of decay rate minimization and basic definitions}
\label{ss:Rew}

Mathematically,
it is convenient to consider minimization of $\Dr (\ka;\vep)$ over the relaxed family $\F_{s_-,s_+}$ of feasible resonators $\vep (\cdot)$
that consists of all $\Ls^\infty (s_-,s_+)$-functions  
(rigorously, of all $\Ls^\infty (s_-,s_+)$-equivalence classes)
satisfying the constraints 
\begin{gather} \label{e:ep1<ep<ep2}
\text{$\n_1^2 \le \vep (s) \le \n_2^2 $  for 
$s \in (s_-,s_+) $, 
}
\end{gather}
where $0 < \n_1 <\n_2 $ and $\ep_1 =\n_1^2$ ($\ep_2 = \n_2^2$) is the minimal (resp., maximal) of the admissible permittivities.
Note that $\n_1$, $\n_2$, and $\n_\infty$ are the \emph{refractive indices} in the corresponding media (the magnetic permeability is assumed to be 1 in all materials under consideration).

Over the feasible family $\F_{s_-,s_+}$ the problem is well-posed in the Pareto sense of the paper \cite{K13}, i.e., 
the minimal decay rate $\beta_{\min} (\alpha)$ for a frequency $\alpha \in \RR$ is defined by
\begin{align} \label{e:betaminI}
 \beta_{\min} (\alpha) := \ \inf \{ \beta \in \RR \ : \ \alpha - \ii \beta  \in \Si [\F_{s_-,s_+}]  \} , \quad \al \in \RR,
 \end{align}
where $ \Si [\F_{s_-,s_+}] := \bigcup_{\vep \in \F_{s_-,s_+} } \Si (\vep) $
is the \emph{set of achievable resonances}.
(Here and below we use the convention that $\inf \varnothing = + \infty$.)
An achievable resonance $\ka_0$ is called the \emph{resonance of minimal decay for (the frequency)} 
$\al_0 = \re \ka_0$ if $\ka_0$ belongs to the \emph{Pareto frontier of minimal decay} 
\[
\Pa_{\Dr} := \{ \al - \ii \beta_{\min} (\al) \ : \ \al \in \dom \beta_{\min} \},
\] where 
$\dom \beta_{\min} := \{ \al \in \RR \ : \ \beta_{\min} (\al) < +\infty \}$ is the \emph{set of achievable frequencies}
(for general theory of Pareto optimization, see \cite{BV04}).
If $\ka_0 \in \Pa_{\Dr} $, then $\vep \in \F_{s_-,s_+}$ such that $\ka_0 \in \Si (\vep)$ is called the \emph{resonator 
of minimal decay for} $\al_0$.

The compactness argument \cite{HS86} implies that $\Si [\F_{s_-,s_+}]$ is closed \cite{K13,KLV17}. So 
for every achievable frequency $\al$, there exists a resonance of minimal decay $\ka = \al - \ii \beta_{\min} (\al)$ and an associated optimal resonator  $\vep (\cdot) \in \F_{s_-,s_+}$  generating $\ka$ (uniqueness is discussed in Section \ref{s:dis}). Other  definitions of optimizers were considered in \cite{HS86,OW13,K14,KLV17}. 

Most of research for 1-D photonic crystals  have been done under the assumption 
 that 
\begin{gather} \label{as:sym}
\text{$\vep (\cdot)$ is symmetric w.r.t.  the \emph{resonator center} $s^\cent = \tfrac{s_-+s_+}{2}$ (see \cite{KS08,NKT08,K13,OW13})}
\end{gather}
in the sense that $\vep (\cdot - s^\cent)$ is an even function.
For such symmetric $\vep (\cdot)$, the \emph{resonant mode} 
$y (\cdot)$ is either an even, or odd function w.r.t. $s^\cent$ (i.e., $y(\cdot - s^\cent)$ 
is even, or odd), and therefore satisfies 
\begin{gather} \label{as:EvenOdd}
\text{either the condition $y'(s^\cent) =0$, or the condition $y(s^\cent) = 0$. }
\end{gather} 
These conditions can be treated as boundary conditions and 
can be used to simplify the problem. Shifting $s^\cent$ to zero and getting $s_+ = - s_- = \Lh$ with a certain $\Lh>0$, 
we introduce the family 
\[
\F_{\Lh} ^\sym = \{ \vep \in \F_{-\Lh,\Lh} \ : \ \vep(s) = \vep(-s) \text{ a.e.}\} 
\]
and, for $\vep \in \F_{\Lh}^\sym$, introduce the set $\Si^{\even} (\vep)$ (the set $\Si^\odd (\vep)$) of resonances $\ka$  such that 
the corresponding mode $y $ is an even (resp., odd) function.
We will say that the corresponding $\ka$ is an \emph{even-mode} resonance (resp., \emph{odd-mode} resonance) of $\vep (\cdot)$.

Replacing $\Si [\F_{s_-,s_+}] $ in (\ref{e:betaminI}) by the closed sets of achievable even- and odd-mode resonances 
$\Si^{\even (\odd)} [\F_{\Lh}^\sym] := \bigcup_{\vep \in \F_{\Lh}^\sym } \Si^{\even (\odd)} (\vep) $ \cite{KLV17}, one obtains the corresponding functions $\beta_{\min}^{\even (\odd)}$ and 
Pareto optimal frontiers 
\[
\Pa^{\even (\odd)}_{\Dr} := \{ \al - \ii \beta_{\min}^{\even (\odd)} (\al) \; : \; \al \in \dom \beta_{\min}^{\even (\odd)} \}
\]
of even- and odd-mode resonances of minimal decay (see Fig. \ref{f:Par} (b)).

\begin{figure}[h]
{\footnotesize\textbf{a)}}\includegraphics[width=0.47\textwidth]{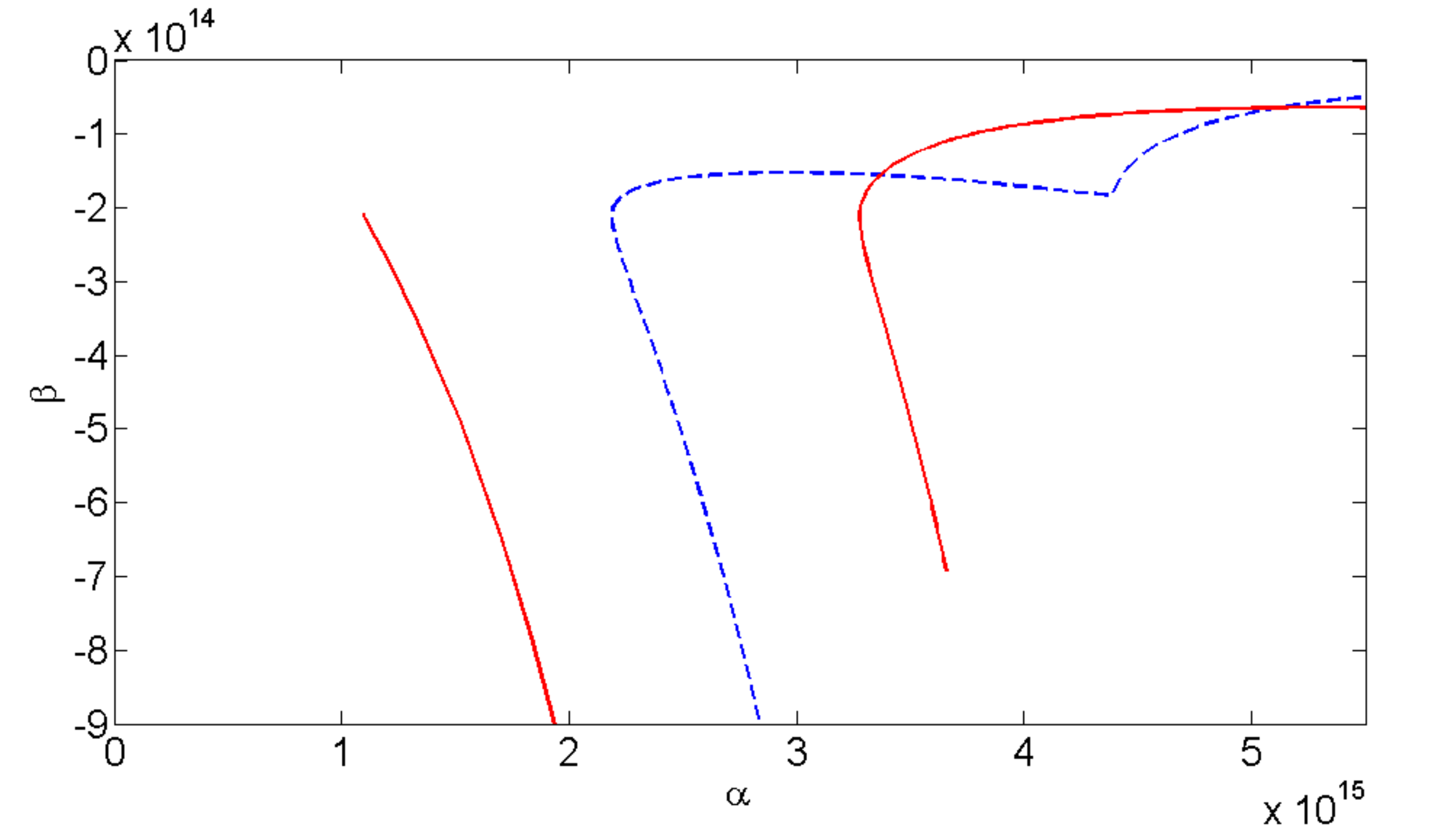}    {\footnotesize\textbf{b)}}\includegraphics[width=0.472\textwidth]{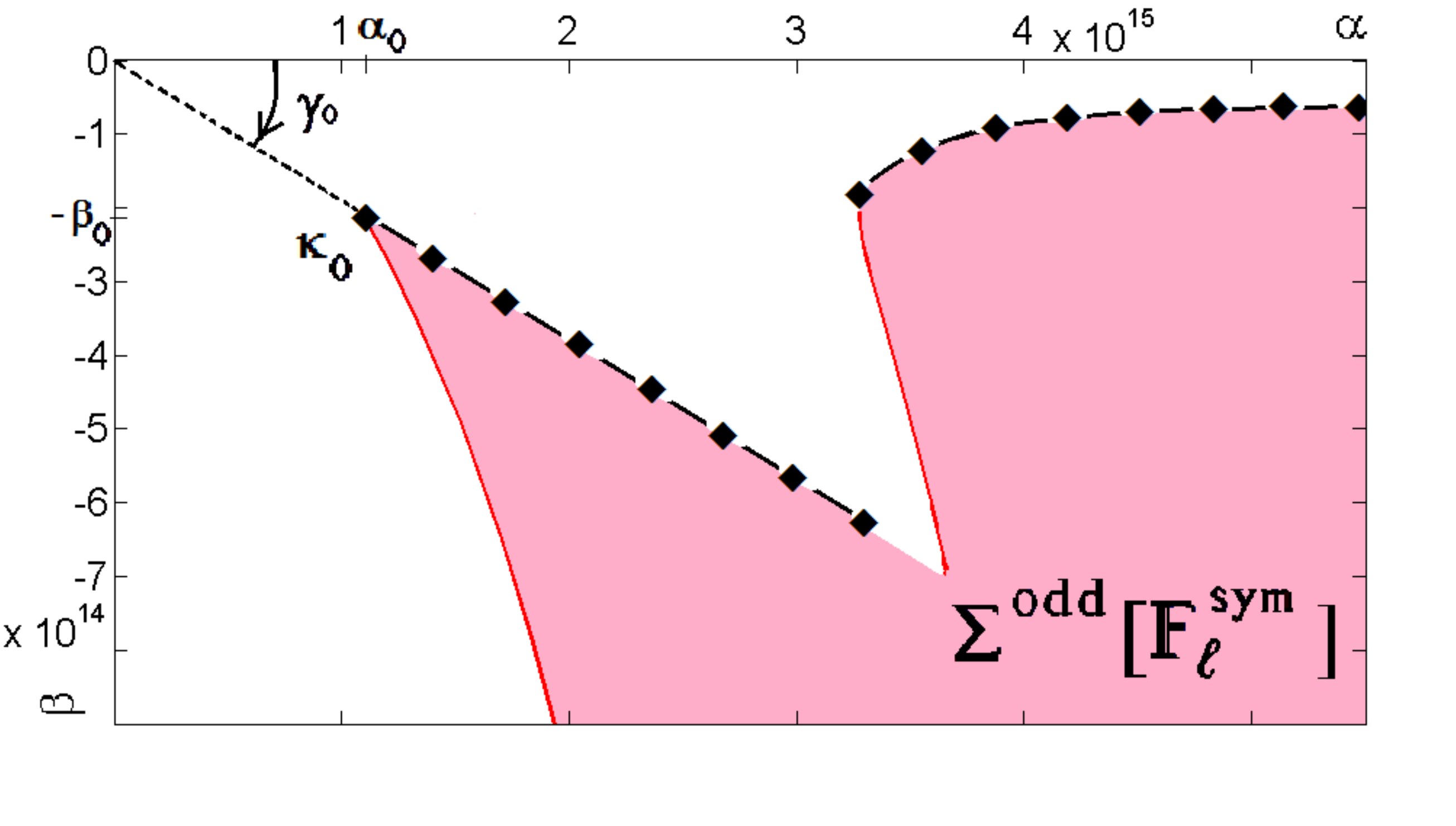} 

\caption{\label{f:Par} \footnotesize
\textbf{(a)} The computed parts of Pareto frontiers of even-mode resonances $\Pa^\even_{\md} := \{ \ee^{\ii \ga}\rho_{\min} (\ga, 0,\n_\infty)  :  \ga \in \dom \rho_{\min} (\cdot, 0,\n_\infty) \}$ (blue dashed line) and odd-mode resonances $\Pa^\odd_{\md} :=\{\ee^{\ii \ga} \rho_{\min} (\ga, \infty, \n_\infty)  :  \ga \in \dom \rho_{\min} (\cdot, \infty, \n_\infty) \}$ (red solid line) of minimal modulus (see Sections \ref{ss:Min|om|} and \ref{ss:Pareto})
for $\n_1 = \n_\infty = 1 $,
$\n_2 = 3.46$, and $\Lh = 0.1243\cdot10^{-6}$. 
 \ \textbf{(b)} The based on (a) and Theorem \ref{t:PF} drawing of the set $\Si^{\odd} [\F_\Lh^\sym] $ of achievable odd-mode resonances.  
The line marked  `\textbf{--$\blackdiamond$--}' shows the corresponding Pareto frontier of minimal decay $\Pa_{\Dr}^{\odd} := \{\al - \ii \beta_{\min} (\al)  :  \al \in \re \Si^{\odd} [\F_{\Lh}^{\sym}]\}$. The parts $\Pa_{\Dr}^{\odd}$ and $\Pa^\odd_{\md}$ of the boundary $\Bd \Si^{\odd} [\F_{\Lh}^{\sym}] $  do not coincide. Theorem \ref{t:ExOpt2} analytically proves that $\ka_0 =\al_0 - \ii \beta_0 = \ee^{\ga_0} \rho_0 $ is Pareto optimal and that there exists a jump of $\Pa^\odd_{\md}$ at $\ka_0$. 
}
\end{figure} 

\subsection{Known facts and open questions about the structure of optimizers}

It was noticed in the numerical experiments of \cite{KS08,HBKW08} that
for the relaxed problem only two extreme values $\n_1^2$ and $\n_2^2$ of $\vep (\cdot)$ appears from a certain iteration of the steepest ascent simulations.
This fact was analytically proved in \cite{K13} for Pareto optimal resonators of minimal decay.
It was shown in  \cite{K13} that a resonator of minimal decay switches between $\n_1^2$ and $\n_2^2$
according to a nonlinear eigenvalue problem with a special bang-bang term (see equation (\ref{e:Eys}) in Section \ref{s:Syn} and also \cite{KLV17}, where this result was extended on all resonators $\vep (\cdot)$ generating $\ka \not \in \ii \RR$
on the boundary $\Bd \Si [\F_{s_-,s_+}]$).
This eigenproblem can be seen as an analogue of the Euler–Lagrange equation.

The steepest ascent numerical experiments of \cite{KS08,HBKW08,OW13} and the shooting 
method for the aforementioned Euler-Lagrange eigenproblem on a fixed interval  \cite{KLV17} suggest that  Pareto optimizers $\vep (\cdot)$ are 
close to structures that consist of periodic repetitions of two layers of permittivities $\n_1^2$ and $\n_2^2$ with a possible introduction of defects. 
This is in good agreement with Photonics studies of high-Q cavities \cite{AASN03,NKT08,BPSZ11}, 
which are based on sophisticated designs of defects in periodic Brag reflectors,
and with Physics intuition, which says that oscillations 
of EM field are expected to accumulate on the defects if 
their frequency $\re \ka$ is in a stopband of the Brag reflector.

However, presently available analytic and numerical methods do not give a clear answer: (i) about the structure of optimal defects and the lengths of alternating layers with permittivities $\n_1^2$ and $\n_2^2$ in the original unperturbed Brag reflector (see Section \ref{s:dis}), (ii) about  symmetry and uniqueness of optimizers (the last two questions are obviously connected, see the discussion in \cite{AK17} and Section \ref{s:dis}), (iii) about the existence of a global minimizer of the decay rate $\Dr (\cdot;\cdot)$ without constraints imposed on the frequency (on the base of numerical experiments of \cite{KS08,HBKW08,OW13} it it was conjectured in \cite{OW13,KLV17} that $\argmin_{\substack{ \vep \in \F_{s_-,s_+} \\ \ka \in \Si (\vep) \quad}} \Dr (\ka;\vep) = \varnothing$; here and below we use the standard $\argmin$ notation \cite{BV04} for the set of minimizers).

These open questions show that new theoretical tools and more accurate numerical approaches are needed to understand the structure of optimal resonators.

To address these goals several other approaches were proposed. 
One of directions \cite{GCh13,LS13} suggests to study resonant properties with the help of certain associated selfadjoint spectral problems avoiding in this way the difficulties of nonselfadjoint spectral optimization. One more direction employs 
`solvable' models of Schrödinger operators with point interactions \cite{O14_Th,AK17}.

The aim of the present paper is to propose analytical and computational methods based a completely different idea.
We connect the problem of optimization of resonances with optimal control theory, more precisely,
with a collection of special minimum-time problems on the Riemann sphere 
$\wh \CC = \CC \cup \{\infty\}$ perceived as a smooth 2-D real manifold.

\textbf{Notation}. 
We use the convention that $\inf \varnothing = + \infty$.
The sets $\CC_1$, $\CC_2$, $\CC_3$, and $\CC_4$ are the open quadrants in $\CC$
 corresponding  to the combinations of signs $(+,+)$, $(-,+)$, $(-,-)$, and $(+,-)$ for $(\re z,\im z)$.
Other sets used in the paper are: the 
compactifications $\wh \RR = \RR \cup \{\infty\}$ and $\wh \CC = \CC \cup \{\infty\}$ perceived as smooth 1-D and 2-D real  manifolds, resp.,
open half-lines $\RR_\pm = \{ s \in \RR: \pm s >0 \}$ and 
half-planes $\CC_\pm = \{ z \in \CC : \pm \im z >0 \}$, 
closed line-segments $[z_1,z_2] := \{ (1-t) z_1 + t z_2 \ : \ t \in [0,1]\}$ (which we call $\CC$-intervals),
$\CC$-intervals $(z_1,z_2) = [z_1,z_2] \setminus \{ z_1,z_2 \} $
with excluded endpoints $z_{1,2} \in \CC$, 
circles $\TT_\de (\zeta) := \{z \in \CC : | z - \zeta | = \de \}$ with $\de >0$, $\TT = \TT_1 (0)$,
open discs $\DD_\de (\zeta) := \{z \in \CC : | z - \zeta | < \de \}$ 
with $\de \ge 0$ and $\zeta \in \CC$, $\DD = \DD_1 (0)$,
the infinite sector (without the origin $z=0$)
\begin{equation} \label{e Sec def}
\Sec (\xi_1,\xi_2) := \{ c e^{\ii \xi} \, : \, c > 0  \text{ and } \xi \in (\xi_1, \xi_2) \} ,  \ \ \ \xi_1 < \xi_2 , \ \ \ \xi_{1,2} \in \RR .
\end{equation} 

For a normed space $W$ over $\CC$ with a norm $\| \cdot \|_W$, 
 $ \BB_\de (w_0; W) := \{w \in W \, : \, \| w - w_0 \|_W < \de \}$ 
are open nonempty balls with $w_0 \in W$ and $\de >0$.
For $E \subset W$ and $z \in \CC$, we write $z E  + w_0 := \{ z w + w_0 \, : \, w \in E \} $.
The closure (the interior) of a set $E $ 
in the norm topology is denoted by $\overline{E}$ (resp., $\Intr E$).


For a function $f$ defined on a set $E$, $f[E]$ is the image of $E$ and, in the case when $f$ maps to $(-\infty,+\infty]$,
the domain of $f$ is $\dom f := \{ s\in E \ : \ f (s) < +\infty\}$.
A line over a complex number $z$ or over a $\wh \CC$-valued function denotes complex conjugation, i.e.,
$\overline y $ means that $y(s) = \overline{y(s)}$; 
$\overline{\infty} = \infty$. We simplify the notation $(f(s))^j =: f^j (s)$, while $f^{\{-1\}} (\cdot)$ is the inverse function.
By $\pa_s f$, $\pa_z f $, etc., we denote (ordinary or partial)
derivatives w.r.t. $s$, $z$, etc.;
 $\vep (s \pm 0) $ are the one-side limits of a function $\vep$ at a point $s \in \RR$.
For an interval $I \subset \RR$, $ \Ls^p  (I)$ and
$
\Ws^{k,p}  (I) := \{ y \in  \Ls^p  (I) \ : \ \pa_s^j y \in  \Ls^p (I), \  j \leq k \}
$
are the complex Lebesgue and Sobolev spaces with standard norms $\| \cdot \|_p$ and $\| \cdot \|_{\Ws^{k,p}}$. To denote the corresponding Lebesgue and and Sobolev spaces of real-valued functions, we use subscript $\RR$ ($\Ls^p_\RR$ , etc.).  The space of continuous complex valued functions with the uniform norm is denoted by $C [s_-,s_+]$.
The $\loc$-notation $y \in \Ws_{\loc }^{k,p} (\RR)$ means that $y \in \Ws^{k,p} (s_-,s_+)$ for every finite interval $(s_-,s_+) \subset \RR$ (the same is applied to the space $C_\loc (\RR)$).


By $z^{1/2} $, $\Arg_0 z$, $\Ln z$  we denote the continuous in 
$\CC \setminus \overline{\RR_-}$ 
branches of multi-functions $\sqrt{z}$, complex argument $\Arg z$,
and complex natural logarithm $\ln z$
fixed by $1^{1/2} =1$ and $\Ln 1 = \ii \Arg_0 1 = 0$. 
For the multifunction $\sqrt{\cdot}$ we use also the notation $[\cdot]^{1/2}$. 
For $z \in \RR_- $, we put $\im \Ln z = \Arg_0 z :=  \pi$ and $z^{1/2} := \ii (-z)^{1/2}$.
The characteristic function of a set $E$ is denoted by $\chi_E (\cdot)$, i.e.,
$\chi_E (s) = 1$ if $s \in E$ and $\chi (s) = 0$ if $s \not \in E$.


\section{Overview of methods and results of the paper}
\label{s:Results}

As a starting point, we introduce a dual problem of minimization of length of a resonator $\vep (\cdot)$ under the assumption that $\vep (\cdot)$ produces a given resonance $\ka \in \CC_-$. 

Since the definition of the length of a resonator is ambiguous (see Remark \ref{r:extension}), consider an additional \emph{feasible  family $\F $ of  (permittivity) coefficients}. The family $\F$ consists of positive functions 
$\vep (\cdot) \in \Ls_\RR^{\infty} (\RR)$ such that there exists 
$s_\pm \in \RR$ satisfying $s_-  \le s_+ $ and conditions (\ref{e:Fbinf}), (\ref{e:ep1<ep<ep2}). 
We consider also the subfamily $\F^\sym$ of symmetric $\vep$, i.e.,
\[
\F^\sym = \{ \vep \in \F \ : \ \vep(s) = \vep(-s) \} .
\]

\begin{defn} \label{d:length}
For any given $\vep(\cdot)$ that is not equal to the constant function $\n^2_\infty$,
we denote by $[s_-^{\vep},s_+^{\vep}]$ the shortest interval $[s_-, s_+]$ satisfying (\ref{e:Fbinf}), 
and by $\Lr (\vep) := s_+^{\vep} - s_-^{\vep} $
\emph{the effective length of the resonator} defined by the coefficient $\vep(\cdot)$. 
If $\vep(\cdot) = \n^2_\infty$ (in $L^\infty (\RR)$-sense),
we put $s_-^{\vep}:=0$, $s_+^{\vep} := 0$, and $\Lr (\vep) = 0$. 
\end{defn}

Let us consider the two following length minimization problems
\begin{gather} \textstyle
\argmin_{\substack{ \vep \in \F \quad \\ \ka \in \Si (\vep)}} \Lr (\vep)  ,
\label{e:argminF} 
\\
\textstyle
\argmin_{\substack{ \vep \in \F^\sym \\ \ka \in \Si (\vep)}} \Lr (\vep) ,
\label{e:argminFsym}
\end{gather}
where the resonance $\ka \in \CC_- \setminus \{0\}$ and the constraint parameters $\n_\infty $, $\n_1$, $\n_2 $ 
are fixed.

\begin{rem} \label{r:sym}
The \emph{symmetric optimization problem} (\ref{e:argminFsym}) 
can be split into two, the \emph{odd-mode} and \emph{even-mode} problems in the way described in  Section \ref{ss:Rew}. 
Note that $\Si^\odd (\vep) \cap \Si^\even (\vep) = \varnothing$ (otherwise, the odd- and even-modes 
form a fundamental system of solutions and at least one of them does not satisfy (\ref{e:BCpm})).
Hence, $\vep(\cdot)$ is a minimizer for (\ref{e:argminFsym}) if and only if it is a minimizer 
for exactly one of the two problems:
\begin{equation} \textstyle
\argmin_{\substack{\vep \in \F^\sym \quad \\ \ka \in \Si^{\odd } (\vep)}} \Lr (\vep) , 
 \qquad  \argmin_{\substack{\vep \in \F^\sym \quad \\ \ka \in \Si^{\even} (\vep)}} \Lr (\vep) .
\label{e:argminOddEven}
\end{equation}
\end{rem}

 One can define the corresponding minimum lengths by 
\[ \textstyle
\Lr_{\min} (\ka) :=   \inf_{\substack{\vep \in \F \\ \ka \in \Si (\vep)}} \Lr (\vep) , \quad
\Lr_{\min}^{\odd (\even)} (\ka) :=   
\inf_{\substack{\vep \in \F^\sym \\ \ka \in \Si^{\odd (\even)} (\vep)}} \Lr (\vep) .
\]


In Section \ref{ss:MinTime} we give equivalent reformulations of these 4 problems in terms of minimum-time control
for the system 
\begin{equation} \label{e:R}
x' (s) = f (x (s),\vep (s)), \text{ with $f(x,\ep) := \ii \ka (- x^2  + \ep )$ }
\end{equation}
where $\vep(\cdot)$ is interpreted as a \emph{control},
 $x(s)$ evolves in the state space $\wh \CC = \CC \cup \{\infty\}$ and is connected with a solution $y(s)$ to (\ref{e:ep}) by 
$x(s) = \frac{y'(s)}{\ii \ka y(s)}$  \quad (where $x(s)=\infty$ if  $ y(s) = 0$)
The trajectory $x(\cdot)$ of (\ref{e:R})  blows-up in the time-like points $s$ such that $y(s) = 0$.
The evolution of $x$ in the neighborhood $\wh \CC \setminus \{0\}$ of $\infty$ can be described in the following way:
\begin{gather} \label{e:R2}
\text{$\wt x (s) = -1/x(s) $ \ satisfies \ 
$\wt x'  = \wt f (\wt x , \vep)$, \ where \ $\wt f(\wt x,\ep):=\ii \ka (-1 + \ep  \wt x^2 )$.}
\end{gather} 

With such settings the outgoing boundary condition (\ref{e:BCpm})
at $s_-$ (at $s_+$) become the initial state value $x(s_-) = - \n_\infty$ (resp., the terminal state $x(s_+) = \n_\infty$). The problem (\ref{e:argminF}) turns into the problem of minimum-time control from $(-\n_\infty)$ to $\n_\infty$.
The symmetric problems (\ref{e:argminOddEven}) are essentially the problems of minimum-time control  from $\infty$ and from $0$ to $\n_\infty$ (or, equivalently, from $(-\n_\infty)$ to $\infty$ and to $0$).

Note that the resonator optimization in the case $\ka \in \ii \RR$ 
is simpler because it 
has some features of selfadjoint spectral optimization \cite{K13,KLV17_MFAT}. 
Besides,  the minimum-time control problem for the value $\ka$ of the spectral parameter is equivalent to that for the value $(-\overline{\ka})$ (see (\ref{e:SSym}) and Section \ref{ss:Monotonicity}). 
Therefore a substantial part of the paper is focused on the case $\ka \in \CC_4
 :=\{ z \in \CC : \re z >0, \im z <0\}$.

We return from the dual problem to the original problem of minimization of the decay rate stated in  Section \ref{ss:Rew} in several steps:
\begin{itemize}
\item[(i)] Global controllability and small time local controllability (STLC) of system (\ref{e:R}), (\ref{e:R2}) are studied in Section \ref{s:existence}. It is important for our needs to ensure that these properties are locally uniform w.r.t. the spectral parameter $\ka$. So, actually, we study the whole collection of control systems (\ref{e:R}), (\ref{e:R2}) indexed by $\ka \in \CC_4$.

\item[(ii)] We give another reformulation for the minimum-time control of (\ref{e:R}), (\ref{e:R2}) in terms of Pareto minimization of the modulus $| \ka |$ of a resonance for a resonator $\vep \in \F_{s_-,s_+}$. 
This gives the second class of Pareto frontiers $\Pa_{\md}$ and $\Pa_{\md}^{\odd (\even)}$, which consist of achievable $\ka$ with minimal possible modulus $|\ka|$ for a given complex argument $\Arg \ka$ (see Section \ref{ss:Min|om|} and Fig. \ref{f:Par} (a)).

\item[(iii)]  Combining the results obtained in steps (i) and (ii), we show in Theorems \ref{t:MinDec1}-\ref{t:MinDec2} that under the assumption $\n_1 \le \n_\infty \le \n_2$ the original Pareto problem of  minimization of the decay rate can be ``partially'' reduced to the minimum-time control problem (``partially'' in the sense that at least one minimizer $\vep (\cdot)$ for each point $\ka \in \Pa_{\Dr}$ is obtained, but possibly not all the minimizers). In the case $\n_1 < \n_\infty < \n_2$, the above reduction is complete.  
\end{itemize}

Note that the relation $\n_1 \le \n_\infty \le \n_2 $ between the constraint  parameters is reasonable from the applied point of view (it means that the permittivity of the outer medium is in the range of permittivities admissible for fabrication,  see \cite{KLV17}).

There are several advantages of the reduction to the minimum-time control:
\begin{itemize}
\item From the analytical point of view, this reduction 
brings the tools of Optimal Control to the study of optimal designs of resonators. This includes Hamilton-Jacobi-Bellman (HJB) equations \cite{BC08,SL12},  their theory on manifolds \cite{CV03}, Pontryagin Maximum Principle (PMP), and the concepts of extremal synthesis \cite{BP03,SL12}. 

\item From numerical point of view, we propose in Section \ref{s:N} a method of accurate computation of optimal symmetric resonators by the minimum-time shooting to the turning point. The turning point $p_0$ of a mode $y(s)$ is the $s$-point, where the trajectory of $y$ changes of the direction of its rotation around $0$ in the complex plane \cite{KLV17}. So the most important step of the method, the computation of $p_0$, is essentially the computation of a zero of a  monotone function (see Lemma \ref{l:MonotFunc}). This allows us to compute the widths of layers of optimal resonators with high accuracy.
\end{itemize} 

The HJB equation associated with the value function
$V (x) := T^{\min}_\ka (x,\n_\infty) $ is $0 = 1 - \max \{ - \nabla_{f(x,\ep)} V (x) \ : \ \ep = \ep_j, \ j=1,2 \}$. Here $T^{\min}_\ka (x,\n_\infty)$ is the minimum time needed to reach $\n_\infty$ from the initial state $x \in \wh \CC$,   
$\nabla_z V(x) := \lim_{\substack {\zeta \to 0 \\ \zeta \in \RR}} \frac{V(x+\zeta z)-V(x)}{\zeta}$ is the directional derivative and  the direction $z \in \CC$ is perceived as an $\RR^2$-vector.
The HJB equation can be formally transformed to the boundary value problem  
\begin{align} \label{e:HJB}
0= V (\n_\infty), \quad 
0 = 1 - \nabla_{\ii \ka (x^2 - (\ep_2+\ep_1)/2)} V (x) - \frac{\ep_2 -\ep_1}{2} |\nabla_{\ii \ka} V (x)| , \quad x \in \CC \setminus \{\n_\infty\},
\end{align}
However, in this form the boundary condition at $x=\infty$ is not specified because it is encoded in the HJB equation for (\ref{e:R2}) in the $\wt x$-state space.

That is why we employ in Section \ref{ss:HJB} the manifold state-space approach of \cite{CV03} to formulate rigorously  the HJB equation and the corresponding existence and uniqueness theorem for a proximal solution on the 2-D real manifold $\wh \CC$. 
This result, Corollary \ref{c:V}, is written as a symmetrical coupling of two HJB boundary value problems for the backward and forward value functions because we believe that it is right to emphasize time-reversal symmetry of system (\ref{e:R}), (\ref{e:R2}).
This and other symmetries of (\ref{e:R}), (\ref{e:R2}) are discussed in Section \ref{ss:Monotonicity}.

Some regularity properties of the backward value function $V$ can be seen from controllability results of Section \ref{s:existence}. If $\n_1 \le \n_\infty \le \n_2$, the domain $\dom (V) $ of $V$ is the whole state-space $\wh \CC$. If additionally $\n_1 < \n_\infty < \n_2$, then $V$ is continuous.

The structural analysis of optimal and extremal controls and trajectories is the subject of Sections \ref{s:bang-bang}-\ref{s:Syn2} (for the main principles and definitions of geometric optimal control, see \cite{AS13,BP03,SL12}). Our terminology and notation are oriented to the specific of resonance problems and somewhat differs of that of \cite{BP03,SL12}, while the ideas of extremal synthesis \cite{BP03,SL12} are behind many statements of Sections \ref{s:bang-bang}-\ref{s:Syn2}. 

Our analysis of extremals  consists of the following steps:
\begin{itemize}
\item The application of PMP in Theorem \ref{t:PMP} 
gives not only the absence of singular arcs and another derivation of the Euler-Lagrange eigenproblem of \cite{K13,KLV17},
but also an additional necessary condition $\im ( \vep (s) y^2 (s)+ \ka^{-2} (y' (s))^2 ) \ge 0$ of optimality of $\vep$,
which occurred to be important for the description of optimal normal extremals of Section  \ref{ss:Norm}. 

\item The combination of PMP with the result of \cite{KLV17} on rotation of resonant mode $y$ implies that optimal controls $\vep (\cdot)$ are of bang-bang type 
(and without chattering effects). 
The intervals $(b_j , b_{j+1})$ of constancy of $\vep (\cdot)$  can be interpreted from Optics point of view as layers.

\item We introduce the \emph{ilog-phase} $\vth $, which is function in $s$ that appears in special representations of $x$ and $y$ trajectories and is convenient from the point of view of iterative analytic calculation of the positions of switch points $b_j$ (see Sections \ref{ss:const} and \ref{ss:CalcSwitch}). The ilog-phase $\vth$ is used then to obtain estimates on the widths $b_{j+1} - b_j$ of layers  (Theorems \ref{t:AbnExtWidth}, \ref{t:NextWidth} and Corollaries \ref{c:Icent}, \ref{c:Norm}).

\item It is shown by Theorems \ref{t:AbnExtWidth}, \ref{t:NextWidth} and Section \ref{s:Syn2} that abnormal extremals correspond to resonators that have parts consisting of periodic alternation of quarter wave layers with permittivities $\n_1^2$ and $\n_2^2$.
Such structures are called quarter-wave stacks (see Remark \ref{r:1/4stack}) and are widely used in Photonics (in the context of resonance optimization they were discussed in \cite{OW13}).

 \item We study the discrete dynamical system $x(b_j) \mapsto x(b_{j+1}) $ that describe the evolution of values of extremal $x(\cdot)$ at the set $\{b_j\}_{j=-\infty}^{+\infty}$ of switch points. Our analysis shows that abnormal extremals can be optimal only for exceptional values of $\Arg \ka$ in $(-\pi/2,0)$ (see Lemma \ref{l:Mep12} and Corollary \ref{c:1/4per}). So the appearance of quarter-wave stacks pieces in the structures of Pareto optimal resonators is at least atypical.
 
\item From the point of view of structural classification of extremal trajectories 
\linebreak $x[[s_-,s^+]]:= \{x(s) : s \in [s_-,s_+]\} $, the above results are summarized in Section \ref{s:Syn2}. This classification is used then to prove the explicit examples of optimizers in 
Theorems \ref{t:ExOpt} and \ref{t:ExOpt2}.
\end{itemize}

While the construction of optimal synthesis for (\ref{e:R})-(\ref{e:R2}) is not finished in the present paper, the effectiveness of our analysis of extremals is demonstrated by several results:

\begin{itemize}
\item For optimizers $\vep(\cdot)$ of the symmetric optimization problems,
we rigorously prove in Section \ref{s:Corol} a series of necessary conditions on width of layers and permittivity of the central layer.

\item We analytically prove that in the case $\n_1 \le \n_\infty < \n_2$ a half-wave layer with permittivity $\n_2^2$ and  its first odd-mode resonance, which we denote here $\ka_0$, form an optimal pair $\{\ka_0 ,\vep \}$ simultaneously for odd-mode Pareto problems of minimal decay for $\re \ka_0$ and of minimal modulus for $\Arg \ka_0$ 
(see Theorem \ref{t:ExOpt}). 
This gives the first analytically derived example of a resonance belonging to a Pareto frontier outside of the imaginary line $\ii \RR$.

\item It is shown that the Pareto frontier $\Pa_{\md}^{\odd}$ of minimal modulus has a jump at the aforementioned $\ka_0$ 
(see Fig. \ref{f:Par}). The existence of this jump and qualitative descriptions of the Pareto frontiers near $\ka_0$ are given by Theorem \ref{t:ExOpt2}.
\end{itemize}

Optimal symmetric resonators with high quality factor $Q$ are studied numerically in Section \ref{s:N}. Conclusions from our analytical and numerical results are summarized in Section \ref{s:dis}, where we also compare them with high-Q designs of earlier studies.

\section{Three reformulations of the dual problem}
\label{s:DualRef}


\subsection{Minimum-time control problem and Riccati equations}
\label{ss:MinTime}

In this subsection, the variable $s$ will be interpreted as time, $\ka \neq 0$ is a fixed parameter.
Functions $\vep (\cdot) \in \Ls^\infty_{\RR} [s_-,+\infty)$ will be called \emph{controls}.

The family $ \F_{s_-} $ 
of \emph{feasible controls} is defined by 
\[
\F_{s_-} := \{ \vep(\cdot) \in \Ls_\RR^\infty (s_-,+\infty) \ : \ \n^2_1 \le \vep (s) \le \n^2_2 \text{ for } s > s_- \}.
\]
To modify the differential equation (\ref{e:ep}) into a control system in a state-space $\CC^2$,
one denotes $Y_0 (s) = y(s)$, $Y_1 (s)= \frac{y'(s)}{\ii \ka}$, forms a column vector $Y (s) = ( Y_0 ; Y_1 )^\top \in \CC^2$, 
and write (\ref{e:ep}) as 
$ Y' (s) = \ii \ka \begin{pmatrix} 0 & 1 \\
\vep (s) & 0 
\end{pmatrix}  
Y(s) $.

Since this system is linear one can consider the associated dynamics on 
the complex projective line, which can be identified with the Riemann sphere $\wh \CC = \CC \cup \{ \infty \}$.
From the  point of view of elementary ODEs, 
this is the standard reduction to the associated 
Riccati differential equation.
Namely, for a nontrivial solution  $y(\cdot)$ to (\ref{e:ep}), the dynamics 
of the function $x(\cdot)$ defined by 
$x := y'/ (\ii \ka y)$  
is described in $\CC$ by the control system  $x'=\ii \ka (- x^2  + \ep )$.
The solution $x(\cdot)$ of (\ref{e:R})  blows-up in the time-points $s$ such that $y(s) = 0$.
The dynamics of $x$ near $\infty$ is given by (\ref{e:R2}).

Recall that a state $x(s_-) $ of the 
system (\ref{e:R})-(\ref{e:R2}) is said to be in 
the \emph{time-$t$-controllable set} $\Contr_{\{t\}} (\eta_+,\ka)$
 to a state $\eta_+$ 
 if there exists a feasible control $\vep \in \F_{s_-}$ such that 
$x(s_- +t ) = \eta_+ $. 
For $t \in [0,+\infty]$, we put
$\Contr_{[0,t)} (\eta_+,\ka) := \bigcup_{0 \le t_0 < t} \Contr_{\{t_0\}} (\eta_+,\ka) $,
and write $\eta_+ \in \Reach_{[0,t)} (\eta_-,\ka)$ if $\eta_- \in \Contr_{[0,t)} ( \eta_+,\ka)$.
A state $\eta_1 $ is said to be \emph{reachable from} $\eta_2$ 
(resp., \emph{controllable to} $\eta_2$)
if it belongs to the set $\Reach_{[0,+\infty)} (\eta_2) $ (resp., $\Contr_{[0,+\infty)} (\eta_2)$).

A feasible control $\vep \in \F_{s_-}$ is said to be a \emph{minimum-time control}
from $x(s_-)$ to $\eta_+$ if $x(s_- + t ) = \eta_+$ in the minimal possible time
$t=T^{\min}_\ka (x(s_-),\eta_+)$, which can be defined by 
\begin{align} \label{e:Tmin}
T^{\min}_\ka (x(s_-),\eta_+) := \inf \{t \ge 0 \ : \ x(s_-) \in \Contr_{\{t\}} (\eta_+,\ka) \} .
\end{align}
If  $\eta_-$ is not controllable to $\eta_+$, we put by definition 
$T^{\min}_\ka (\eta_-,\eta_+) := +\infty$.

For $ (\eta_- ; \eta_+) \in \wh \CC^2$, 
we say that $\ka \in \CC \setminus \{0\}$ is an $ (\eta_- ; \eta_+)$-eigenvalue of $\vep (\cdot)$
on an interval $(s_-,s_+)$ if equation (\ref{e:ep}) has a nontrivial solution $y$ satisfying 
the two boundary conditions 
\begin{gather}\label{e:BCxpm} 
\frac{y' (s_\pm)}{\ii \ka y(s_\pm) }  =  \eta_\pm  \quad
\text{(which are understood as $y(s_\pm) = 0$ when $\eta_\pm =\infty$)} .
\end{gather} 
We denote the set of $ (\eta_- ; \eta_+)$-eigenvalues by $\Si_{\eta_-,\eta_+}^{ s_-, s_+} (\vep) $.

One sees that the following statements are equivalent for $t>0$:
\begin{itemize}
\item[(C1)] $\ka \in \Si_{\eta_-,\eta_+}^{ s_-, s_+} (\vep)$, where $s_+ = s_- +t$;
\item[(C2)] $\ka \neq 0$ and 
the control $\vep (\cdot)$ steers system (\ref{e:R})-(\ref{e:R2})
from $\eta_-$ to $\eta_+$ in time $t$.
\end{itemize}

\begin{prop} \label{p:eqv} For a fixed $\ka \neq 0$, 
the following minimization problems are equivalent.

\item[(i)] Problem (\ref{e:argminF}) is equivalent to the minimum-time control  of 
(\ref{e:R})-(\ref{e:R2}) 
from $(-\n_\infty)$ to $\n_\infty$
in the sense that 
$\Lr_{\min} (\ka) = T^{\min}_\ka (-\n_\infty,\n_\infty)$
and the two families of $L^\infty (s_-^{\vep},s_+^{\vep})$ functions 
produced by the restriction of minimizers $\vep (\cdot)$ for these two problems 
to the interval $(s_-^{\vep},s_+^{\vep})$ coincide after a suitable shift of $s_-$ to $s_-^{\vep}$. 

\item[(ii)] The odd-mode (even-mode) problem in (\ref{e:argminOddEven}) is equivalent to 
the minimum-time control of (\ref{e:R})-(\ref{e:R2}) 
from $\infty$ (resp., from $0$) to $\n_\infty$  in the sense that 
$\Lr_{\min}^{\odd} (\ka) = 2 T^{\min}_\ka (\infty, \n_\infty)$ 
(resp., $\Lr_{\min}^{\even} (\ka) = 2 T^{\min}_\ka (0,\n_\infty)$) and
their minimizers $\vep (\cdot)$ can be identified by a suitable shift of $s_-$ to $s_-^{\vep}$.
\end{prop}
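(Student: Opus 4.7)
My plan is to reduce both parts to the equivalence (C1)$\Leftrightarrow$(C2) recorded just before the proposition, via the change of variables $x = y'/(\ii\ka y)$. This identification converts the outgoing boundary conditions (\ref{e:BCpm}) into the endpoint constraints $x(s_\pm) = \pm\n_\infty$, and converts odd (even) symmetry of a mode $y$ into $x(0) = \infty$ (resp.\ $x(0) = 0$). Since system (\ref{e:R})--(\ref{e:R2}) is autonomous in $s$, time origins may be shifted freely, which is what will absorb the shift $s_-^{\vep} \mapsto s_-$ appearing in the statement.

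For part (i), I would prove the two inequalities separately. Given $\vep \in \F$ with $\ka \in \Si(\vep)$, I shift the interval $[s_-^{\vep},s_+^{\vep}]$ to start at an arbitrary $s_-$ and apply (C1)$\Rightarrow$(C2) to the resonant mode $y$: this produces a feasible control on $[s_-, s_- + \Lr(\vep)]$ steering $-\n_\infty$ to $\n_\infty$, so $T^{\min}_\ka(-\n_\infty,\n_\infty) \le \Lr(\vep)$, which on taking infima gives $T^{\min}_\ka \le \Lr_{\min}(\ka)$. Conversely, given a control $\vep \in \F_{s_-}$ steering $-\n_\infty$ to $\n_\infty$ in time $t$, I extend it globally by setting $\tilde\vep := \n_\infty^2$ on $\RR \setminus [s_-, s_- + t]$; applying (C2)$\Rightarrow$(C1), together with Remark \ref{r:extension} to propagate the outgoing condition through the homogeneous outer medium, yields $\ka \in \Si(\tilde\vep)$ with $\Lr(\tilde\vep) \le t$, hence $\Lr_{\min}(\ka) \le T^{\min}_\ka$. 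The same explicit constructions identify the minimizer families modulo the shift.

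For part (ii), I use reflection across $s = 0$. If $\vep \in \F^\sym$ carries an odd-mode resonance $\ka$ with mode $y$, then $y(0) = 0$, so $x(0) = \infty$, and restricting $\vep$ to $[0,\Lh]$ with $\Lh = \Lr(\vep)/2$ gives a control steering $\infty$ to $\n_\infty$ in time $\Lh$; hence $T^{\min}_\ka(\infty,\n_\infty) \le \Lr_{\min}^\odd(\ka)/2$. Conversely, given a control $\vep$ on $[0,t]$ steering $\infty$ to $\n_\infty$, I reflect by $\vep(-s) := \vep(s)$ on $[-t,0]$ and extend by $\n_\infty^2$ outside $[-t,t]$; the odd extension of $y$ solves (\ref{e:ep}) on $[-t,t]$ because the coefficient is even in $s$, and the outgoing condition at $-t$ follows from oddness combined with the one at $+t$. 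The even-mode case is strictly parallel, with $y'(0) = 0$ in place of $y(0) = 0$, corresponding to the target state $x(0) = 0$.

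The one step I expect to require care is verifying that the reflected extension of $y$ produces a genuine $\Ws^{2,\infty}_\loc$ solution of (\ref{e:ep}), i.e.\ that $y$ and $y'$ match continuously at $s = 0$. For odd extension this is automatic: oddness forces $y(0) = 0$ and $y'$ becomes an even $\Ws^{1,\infty}_\loc$ function, hence continuous at $0$. For even extension, the matching of $y'$ demands $y'(0) = 0$, which is precisely the constraint $x(0) = 0$ built into the even-mode minimum-time problem, so the reformulation comes equipped with exactly what is needed. Once this identification is made, everything else is an unwinding of definitions together with the autonomy of the control system.
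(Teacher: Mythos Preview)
Your proposal is correct and follows essentially the same route as the paper: the paper's proof consists of the three identities $\Si(\vep) = \Si_{-\n_\infty,\n_\infty}^{s_-^\vep, s_+^\vep}(\vep)$ for $\vep\in\F$ and $\Si^{\odd}(\vep) = \Si_{\infty,\n_\infty}^{0,s_+^\vep}(\vep)$, $\Si^{\even}(\vep) = \Si_{0,\n_\infty}^{0,s_+^\vep}(\vep)$ for $\vep\in\F^\sym$, together with the equivalence (C1)$\Leftrightarrow$(C2). Your argument unpacks these identities explicitly (the extension by $\n_\infty^2$ in part (i), the reflection and odd/even extension of $y$ in part (ii)), which is exactly what lies behind the paper's one-line statement; there is no substantive difference in strategy.
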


\begin{figure}[h]
\begin{minipage}[t]{0.495\linewidth}
\centering \includegraphics[width=\textwidth]{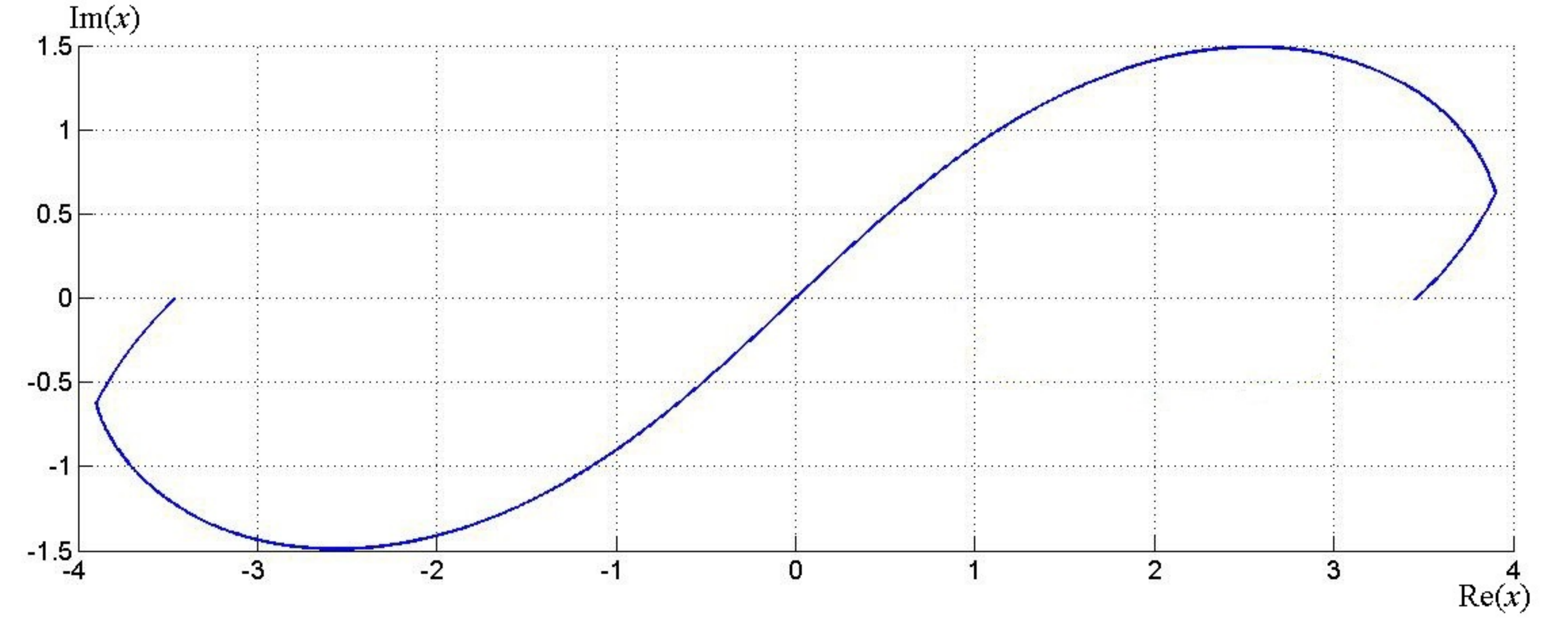}
\footnotesize \textbf{a)}
\end{minipage}
\begin{minipage}[t]{0.495\linewidth}
\centering
\includegraphics[width=\textwidth]{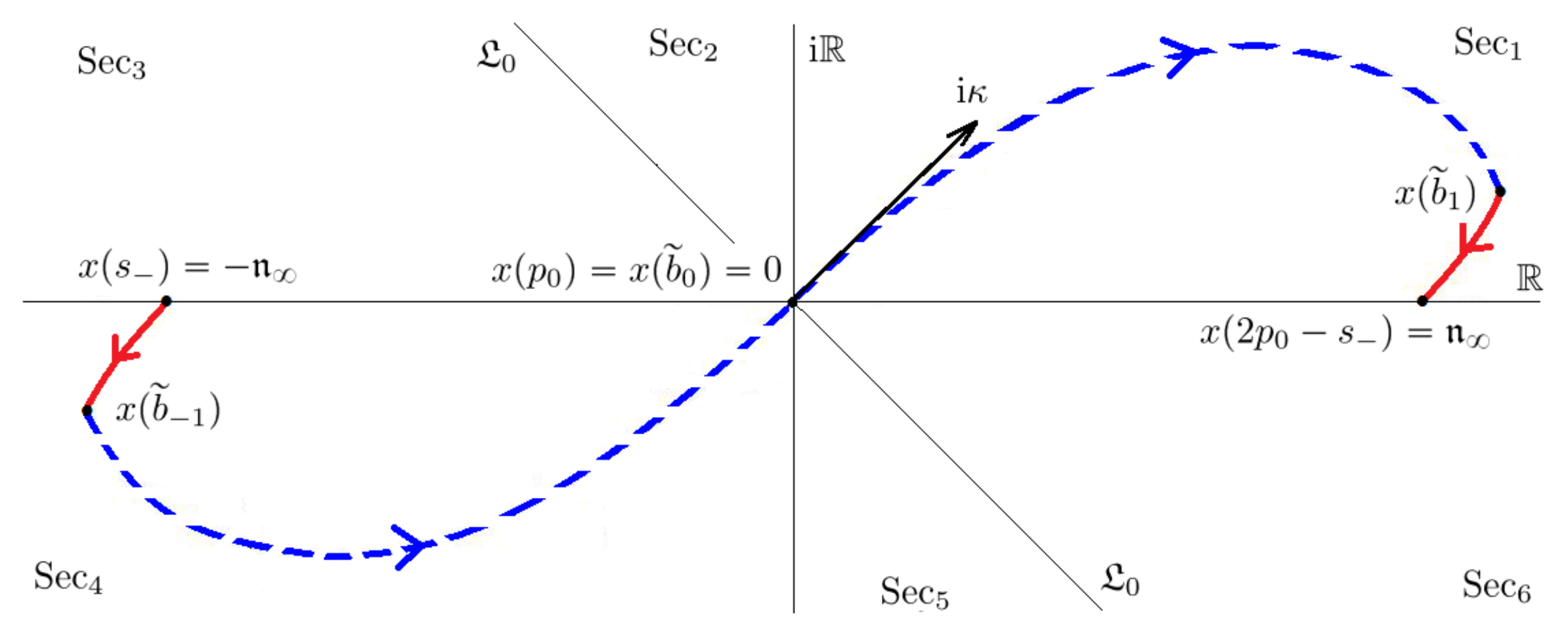}
\footnotesize \textbf{b)}
\end{minipage}
\vspace{-2ex}
\caption{\footnotesize \textbf{(a)} The computed trajectory of the $x$-extremal corresponding to an optimizer $\vep (\cdot)$ of the even-mode minimal length problem (\ref{e:argminOddEven}) with the parameters $\ka = 1  - \ii$, $\n_1 = 1$, $\n_2 = \n_\infty = 3.46$. The optimizer $\vep (\cdot)$ consists of 3 layers and has two switch points $\wt b_{\pm1}$ (see Sections \ref{s:bang-bang}-\ref{s:Corol}; for the computation method see  Section \ref{s:N}).
\textbf{(b)} The drawing based on (a). The part of trajectory $x[[s_-,\p_0]]$ (the part $x[[\p_0,2\p_0-s_-]]$) corresponds to the minimum-time control from $(-\n_\infty)$ to $0$ (resp., from $0$ to $\n_\infty$).  The two parts of $x$-trajectory corresponding to the two edge layers $I^\pm$ with $\vep(s) = \ep_\edge = \n_1^2$ are given by the red solid curves, the part corresponding to the layer $I^\cent$ with $\vep (s) = \ep_\cent = \n_2^2$ by the blue dashed curve.  The `no-return line of turning points' $\Li_0$ defined in Section \ref{ss:Monotonicity} and the lines $\RR$, $\ii \RR$, separate $\CC$ to the six sectors $\Sec_j$, $j=1,\dots,6$, important for the extremal synthesis. The trajectory has the structure (\ref{e:NS4}) with $m_1=m_2=0$. \label{f:x}}
\end{figure}

The proposition follows from the following identities: 
$\Si (\vep) = \Si_{-\n_\infty,\n_\infty}^{ s_-^{\vep}, s_+^{\vep}} (\vep)$
\ for $\vep \in \F$;
$\Si^{\odd} (\vep) = \Si_{-\n_\infty, \infty}^{s_-^{\vep},0 } (\vep) = \Si_{\infty,\n_\infty}^{0,s_+^{\vep}} (\vep)$ 
 and 
$\Si^{\even} (\vep) = \Si_{-\n_\infty, 0}^{s_-^{\vep} , 0} (\vep) 
 = \Si_{0, \n_\infty}^{0,s_+^{\vep}} (\vep)  $
\  for $\vep \in \F^\sym$. 


Assuming that there exists a minimum-time control $\vep$ from $x(s_-) = \eta_- $ 
to $\eta_+$,  one sees that the trajectory of the associated optimal solution $x$ of 
 (\ref{e:R}), (\ref{e:R2}) has no self-intersections in the sense that
\begin{equation} \label{e:noSelfint}
\text{$x (s) \neq x (\wt s)$  if $s \neq \wt s$ and 
$s,\wt s \in \left( s_-,s_- + T^{\min}_\ka (\eta_-,\eta_+) \right)$.} 
\end{equation}

\subsection{Pareto frontier of resonances of minimal $|\ka|$}
\label{ss:Min|om|}

Recall that $\Arg_0 (\cdot)$ is a continuous in $\CC \setminus \overline{\RR_-}$ branch of 
multi-valued complex argument $\Arg (\cdot)$ fixed by $\Arg_0 1 = 0$.


In this subsection, we reformulate the problems of length minimization (\ref{e:argminF})-(\ref{e:argminOddEven})
and, more generally, of the minimum-time control of (\ref{e:R})-(\ref{e:R2}) 
as the problem of minimization of modulus $|\ka|$ of an $(\eta_- ; \eta_+)$-eigenvalue $\ka$ for a given  
complex argument $\ga = \Arg_0 k$ over 
the family 
\begin{gather}
\F_{s_-,s_+} := \{ \vep (\cdot) \in \Ls^\infty_\RR (s_-,s_+) \ : \  
\n^2_1 \le \vep (s) \le \n^2_2 \text{ for $s \in (s_-,s_+)$}\} ,
\end{gather}
where the finite interval $(s_-,s_+)$ with $s_-<s_+$ and the tuple $(\eta_-;\eta_+ )$ are fixed.

The main tool for this reformulation is the natural scaling of eigenproblem 
(\ref{e:ep}), (\ref{e:BCxpm}):
\begin{gather} \label{e:scalingSi}
\text{if $\ka \in \Si_{\eta_-,\eta_+ }^{s_-,s_+} (\vep)$ and 
$\wt \vep (s)= \vep (\tau s)$ for $\tau \in \RR_+$, then $\tau \ka \in \Si_{\eta_-,\eta_+ }^{\tau^{-1} s_-, \tau^{-1}s_+ } (\wt \vep)$.
}
\end{gather}


Let us introduce the set 
\begin{equation} \label{e:Si[]}
 \Si_{\eta_-,\eta_+}^{s_-,s_+} [\F_{s_-}] := 
\bigcup_{\vep \in \F_{s_-,s_+} } \Si_{\eta_-,\eta_+}^{s_-,s_+} (\vep) 
\end{equation}
of \emph{achievable  $(\eta_- ; \eta_+)$-eigenvalues} (over $\F_{s_-,s_+}$).
We define the set of \emph{achievable $(\eta_- ; \eta_+)$-arguments} by 
\[
\Arg_0 \Si_{\eta_-,\eta_+}^{s_-,s_+} [\F_{s_-}] := \{ \Arg_0 \ka \ :\ \ka \in \Si_{\eta_-,\eta_+ }^{s_-,s_+} (\vep) 
\text{ for certain } \vep \in \F_{s_-,s_+} \},
\] 
and 
the \emph{minimal modulus} $\rho_{\min}  (\ga) = \rho_{\min}  (\ga, \eta_-,\eta_+) $ by
\begin{equation} \label{e:RhoMin}
\rho_{\min} (\ga) := \inf \{ |\ka| \ : \ \ka \in \Si_{\eta_-,\eta_+}^{s_-,s_+} [\F_{s_-}] \text{ and }
\Arg_0 \ka = \ga \}  .
\end{equation}
The function $\rho_{\min}$ takes values in $[0,+\infty]$ 
and depends on $\ga$, $\eta_\pm$, and $s_+-s_-$.
We omit $s_\pm$ and sometimes $\eta_\pm $ from the list of variables of $\rho_{\min}$
when they are fixed.

If $k_\ga^{\min} :=\ee^{\ii \ga} \rho_{\min} (\ga)$ belongs to 
$ \Si^{\eta_-,\eta_+}_{s_-,s_+} (\vep^{\min}_{\ga}) $
for a certain $\vep^{\min}_{\ga} (\cdot) \in \F_{s_-,s_+}$, i.e., if minimum 
is achieved in (\ref{e:RhoMin}), 
then we say that
\begin{align} \label{e:ep^min_ga}
\text{$\vep^{\min}_{\ga} (\cdot)$ is a 
resonator of minimal modulus $|\ka|$ for (the complex argument) $\ga$.}
\end{align}

The set $\Pa_{\md}^{\eta_-,\eta_+} :=\{ \ee^{\ii \ga} \rho_{\min} (\ga) \ : \, \ga \in \Arg_0 \Si_{\eta_-,\eta_+}^{s_-,s_+} [\F_{s_-,s_+}]\}$ 
forms the \emph{Pareto optimal 
frontier} for the problem of minimization of the 
modulus $|\ka|$ of an $(\eta_- ; \eta_+)$-eigenvalue $\ka$ over $\F_{s_-,s_+}$
(see Fig. \ref{f:Par} (a)).

The minimum-time control problem for the system (\ref{e:R})-(\ref{e:R2})
and the problem of finding of resonators of minimal modulus 
for given $\ga$ over $\F_{s_-,s_+} $ are equivalent in the sense of the following theorem,
which includes also a result on the existence of optimizers.

\begin{thm} \label{t:ExOptContr}
Let $\eta_- \neq \eta_+$, $\ka \neq 0$, and  $\ga =\Arg_0 \ka$. 
Then the following statements are equivalent:
\item[(i)] $\eta_- \in \Contr_{[0,+\infty)} (\eta_+,\ka)$, i.e.,  (\ref{e:R})-(\ref{e:R2}) is controllable 
from $\eta_-$ to $\eta_+$;
\item[(ii)] there exists a minimum-time control $\vep (\cdot) \in \F_{s_-}$ 
for  (\ref{e:R})-(\ref{e:R2}) that steers 
$\eta_-$ to $  \eta_+$ in the minimal time $ T^{\min}_{\ka} (\eta_-,\eta_+)$.
\item[(iii)] $\ga \in \Arg_0 \Si^{s_-,s_+}_{\eta_-,\eta_+} [\F_{s_-,s_+}]$, i.e., the complex argument 
$\ga$ is achievable over $\F_{s_-,s_+}$;
\item[(iv)]  there exist at least one resonator $\vep_\ga^{\min} (\cdot)$ of minimal modulus  
for $\ga$ over $\F_{s_-,s_+}$.

If statements (i)-(iv) hold true, then 
\begin{equation} \label{e:Tmin=rho}
T^{\min}_\ka (\eta_-,\eta_+) = \frac{(s_+-s_-) \rho_{\min} (\ga , \eta_-, \eta_+)}{|\ka|} .
\end{equation}
If, additionally, $s_\pm$ are chosen so that $s_+-s_-=T^{\min}_\ka (-\eta_-,\eta_+)$, then 
the families of minimum-time controls $\vep (\cdot) $ and of resonators of minimal
modulus $\vep_\ga^{\min} (\cdot)$ coincide. 
\end{thm}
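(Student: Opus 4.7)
The whole theorem reduces to one geometric fact about system (\ref{e:R})--(\ref{e:R2}): the scaling relation (\ref{e:scalingSi}), combined with translation invariance in $s$, provides a bijection between trajectories on an arbitrary time-interval of length $t>0$ and those on the prescribed interval of length $s_+-s_-$, under which the Riccati parameter $\ka$ is multiplied by $\tau:=t/(s_+-s_-)>0$, while $\Arg_0\ka$ is preserved. Via the equivalence (C1)$\Leftrightarrow$(C2) recorded just before Proposition \ref{p:eqv}, the time-$t$ drivers $\vep$ from $\eta_-$ to $\eta_+$ for the spectral value $\ka$ (statements (i)--(ii)) are identified with the members of $\Si^{s_-,s_+}_{\eta_-,\eta_+}(\wt\vep)$ of argument $\ga$ and modulus $|\tau\ka|$ (statements (iii)--(iv)); this matches the two optimization problems and yields (\ref{e:Tmin=rho}) at once.

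\emph{Equivalences.} The implications (ii)$\Rightarrow$(i) and (iv)$\Rightarrow$(iii) are immediate from the definitions. For (i)$\Rightarrow$(iii) pick any feasible $\vep$ and $t>0$ driving $\eta_-$ to $\eta_+$, so that $\ka\in\Si^{s_-,s_-+t}_{\eta_-,\eta_+}(\vep)$; a translation and the rescaling $s\mapsto \tau s$ with $\tau=t/(s_+-s_-)$ produce $\wt\vep\in\F_{s_-,s_+}$ with $\tau\ka\in\Si^{s_-,s_+}_{\eta_-,\eta_+}(\wt\vep)$ and $\Arg_0(\tau\ka)=\ga$, proving (iii). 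Reversing the construction with $\tau^{-1}$ yields (iii)$\Rightarrow$(i). Taking infima on both sides of the identity $|\tau\ka|=\tau|\ka|=|\ka|\cdot t/(s_+-s_-)$ then gives (\ref{e:Tmin=rho}) and simultaneously reads off the bijection between minimum-time controls and resonators of minimal modulus; this is (ii)$\Leftrightarrow$(iv). Specialising to $s_+-s_-=T^{\min}_\ka(\eta_-,\eta_+)$ forces $\tau=1$, so the two families of optimizers coincide on the same interval, as asserted.

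\emph{Existence.} It remains to establish (iii)$\Rightarrow$(iv), i.e.\ that the infimum in (\ref{e:RhoMin}) is attained. Take a minimizing sequence $(\vep_n)\subset\F_{s_-,s_+}$ with $\ka_n:=\ee^{\ii\ga}|\ka_n|\in\Si^{s_-,s_+}_{\eta_-,\eta_+}(\vep_n)$ and $|\ka_n|\downarrow\rho_{\min}(\ga)$. Since $\F_{s_-,s_+}$ is a bounded weak-$*$-closed subset of $L^\infty_\RR(s_-,s_+)$, a subsequence converges weak-$*$ to some $\vep_\infty\in\F_{s_-,s_+}$, and the standard closedness argument of \cite{HS86} (adapted to the generalized boundary data $\eta_\pm$ as in \cite{K13,KLV17}) yields $\ka_\infty=\ee^{\ii\ga}\rho_{\min}(\ga)\in\Si^{s_-,s_+}_{\eta_-,\eta_+}(\vep_\infty)$. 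The limit is nonzero: $\eta_-\ne\eta_+$ forces a strictly positive minimum transition time, so the identity already derived gives $\rho_{\min}(\ga)>0$. The main obstacle is this closedness step in the degenerate cases $\eta_\pm\in\{0,\infty\}$, where the boundary conditions (\ref{e:BCxpm}) collapse to $y(s_\pm)=0$ and the standard characterization of eigenvalues as zeros of a Jost-type analytic function must be slightly reworked; once that is in place, the entire theorem follows from the scaling bijection of the first paragraph.
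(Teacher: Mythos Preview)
Your proof is correct and follows essentially the same route as the paper: the scaling (\ref{e:scalingSi}) together with (C1)$\Leftrightarrow$(C2) gives the equivalences (i)$\Leftrightarrow$(iii), (ii)$\Leftrightarrow$(iv), formula (\ref{e:Tmin=rho}), and the coincidence of optimizer families, while a compactness argument supplies the missing existence step. The only organizational difference is that the paper invokes Filippov's theorem directly on the control side for (i)$\Rightarrow$(ii), whereas you transfer to the eigenvalue side and run the weak-$*$ compactness argument there for (iii)$\Rightarrow$(iv); these are equivalent manifestations of the same sequential-compactness principle, and your concern about the boundary data $\eta_\pm\in\{0,\infty\}$ is handled in \cite{KLV17} (and is in any case bypassed by the Filippov route, which needs only compactness of the control set and convexity of the velocity set on the compact state space $\wh\CC$).
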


\begin{proof}[Proof of Theorem \ref{t:ExOptContr}] 
The statement (i) $\Leftrightarrow$ (ii) is a standard existence result 
(see, e.g., Filippov's theorem in \cite{AS13}). 
 Equivalences (i) $\Leftrightarrow$ (iii), (ii) $\Leftrightarrow$ (iv), 
formula (\ref{e:Tmin=rho}), and the last statement of the theorem 
follow from the scaling (\ref{e:scalingSi}) and equivalence (C1) $\Leftrightarrow$ (C2) 
of Section \ref{ss:MinTime}. 
\end{proof}


\subsection{HJB equations for optimal resonators}
\label{ss:HJB}

Let $\eta_\pm \in \wh \CC$ and $\ka \in \CC \setminus \{0\}$ be fixed. Let us define for all states 
$x \in \wh \CC$ the \emph{backward  
value function} 
$V^-_{\eta_+} (x) := T^{\min}_\ka (x,\eta_+) $, which, according to (\ref{e:Tmin}),
takes values in $[0,+\infty]$ and, in the standard terminology \cite{BC08,CV03} is called simply the value function.
Recall that the domain of $V^-_{\eta_+} (\cdot)$ is the 
set 
\[
\dom V^-_{\eta_+} (\cdot) := \{ x \in \wh \CC \ : \ V^-_{\eta_+} (x)<+\infty\} = \Contr_{[0,+\infty)} (\eta_+,\ka) .
\]
Similarly, we define the \emph{forward value function} $V^+_{\eta_-} (x) := T^{\min}_\ka (\eta_-,x) $, $x \in \wh \CC$.
  
The Hamilton-Jacobi-Bellman (HJB) equation formally associated 
with the backward value function for the minimum time problem of Section \ref{ss:MinTime} can be written as (\ref{e:HJB}). To pose the corresponding boundary value problem rigorously on the 2-D real manifold $\wh \CC$ 
we use the approach of \cite{CV03}.

We assume $V: \wh \CC \to (-\infty,+\infty]$ and use 
the notation $x=x_1+\ii x_2 = (x_1;x_2) \in \RR^2$, 
\[
f (x,\ep) = f_1 (x,\ep) +\ii f_2 (x,\ep)= (f_1(x,\ep);f_2(x,\ep)) = (\re f (x,\ep);\im f (x,\ep)). 
\]
The similar notation is used for the $\RR^2$-coordinates of $\wt x = -1/x$ and $\wt f (\wt x)$. It is enough to consider only the minimal atlas for $\wh \CC$ that 
consists of the cover $\{\CC , \wh \CC\setminus\{0\} \}$ and the associated charts  
$\vphi: x \mapsto x=(x_1;x_2)^\top $, $\wt \vphi: x \mapsto \wt x =  (\wt x_1; \wt x_2)^\top  $ onto $\CC = \RR^2$.

We write (\ref{e:R}), (\ref{e:R2}) as the differential inclusion $ x'  \in F_\ka (x)$ on $\wh \CC$ defining the multifunction $ F_\ka $, which maps each $x \in \wh \CC$ to a subset $F_\ka (x)$ of the tangent space 
$T_x$ via its representation in the local coordinates
\begin{align*}
\vphi_* F_\ka (x) & = 
\{  f_1 (x,\ep) \pa_{x_1}+  f_2 (x,\ep) \pa_{x_2} \ : \ 
\ep \in [\n_1^2,\n_2^2] \} , \quad x \in \CC , \\
\wt \vphi_* F_\ka (\wt x) & = 
\{  \wt f_1 (\wt x,\ep) \pa_{\wt x_1} +  \wt f_2 (\wt x,\ep) \pa_{\wt x_2} \ : \ 
\ep \in [\n_1^2,\n_2^2] \} , \quad x = -\wt x^{-1} \in \wh \CC \setminus \{0\} .
\end{align*}
Note that $f$ and $\wt f$ depend also on the spectral parameter $\ka$ and that this dependence for $F_\ka$
is shown explicitly as the lower index.

Following \cite{CV03},  we say that a cotangent vector  $\la \in T^*_x \wh \CC$ is a proximal subgradient of $V$ at $x$ if $x \in \dom V$ and there exists a $C^2$-function $m$ defined in a certain neighborhood 
$\Om \subset \wh \CC$ of $x$ that satisfies $\la = \dd m (x)$ and $V(z)-V(x) \ge m(z)-m(x)$ for all $z \in \Om$.

The $\RR$-valued Hamiltonian $\H_\ka (x,\la)=\sup_{v \in F_\ka (x)} \< v , \la \>$ is defined 
for all $x \in \wh \CC$ and $\la \in T^*_x \wh \CC$. 
A lower semicontinuous function $V:\wh \CC \to (-\infty,+\infty]$ is called a \emph{proximal solution} 
to the HJB boundary value problem 
\begin{align}
& 0 =V(\eta_+),  \qquad 0  =1-\H_\ka (x,-\dd V(x) ) , \quad x \in \wh \CC \setminus \{\eta_+\}, \label{e:HJBwhC} 
\end{align}
if the following three conditions hold: (i) $0 = 1-\H_\ka (x,-\la)$ for all $\la \in \pa^{\Pd} V (x)$ and all $x \neq \eta_+$,
(ii) $0 \le 1-\H_\ka (\eta_+,-\la)$ for all $\la \in \pa^{\Pd} V (\eta_+)$, and (iii) $0= V(\eta_+)$,
where $\pa^{\Pd} V (x) \subset T^*_x \wh \CC$ is the set of all proximal subgradients 
of $V$ at $x$ \cite{CLSW95,CV03}. 


Part (i) of the next result provides a rigorous form to the statement 
that the HJB equation (\ref{e:HJB}) corresponds to the problem (\ref{e:argminF})
of the minimization of resonator length.

\begin{cor} \label{c:V}
Let $\ka \in \CC \setminus \{0\}$. Then the function $T^{\min}_\ka (\cdot,\cdot)$ satisfies the following collection of HJB boundary value problems:
\item[(i)] For each $\eta_+ \in \wh \CC$, the function 
$V^-_{\eta_+} (\cdot) := T^{\min}_\ka (\cdot,\eta_+)$ is 
lower semicontinuous and is a unique proximal solution to (\ref{e:HJBwhC}).
\item[(ii)] For each $\eta_- \in \wh \CC$, the function 
$V^+_{ \eta_-} (\cdot) := T^{\min}_\ka (\eta_-,\cdot) $ is 
lower semicontinuous and is a unique proximal solution to 
\begin{align*}
& 0 =V_{\eta_-}^+ (\eta_-),  \qquad   0  =1-\H_{-\ka} (x,-\dd V^+_{ \eta_- } (x) ) , \qquad   x \in \wh \CC \setminus \{\eta_-\}. \label{e:HJBfrom} 
\end{align*}
\end{cor}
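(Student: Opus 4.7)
The plan is to apply the manifold version of the minimum-time HJB characterization developed in \cite{CV03}, combined with a time-reversal symmetry argument to derive part (ii) from part (i). Both value functions are defined on the same compact 2-D manifold $\wh\CC$, so it suffices to establish part (i) in the coordinate-free framework and then to observe that the change $\ka \mapsto -\ka$ interchanges the forward and backward problems.

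For part (i), I would first verify the standing hypotheses of the relevant theorem from \cite{CV03}: in the chart $\vphi$ the vector field $f(x,\ep)=\ii\ka(-x^2+\ep)$, and in the chart $\wt\vphi$ the vector field $\wt f(\wt x,\ep)=\ii\ka(-1+\ep\wt x^2)$, are polynomial in the state and affine in the control parameter $\ep\in[\n_1^2,\n_2^2]$. Hence $F_\ka(x)$ has compact convex values for every $x\in\wh\CC$, and $F_\ka$ is a smooth coordinate-free multifunction on $\wh\CC$; the transition diffeomorphism $x\mapsto -1/x$ pushes $F_\ka$ consistently between the two charts, so $x=\infty$ is an ordinary point from the intrinsic viewpoint. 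Lower semicontinuity of $V^-_{\eta_+}$ is then an immediate consequence of Filippov's compactness theorem (the same argument used in the proof of Theorem \ref{t:ExOptContr}), with the convention $V^-_{\eta_+}(x)=+\infty$ outside $\Contr_{[0,+\infty)}(\eta_+,\ka)$. The proximal characterization follows from the standard result that, under these hypotheses, the minimum-time function to a closed target is the unique lower semicontinuous proximal solution of the corresponding HJB boundary value problem; the intrinsic definitions of the Hamiltonian $\H_\ka(x,\la)=\sup_{v\in F_\ka(x)}\<v,\la\>$ on $T^*\wh\CC$ and of proximal subgradients $\pa^\Pd V(x)\subset T^*_x\wh\CC$ make the manifold theorem of \cite{CV03} directly applicable.

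For part (ii), I would exploit the time-reversal symmetry of (\ref{e:R})-(\ref{e:R2}). If $x(\cdot)$ satisfies $x'=f(x,\vep)$ on $[s_-,s_+]$ with parameter $\ka$, then $\tilde x(s):=x(s_-+s_+-s)$ satisfies $\tilde x'(s)=-f(\tilde x(s),\tilde\vep(s))=\ii(-\ka)(-\tilde x^2(s)+\tilde\vep(s))$, which is the same control system with $\ka$ replaced by $-\ka$ and $\tilde\vep(s):=\vep(s_-+s_+-s)$. Consequently
\[
V^+_{\eta_-}(x;\ka)=T^{\min}_\ka(\eta_-,x)=T^{\min}_{-\ka}(x,\eta_-)=V^-_{\eta_-}(x;-\ka),
\]
and part (ii) reduces to part (i) applied with $\ka$ replaced by $-\ka$ and $\eta_+$ replaced by $\eta_-$. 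One also checks directly that $F_{-\ka}(x)=-F_\ka(x)$, so that $\H_{-\ka}(x,-\la)=\H_\ka(x,\la)$; this confirms that the HJB equation in part (ii), with the Hamiltonian $\H_{-\ka}$, is the correct forward-time HJB for $V^+_{\eta_-}$ relative to the original parameter $\ka$.

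The main obstacle will be a careful verification that the hypotheses of the manifold HJB theorem from \cite{CV03} hold globally on $\wh\CC$, particularly that the proximal calculus and dynamic programming principle transform consistently across the overlap of the two charts at $x=0$ and $x=\infty$. Global controllability of (\ref{e:R})-(\ref{e:R2}) from Section \ref{s:existence}, together with compactness of $\wh\CC$, should suffice to guarantee both the existence and uniqueness of the proximal solution and the appropriate behavior of $V^\pm$ at the boundary of its effective domain.
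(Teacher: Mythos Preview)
Your proposal is correct and follows essentially the same route as the paper: part (i) is obtained by a direct application of the manifold minimum-time HJB theorem from \cite{CV03}, and part (ii) by the time-reversal symmetry $T^{\min}_\ka(\eta_-,x)=T^{\min}_{-\ka}(x,\eta_-)$ (this is exactly the equivalence (S0)$\Leftrightarrow$(S2) of Section~\ref{ss:Monotonicity}) which reduces (ii) to (i) for the parameter $-\ka$. The only point the paper adds and you omit is an explicit appeal to the existence result of Theorem~\ref{t:ExOptContr}, which guarantees that the infimum in the definition of $T^{\min}_\ka$ is a minimum and hence that the forward and backward formulations of the value function coincide; your time-reversal identity already encodes this, so the omission is harmless. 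Your final remark that global controllability is needed for the HJB characterization is inaccurate---the \cite{CV03} result allows the value function to take the value $+\infty$---but since you frame this only as a potential obstacle rather than a required hypothesis, it does not affect the correctness of the argument.
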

\begin{proof} (i) follows from \cite{CV03}. (ii) follows from the combination of \cite{CV03} with the time-reversal symmetry of (\ref{e:R})-(\ref{e:R2}) (cf. (S0), (S2) in Section \ref{ss:Monotonicity}) and the existence result of Theorem \ref{t:ExOptContr} (the latter implies  
$ T^{\min}_\ka (\eta_-,\eta_+) = \min \{t \ge 0 \ : \ \eta_+ \in \Reach_{\{t\}} (\eta_-,\ka) \}$, cf. (\ref{e:Tmin})). 
\end{proof}

\section{Controllability and existence of minimizers}
\label{s:existence}

In the preparatory Subsection \ref{ss:Monotonicity}, 
we consider symmetries of the system  (\ref{e:R})-(\ref{e:R2}) and 
give technical Lemma \ref{l:MonotFunc} about lines of no return, which is used in  subsequent sections. Theorem \ref{t:GlContr} gives the global controllability result needed for the study of optimization problems (\ref{e:argminF}), (\ref{e:argminOddEven}) in the case 
$\n_1 \le \n_\infty \le \n_2$. The dependence of small time local controllability (STLC)
 for (\ref{e:R})-(\ref{e:R2}) on the target point $\eta_+$ can be easily obtained 
 with the use of the Kalman test and the conditions of \cite{S83} on equilibrium points.
Namely, (\ref{e:R})-(\ref{e:R2}) is STLC to $\eta_+$ iff $\eta_+ \in (-\n_1,-\n_2) \cup (\n_1,\n_2)$. Subsection \ref{ss:STLC} studies the uniform in $\ka$ version of STL-controllability, which is crucial for the proof of Theorem \ref{t:MinDec1}.
 
The first application of these results is the following existence statement, 
which immediately follows
from Theorem \ref{t:ExOptContr} and the controllability result of Theorem \ref{t:GlContr}.

\begin{cor}  \label{c:exist}
Let $\ka \in \CC_- \setminus \ii \RR$ and $\n_1 \le \n_\infty \le \n_2 $.
Then the following four sets 
\begin{equation} \label{e:argmin4prob} 
\argmin_{\substack{\vep \in \F \\ \ka \in \Si (\vep)}} \Lr (\vep) , \quad \argmin_{\substack{\vep \in \F^\sym \\ \ka \in \Si (\vep)}} \Lr (\vep) , \quad \argmin_{\substack{ \vep \in \F^\sym \\ \ka \in \Si^{\odd (\even)} (\vep)}} \Lr (\vep) 
\quad \text{are nonempty.}
\end{equation}

\end{cor}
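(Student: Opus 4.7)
The plan is to convert each length minimization problem into a minimum-time control problem on the Riemann sphere $\wh\CC$ and then to combine the global controllability result of Theorem~\ref{t:GlContr} with the existence portion of Theorem~\ref{t:ExOptContr} (the equivalence (i)$\Leftrightarrow$(ii)).

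By Proposition~\ref{p:eqv}, the first set in (\ref{e:argmin4prob}) is nonempty iff there is a minimum-time control for (\ref{e:R})-(\ref{e:R2}) steering $\eta_-=-\n_\infty$ to $\eta_+=\n_\infty$, while the odd-mode (resp.\ even-mode) half of (\ref{e:argminOddEven}) is nonempty iff such a control exists from $\infty$ (resp.\ from $0$) to $\n_\infty$. I would invoke Theorem~\ref{t:GlContr} to conclude that, under the assumption $\n_1 \le \n_\infty \le \n_2$ and for every $\ka \in \CC_-\setminus\ii\RR$, each of the three initial states $-\n_\infty$, $\infty$, $0$ lies in $\Contr_{[0,+\infty)}(\n_\infty,\ka)$, i.e.\ statement~(i) of Theorem~\ref{t:ExOptContr} holds in each of the three cases. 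Statement~(ii) of the same theorem then produces a minimum-time control, and the last sentences of Proposition~\ref{p:eqv} identify it, after restriction and a shift $s_-\mapsto s_-^\vep$, with an element of the corresponding argmin set. This handles three of the four sets in (\ref{e:argmin4prob}).

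For the remaining symmetric combined set $\argmin_{\vep\in\F^\sym,\,\ka\in\Si(\vep)}\Lr(\vep)$, I would appeal to Remark~\ref{r:sym}: since $\Si^\odd(\vep)\cap\Si^\even(\vep)=\varnothing$ for $\vep\in\F^\sym$, the feasible set is a disjoint union of the odd- and even-mode feasible sets, hence the infimum of $\Lr$ over it equals $\min\{\Lr^\odd_{\min}(\ka),\Lr^\even_{\min}(\ka)\}$. Since both of these values are achieved minima by the previous paragraph, the smaller of the two is realized by a $\vep\in\F^\sym$ which is automatically a minimizer of the combined symmetric problem.

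The only substantive content is already encapsulated in the two cited ingredients; once these are in hand the corollary is essentially bookkeeping, and in particular no further compactness or lower-semicontinuity argument is required beyond what is already used in Theorem~\ref{t:ExOptContr} (via a Filippov-type existence result). Thus the main obstacle does not lie in this corollary but in establishing global controllability of the Riccati-type flow (\ref{e:R})-(\ref{e:R2}) on $\wh\CC$ from each of the distinguished points $-\n_\infty$, $0$, $\infty$ to $\n_\infty$, which is precisely the content of Theorem~\ref{t:GlContr} and relies crucially on the hypothesis $\n_1 \le \n_\infty \le \n_2$.
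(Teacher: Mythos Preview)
Your proposal is correct and follows essentially the same route as the paper, which simply says the corollary ``immediately follows from Theorem~\ref{t:ExOptContr} and the controllability result of Theorem~\ref{t:GlContr}.'' Your write-up is in fact more explicit, spelling out the role of Proposition~\ref{p:eqv} and invoking Remark~\ref{r:sym} to handle the combined symmetric problem via $\min\{\Lr^\odd_{\min}(\ka),\Lr^\even_{\min}(\ka)\}$. The only small gap is that Theorem~\ref{t:GlContr} is stated for $\ka\in\CC_4$, whereas the corollary covers all of $\CC_-\setminus\ii\RR$; you should note that the case $\ka\in\CC_3$ reduces to $\CC_4$ via the symmetry (S0)$\Leftrightarrow$(S3) of Section~\ref{ss:Monotonicity} (since $\eta_\pm\in\wh\RR$), but the paper itself leaves this implicit as well.
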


\subsection{Lines of no return, monotonicities, and symmetries}
\label{ss:Monotonicity}

Let us consider first the symmetries of  (\ref{e:R})-(\ref{e:R2}), which appear 
because the right hand side in (\ref{e:R}) is an even function in $x$ and $\F_{s_-,s_+}$ is symmetric 
w.r.t. $s^\cent$. 

Considering dynamics of $(-x(-s))$, $x(-s)$, and $\overline{x(s)}$, 
one sees that the statements that $\vep(\cdot)$ steers $x(s_-) = \eta_-$ to $\eta^+ $ in time $t$ 
for the following choices of 
$(\eta_-;\eta_+)\in \wh \CC^2$, $\ka$, and $\vep (\cdot)\in \F_{s_-}$ are equivalent:
\begin{itemize}
\item[(S0)] $(\eta_-;\eta_+) = (\eta_1;\eta_2 ) $ , $\ka = \ka_0$, and $\vep (s) = \vep_0 (s) $;
\item[(S1)] $(\eta_-;\eta_+) = (- \eta_2; - \eta_1 )$, $\ka = \ka_0$, and $\vep (s) = \vep_0 (-s)$;
\item[(S2)] $(\eta_-;\eta_+) = (\eta_2; \eta_1 )$, $\ka = - \ka_0$, and $\vep (s) = \vep_0 (-s)$;
\item[(S3)] $(\eta_-;\eta_+) = (\overline{\eta_1}; \overline{\eta_2} )$, $\ka = - \overline{\ka_0}$, and 
$\vep (s) = \vep_0 (s)$.
\end{itemize}


Equivalence $(S0) \Leftrightarrow (S3)$ implies that 
if $(\eta_-;\eta_+) \in \wh \RR^2$ and $\vep (\cdot) \in \F_{s_-,s_+}$, then
$\Si_{\eta_-,\eta_+}^{s_-,s_+} (\vep)$ and 
$\Si_{\eta_-,\eta_+}^{s_-,s_+}  [\F_{s_-}]$
are symmetric w.r.t. the imaginary axis $\ii \RR$.
 

\begin{lem} \label{l:MonotFunc}
Let $\vep \in \F_{s_-,s_+} $,  $\Arg_0 \ka \in (-\pi/2,0) $, 
and $y $ be a nontrivial solution to (\ref{e:ep}) in $[s_-,s_+]$.
Then, 
for each $\tau \in [0, - 2 \Arg_0 \ka ]$, the function 
$
G_\tau (s) = \re \left(e^{\ii ( \tau - \pi/2)} \overline{y (s)} \pa_s y (s) \right)
$ 
is strictly increasing in $s$. In particular, there exists at most one point $\p_\tau \in [s_-,s_+]$ such that $G_\tau (\p_\tau) = 0$. 
If $G_\tau (\p_\tau) = 0$, then  
$x(\p_\tau)$ is the only point where the trajectory $x [[s_-,s_+]] := \{ x (s) : s \in [s_-,s_+] \} $ of $x=\frac{y'}{\ii \ka y}$ 
intersects the (extended) line 
$\Li_\tau:= \ii \ee^{-\ii (\tau+\Arg \ka) } \wh \RR$.
\end{lem}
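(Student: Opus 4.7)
The plan is to differentiate $G_\tau$ using the ODE (\ref{e:ep}), rewrite the derivative as a non-negative linear combination of $|y'|^2$ and $|y|^2$ (with coefficients depending on $\tau$ and $\ga=\Arg_0\ka$), and then identify the zero set of $G_\tau$ with the intersection of $x[[s_-,s_+]]$ and $\Li_\tau$.

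First I would note that $y\in \Ws^{2,\infty}_\loc$ so $G_\tau$ is absolutely continuous and the product rule gives
\[
G_\tau'(s)=\re\!\left(e^{\ii(\tau-\pi/2)}\bigl(|y'(s)|^2+\overline{y(s)}\,y''(s)\bigr)\right)
=\re\!\left(e^{\ii(\tau-\pi/2)}\bigl(|y'(s)|^2-\ka^2\vep(s)|y(s)|^2\bigr)\right),
\]
after substituting $y''=-\ka^2\vep y$. Writing $\ka^2=|\ka|^2e^{2\ii\ga}$, a short trigonometric reduction yields
\[
G_\tau'(s)=\sin(\tau)\,|y'(s)|^2-\sin(\tau+2\ga)\,|\ka|^2\vep(s)\,|y(s)|^2.
\]
For $\tau\in[0,-2\ga]$ one has $\sin(\tau)\ge 0$ and, since $\tau+2\ga\in[2\ga,0]\subset(-\pi,0]$, also $-\sin(\tau+2\ga)\ge 0$. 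Moreover, at each endpoint $\tau=0$ (resp.\ $\tau=-2\ga$) exactly one of the two coefficients is strictly positive, while in the interior both are. Since $\vep\ge \n_1^2>0$ and uniqueness for (\ref{e:ep}) forbids the simultaneous vanishing of $y$ and $y'$, we conclude $G_\tau'(s)>0$ except on an at most countable set of isolated points, giving strict monotonicity of $G_\tau$ on $[s_-,s_+]$ and thus at most one zero $\p_\tau$.

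Next I would identify $\{s : x(s)\in \Li_\tau\}$ with $\{s : G_\tau(s)=0\}$. A point $z\in\CC$ lies on $\Li_\tau=\ii e^{-\ii(\tau+\ga)}\wh\RR$ iff $\re\!\left(e^{\ii(\tau+\ga)}z\right)=0$. At points with $y(s)\ne 0$, using $\ii\ka=|\ka|e^{\ii(\ga+\pi/2)}$ one computes
\[
\re\!\left(e^{\ii(\tau+\ga)}x(s)\right)
=\re\!\left(\frac{e^{\ii(\tau+\ga)}}{\ii\ka}\cdot\frac{y'(s)}{y(s)}\right)
=\frac{1}{|\ka|\,|y(s)|^2}\re\!\left(e^{\ii(\tau-\pi/2)}\overline{y(s)}\,y'(s)\right)
=\frac{G_\tau(s)}{|\ka|\,|y(s)|^2},
\]
so $x(s)\in\Li_\tau\iff G_\tau(s)=0$. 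At a zero of $y$, $x(s)=\infty\in\Li_\tau$ while $G_\tau(s)=0$ directly from the definition; the equivalence persists. Combined with the injectivity of $x$ on $[s_-,s_+]$ given by strict monotonicity of $G_\tau$, this yields the uniqueness of the intersection point.

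I do not expect a serious obstacle: the argument is a one-line differentiation followed by a sign check of a two-term trigonometric sum. The only place requiring care is the endpoint cases $\tau\in\{0,-2\ga\}$, where one of the coefficients degenerates and strict monotonicity has to be salvaged from the fact that zeros of $y$ (or of $y'$) are isolated — a standard consequence of the uniqueness theorem for the linear ODE (\ref{e:ep}).
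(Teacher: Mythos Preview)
Your proof is correct and follows essentially the same approach as the paper: differentiate $G_\tau$, use the ODE to get a nonnegative combination of $|y'|^2$ and $|y|^2$, rule out vanishing on an interval via uniqueness for (\ref{e:ep}), and then use the identity $G_\tau(s)=|\ka|\,|y(s)|^2\,\re\bigl(e^{\ii(\tau+\ga)}x(s)\bigr)$ to identify the zero set with $x^{-1}(\Li_\tau)$. One small remark: the phrase ``injectivity of $x$ on $[s_-,s_+]$ given by strict monotonicity of $G_\tau$'' is a slip---strict monotonicity of $G_\tau$ does not give injectivity of $x$, but it does directly give that $\{s:G_\tau(s)=0\}$ is at most a single point, which is all you need.
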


\begin{proof}
For $\tau \in [0, -2 \Arg_0 \ka ]$, 
\begin{gather} \label{e:Gom'}
G'_\tau =  
|y'|^2 \re  \ee^{\ii (\tau-\pi/2)} + |y|^2 \vep \re \left(\ee^{\ii (\tau-\pi/2)} (-\ka^2) \right) \ge 0 .
\end{gather}
It is easy to see that $G'_\tau$ cannot vanish on an interval for a nontrivial $y$. 
The equality $G_\tau (s) = |y|^2|\ka|\re \left(e^{\ii ( \tau + \Arg \ka)} x(s) \right) $ 
completes the proof.
\end{proof}

\begin{rem} \label{r:noreturn}
In the settings of Lemma \ref{l:MonotFunc}, $\Li_\tau$ is 
an \emph{line of no return} in the following sense: if $x(s_0)$ is in the $\wh \CC$-closure 
$\Hp_\tau^+ := \Li_\tau \cup \ii e^{-\ii (\tau+\Arg \ka) }\CC_- $
of the half-plane $ \ii e^{-\ii (\tau+\Arg \ka) }\CC_-$, then $x(s) \in 
\ii e^{-\ii (\tau+\Arg \ka) }\CC_-$ for $s>s_0$. This shows that 
(\ref{e:R})-(\ref{e:R2}) is not controllable from $\eta_- \in \Hp_\tau^+$ 
to $\eta_+ \in \Hp_\tau^- \setminus \{\eta_-\}$, where $\Hp_\tau^- := - \Hp_\tau^+$. The line $\Li_0$ is shown in Fig. \ref{f:x}.
\end{rem}

\begin{rem} \label{r:monotFuncR}
\emph{(i)} Let $\ka \in \ii \RR_-$. Then Lemma  \ref{l:MonotFunc} and 
Remark \ref{r:noreturn} are valid for $\tau \in (0,\pi)$.\\
\emph{(ii)} Let $\ka \in \RR$ and $\vep (\cdot) \in \F_{s_-,s_+}$. 
Then (\ref{e:Gom'}) applied to $\tau = 0$ implies that $G_0 (\cdot)$ is a constant function and 
that (\ref{e:R})-(\ref{e:R2}) is not controllable from $\eta_- \in \ii \CC_-$ to 
$\eta_+ \in (- \Hp_0^+)$. 
\end{rem}

\begin{rem} \label{r:k_in_iR}
Let $\ka \in \ii \RR_-$. Let $\eta_- = x(s_-) = y' (s_-)/(\ii \ka y (s_-))$ be in $\wh \RR$ like in (\ref{e:BCpm}). 
Then the control problem and the resonance optimization problems are essentially 
simpler since the trajectory of $x$ lies in $\wh \RR $. On one hand, the control problems can be solved 
by the application of the simple no-selfintersection principle (\ref{e:noSelfint}). On the other hand, the solution of the problems  
(\ref{e:argmin4prob}) for $\n_\infty \in \RR_+$ and $\ka \in \ii \RR_-$ can be obtained from the results of \cite{KLV17_MFAT}.
\end{rem}

\begin{rem} \label{r:C4}
Note that the constancy of $G_0 (\cdot)$ for $\ka \in \RR$, the existence of no-return lines 
$\Li_\tau $ for $\ka \in \CC_4 \cup \ii \RR_-$, and 
 the symmetries (S0)--(S3) imply the well-known fact 
 that $\Si (\vep) \subset \CC_-$ for $\vep \in \F$. 
 This, the symmetries (S0)--(S3),  and Remark \ref{r:k_in_iR} on the case $\ka \in \ii \RR_-$ show 
 that we can restrict our attention to the case $\ka \in \CC_4$.
 \end{rem}
 
 \subsection{Global controllability to stable equilibria}
\label{ss:const}

Let us denote by $\J :\wh \CC \to \wh \CC$ the Kutta-Zhukovskii transform $\J(z) = (z + z^{-1})/2$ and by 
$\I:\wh \CC \to \wh \CC$ the linear fractional transform of the form $\I (z) := (1-z)/(1+z)$. Note that 
\begin{gather*} \label{e:I2J-1}
\text{$\I (-z) = (\I (z))^{-1}$, $\I (z^{-1}) = - \I (z)$, $\I$ is involution (i.e., $\I (\I(z)) \equiv z$), \ } 
\end{gather*}
and that the inverse $\J^{\{-1\}} (\cdot)$ of $\J$ given by the expression
$ \J^{\{-1\}} (\zeta) := \zeta + \sqrt{\zeta^2 -1} $
is a 2-valued analytic function with the branching points $\pm 1$.
For $\zeta \in \CC \setminus [-1,1]$,  two branches $\J_{\pm 1}^{\{-1\}} (\zeta) $ of $\J^{\{-1\}}$ can be 
singled out by the condition $ \pm (|\J_{\pm 1}^{\{-1\}} (\zeta) | -1 ) > 0 $.

Let us consider the forward and backward evolution of (\ref{e:R})-(\ref{e:R2}) 
under the constant control $\vep (\cdot) \equiv \ep \in \RR_+$ from an initial state 
$x(s_-) = \eta_- \in \wh \CC$. 
Let us define the continuous $\wh \CC$-valued  function
\begin{equation} \label{e:vthep}
\vth_{\ep} (s) = \I (\ep^{-1/2} x(s)) .
\end{equation}

The evolution of $x(\cdot)$ and of the associated solution $y$ to (\ref{e:ep}) have the form 
\begin{gather} \label{e:yxRepr}
\text{ $y(s) = c_+ \ee^{\ii \vka s} + c_- \ee^{-\ii \vka s} $ and 
$x (s) = \ep^{1/2} \frac{c_+ - c_- \ee^{-2 \ii \vka s} }{c_+  + c_- \ee^{-2\ii \vka s} }$
\quad  with $\vka :=\ep^{1/2} \ka $}
 \end{gather}
and certain $c_\pm \in \CC$ satisfying $|c_-|+|c_+| \neq 0$.
The identity $\I (\I (z)) \equiv z$ implies 
\begin{gather} \label{e:vth=evth}
\vth_\ep (s) = \frac{c_-}{c_+} \ee^{-2\ii \vka  (s-s_-)} =\vth_\ep (s_-) \ee^{-2\ii \vka  (s-s_-)}, \qquad s \in \RR ,
\end{gather}
which have to be understood as $\vth_\ep (s) \equiv \infty$ if $c_+ = 0$.
The latter case corresponds the unique unstable equilibrium solution $x(\cdot) \equiv -\ep^{1/2}$.
The equation $x' (s) = \ii \ka (- x^2  + \ep )$ has also one stable equilibrium solution $x(\cdot) \equiv \ep^{1/2}$. 
Summarizing, we see that $\eta_- \mapsto x(t)$ is an holomorphic evolution on 
$\wh \CC$ given by a continuous subgroup of the group $\mathrm{Aut} (\wh \CC)$ of Möbius transformations. 

Assume now that $\eta_- \neq \pm \ep^{1/2} $. Then 
\begin{align} 
y (s) & = A \cos \left( \frac{\ii}{2} \Ln_\star \vth_\ep (s) \right) = 
A \cos \left( \vka  (s-s_-) + \frac{\ii}{2} \Ln_\star \vth_\ep (s_-) \right), \label{e:yRepr} \\
x (s) & = \ii \ep^{1/2 } \tan \left(\frac{\ii}{2} \Ln_\star \vth_\ep (s)  \right)
=\ii \ep^{1/2 } \tan \left( \vka  (s-s_-) + \frac{\ii}{2} \Ln_\star \vth_\ep (s_-)  \right) ,
\label{e:xRepr}
\end{align}
where $\Ln_\star \vth_\ep (\cdot)$ is an arbitrary continuous on $\RR$ branch of 
the multifunction $\ln \vth_\ep (\cdot)$ and $A \in \CC \setminus \{0\}$ is a constant
depending on $c_\pm$ and the choice of the branch $\Ln_\star \vth_\ep (\cdot)$.

Because of the form of right sides of (\ref{e:yRepr}), (\ref{e:xRepr}), we will say that $\vth_\ep (\cdot)$ 
is the \emph{ilog-phase} for the constant control $\ep$. A curve $x(\cdot)$ produced by the constant control $\ep$ has the form 
$C_1 \tan \left( \ep^{1/2} \ka  (s-s_-) + C_2 \right)$. 
In the case $\ka \in \CC \setminus \ii \RR$, 
$x [\RR]$ is the image of the logarithmic spiral (\ref{e:vth=evth}) under 
the linear fractional map $z \mapsto \ep^{1/2} \I (z)$.
This gives the rigorous sense to the statement that, as $s \to \pm \infty$,
the trajectory of $x(s)$ asymptotically approaches 
a logarithmic spiral with the pole at $\pm \ep^{1/2}$.

Using (\ref{e:vth=evth}) it is easy to see that, if $\eta_- \neq \eta_+$ and each of 
$\vth_\ep (s_\pm) = \I (\ep^{-1/2} \eta_\pm)$ is not in the set $\{0,\infty\}$,
then the $(\eta_-,\eta_+)$-spectrum produced by the constant resonator $\vep (\cdot) = \ep$ is
\begin{align} \label{e:Si(ep)}
\Si_{\eta_-,\eta_+}^{s_-,s_+} (\ep) = \{ k_n (\ep) \}_{n \in \ZZ}, \quad 
k_n (\ep) := -  \frac{ \ii }{ 2 (s_+ - s_-) \ep^{1/2} } \Ln \frac{\vth_\ep (s_-)}{\vth_\ep (s_+)}
+ \frac{\pi n}{(s_+ - s_-) \ep^{1/2}} .
\end{align}

Recall that $\Contr_{[0,+\infty)} (\eta,\ka)$ is the set of points controllable to $\eta$ by feasible controls.

\begin{thm} \label{t:GlContr}
Let $\ka \in \CC_4 $. Let $\eta_+$ belongs to the interval $[\n_1,\n_2]$ of admissible refractive indices. 
Then (\ref{e:R}), (\ref{e:R2}) is globally controllable to $\eta_+$ in the sense 
$\Contr_{[0,+\infty)} (\eta_+,\ka) = \wh \CC $.
\end{thm}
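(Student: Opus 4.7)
My strategy combines the closed-form integration of the Riccati equation under constant admissible controls from Section~\ref{ss:const} with the small-time local controllability (STLC) to $\eta_+$ developed in Section~\ref{ss:STLC}. The key observation is that $\ka \in \CC_4$ forces $\vka := \ep^{1/2}\ka \in \CC_4$ for every admissible $\ep \in [\n_1^2,\n_2^2]$, hence $\im\vka < 0$. Formula~(\ref{e:vth=evth}) then yields $|\vth_\ep(s)| = |\vth_\ep(s_-)|\,\ee^{2\im\vka\,(s-s_-)} \to 0$ as $s \to +\infty$, so in the $x$-chart every trajectory of the constant control $\vep \equiv \ep$ started at $\eta_- \neq -\ep^{1/2}$ converges to the stable equilibrium $\ep^{1/2}$. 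This asymptotic funneling is the engine that drives arbitrary initial states into any prescribed open neighborhood of a target in $[\n_1,\n_2]$.

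I would first dispose of the interior case $\eta_+ \in (\n_1,\n_2)$. Section~\ref{ss:STLC} supplies an open neighborhood $U \ni \eta_+$ contained in $\Contr_{[0,+\infty)}(\eta_+,\ka)$. For $\eta_- \in \wh\CC \setminus \{-\eta_+\}$, I apply the constant control $\vep \equiv \eta_+^2$ until the trajectory first enters $U$, and then concatenate with the STLC control that finishes off at $\eta_+$. For $\eta_- = -\eta_+$, a brief preliminary application of any other admissible constant $\vep \equiv \ep'$ moves $x$ off the unstable equilibrium $-\eta_+$, since $f(-\eta_+,\ep') = \ii\ka(\ep' - \eta_+^2) \neq 0$ whenever $\ep' \neq \eta_+^2$; the previous argument then applies. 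Thus $\Contr_{[0,+\infty)}(\eta_+,\ka) = \wh\CC$ for $\eta_+ \in (\n_1,\n_2)$.

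The main obstacle is the endpoint case $\eta_+ \in \{\n_1,\n_2\}$, where STLC to $\eta_+$ fails because all admissible vector fields $f(\eta_+,\ep) = \ii\ka(\ep - \eta_+^2)$ with $\ep \in [\n_1^2,\n_2^2]$ lie on a single half-ray through the origin, so infinitesimal switching cannot produce motion in the transverse direction. The remedy is to approach $\eta_+$ in finite time along an orbit of a nearby non-extremal constant control. Focusing on $\eta_+ = \n_2$, I pick $\ep_\star \in (\n_1^2,\n_2^2)$ close to $\n_2^2$; formula~(\ref{e:vth=evth}) identifies the $\vep \equiv \ep_\star$-orbit $\Ga$ passing through $\n_2$ at some time $s_0$ as a logarithmic spiral in the $\vth_{\ep_\star}$-chart with discrete backward real crossings at $s_0 - \pi m/(2\re\vka)$, $m \ge 1$. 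A direct computation in the $\I$-chart of (\ref{e:xRepr}) shows that the earliest backward crossing $\eta_+^\ast := x\bigl(s_0 - \pi/(2\re\vka)\bigr)$ lies strictly in $(\n_1,\n_2)$ once $\ep_\star$ is chosen close enough to $\n_2^2$: both $\eta_+^\ast$ and $\ep_\star^{1/2}$ converge to a common interior point as $\ep_\star \uparrow \n_2^2$, by continuity of (\ref{e:xRepr}). Having steered $\eta_-$ to $\eta_+^\ast$ via the interior case, I then apply $\vep \equiv \ep_\star$ for the precise time $\pi/(2\re\vka)$ to land at $\n_2$ exactly. The case $\eta_+ = \n_1$ is symmetric with $\ep_\star$ chosen close to $\n_1^2$. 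Verifying the asymptotic location of $\eta_+^\ast$ inside $(\n_1,\n_2)$ is the technical crux of the endpoint case.
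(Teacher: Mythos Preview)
Your overall strategy is sound and yields a valid proof, but one step in the endpoint case is misstated. You claim that as $\ep_\star \uparrow \n_2^2$, both $\eta_+^\ast$ and $\ep_\star^{1/2}$ converge to a common \emph{interior} point; in fact they both converge to the endpoint $\n_2$. The correct justification is that for $\ep_\star < \n_2^2$ one has $\vth_{\ep_\star}(s_0) = \I(\ep_\star^{-1/2}\n_2) \in (-1,0)$, hence $u := |\vth_{\ep_\star}(s_0)|\,\ee^{\pi q} > 0$ (with $q = -\im\ka/\re\ka$), so $\eta_+^\ast = \ep_\star^{1/2}\,\I(u) < \ep_\star^{1/2} < \n_2$ strictly; since $\eta_+^\ast \to \n_2$ as $\ep_\star \to \n_2^2$, it follows that $\eta_+^\ast \in (\n_1,\n_2)$ once $\ep_\star$ is close enough to $\n_2^2$. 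With this fix your argument goes through. Note also that your appeal to Theorem~\ref{t:STLC} is a forward reference (it is stated and proved after the present theorem), though there is no circularity since the proof of Theorem~\ref{t:STLC} is self-contained.

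Your route differs from both proofs the paper gives. The paper's first (geometric) argument is close in spirit to yours but more uniform: it does not split into interior and endpoint cases and does not invoke STLC. For every $\eta_+ \in [\n_1,\n_2]$ one spirals in toward $\eta_+$ under $\vep \equiv \eta_+^2$; since this forward spiral winds around $\eta_+$ infinitely often, it must cross the backward trajectory from $\eta_+$ under any feasible constant $\ep_0 \neq \eta_+^2$, and one switches at such a crossing. The paper's second proof is entirely different: it uses the equivalence with the Pareto minimum-modulus problem (Theorem~\ref{t:ExOptContr}) and the explicit spectrum formula~(\ref{e:Si(ep)}) for constant coefficients to show directly that every $\ga \in (-\pi/2,0)$ is an achievable argument. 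Your version makes the finite-time landing mechanism very explicit; the paper's geometric version buys simplicity by avoiding the case split, while its Pareto version buys a technique that transfers to settings where the spiral picture is less transparent.
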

\begin{proof}
It is enough to consider the case $\ka \in \CC_4$.
Geometrically, $\Contr_{[0,+\infty)} (\eta,\ka) = \wh \CC $ can be proved using the fact that $x (\cdot) \equiv \eta_+$ is a
stable equilibrium solution to (\ref{e:R}) with the constant control $\vep (\cdot) \equiv \eta_+^2$.
The corresponding trajectory $x (s)$ with $x(s_-) = \eta_- \neq \eta_+ $ asymptotically approaches 
a logarithmic spiral with the pole at $\eta_+$ as $s \to +\infty$. 
One can concatenate this asymptotic spiral with a backward trajectory through $\eta_+$
under another feasible constant control $\vep (\cdot) \equiv \ep_0 \neq \eta_+^2$. 
This argument can be made rigorous with the use of the map from (\ref{e:vthep})
and the dynamics of the ilog-phases for $\ep = \eta_+^2$ and $\ep = \ep_0$.

Let us give another proof that can be used in less geometrically transparent situations and is 
based on the Pareto optimization equivalence of Section \ref{ss:Min|om|}.
Let us fix an interval $(s_-, s_+)$ and $\eta_- \neq \eta_+$. 
We want to show that 
\begin{align} \label{e:arg)}
\Arg_0 \Si_{\eta_-,\eta_+}^{s_-,s_+} [\F_{s_-,s_+}] \supset (-\pi/2,0) 
\end{align}
and use Theorem \ref{t:ExOptContr}.
Consider for $n \in \NN$ the behavior of 
$k_n (\ep)$ from (\ref{e:Si(ep)}) as the constant $\ep$ 
runs through $[\n_1^2,\n_2^2] \setminus \{\eta_+^2\}$. 
On one hand, such $k_{n} (\ep)$ lie in the strip 
$
 \re z \in \frac{\pi }{s_+-s_-} \left[ \frac{n-1/2}{\n_2}, \frac{n+1/2}{\n_1} \right] .
$
On the other, 
$ \im k_{n} (\ep) \to -\infty $ as $\ep \to \eta_+^2$ since $x(s_+) = \eta_+$ and $\vth_\ep (s_+) \to 0$. So 
$
\{ \Arg_0 k_{n} (\ep) \ : \ \ep \in [\n_1^2,\n_2^2] , \ \ep \neq \eta_+^2 \} 
\supset \left( -\pi/2,  \Arg_0 k_n (\ep_0) \right] ,
$
where $\ep_0$ is any fixed number in $[\n_1^2,\n_2^2] \setminus \{\eta_+^2\}$.
Since $\Arg_0 k_n (\ep_0) \to 0$ as $n \to +\infty$, we see from (\ref{e:Si(ep)})
that (\ref{e:arg)}) holds true. 
\end{proof}

\subsection{Uniform in $\ka$ small-time local controllability}
\label{ss:STLC} 

Let us recall the definition of regular equilibria \cite{S83}.
If there exists a feasible control $\vep \in \F_{s_-}$ such that the constant function 
$x(s) = \eta_-$,
$s \in [s_-,+\infty)$, is a solution to  (\ref{e:R}), (\ref{e:R2}), then $\eta$ 
is an \emph{equilibrium point for} (\ref{e:R}), (\ref{e:R2}).
The set of equilibrium points of (\ref{e:R}), (\ref{e:R2}) equals $[-\n_2,-\n_1] \cup [\n_1,\n_2]$.
The equilibrium points in the set $(-\n_2,-\n_1) \cup (\n_1,\n_2)$ are called \emph{regular},
the equilibrium points $\pm \n_1$ and $\pm \n_2$ \emph{singular}.

Recall that the system (\ref{e:R}), (\ref{e:R2}) is \emph{small-time local controllable} (STLC) from $\eta_-$ (STLC to $\eta_+$)
if $\eta_- \in \bigcap_{t>0} \Intr \Reach_{[0,t)} (\eta_-,\ka)$
(resp., $\eta_+ \in \bigcap_{t>0} \Intr \Contr_{[0,t)} (\eta_+,\ka)$), where $\Intr E$ is the interior of a set $E$.
Since we need a version of this definition that is uniform over a set $\Om$ of spectral parameters $\ka$,
we quantify STLC with the use of the function 
\begin{align*} 
\ra^{\max} (t;\eta_+,\ka) := \sup \{ \ra \ge 0 \ : \ \DD_\ra (\eta_+) \subset \Contr_{[0,t)} (\eta_+,\ka) \} ,
\end{align*}
where $\eta_+ \in \CC$ and $\DD_\ra (z_0) := \{ |z-z_0| <\ra \} $. 
As a function of $t$, $\ra^{\max} (\cdot;\eta_+,\ka)$ is nondecreasing, takes finite values for small enough $t$,
and satisfies $\ra^{\max} (0;\eta_+,\ka) =0$.

\begin{defn} \label{d:uniSTLC} Let $\Om$ be a subset of $\CC$.
We say that (\ref{e:R}), (\ref{e:R2}) is \emph{STLC to $\eta \in \CC$ uniformly over $\Om$}
if there exists $\de>0$ and a strictly increasing continuous  function 
$ \ra^{\max}_{\Om ,\eta} : [0,\de) \to [0,+\infty)$ such that 
$\ra^{\max}_{\Om,\eta} (t) \le \ra^{\max} (t,\eta,\ka)$
for all $\ka \in \Om$ and $t \in [0,\de)$.
The uniform over $\Om$ version of STL-controllability \emph{from} $\eta$ can be defined similarly.
\end{defn}

Let us denote by $x_{\eta_-} (s, \vep, \ka )$ the solution to (\ref{e:R}), (\ref{e:R2})
satisfying $x(s_-) = \eta_-$. 
Let $\ka \in \CC$, $\eta_- \in \CC$, and $s_+ = s_-+t_0$.
 Then for small enough $\de_1>0$
and small enough $t_0 \in (0, T^{\min}_{\ka} (\eta_-,\infty))$, the map 
$(\vep (\cdot);\ka) \mapsto x_{\eta_-} (\cdot, \vep, \ka )$ from 
$\F_{s_-,s_+} \times \DD_{\de_1} (\ka_0)$ to $W^{1,\infty}_\CC [s_-,s_+]$ is analytic in a certain  neighborhood of $\F_{s_-,s_+} \times \DD_{\de_1} (\ka_0)$ in the Banach space $L^\infty_\CC (s_-,s_+) \times \CC$  
(here $\max \{\| \vep \|_\infty, |\ka| \}$ can be taken as the norm in $L^\infty_\CC (s_-,s_+) \times \CC$). 
For $\vep_0 \in \F_{s_-,s_+}$ and $\vep_1 \in L^\infty_\CC (s_-,s_+)$,
the directional derivative 
  $\frac{\pa x_{\eta_-} (s, \vep_0, \ka )}{\pa \vep} (\vep_1) = 
\pa_\zeta x_{\eta_-} (s, \vep_0 +\zeta \vep_1, \ka )$ 
is equal to
\begin{gather} \label{e:pae}
\frac{\pa x_{\eta_-} (s, \vep_0, \ka )}{\pa \vep} (\vep_1)  = 
\ii \ka \int_{s_-}^s \vep_1 (\wt s) \exp \left( -2\ii \ka \int_{\wt s}^s x_{\eta_-} (\si,\vep_0 (\si),\ka) \dd \si \right) \dd \wt s.
\end{gather}

 According to  \cite{S83},  if (\ref{e:R}), (\ref{e:R2}) is STLC to (from) $\eta \in \wh \CC$, then $\eta $ is a regular 
 equilibrium point, i.e.,  $\eta \in (-\n_2,-\n_1) \cup (\n_1,\n_2) $. 
In the case $\ka \not \in \ii \RR$, the next theorem gives a strengthened uniform version 
 of the converse implication.

\begin{thm} [uniform STLC]
\label{t:STLC}
Let $\ka_0 \in \CC \setminus \ii \RR$.
Then (\ref{e:R}), (\ref{e:R2}) is STLC to (from) $\eta \in \wh \CC$ uniformly over $\DD_\de (\ka_0)$
with a certain $\de>0$ if and only if $\eta \in (-\n_2,-\n_1) \cup (\n_1,\n_2)$.
\end{thm}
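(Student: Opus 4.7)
The plan is to prove both directions by linearisation at an equilibrium, with the uniformity in $\ka$ coming from continuity of the relevant Kalman determinant on a compact disc around $\ka_0$.

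\textbf{Sufficiency.} Fix $\eta \in (\n_1,\n_2)\cup(-\n_2,-\n_1)$ and put $u_0:=\eta^2\in(\n_1^2,\n_2^2)$, which lies strictly in the interior of the control interval. The constant control $\vep\equiv u_0$ yields the equilibrium trajectory $x\equiv\eta$. Writing $x=\eta+\xi$, $\vep=u_0+v$, the linearisation of (\ref{e:R}) at $(\eta,u_0)$ is $\xi'=-2\ii\ka\eta\,\xi+\ii\ka\, v$, a single-input real-linear system on $\CC\cong\RR^2$. Its Kalman controllability matrix $[\,\ii\ka\mid -2\ii\ka\eta\cdot\ii\ka\,]=[\,\ii\ka\mid 2\ka^2\eta\,]$ has real determinant $-2\eta\,(\re\ka)\,|\ka|^2$, which is nonzero iff $\ka\notin\ii\RR$. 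Since $\eta\neq0$ and $\ka_0\notin\ii\RR$, there exist $\de>0$ and $c_0>0$ with $|{-2\eta(\re\ka)|\ka|^2}|\ge c_0$ uniformly on $\overline{\DD_\de(\ka_0)}$.

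\textbf{Uniform surjectivity of the endpoint map.} Using (\ref{e:pae}) along the constant trajectory $x\equiv\eta$, the Fréchet derivative at $\vep=u_0$ of the endpoint map $E_{t,\ka}:\vep\mapsto x_\eta(s_-+t,\vep,\ka)$ is
\[
D E_{t,\ka}[\vep_1]=\ii\ka\int_0^t \vep_1(s_-+\tau)\,e^{-2\ii\ka\eta(t-\tau)}\,d\tau .
\]
Expanding the exponential in $\tau$ to first order and choosing $\vep_1=(\alpha+\beta(\tau-t/2))\chi_{[0,t]}$ with $\alpha,\beta\in\RR$, the two generating directions $\ii\ka$ and $2\ka^2\eta$ span $\RR^2$ with determinant bounded below by $c_0$, so the image of admissible perturbations (those with $\|\vep_1\|_\infty$ small enough to keep $u_0+\vep_1\in[\n_1^2,\n_2^2]$) contains a disc $\DD_{c_1(\ka)t}(0)$, with $c_1$ continuous and positive on $\overline{\DD_\de(\ka_0)}$. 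Since $E_{t,\ka}$ is real-analytic in $(\vep,\ka)$ on a neighbourhood of $\F_{s_-,s_-+t}\times\overline{\DD_\de(\ka_0)}$ in $L^\infty_\CC\times\CC$, a quantitative open mapping / inverse function argument produces $\Reach_{[0,t)}(\eta,\ka)\supset\DD_{c_2(\ka)t}(\eta)$ for all small $t$, uniformly on $\overline{\DD_\de(\ka_0)}$. Setting $\ra^{\max}_{\Om,\eta}(t):=c_2^{\min}\,t$ with $c_2^{\min}:=\inf_{\ka\in\overline{\DD_\de(\ka_0)}} c_2(\ka)>0$ gives uniform STLC from $\eta$. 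For STLC \emph{to} $\eta$, the time reversal (S0)$\Leftrightarrow$(S2) of Section \ref{ss:Monotonicity} identifies $\Contr_{[0,t)}(\eta,\ka)$ with $\Reach_{[0,t)}(\eta,-\ka)$, and since $\ka_0\notin\ii\RR\Leftrightarrow -\ka_0\notin\ii\RR$, the same bound holds on $\overline{\DD_\de(-\ka_0)}$, which after the trivial relabelling gives the claim on $\overline{\DD_\de(\ka_0)}$.

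\textbf{Necessity.} Suppose $\eta\notin(-\n_2,-\n_1)\cup(\n_1,\n_2)$. If $\eta^2\notin[\n_1^2,\n_2^2]$, then $\eta$ is not an equilibrium and $f(\eta,\vep)=\ii\ka(\vep-\eta^2)$ has a constant nonzero phase for every admissible $\vep$: the vector $f(\eta,\vep)/|f(\eta,\vep)|$ lies on the half-line $\ii\ka\cdot\mathrm{sign}(\vep-\eta^2)\cdot\RR_+$. A first-order Taylor expansion of trajectories then confines $\Reach_{[0,t)}(\eta,\ka)$, for small $t$, to a closed half-plane bounded by the line through $\eta$ orthogonal to $\ii\ka$, so $\eta\in\Bd\Reach_{[0,t)}(\eta,\ka)$ and STLC fails. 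If $\eta\in\{\pm\n_1,\pm\n_2\}$, then $\eta$ is a singular equilibrium: the velocity set $\{f(\eta,\vep):\vep\in[\n_1^2,\n_2^2]\}$ is a closed half-line in $\ii\ka\RR$ (one-sided because $\vep-\eta^2$ keeps one sign), and the same half-plane obstruction applies. Both arguments are independent of $\ka\in\DD_\de(\ka_0)$, so uniform STLC cannot hold either.

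\textbf{Main obstacle.} The only delicate point is making the open mapping / implicit function argument in the second step quantitative \emph{uniformly} in the parameter $\ka$. The ingredients (the Kalman determinant and the $C^2$-norm of $E_{t,\ka}$ near the equilibrium control) depend continuously (even holomorphically) on $\ka$, so the standard STLC proof via linearisation carries over; nevertheless, the estimates $c_1(\ka), c_2(\ka)$ must be extracted in a form that admits a uniform lower bound on $\overline{\DD_\de(\ka_0)}$, which requires some bookkeeping of the remainder estimates in the inverse function theorem.
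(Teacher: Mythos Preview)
Your sufficiency argument follows the same route as the paper: linearise at the equilibrium $(\eta,\eta^2)$, use the derivative formula (\ref{e:pae}) for the endpoint map, pick two real perturbation directions to get a surjective $\RR^2\to\RR^2$ linear map with inverse bounded uniformly on a compact $\ka$-disc, and invoke a quantitative open-mapping/Graves theorem. The paper uses characteristic functions of two disjoint subintervals and the symmetry (S0)$\Leftrightarrow$(S1); you use an affine perturbation $\alpha+\beta(\tau-t/2)$ and (S0)$\Leftrightarrow$(S2). Both work. One cosmetic point: with your two-parameter family the Jacobian columns scale like $t$ and $t^3$, so the radius you actually get from the inverse function theorem is a higher power of $t$, not $c_2 t$; this is harmless since Definition~\ref{d:uniSTLC} only asks for \emph{some} strictly increasing continuous lower bound.

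Your necessity argument, however, has a genuine gap at the singular equilibria $\eta\in\{\pm\n_1,\pm\n_2\}$. The half-plane obstruction you invoke for non-equilibria relies on $0\notin F(\eta)$ so that a separating hyperplane gives $\langle v,n\rangle\ge c>0$ for all $v\in F(\eta)$, which by continuity persists for nearby $x$ and forces $\langle x(s)-\eta,n\rangle$ to be strictly increasing. At $\eta=\n_1$, say, the velocity set $F(\n_1)=\ii\ka[0,\n_2^2-\n_1^2]$ \emph{contains} $0$, so no such strict inequality exists; and the weak inequality $\langle v,n\rangle\ge 0$ does \emph{not} persist to nearby points: once $\re x>\n_1$, the choice $\vep=\n_1^2$ gives $f(x,\n_1^2)=\ii\ka(\n_1^2-x^2)$ with $\langle f,\ii\ka\rangle_{\RR^2}=|\ka|^2(\n_1^2-\re(x^2))<0$. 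So the trajectory can exit the closed half-plane and your first-order confinement fails. Ruling out STLC at these boundary equilibria needs a higher-order argument (this is exactly why the paper cites \cite{S83} for the ``only if'' part). A smaller issue: your non-equilibrium case is phrased via $\mathrm{sign}(\vep-\eta^2)$, which presumes $\eta^2\in\RR$; you should also cover $\eta\in\CC\setminus\RR$ (same argument, since the segment $\ii\ka([\n_1^2,\n_2^2]-\eta^2)$ misses $0$) and $\eta=\infty$ (trivial, since $\wt f(0,\vep)=-\ii\ka$ is control-independent).
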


\begin{proof} 
The part `only if' follows from \cite{S83}. Let us put $\eta \in (-\n_2,-\n_1) \cup (\n_1,\n_2)$
and prove the part `if'. 
For $\de_0>0$, we define the nondecreasing function $\ra^{\max}_{\de_0} : [0,+\infty) \to [0,+\infty]$ by
$
\ra^{\max}_{\de_0} (t) := \inf_{|\ka - \ka_0| < \de_0} \ra^{\max} (t,\eta,\ka).
$
Note that $\dom \ra^{\max}_{\de_0}$ contains $ \left[ 0,T^{\min}_{\ka_0} (\infty,\eta) \right)$, where $T^{\min}_{\ka_0} (\infty,\eta) = T^{\min}_{\ka_0} (-\eta,\infty) >0$ 
is the \emph{escape time} from $(-\eta)$ to $\infty$.
Assume that for a certain $\de_0 >0$ and $\de$ satisfying 
$0 < \de < \de_1 :=\min \{1,T^{\min}_{\ka_0} (\infty,\eta)\}$,
\begin{align} \label{e:rhode}
\text{$\ra^{\max}_{\de_0} (t) > 0 $ for all $t\in (0,\de)$}.
\end{align} 
Then the function $\rho_{\Om,\eta}^{\max}$ defined on $[0,\de)$ by 
$\rho_{\Om,\eta}^{\max} (t) :=  \int_0^t  \ra^{\max}_{\de_0} (s) \dd s$
satisfies Definition \ref{d:uniSTLC} with $\Om= \DD_{\de_0} (\ka_0)$.
Hence, to prove (ii) it is enough to prove (\ref{e:rhode})  
for certain $\de_0, \de >0$. 

Put $\eta_- = - \eta$. 
Using the symmetry (S0) $\Leftrightarrow$ (S1), 
it is easy to see that if, for each $t\in (0,\de)$,
\begin{align} \label{e:Reach>D}
\text{$\Reach_{[0,t)} (\eta_-) \supset \DD_{\ra} (\eta_-) $ with a certain $\ra>0$ (depending on $t$)
for all $\ka \in \DD_{\de_0} (\ka_0) $},
\end{align}
then (\ref{e:rhode}) is fulfilled, and so (\ref{e:R}), (\ref{e:R2}) is STLC from $\eta_-$
and STLC to $\eta $ uniformly over $\DD_{\de_0} (\ka_0)$.

Let us prove (\ref{e:Reach>D}) for $\ka_0 \not \in \ii \RR$.
Put $d_0 = \min \{|\eta^2 - \n_1^2| , |\eta^2 - \n_2^2 |\}$,
and  $\vep_0 (s) \equiv \eta^2 $. So $x_{\eta_-} (s, \vep_0, \ka ) \equiv \eta_-$ and 
(\ref{e:pae}) takes the form 
\begin{gather} \label{e:paeRCP}
\frac{\pa x_{\eta_-} (s, \vep_0, \ka_0 )}{\pa \vep} (\vep_1)  = 
\ii \ka \int_{s_-}^s \vep_1 (\wt s) \ee^{ 2\ii \ka_0 (\wt s - s) \eta_-} \dd \wt s
\end{gather}

Let us take $\de_0, \de >0$ such that $2 \de_0 < |\re \ka_0|$ and $\de |\eta \re \ka_0| < \pi/6 $. Let $s_+ \in (s_-,s_- + \de)$.
Then 
$
-\frac{\pi}{2}<\inf_{\substack{|\ka - \ka_0| \le \de_0 \\ s \in [0,\de] \quad}} \Arg_0 (\ee^{2\ii \ka_0 s })
< \sup_{\substack{|\ka - \ka_0| \le \de_0 \\ s \in [0,\de] \quad
}} \Arg_0 (\ee^{2\ii \ka_0 s }) < \frac{\pi}{2} 
$
and it follows from (\ref{e:paeRCP}) that taking as $\vep_j (\cdot)$, $j=1,2$, 
characteristic (indicator) functions of disjoint small enough subintervals
of $[s_-,s_+]$, we can ensure that the Fréchet derivatives $\frac{\pa h_\ka (0)}{\pa \zeta}$ at 
$\zeta = (\zeta_1;\zeta_2) =0$
of the functions $ h_\ka : (-d_0,d_0)^2  \to \RR^2$,
\[
h_\ka (\zeta_1,\zeta_2) :=  x_{\eta_-} (s_+, \vep_0+\zeta_1 \vep_1 +\zeta_2 \vep_2, \ka),
\]
satisfy $\sup_{|\ka - \ka_0|\le \de_0} \| (\frac{\pa h_\ka (0)}{\pa \zeta} )^{-1} \|_{\RR^2 \to \RR^2}  < +\infty$.
Since the images of $(-d_0,d_0)^2$ under $h_\ka$ are subsets of 
the images of $\F_{s_-,s_+}$ under the mapping $\vep \mapsto x_{\eta_-} (s_+, \vep, \ka )$,
one can see (e.g., from the Graves theorem) that $\Reach_{[0,s_+)} (\eta_-,\ka) \supset \DD_{\de_2} (\eta_-)$ with certain $\de_2>0$
for all $\ka \in \DD_{\de_0} (\ka_0) $.
This completes the proof of (\ref{e:rhode}) and of the theorem.
\end{proof}

\begin{rem}\label{r:LC}
It can be seen from Theorem \ref{t:GlContr} and its proof that 
for $\eta \in [-\n_2,-\n_1] \cup [\n_1,\n_2]$ and $\ka \in \CC_4$,
(\ref{e:R}), (\ref{e:R2}) is locally controllable to $\eta$ in the sense that 
$\eta \in \Intr \Contr_{[0,+\infty)} (\eta,\ka)$. Thus, for $\eta = \pm \n_j $, $j=1,2$,
(\ref{e:R})-(\ref{e:R2}) is locally controllable to $\eta$, but is not STLC to $\eta$.
\end{rem}

\begin{rem} \label{r:Vcont}
Let $k \in \CC_4$. Then it follows from Theorems \ref{t:GlContr}, \ref{t:STLC} and 
\cite[Chapter 4]{BC08} that the value function 
$V^-_{\eta_+} (\cdot):= T^{\min}_\ka (\cdot,\eta_+)$ 
has the following properties: (i)  $\dom V^-_{\eta_+} (\cdot )= \wh \CC$ whenever $\eta_+ \in [\n_1,\n_2]$, (ii) if $\eta_+ \in (-\n_2,-\n_1) \cup (\n_1,\n_2)$, then $\dom V^-_{\eta_+} (\cdot )$ is an open subset of $\wh \CC$ and $V^-_{\eta_+} (\cdot)$ is continuous on $\dom V^-_{\eta_+}$.
\end{rem}

\section{Maximum principle and rotation of modes}
\label{s:bang-bang}

In the rest of the paper it is assumed that $\ka \in \CC_4$ 
(see Section \ref{s:Results} for explanations).
For the optimal control terminology used in this section, 
we refer to \cite{SL12}.

The goal of the section is to combine the monotonicity result of Lemma \ref{l:MonotFunc} 
and the properties the complex argument $\Arg y$ 
of an eigenfunction $y$ of (\ref{e:ep}), (\ref{e:BCxpm}) with the Pontryagin Maximum Principle (PMP) 
in the form of \cite[Section 2.8]{SL12}.
The rotational properties of $y$ are important to show 
that singular arcs and the chattering effect are absent, and so 
minimum-time controls are of bang-bang type in the sense described below.

Let $[s_-,s_+]$ be a bounded interval in $\RR$.
Recall that a function $\vep (\cdot)$  is called \emph{piecewise constant in  $[s_-,s_+]$} 
if there exist a finite partition
$s_- = b_0 < b_1 < b_2 < \dots < b_{n} < b_{n+1} = s_+$, $n \in \{0\}\cup\NN $,
such that, after a possible correction of $\vep (\cdot)$ on a set of zero (Lebesgue) measure, 
\begin{equation} \label{e:pwconst}
\text{$\vep (\cdot)$ is constant on each interval $(b_{j} , b_{j+1}) $, $0 \le j \le n$.}
\end{equation}
 For such functions, 
 we always assume that the aforementioned correction that ensures (\ref{e:pwconst}) has been done. 
 A point $b_j$, $1 \le j \le n$, of the partition is called 
 \emph{a switch point of $\vep (\cdot)$} if $\vep (b_j - 0) \neq \vep (b_j+0)$. 
 
Suppose $\vep (\cdot) \in \F_{s_-,s_+}$. 
Then $\vep (\cdot)$ is said to be a \emph{bang-bang control}
on $[s_-,s_+]$ if it is a piecewise constant function that takes only the values 
$\n^2_1$ and $\n^2_2$
(after a possible correction at switch points). 

When $\vep (\cdot)$ represents a layered structure of an optical resonator, 
it is natural to say that the maximal intervals of constancy $(b_j , b_{j+1}) $ 
of a piecewise constant control $\vep (\cdot) \in \F_{s_-,s_+}$ are \emph{layers} of \emph{width} $b_{j+1} - b_j$ 
with the constant permittivity equal to $\vep (s)$, $s \in (b_j , b_{j+1})$.

Assume that $\vep (\cdot) \in \F_{s_- ,s_+ }$ and that $y$ is a 
nontrivial solution to (\ref{e:ep}) on $[s_- ,s_+ ]$. 
If the point $\p_0$ introduced in Lemma \ref{l:MonotFunc} exists in $[s_-,s_+]$ it 
is called the \emph{turning point} of $y $ \cite{KLV17}. Note that 
if $y(s_0) y'(s_0)= 0$ for a certain $s_0 \in [s_-,s_+]$, then $s_0 = \p_0 = \p_\tau$ 
for every $\tau \in [0, - 2 \Arg_0 \ka ]$ (for definition of $\p_\tau$, see Lemma \ref{l:MonotFunc}). 
In particular, the set $\{ s : y(s) = 0\}$ consists of at most one point. If the point $\p_0$ 
does not exist in the interval $[s_-,s_+]$, we will assign to $\p_0$ a special value outside of $[s_-,s_+]$, which will be specified later.

The special role of $\p_0$ is that the trajectory of $y$ rotates clockwise for $s < \p_0 $,
and counterclockwise  for $s > \p_0 $. More rigorously, 
the multifunction $\Arg y (\cdot)$ has a branch $\Arg_\star y (\cdot) \in W^{1,\infty} [s_-,s_+]$ 
that is defined and continuously differentiable  on $[s_-,s_+] \setminus \{s: y(s) = 0 \} $ 
and possesses the following properties:
(A1) if an interval $I \subset [s_-,s_+]$ 
does not contain $\p_0$, 
then the derivatives $\pa_s \Arg_\star y (s) $ are nonzero and of the same sign  for all $s \in I$;
(A2) in the case $\p_0 \in [s_-,s_+]$, we have  
\begin{align}
\text{$\pa_s \Arg_\star y (s) < 0$ if  $s < \p_0$, \qquad $\pa_s \Arg_\star y (s)> 0 $ if $s > \p_0$;}
\label{e arg dec arg inc}
\end{align}
(A3) if $\p_0 \in (s_-,s_+)$ and $y (\p_0) = 0$, there exist finite limits 
$\Arg_\star y (\p_0 \pm 0)$ satisfying
\begin{align} \label{e:A3}
\text{ 
$\Arg_\star y (\p_0 + 0)= \pi+\Arg_\star y (\p_0 - 0) $ } .
\end{align}

The existence of such $\Arg_\star y (\cdot)$ with properties (A1), (A2) 
is essentially proved in \cite{KLV17}. Formally, \cite{KLV17} works with the case where 
$\eta_\pm \in \RR_\pm$ 
in (\ref{e:BCxpm}). However the proof can be extended without changes. 
It is based on the facts that
$\im (\ii \ka x (s)) = \pa_s \im \ln y (s)  = \pa_s \Arg_\star y (s) $
for a suitable branch of $\ln y (s)$ differentiable on $\RR \setminus \{ s : y (s) = 0\}$,
and that the function 
$|y |^2 \im (\ii \ka x)  = \im (\overline{y} \pa_s y) = G_{0}  $ 
is strictly increasing (see Lemma \ref{l:MonotFunc} for $\tau =0$).
Concerning the property (A3),  one sees that $y' (\p_0) \neq  0$ in the case $y (\p_0) = 0$
and so $\Arg_\star y (\p_0 \pm 0) = \pm \Arg y' (\p_0) \ (\modn 2 \pi)$. 
This ensures that $\Arg_\star y$ can be chosen such that (A3) holds.

From now on, we assume that, for each $y$, the function $\Arg_\star y$ 
is a certain fixed branch of $\Arg y$ satisfying the above properties.

\begin{thm} \label{t:PMP}  Let $\ka \in \CC_4$, $\eta_\pm \in \wh \CC$, and $\eta_- \neq \eta_+$. 
Let $\vep (\cdot)$ be a minimum-time control from the initial state $x(s_-) =\eta_-$ of the system 
(\ref{e:R})-(\ref{e:R2}) to the target state $x (s_+) = \eta_+$. Then $\vep (\cdot)$ is a bang-bang 
control on $[s_-,s_+]$, and there exist a constant $\la_0 \in [0,+\infty)$ and an eigenfunction 
$y $ of the problem (\ref{e:ep}), (\ref{e:BCxpm}) such that 
\begin{gather} \label{e:MinC}
\vep (s) = \left\{ \begin{array}{l}
 \n^2_1 , \text{ if } \im ( y^2 (s)) <0;  \\
 \n^2_2 , \text{ if } \im ( y^2 (s)) >0;
\end{array} \right. 
\\
\text{$\im ( \vep (s) y^2 (s)+ \ka^{-2} (y' (s))^2 ) = \la_0 $ for all $s \in [s_-,s_+]$.}
\label{e:TransC}
\end{gather}
\end{thm}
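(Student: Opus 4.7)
My plan is to combine Pontryagin's Maximum Principle for the control system (\ref{e:R})--(\ref{e:R2}) with the rotation property of eigenfunctions from Lemma \ref{l:MonotFunc}. The starting observation is that the endpoint variation (\ref{e:pae}) has a particularly compact form in resonance language: using $x^{*}=y'/(\ii\ka y)$ one has $-2\ii\ka\int_{\wt s}^{s_+} x^{*}(\sigma)\,d\sigma = 2\ln(y(\wt s)/y(s_+))$, so formula (\ref{e:pae}) evaluated at $s=s_+$ reduces to
\begin{equation*}
\frac{\partial x^{*}(s_+)}{\partial\vep}(\vep_1) \;=\; \frac{\ii\ka}{y^2(s_+)}\int_{s_-}^{s_+}\vep_1(s)\,y^2(s)\,ds.
\end{equation*}
Here $y$ is any nontrivial solution of (\ref{e:ep}) with $\vep=\vep^{*}$ and $y'(s_-)/(\ii\ka y(s_-))=\eta_-$; the constraint $x^{*}(s_+)=\eta_+$ then automatically supplies the second condition in (\ref{e:BCxpm}), so $y$ is an eigenfunction of the problem (\ref{e:ep})--(\ref{e:BCxpm}).

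Next I would invoke the minimum-time form of PMP via a separation argument. If $\vep^{*}$ is optimal, the convex hull of the endpoint variations $\{\delta x(s_+)\}$ together with the drift $f(\eta_+,\vep^{*}(s_+))$ cannot cover a neighbourhood of the origin in the tangent plane at $\eta_+$. Standard convex separation then produces a nonzero cotangent $\la_+\in T^{*}_{\eta_+}\wh\CC$ and a constant $\la_0\ge 0$ such that $\re(\overline{\la_+}\,\delta x(s_+))\le 0$ for every admissible needle perturbation and $\re(\overline{\la_+}\,f(\eta_+,\vep^{*}(s_+)))=-\la_0$. Substituting the variation formula and testing with $\vep_1(s)=\ep-\vep^{*}(s)$ for arbitrary $\ep\in[\n_1^2,\n_2^2]$ gives the pointwise condition
\begin{equation*}
\vep^{*}(s)\,\re\bigl(c\,y^2(s)\bigr) \;=\; \max\bigl\{\ep\,\re(c\,y^2(s))\,:\,\ep\in[\n_1^2,\n_2^2]\bigr\}\quad\text{for a.e.\ }s,
\end{equation*}
with $c=\overline{\la_+}\,\ii\ka/y^2(s_+)$. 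Since $y$ is determined only up to a complex scalar, rescaling $y\mapsto \ee^{\ii\theta}y$ rotates $c\,y^2$ onto the imaginary axis, turning the switching function into $\pm\im y^2$; the correct sign, i.e.\ (\ref{e:MinC}), is then fixed by the orientation of $\la_+$ coming out of the separation.

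I would then use Lemma \ref{l:MonotFunc} to exclude singular arcs and chattering. If $\im y^2\equiv 0$ on an open subinterval, then $\Arg_\star y$ would be constant modulo $\pi/2$ there, contradicting the strict monotonicity of $\Arg_\star y$ asserted by property (A1) on every interval not containing the turning point $\p_0$. Moreover, since $\Arg_\star y^2 = 2\Arg_\star y$ is strictly monotone away from $\p_0$, the zeros of $\im y^2$ are isolated on $[s_-,s_+]$; hence the switch points of the extremal control are isolated, $\vep^{*}$ takes only the two values $\n_1^2$ and $\n_2^2$, and is bang-bang in the sense required by the theorem.

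Finally, for the transversality identity (\ref{e:TransC}), on each interval of constancy of $\vep^{*}$ the equation $y''=-\ka^2\vep y$ gives $\pa_s(\vep y^2+\ka^{-2}(y')^2)=2\vep y y'+2\ka^{-2}y' y''=0$, so the quantity is locally conserved. At each switch $b_j$ the jump equals $(\vep^{*}(b_j{+})-\vep^{*}(b_j{-}))\,y^2(b_j)$, whose imaginary part vanishes because $b_j$ is a zero of $\im y^2$ by (\ref{e:MinC}). Consequently $\im(\vep y^2+\ka^{-2}(y')^2)$ is constant on the whole of $[s_-,s_+]$, and identifying this constant with the PMP multiplier yields (\ref{e:TransC}). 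The main technical hurdle I expect is the bookkeeping of normalizations in the second step, namely choosing the phase of $y$ so that the switching function becomes exactly $\im y^2$ with the correct sign for (\ref{e:MinC}) while simultaneously producing $\la_0\ge 0$; for this I would exploit the standing assumption $\ka\in\CC_4$ together with the sign information encoded in $G_0(s)=|y|^2|\ka|\re(\ee^{\ii\Arg\ka}x(s))$ being strictly increasing on $[s_-,s_+]$.
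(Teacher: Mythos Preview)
Your approach is essentially the paper's: both apply PMP to the control system and then invoke the rotation property (A1)--(A2) of eigenfunctions (Remark~\ref{r:SwitchPoints}) to rule out singular arcs and chattering. The difference is one of bookkeeping. The paper sets up the adjoint equation $\la' = 2\,\overline{(\ii\ka x)}\,\la$ in the chart $x\in\CC$, solves it explicitly as $\la(s)=\overline{y_\star(s)}^{\,2}$ for an eigenfunction $y_\star$, and then substitutes $y:=\ka^{1/2}y_\star$ to obtain the control Hamiltonian in the form $H=\la_0+\im\bigl(y^2[x^2-\vep]\bigr)$. The identity $H\equiv 0$ yields (\ref{e:TransC}) and the extremum condition yields (\ref{e:MinC}), with all signs and the nonnegativity of $\la_0$ delivered directly by PMP. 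Your route via the endpoint variation (\ref{e:pae}) is the same computation in integrated form---the exponential factor in (\ref{e:pae}) is exactly the fundamental solution of that adjoint ODE---so the two presentations are equivalent.

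Two points in your write-up deserve attention. First, your rewriting of (\ref{e:pae}) as $\ii\ka\, y^{-2}(s_+)\int\vep_1 y^2$ presupposes $y\neq 0$ on $[s_-,s_+]$, i.e., that the optimal trajectory never leaves the chart $x\in\CC$. The paper deals with this by observing (Lemma~\ref{l:MonotFunc}) that the trajectory meets $\ii\wh\RR$ at most once, and then redoing the adjoint computation in the chart $\wt x=-1/x$, where one finds $\la=\overline{y_\star'}^{\,2}$ and the same final formulae; you should add this second chart. Second, the ``bookkeeping of normalizations'' you flag is exactly what the substitution $y=\ka^{1/2}y_\star$ resolves in one stroke: it is a \emph{complex} (not merely unimodular) rescaling of the eigenfunction that simultaneously turns the switching function into $\im y^2$ with the sign of (\ref{e:MinC}) and identifies the conserved quantity $\im(\vep y^2+\ka^{-2}(y')^2)$ with the PMP multiplier $\la_0\ge 0$. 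In your framework the analogous move is to choose the complex scalar $\mu$ in $y\mapsto\mu y$ so that your constant $c$ becomes a positive multiple of $-\ii$; a phase rotation alone does not suffice because it leaves $|c|$ unchanged and therefore does not tie the conserved quantity to $\la_0$.
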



\begin{rem} \label{r:SwitchPoints}
If $\vep (\cdot) \in \F_{s_-,s_+}$ 
and an eigenfunction $y (\cdot)$ of (\ref{e:ep}), (\ref{e:BCxpm})
are connected by (\ref{e:MinC}), then $\vep (\cdot)$ is a bang-bang control
and the set of switch points of $\vep (\cdot)$ is exactly 
$\{s \in  (s_-,s_+) \setminus \{\p_0\} \ : \ y^2 (s) \in \RR \}$ .
Indeed, (\ref{e arg dec arg inc}) and $\pa_s \Arg_\star y \in L^\infty (s_-,s_+)$
imply that 
for each eigenfunction $y $ of (\ref{e:ep}), (\ref{e:BCxpm}) the set 
$\{ s \in (s_-,s_+) : \im y^2 (s) = 0 \}$ is finite.
From the fact that the direction of rotation 
of $y$ changes at the turning point $\p_0$, one sees that 
$\p_0$ is not a switch point (see Fig. \ref{f:x}).
\end{rem}

\begin{proof}[Proof of Theorem \ref{t:PMP}]
By Lemma \ref{l:MonotFunc}, the trajectory 
$x [[s_-,s_+]] $
intersects $\ii \wh \RR$ at most once. 
So we can apply PMP in the form of \cite[Section 2.2]{SL12} to the following 
two cases with restricted state spaces:
(i) $x [[s_-,s_+]] \subset \CC $ and $\CC$ is the state space for dynamics of $x(\cdot)$,
(ii) $x [[s_-,s_+]] \subset \wh \CC \setminus\{0\} $  and $\CC$ is the state space for dynamics of $\wt x(\cdot) $.

We perceive $\CC$ as $\RR^2$ and equip it with the $\RR$-valued scalar product 
$\< \la , x \>_{\RR^2} = \re (\overline{\la}x) = \re (\la\overline{x})$.
Let $\vep (\cdot)$ be a minimum time control that steers optimal $x (\cdot)$ from $\eta_-$ to $\eta_+$.

\emph{Case (i).} We assume that optimal trajectory $x [[s_-,s_+]]$
 lies in $\CC=\RR^2$.
The $\CC$-valued adjoint variable $\la (\cdot)$ and the state variables  $x(\cdot)$
are considered as $\RR^2$-valued functions defined on $[s_-,s_+]$ with real-valued components $\la_1$, $\la_2$ and $x_1$,$x_2$, i.e., $\la = \la_1 + \ii \la_2$, $x= x_1 + \ii x_2$. 
 The time-independent control Hamiltonian $H$ equals
 $H (\la_0,\la,x,\vep) = \la_0 + \< \la , f (x,\vep) \>_{\RR^2} $, where $\la_0 \ge 0$.

The adjoint equation is $\pa_s \la = -  \pa_x \< \la , f (x,\vep)\>_{\RR^2}  $, where
\begin{align*}
\pa_x  \< \la , f \>_{\RR^2}  & =  \< \la , \pa_{x_1} f \>_{\RR^2}  + \ii  \< \la , \pa_{x_2} f \>_{\RR^2} =
\< \la , \pa_{x_1} f \>_{\RR^2} + \ii  \< \la , \ii \pa_{x_1} f \>_{\RR^2} 
 \\ 
& 
= \re (\la \overline{[-2\ii \ka x]}) + \ii \im (\la\overline{[-2\ii \ka x]})
= -2\overline{ (\ii\ka x)} \la 
\end{align*}
(the complex differentiability of $f$ in $x$ is used here, namely, $\pa_{x_2} f = \ii  \pa_{x_1} f$).
From $x [[s_-,s_+]] \subset \CC$, we see that 
$y_\star \neq 0$ everywhere for any eigenfunction $y_\star$ of (\ref{e:ep}), (\ref{e:BCxpm}),
and that
$
\la'  = 2 \overline{(\ii \ka x)} \la = 2 \la \overline{y'_\star}/\overline{y_\star}
$.
The adjoint variable $\la (\cdot)$ 
takes the form $\la (s) = \overline{y_\star}^2 $ 
for a certain eigenfunction $y_\star$, and so 
$H = \la_0 + \re (\ii \ka y_\star^2 [-x^2 +\vep])  
= \la_0 + \im(y^2 [ x^2 - \vep]),$
where $y :=\ka^{1/2} y_\star $ is another eigenfunction.
Since the Hamiltonian is time-independent, the version of PMP for minimum-time problems 
implies 
\begin{gather} \label{e:H}
\text{$0 = H = \la_0 + \im(y^2 [ x^2 - \vep]) = \la_0 - \im ( \ka^{-2} (y')^2 +  y^2 \vep)$ for all $s$.} 
\end{gather}
This gives (\ref{e:TransC}).
The minimum condition 
$ \im  (- y^2 \vep (s)  )  = 
\min_{\ep \in [\n^2_1,\n^2_2]}  \im (-  y^2 \ep ) $  gives 
(\ref{e:MinC}).  Remark \ref{r:SwitchPoints} implies that $\vep (\cdot)$ is bang-bang.

\emph{Case (ii).} The scheme is the same with the change in of the
 state variable to $\wt x = -1/x$. 
 This leads to a different adjoint variable $\la = \overline{ y_\star'}^2 $, where 
 $y_\star$ is a certain eigenfunction, but to
 the same formulae at the end. 
 \end{proof}

\section{Extremals and quarter-wave stacks} 
\label{s:Syn}


We say that a nondegenerate  interval $[\wt s_-, \wt s_+]$ is stationary for $x$ if 
$x (s) = c$ for all $s \in [\wt s_-, \wt s_+]$ and a certain constant $c $.

Let an eigenfunction $y $ of (\ref{e:ep})-(\ref{e:BCxpm}), 
a constant $\la_0 \in [0,+\infty)$, and functions $\vep \in \F_{s_-,s_+}$ 
and $x $ satisfy (\ref{e:MinC}), (\ref{e:TransC}), and (\ref{e:R})-(\ref{e:R2}). 
Then $y (\cdot)$, $x (\cdot)$, and $\vep (\cdot)$ are called a \emph{y-extremal}, an 
\emph{x-extremal} (see Fig. \ref{f:x}), and 
an \emph{extremal control ($\vep$-extremal)}, resp.,
associated with the \emph{extremal tuple} $(x,\vep,\la_0,y)$ (our terminology 
is adapted to the needs of resonance optimization 
and differs slightly from the standard terminology of extremal lifts \cite{BP03,SL12}).
If, additionally, $\la_0 >0$ ($\la_0 = 0$), these extremals are called \emph{normal} 
(resp., \emph{abnormal}).

Let $(x,\vep,\la_0, y)$ be an extremal tuple on $[s_-,s_+]$. Then, 
by Remark \ref{r:SwitchPoints} and (\ref{e:MinC}), 
$\vep$ is a bang-bang control on $[s_-,s_+]$ and $y$ is a nontrivial solution to 
the autonomous equation
\begin{gather} \label{e:Eys}
-y'' (s) = \ka^2 y(s) \ \E (y) (s) , \quad \text{ where }
\E (y) (\cdot ) := \n^2_1   + (\n^2_2   - \n^2_1 )  \ChiCpl ( y^2 (\cdot) ) 
\end{gather}
and $\ChiCpl (\cdot)$ is the characteristic function of $\CC_+ $.
Besides, $\vep (\cdot) = \E (y) (\cdot) $ on $[s_-,s_+]$.
A function $y \in W^{2,\infty}_{\CC,\loc} (\RR)$ is called a solution to
(\ref{e:Eys}) if (\ref{e:Eys}) is satisfied for a.a. $s \in (s_-,s_+)$.
A solution $y$ is called trivial if $y = 0$ a.e. on $(s_-,s_+)$.
 
Any nontrivial solution $y$ to equation  (\ref{e:Eys}) on $[s_-,s_+]$ 
can be extended from this interval to whole $\RR$ in a unique way (see 
the existence and uniqueness theorem in \cite{KLV17}).
Then it is easy to see from (\ref{e:Eys}) that 
$\im ( \vep (s) y^2 (s)+ \ka^{-2} (y' (s))^2 ) $ is a constant independent of $s$.
Let us denote this constant by $\wt \la_0 $ and define $x (\cdot)$ by  $x(s) = y'(s)/(\ii \ka y(s))$. 
If $\wt \la_0 \ge 0$, the extended to the whole $\RR$ solution $y$ (the corresponding extended tuple $(x (\cdot), \E (y) (\cdot) , 
\wt \la_0 , y ) $)
will be called an \emph{extended y-extremal} (resp., 
\emph{extended extremal tuple}).

By Theorem \ref{t:PMP} and (\ref{e:noSelfint}), every minimum time control is a 
restriction of an extended extremal control on a finite interval $[s_-,s_+]$ containing 
no stationary subintervals. This statement explains the logic of definitions and 
studies of this section. 


In the rest of the section we assume (if it is not explicitly stated otherwise) that $y $ is an extended y-extremal, 
$(x , \vep , \la_0 , y^2 ) $ is the associated extended extremal tuple,
and $\{b_j\}$ is the set of switch points of $\vep (\cdot)$ indexed by the following lemma.

\begin{lem} 
Let $\vep (\cdot)$ be an extended extremal control. 
Then the set of switch point of $\vep$ has a form of strictly increasing 
sequence $\{ b_j \}_{j=-\infty}^{+\infty}$ satisfying $\lim_{j\to\pm \infty} b_j = \pm \infty$.
\end{lem}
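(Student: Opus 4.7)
The plan is to establish two properties of the set $S$ of switch points of the extended extremal control $\vep(\cdot)$ on $\RR$: (a) $S$ is locally finite, and (b) $S$ is unbounded both above and below. Combining these yields the enumeration as a strictly increasing sequence $\{b_j\}_{j=-\infty}^{+\infty}$ with $b_j \to \pm\infty$ as $j \to \pm\infty$.

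For (a), I would restrict $y$ to an arbitrary bounded interval $[s_-,s_+]$; the restriction is a nontrivial solution of (\ref{e:ep}) satisfying (\ref{e:BCxpm}) with $\eta_\pm := x(s_\pm) \in \wh\CC$, so by Remark \ref{r:SwitchPoints} the set $\{s \in (s_-,s_+) : \im y^2(s) = 0\}$ is finite. Every switch point in $(s_-,s_+)$ belongs to this set, hence $S \cap (s_-,s_+)$ is finite.

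For (b), I argue by contradiction: suppose $S \cap [B,+\infty)$ is finite for some $B \in \RR$, so that $\vep(s) \equiv \ep$ on $[B',+\infty)$ for some $B' \ge B$ and $\ep \in \{\n_1^2, \n_2^2\}$. By the constant-control representation (\ref{e:yxRepr}),
\[
y(s) = c_+ \ee^{\ii\vka s} + c_- \ee^{-\ii\vka s}, \qquad s \ge B',
\]
with $\vka = \ep^{1/2}\ka = \vka_1 + \ii \vka_2 \in \CC_4$ (so $\vka_1 > 0$, $\vka_2 < 0$) and $|c_+|+|c_-|>0$. Expanding
\[
y^2(s) = c_+^2\ee^{2\ii\vka s} + 2c_+c_- + c_-^2\ee^{-2\ii\vka s},
\]
I split into two cases. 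If $c_+ \neq 0$, the first summand dominates as $s \to +\infty$ (its modulus $|c_+|^2\ee^{-2\vka_2 s}$ grows exponentially while the third summand decays), and its argument $\Arg c_+^2 + 2\vka_1 s$ is linear with strictly positive slope; hence for large $s$ the sign of $\im y^2(s)$ agrees with that of $\sin(\Arg c_+^2 + 2\vka_1 s)$ and changes periodically. If $c_+ = 0$, then $y^2(s) = c_-^2\ee^{-2\ii\vka s}$ and $\im y^2(s) = |c_-|^2\ee^{2\vka_2 s}\sin(\Arg c_-^2 - 2\vka_1 s)$ still changes sign at regular intervals of length $\pi/(2\vka_1)$. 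In either case $\im y^2$ changes sign infinitely often on $[B',+\infty)$, contradicting (\ref{e:MinC}), which forces $\im y^2$ to have constant sign on every interval of constancy of $\vep$. Unboundedness of $S$ from below is obtained either by the same analysis as $s \to -\infty$ (with the roles of $c_+$ and $c_-$ interchanged) or directly via the time-reversal symmetry (S0)$\Leftrightarrow$(S1).

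The step I expect to be most delicate is the degenerate case $c_+ = 0$ in (b): the dominant-term asymptotics break down because $|y(s)|$ decays rather than grows, so one must rely purely on the linear evolution of $\Arg y^2$ to guarantee the sign changes of $\im y^2$. This reduces to the strict positivity of the slope $2\vka_1 = 2\ep^{1/2}\re\ka$, which is immediate from the standing assumption $\ka \in \CC_4$.
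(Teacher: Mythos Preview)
Your proposal is correct and follows essentially the same approach as the paper's proof: both use Remark \ref{r:SwitchPoints} for local finiteness, derive a contradiction from the explicit representation (\ref{e:yxRepr}) to show switch points are unbounded above, and invoke the symmetry (S0)$\Leftrightarrow$(S1) for unboundedness below. The only difference is that you spell out the asymptotic/oscillation argument in detail, whereas the paper simply notes that having $y^2[(s_0,+\infty)] \cap \RR = \varnothing$ together with (\ref{e:yxRepr}) ``easily leads to a contradiction.''
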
 

\begin{proof}

Let us show that for arbitrary $s_0 \in \RR$, there exists a switch point of $\vep$ in $(s_0,+\infty)$. 

Indeed, assume that there are no switch points in $(s_0,+\infty)$ and 
$\vep (s) = \ep $ for $s > s_0 $ with a constant $\ep \in \{\n_1^2,\n_2^2\}$.
Then, for $s>s_0$, one one hand, we have $y^2[(s_0,+\infty)]) \cap \RR = \varnothing$,
and, on the other hand  (\ref{e:yxRepr}) holds with $|c_-|+|c_+| \neq 0$.
This easily lead to a contradiction.

By Remark \ref{r:SwitchPoints}, there exists at most finite number of switch points in each finite interval.
The symmetry (S0)$\Leftrightarrow$(S1) completes the proof.
\end{proof}

\begin{rem} \label{e:L0}
The special role of the no-return line $\Li_0$ and the extended half-planes $\Hp_0^\pm$ of Remark \ref{r:noreturn} 
is that 
$x(s) \in \Li_0$  if and only if $s$ is the turning point $\p_0$ of $y$ (see Fig. \ref{f:x}).
If the trajectory $x[\RR]$ does not intersect $\Li_0$, then 
either $x[\RR] \subset \Intr \Hp_0^-$ and we put $\p_0 = +\infty$, or $x[\RR] \subset \Intr \Hp_0^+$ and 
we put $\p_0 = -\infty$.
\end{rem}

\begin{rem} \label{r:wtbj}
Let $\p_0 \in \RR$. Then we can introduce an additional enumeration for switch points denoting 
by $\wt b_j$, $j \in \NN$ ($j \in -\NN$), all switch points greater than $\p_0$ 
(resp., all switch points less than $\p_0$) in increasing order. 
We put $\wt b_0 := \p_0$, but recall that $\p_0 $ is not a switch point
 due to (\ref{e arg dec arg inc}) and (\ref{e:MinC}) (see Fig. \ref{f:x}).
 \end{rem}

\begin{rem} \label{r:StInt}
If $[\wt s_-, \wt s_+]$ is a maximal stationary interval for $x(\cdot)$, then it has 
the form $[b_{j},b_{j+1}]$ for a certain $j$ and, in the interval $(\wt s_-, \wt s_+)$,
either $ \vep (\cdot) \equiv \n_1^2 \equiv x^2 (\cdot) $, or $\vep (\cdot) \equiv \n_2^2 
\equiv x^2 (\cdot)$. 
Stationary intervals cannot be discarded as pathologies irrelevant to resonance optimization problems.
Their role is shown in Theorems 
\ref{t:MinDec2}, \ref{t:ExOpt2} and can bee seen in the existence of the straight segment 
of $\Pa^\odd_{\Dr}$ in Fig. \ref{f:Par} (b).
\end{rem}


\subsection{Iterative calculation of switch points}
\label{ss:CalcSwitch}


For all $s \in \RR$, let us define the $\wh \CC$-valued  function
\begin{equation} \label{e:vth=ep}
\vth (s) = \I (\vep^{-1/2} (s) x(s)) \qquad  \text{(recall that $\I (z) := (1-z)/(1+z)$)},
\end{equation}
which we call the \emph{ilog-phase} for the control $\vep(\cdot)$.
Let us take an arbitrary layer $I=(b_j,b_{j+1})$ (i.e., a maximal interval of constancy of $\vep (\cdot)$).
Then, in $I$, the function $\vth (\cdot) $ is continuous and equals to  
$\vth_{\ep} (\cdot)$ of (\ref{e:vthep}) with $\ep := \vep (b_j+0)$.
In $I$, the representations (\ref{e:yxRepr}) and (\ref{e:vth=evth}) with 
$\vka :=\ep^{1/2} \ka $ 
give 
\begin{gather*} 
\vth (s) = \vth (\wt s) \ee^{-2\ii \vka  (s-\wt s)} , \qquad s, \wt s \in I ,
\end{gather*}
which have to be understood as $\vth (s) \equiv \infty$ on $I$ if $\vth (\wt s) = \infty$.
Note that 
$I$ is an interval of constancy for $x$ exactly when $c_+ = 0$, or $c_- = 0$ in (\ref{e:yxRepr}).
Hence, in the case when $I$ is not an interval of constancy, (\ref{e:yRepr}) and (\ref{e:xRepr}) holds in $I$.

\begin{lem} \label{l:bj+1} 
Assume that $I = (b_j,b_{j+1})$ is not a stationary interval 
for $\vep (\cdot)$. Let $\ep = \vep (b_j+0)$ and $\vka = \ep^{1/2} \ka $.
Let a function $\Phi ( \cdot ) \in C [b_j , +\infty )$ satisfy 
\begin{align} \label{e:Phi}
\Phi ( s ) = \vth^{1/2} (b_j+0) \ee^{-\ii \vka  (s-b_j)} .
\end{align} 
Denote by $\tp (\zeta)$ the minimal 
$s \in (b_j , +\infty)$ so that 
\begin{gather*} \label{e:J-1(cp)}
\text{$\Phi ( s ) \in \J^{\{-1\}} [\zeta \RR_+]$, where $\J^{\{-1\}} [\zeta \RR_+] := \{ z \in \CC \ : \J (z) = \zeta c  \text{ for a certain }  c \in \RR_+  \} $} 
\end{gather*}
if such $s$ exists, otherwise put $\tp (\zeta) = +\infty$.

\item[(i)] Let $\p_0 \not \in I $ 
and $\mp \pa_s \Arg_\star y (s) >0$ on $I$. Then 
$b_{j+1} = \tp (\mp \ii \J(\Phi (b_j)))$.

\item[(ii)] Let $\p_0 \in \overline{I}=[b_j,b_{j+1}]$ and $x (\p_0) \neq \infty$. Then 
$b_{j+1} = \tp (\J(\Phi (b_j))) $.

\item[(iii)] Let $\p_0 \in \overline{I}$ and $x (\p_0) = \infty$. Then 
$b_{j+1} = \tp (-\J(\Phi (b_j))) $.
\end{lem}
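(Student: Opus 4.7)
The plan is to obtain a closed-form expression $y(s) = A \, \J(\Phi(s))$ on the layer $I$, with a constant $A \in \CC \setminus \{0\}$, then translate the switch condition of Remark \ref{r:SwitchPoints} ($y^2(s) \in \RR$) into a ray-hit condition for $\J(\Phi(s))$, and finally use the rotation information (A1)--(A3) of the preceding subsection to identify, case by case, which ray is reached first. To get the identity: since $I=(b_j,b_{j+1})$ is not stationary for $x$, both coefficients $c_\pm$ in the representation (\ref{e:yxRepr}) are nonzero, so $y(s) = c_+ \ee^{\ii\vka s}(1+\vth(s))$ with $\vth(s) = (c_-/c_+)\ee^{-2\ii\vka s}$; substituting $\vth = \Phi^2$ and $\ee^{\ii\vka(s-b_j)} = \Phi(b_j)/\Phi(s)$ (both immediate from (\ref{e:Phi})) yields $y(s) = 2 c_+ \ee^{\ii\vka b_j}\Phi(b_j)\, \J(\Phi(s)) =: A \, \J(\Phi(s))$. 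Consequently $y^2(s) \in \RR$ if and only if $\J(\Phi(s))$ lies on one of four rays through the origin whose directions differ by multiples of $\pi/2$; because $b_j$ is itself a switch point and $y(b_j) \neq 0$ (else $b_j = \p_0$), the value $\J(\Phi(b_j)) \neq 0$ already sits on one of these rays, so the four candidates for $\J(\Phi(b_{j+1}))$ are the rays $\J(\Phi(b_j))\RR_+$, $\pm\ii\J(\Phi(b_j))\RR_+$, and $-\J(\Phi(b_j))\RR_+$.

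The selection among these four is dictated by the evolution of $\Arg_\star y = \Arg A + \Arg_\star \J(\Phi)$ on $I$, which is governed by (A1)--(A3). In case (i), (A1) gives strict monotonicity of $\Arg_\star y$ on $I$; minimality of $b_{j+1}$ forces the total rotation across $I$ to equal exactly $\pi/2$, in the direction determined by the sign of $\pa_s \Arg_\star y$, so $\J(\Phi(b_{j+1})) \in \mp \ii \J(\Phi(b_j)) \RR_+$ whenever $\mp \pa_s \Arg_\star y > 0$. In case (ii), (A2) makes $\Arg_\star y$ decrease on $(b_j,\p_0)$ and increase on $(\p_0,b_{j+1})$; since $\p_0$ is not a switch point and $y(\p_0) \neq 0$, the four-ray lattice cannot be met strictly between $b_j$ and $\p_0$, the decrease there is therefore less than $\pi/2$, and the first return to the lattice after $\p_0$ happens precisely when $\Arg_\star y$ climbs back to its initial value $\Arg_\star y(b_j)$, placing $\J(\Phi(b_{j+1}))$ on $\J(\Phi(b_j))\RR_+$. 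In case (iii), $y(\p_0) = 0$ forces $\Phi(\p_0) = \pm \ii$ and $\J(\Phi(\p_0)) = 0$, and (A3) contributes an additional $\pi$-jump of $\Arg_\star y$ at $\p_0$; combining the pre- and post-$\p_0$ monotonicity bounds (each strictly less than $\pi/2$, by the same no-premature-hit argument as in case (ii)) with this $\pi$-jump places $\Arg_\star y(b_{j+1})$ exactly $\pi$ past $\Arg_\star y(b_j)$, whence $\J(\Phi(b_{j+1})) \in -\J(\Phi(b_j)) \RR_+$.

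The main obstacle I expect is branch bookkeeping: the square root $\vth^{1/2}(b_j+0)$ fixes $\Phi$, and hence $A$, only up to sign, but only $A^2$ enters the switch condition, so this ambiguity is harmless; relatedly, the passage between the $y$-representation and the $\J\circ\Phi$-representation requires $\vth \notin \{0,\infty\}$ on $I$, i.e., $x \neq \pm \ep^{1/2}$ there, which is precisely the non-stationarity assumption. Once these points are settled, all three parts reduce to the common rotation-counting argument described above, and the monotonicity result Lemma \ref{l:MonotFunc} underlying (A1)--(A3) supplies the strict-inequality bounds needed to exclude premature ray hits.
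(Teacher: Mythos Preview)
Your proposal is correct and follows essentially the same route as the paper: derive $y(s)=A\,\J(\Phi(s))$ on the layer, convert the switch condition $y^2\in\RR$ into a ray-hit condition for $\J(\Phi)$, and then use the rotation properties (A1)--(A3) to determine which of the four rays is reached first in each case. Your treatment is in fact slightly more explicit than the paper's (you flag the harmless sign ambiguity in $\vth^{1/2}$ and correctly identify the net $\Arg_\star y$-increment in case~(iii) as $\pi$, whereas the paper's text contains a typo there); the only cosmetic point is that the ``strictly less than $\pi/2$'' bound on the pre-$\p_0$ rotation could in principle be an equality at the limit $s\to\p_0$, but this does not affect the conclusion since $\J(\Phi(\p_0))$ is either off the target ray (case~(ii)) or equal to $0\notin\zeta\RR_+$ (case~(iii)).
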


\begin{proof}
By the definition of $\Phi (\cdot)$ and (\ref{e:vth=evth}), 
\begin{align}\label{e:vth=Phi2}
\text{$\vth (s) = \Phi^2 (s) $ for $s \in I$.}
\end{align}
It follows from (\ref{e:yRepr}) that, for a certain constant $\wt A \neq 0$,  one has 
$y (s) = \wt A \J (\Phi (s))$, $s \in I$.
Since $b_j$ is a switch point, $y^2 (b_j) \in \RR \setminus \{0\}$.
The switch point $b_{j+1}$ is the smallest time point  after the time point $b_j$ 
when the trajectory of 
$y^2$ again reaches $\RR \setminus \{0\}$.

In the case of statement (i), the rotation of $y$ around $0$ goes in one direction.
So $\Arg_\star y$ changes on $\pi/2$ when $s$ passes from $b_j$ to $b_{j+1}$. This and $y (s) = \wt A \J (\Phi (s))$, $s \in I$, give the desired statement.
In the case (ii), $y(s)$ rotates in one direction till $s=\p_0$, then rotate back 
(see (\ref{e arg dec arg inc})) and 
$\Arg_\star y $ reaches at $s=b_{j+1}$ the value of $\Arg_\star y (b_j)$. So 
$y (\cdot) = \wt A \J (\Phi (\cdot))$ gives statement (ii).
In the case (iii), $y(s)$  passes through $0 = y(\p_0)$ changing the direction of rotation 
and, due to (\ref{e:A3}),
 $\Arg_\star y (s)$ reaches at $s=b_{j+1}$ the value $\pi/2 + \Arg_\star y (s)$.
Due to $y (\cdot) = \wt A \J (\Phi (\cdot))$, this translates to (iii) 
\end{proof}

The condition $\p_0 \in I$ can be checked with the use of the following fact:
the no-return lines $\Li_\tau $ 
(with $\tau \in [0,-2\Arg_0 \ka$]) 
are mapped by $\I (\cdot)$ onto
\begin{align} \label{e:I[noreturn]}
& \I [\Li_\tau] = \TT_{R_\tau} (Z_\tau) , 
\end{align}
where 
$Z_\tau =   \frac{ \wh \ka^2 + e^{ \ii (\pi - 2\tau)}}{\wh \ka^2 - e^{ \ii (\pi - 2\tau)}} $, 
$R_\tau = \frac{2}{|\wh \ka^2 - e^{ \ii  (\pi - 2\tau)}|} $, and $\wh \ka = \ka/|\ka|$.
 In particular,  due to (\ref{e:vth=ep}),
\begin{align} \label{e:TT0}
\text{ $\mp \pa_s \Arg_\star y (s) >0$} & \text{ if and only if 
$\pm |\vth (s\pm 0) - Z_0 | \mp R_0 >0$}; \\
\text{ $x \in \ii \CC_\pm $} & \text{ if and only if 
$\pm |\vth (s \pm 0) | \mp 1 >0$}. 
\label{e:TTiRR}
\end{align}


\subsection{Abnormal extremals and quarter wave layers}
\label{ss:Abn}

\begin{prop} \label{p:AbnExt} 
Let $(x , \vep , \la_0 , y ) $  be an extended extremal tuple.

\item[(i)]  Let $\la_0 = 0$ (i.e., the extremal tuple is abnormal). 
Then $s$ is a switch point of $\vep (\cdot)$ if and only if 
$x^2 (s) \in \RR \setminus \{0\}$ and $s$ is not an interior point of a 
stationary interval for $x (\cdot)$.

\item[(ii)] Let $x^2 (b_j) \in \RR$ for at least one switch point $b_j$, then $\la_0 = 0$.
\end{prop}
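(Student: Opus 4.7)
The plan rests on rewriting the transversality identity (\ref{e:TransC}) in a more usable form. Since $x = y'/(\ii \ka y)$ wherever $y \neq 0$, we have $\ka^{-2}(y')^2 = -x^2 y^2$, so (\ref{e:TransC}) becomes the pointwise conservation law
\[
\la_0 \;=\; \im\!\bigl(y^2(\vep - x^2)\bigr) \qquad \text{for every } s \in \RR \text{ with } y(s) \neq 0.
\]
Because $\vep$ is real-valued, at each point $s$ where additionally $y^2(s) \in \RR$ this reduces to $\la_0 = -\,y^2(s)\,\im x^2(s)$.

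First I would dispose of (ii). At any switch point $b_j$, Remark~\ref{r:SwitchPoints} gives $y^2(b_j) \in \RR$, while the characterization stated just after (\ref{e:A3}) (if $y(s_0)y'(s_0) = 0$ then $s_0 = \p_0$), combined with $b_j \neq \p_0$, forces $y(b_j) \neq 0$ and hence $y^2(b_j) \in \RR \setminus \{0\}$. Substituting $x^2(b_j) \in \RR$ into the reduced identity immediately yields $\la_0 = 0$.

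For the forward direction of (i), take $\la_0 = 0$ and $b_j$ a switch point. The argument above gives $y^2(b_j) \in \RR \setminus \{0\}$, so the reduced identity forces $\im x^2(b_j) = 0$. To exclude $x^2(b_j) = 0$, note that this would mean $y'(b_j) = 0$, and together with $y(b_j) \neq 0$ the turning-point characterization again forces $b_j = \p_0$, contradicting that $\p_0$ is not a switch point. Finally, by Remark~\ref{r:StInt} a stationary interval is contained in a single maximal interval of constancy of $\vep$; since $b_j$ lies on the boundary of two such intervals, it cannot lie in the interior of any stationary interval.

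The backward direction of (i) is the main step. Assume $\la_0 = 0$, $x^2(s) \in \RR \setminus \{0\}$, and that $s$ is not interior to any stationary interval; note $y(s) \neq 0$ since $x(s)$ is finite. The conservation law then reads $\bigl(\vep(s \pm 0) - x^2(s)\bigr)\,\im y^2(s) = 0$ for each one-sided value of $\vep$. If $\im y^2(s) = 0$, then $y^2(s) \in \RR \setminus \{0\}$, and $s \neq \p_0$ (otherwise either $y(s)=0$, already excluded, or $y'(s)=0$, giving the impossible $x^2(s) = 0$); Remark~\ref{r:SwitchPoints} then shows $s$ is a switch point. If instead $\im y^2(s) \neq 0$, both one-sided limits of $\vep$ at $s$ must equal the same real number $x^2(s) \in \{\n_1^2, \n_2^2\}$, so $\vep$ is continuous, and hence constant, in a neighborhood of $s$; in particular $s$ is not a switch point, and $x(s) = \pm\sqrt{\vep(s)}$ is an equilibrium of (\ref{e:R}) under that constant control. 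Uniqueness for (\ref{e:R}) then keeps $x$ at this equilibrium throughout the entire layer containing $s$, making that layer a stationary interval with $s$ in its interior, contradicting the hypothesis. The main obstacle is precisely this Case~B dichotomy: the somewhat subtle hypothesis ``not interior of a stationary interval'' is tailored to rule out the equilibrium scenario, and the bang-bang characterization of switch points via $\im y^2 = 0$ has to be combined with the equilibrium dynamics of (\ref{e:R}) to yield the contradiction.
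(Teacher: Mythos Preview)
Your argument is essentially the paper's, just reorganized: where the paper asserts in one line that ``either $x^2(s-0)\neq\vep(s-0)$ or $x^2(s+0)\neq\vep(s+0)$'' and then reads off $\im y^2(s)=0$, you run a case split on $\im y^2(s)$ and handle the equilibrium scenario explicitly. Part (ii), the forward direction of (i), and your Case~B in the backward direction are all correct.

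There is one genuine gap, in Case~A of the backward direction. You write ``$s\neq\p_0$ (otherwise either $y(s)=0$, already excluded, or $y'(s)=0$, giving the impossible $x^2(s)=0$)''. This invokes the implication $s=\p_0\Rightarrow y(s)y'(s)=0$, which is the \emph{converse} of what the paper proves just before (\ref{e arg dec arg inc}); that converse is false in general, since $\p_0$ is defined by $G_0(\p_0)=\im(\bar y(\p_0)y'(\p_0))=0$, and one may have $y(\p_0)=y'(\p_0)=1$, say. The paper's route is Remark~\ref{e:L0}: $s=\p_0$ if and only if $x(s)\in\Li_0$. For $\ka\in\CC_4$ the line $\Li_0=\ii e^{-\ii\Arg_0\ka}\wh\RR$ is neither $\wh\RR$ nor $\ii\wh\RR$, so $\Li_0\cap(\RR\cup\ii\RR)=\{0,\infty\}$. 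Your hypothesis $x^2(s)\in\RR\setminus\{0\}$ gives $x(s)\in(\RR\cup\ii\RR)\setminus\{0,\infty\}$, whence $x(s)\notin\Li_0$ and $s\neq\p_0$. With this correction your proof goes through.
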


\begin{proof}
\emph{(i)} 
From (\ref{e:H}) and $\la_0 = 0$, we get the equality
$\im(y^2 [ x^2 - \vep]) \equiv 0 $. 

Consider a switch point $b_j$. By Remark \ref{r:SwitchPoints},
$b_j$ is not the turning point $\p_0$ of $y$, $y^2 (b_j) \in \RR \setminus \{0\}$, 
and $x(b_j) \not \in \{0,\infty\}$. From this and $\im(y^2 [ x^2 - \vep]) \equiv 0 $ 
we see that 
$x^2 (b_j) \in \RR \setminus \{0\}$. 
Remark \ref{r:StInt} completes the proof of the `only if' part of (i).

Assume now that $x^2 (s) \in \RR \setminus \{0\}$ and $s$ is not an interior point of a 
stationary interval for $x (\cdot)$. Then either $x^2 (s-0) \neq \vep (s-0)$, 
or $x^2 (s+0) \neq \vep (s+0)$. Plugging the corresponding inequality 
into $\im(y^2 [ x^2 - \vep]) \equiv 0 $, we see that $\im y^2 (s) = 0$,  and so $s$ is a switch point due to Remark \ref{r:SwitchPoints} and the fact that $s \neq \p_0$ (the latter follows from Remark \ref{e:L0}).

(ii) It follows from (\ref{e:H}), that 
$ \la_0 = - \im(y^2  (b_j) [ x^2 (b_j) - \vep (b_j \pm 0)]) $. Since $y^2  (b_j)$ and 
$x^2 (b_j)$ are real, one gets $\la_0 = 0$.
\end{proof}

\begin{prop} \label{p:[ep1ep2]} 
Let $(x , \vep , 0 , y) $  be an extended abnormal extremal tuple. 

\item[(i)] The intersection of the trajectory $x[\RR]$ with the interval 
$[\n_1,\n_2]$ 
(with $[-\n_2,-\n_1]$) consists of at most 
one point $x_+$ (resp., $x_-$). Assume that the point $x_+ $ does exist. 
Then one of the following three cases takes place:
\begin{itemize}
\item[(i.a)] In the case $x_+ \in (\n_1,\n_2)$, there exists a unique
$s \in \RR$ such that $x(s) = x_+$. This $s$ equals to a certain switch point $b_j$, $j \in \ZZ$; moreover, $\im x'(b_j - 0) $ and $\im x'(b_j + 0) $ are nonzero and of opposite sign.
If $\mp \im x'(b_j - 0) >0$
and $\pm \im x'(b_j + 0) >0$, then 
$\vep (b_j \mp 0) = \n^2_1$ and $\vep (b_j \pm 0) = \n^2_2$.   
\item[(i.b)] In the case $x_+ = \n_1$ (in the case $x_+ = \n_2$), the set  
$\{s \in \RR : x(s) = x_+ \}$ consists 
of one stationary interval of the form $[b_j,b_{j+1}]$, and 
$\vep (b_j - 0) = \vep (b_{j+1} + 0) \neq x_+^2$.
\end{itemize}

\item[(ii)]  If  $x(b_m) \in (\n_1,+\infty)$ and  
$\vep (b_m+0) = \n^2_1$ (for a certain $m \in \ZZ$), then 
$x(b_{m+1}) \in (0,\n_1)$ 
(and $\vep (b_{m+1} +0) = \n^2_2$).
\item[(iii)] If $x(b_m) \in (0,\n_2)$ and 
$\vep (b_m+0) = \n^2_2$, then 
$x(b_{m+1}) \in (\n_2,+\infty)$ 
(and $\vep (b_{m+1} +0) = \n^2_1$).
\end{prop}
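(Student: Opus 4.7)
\emph{Proof plan.} The plan is to establish (ii) and (iii) first by a direct ilog-phase calculation, and then to deduce (i) from them together with the abnormal switching characterization of Proposition \ref{p:AbnExt}. Fix a layer $I = (b_m, b_{m+1})$ on which $\vep \equiv \ep$, and use the ilog-phase from (\ref{e:vthep}) in the form $\vartheta(s) = \vartheta(b_m{+}0)\, e^{-2\ii \varkappa (s - b_m)}$ with $\varkappa = \ep^{1/2}\ka \in \CC_4$. Since $\im \varkappa < 0$ and $\re \varkappa > 0$, the modulus $|\vartheta(s)|$ is strictly decreasing on $I$ and $\vartheta$ rotates clockwise. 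Under abnormality ($\la_0 = 0$), Proposition \ref{p:AbnExt}(i) says that switch points in the interior of non-stationary intervals are precisely the points where $x^2 \in \RR \setminus \{0\}$; since $x = \ep^{1/2}\, \I(\vartheta)$, this translates to the ilog-phase hitting $\RR \cup \TT$.

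For (ii), set $\ep = \n_1^2$. From $x(b_m) \in (\n_1, +\infty)$ one computes $\vartheta(b_m{+}0) = \I(x(b_m)/\n_1) \in (-1, 0)$, so $|\vartheta(b_m{+}0)| < 1$; since $|\vartheta|$ is decreasing, $\vartheta$ never reaches $\TT$ on $I$ and the next switch is the first return to $\RR$. A clockwise rotation by $\pi$ produces $\vartheta(b_{m+1}{-}0) = -\vartheta(b_m{+}0)\, e^{2\im \varkappa \cdot \pi/(2\re\varkappa)} \in (0, 1)$, whence $x(b_{m+1}) = \n_1\,\I(\vartheta(b_{m+1}{-}0)) \in (0, \n_1)$; the parenthetical $\vep(b_{m+1}{+}0) = \n_2^2$ is forced by $b_{m+1}$ being a switch point. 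The proof of (iii) is completely parallel: $\ep = \n_2^2$ and $x(b_m) \in (0, \n_2)$ give $\vartheta(b_m{+}0) \in (0,1)$, which rotates into $(-1,0)$, yielding $x(b_{m+1}) \in (\n_2, +\infty)$.

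For (i), any $s$ with $x(s) \in (\n_1, \n_2)$ has $x^2(s) \in (\n_1^2, \n_2^2)$, hence $x^2(s) \neq \vep(s{\pm}0)$ so $s$ is not interior to a stationary interval; combined with $x^2(s) \in \RR$, Proposition \ref{p:AbnExt}(i) yields $s = b_j$ for some $j$. Writing $x'(b_j{\pm}0) = \ii\ka(-x_+^2 + \vep(b_j{\pm}0))$ and using $\ii\ka \in \CC_1$ gives $\im x'(b_j{\pm}0) < 0 \Leftrightarrow \vep(b_j{\pm}0) = \n_1^2$ and $\im x'(b_j{\pm}0) > 0 \Leftrightarrow \vep(b_j{\pm}0) = \n_2^2$, which supplies the asserted sign dichotomy and the correspondence between the signs of $\im x'(b_j\pm 0)$ and the values of $\vep$. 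Uniqueness of such a switch point follows by chaining (ii) and (iii): irrespective of whether $\vep(b_j{+}0) = \n_1^2$ or $\n_2^2$, iterating either of (ii)-(iii) shows that all subsequent switches $b_{j+k}$, $k \geq 1$, satisfy $x(b_{j+k}) \in (0, \n_1) \cup (\n_2, +\infty)$, so none can lie in $(\n_1, \n_2)$. For (i.b) with $x_+ = \n_1$ (the case $x_+ = \n_2$ is analogous), stability of $\n_1$ under $\vep \equiv \n_1^2$ shows that $\{s: x(s) = \n_1\}$ is a maximal interval $[b_j, b_{j+1}]$ on which $\vep \equiv \n_1^2$, forcing $\vep(b_j{-}0) = \vep(b_{j+1}{+}0) = \n_2^2$; uniqueness again follows from (ii)-(iii) applied forward from $b_{j+1}$. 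The $x_- \in [-\n_2, -\n_1]$ statement is obtained from the $x_+$ case via the symmetry (S0)$\Leftrightarrow$(S1).

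The main conceptual obstacle is the order of the argument: the uniqueness claims in (i) rely essentially on the local transition rules (ii)-(iii), so these must be proved first; the computations themselves are straightforward once one records that $|\vartheta|<1$ throughout the layers involved, so $\vartheta$ never meets $\TT$ and the next switch is governed solely by the half-turn condition $\vartheta \in \RR$.
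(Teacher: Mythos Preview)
Your proof is correct, and the overall architecture --- prove (ii)--(iii) first, then derive (i) by chaining them --- is exactly the paper's. The difference is in the mechanism for (ii)--(iii).

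The paper argues (ii)--(iii) geometrically: the imaginary axis $\ii\wh\RR$ is one of the no-return lines $\Li_\tau$ of Remark~\ref{r:noreturn} (take $\tau=-\Arg_0\ka$), so a trajectory starting on $\RR_+$ stays in the open right half-plane; the next switch value is therefore again in $\RR_+$, and the sign of $\im x'=\re\ka\,(-x^2+\vep)$ at a real $x$ then pins down whether that value lies in $(0,\n_1)$ or $(\n_2,+\infty)$. You instead compute directly with the ilog-phase $\vth$: starting with $|\vth|<1$ and $\vth$ real, the strict decrease of $|\vth|$ keeps $\vth$ inside $\DD$ (hence $x$ off $\ii\RR$), and a clockwise half-turn lands $\vth$ on the opposite real half-interval. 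This is precisely the calculation the paper performs a page later in Theorem~\ref{t:AbnExtWidth}(i) to get the quarter-wave width, so you are in effect front-loading that computation. Both routes are short; yours gives the layer width as a by-product, the paper's avoids any explicit parametrization.

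Two small points on (i). First, your ``stability of $\n_1$'' remark in (i.b) is doing more work than the word suggests: what you actually need is uniqueness for the ODE $x'=\ii\ka(-x^2+\n_1^2)$, which forces $x\equiv\n_1$ on any layer with $\vep\equiv\n_1^2$ that touches $\n_1$ at an endpoint; together with Proposition~\ref{p:AbnExt}(i) this indeed yields a full stationary interval $[b_j,b_{j+1}]$. Second, your uniqueness argument (``all subsequent switches lie in $(0,\n_1)\cup(\n_2,+\infty)$'') only runs forward, but that suffices: two distinct points of $x[\RR]\cap[\n_1,\n_2]$ would both be switch points (or endpoints of stationary intervals), and the forward chain from the earlier one excludes the later one. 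The paper's Remark after the proof makes the same one-directional argument.
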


\begin{rem} \label{r:[ep1ep2]}
One can formulate an analogue of statement (i.a)-(i.b) for the point $x_-$ using, e.g., 
the time reversal symmetry (S0) $\Leftrightarrow$ (S1).
\end{rem}

\begin{proof}[Proof of Proposition \ref{p:[ep1ep2]}]
Statements (ii)-(iii) follow from Remark \ref{r:noreturn} about the lines 
$\Li_\tau$
of no-return and the sign of the value of the bracket $(-x^2 + \vep)$ in 
$x' = \ii \ka (- x^2  + \vep )$ for $x \in \RR$. Statement (i) is a 
consequence of (ii)-(iii) as it is explained in the following remark.
\end{proof}

\begin{rem}
It follows from statements (ii)-(iii) of Proposition \ref{p:[ep1ep2]}  
that in the cases (1) $x(b_m) \in (\n_2,+\infty)$,
$\vep (b_m+0) = \n^2_1$ and (2)  $x(b_m) \in (0,\n_1)$, 
$\vep (b_m+0) = \n^2_2$, we get for $b_{m+1}$ 
the cases (2) and (1), respectively. So the alternation of the cases (1) and (2) is stable in the sense 
that, provided it has been started once, it continues infinitely.
Now note that if for a certain $j$ we get $x(b_j) \in (\n_1,\n_2)$,
then for $b_{j+1}$ we are in the case (2) or in the case (1) 
depending on value of $\vep (b_j+0)$. 
This implies $(i.a)$ of Proposition \ref{p:[ep1ep2]}. 
The modification of this argument for the case when the trajectory of $x$ gets 
into one of stationary points $\n_1$, $\n_2$ is straightforward.
\end{rem}

When the values $x(b_j)$ and  $x(b_{j+1})$ of an abnormal x-extremal in two consecutive switch points
are real, the relation between them and the corresponding length $b_{j+1} - b_j$ 
of the layer is given by the next theorem.

With a matrix $M=(a_{jm})_{j,m=1}^2$ 
belonging to $\GL_2 (\CC) = \{ M \in \CC^{2\times2} \ : \ \det M \neq 0\}$, 
one can associate the Möbius transformation  
$f_M:  z \mapsto \frac{a_{11} z+a_{12}}{a_{21} z + a_{22}}$.
The map $M \mapsto f_M  $ is a group homomorphism. 
Since $f_{cM} = f_M$ for $c \in \CC \setminus\{0\}$, every 
 Möbius transformation can be represented as $f_{\M_1}$ with 
 $M_1 \in \SL_2 (\CC) := \{M \in  \CC^{2\times2} \ : \ \det M = 1 \}$.

\begin{thm} \label{t:AbnExtWidth}
Let $(x , \vep , 0 , y) $ be an extended abnormal extremal 
tuple. 
Let $\ep = \vep (b_j+0) $ be the value of $\vep (\cdot)$ in the layer $I=(b_j, b_{j+1} )$.
Let 
$ q:=-\frac{\im \ka}{\re \ka}$ and $q_1 := \I (-e^{-q\pi}) $ (note that $q_1 = \tfrac{1+e^{-q\pi}}{1-e^{-q\pi}} >1 $).
  
\item[(i)] Assume that 
$x[ \, \overline{I} \, ] := \{ x(s) \ : \ s \in [b_j,b_{j+1}]\}$ 
does not intersect $\ii \wh \RR$. Then
\begin{gather} \label{e:1/4}
\text{$b_{j+1} - b_j = \frac{\pi}{2 \ep^{1/2} \re \ka}$ \quad (quarter-wave layer)} 
\end{gather}
i.e., the layer width is 1/4 of the wavelength in the material of the layer. Moreover, 
$x(b_{j+1}) = f_{\scriptscriptstyle M (\ep)} (x(b_j))$, 
where $M (\ep) := 
 \begin{pmatrix} 
1 & \ep^{1/2} q_1 \\
\ep^{-1/2} q_1 & 1
  \end{pmatrix} $.

\item[(ii)] Assume that $x(s_0) \in \{0,\infty\} $ 
for a certain $s_0 \in [b_j,b_{j+1}]$. Then $s_0 = (b_j+b_{j+1})/2$ and
\begin{align} \label{e:1/2}
& \text{$b_{j+1} - b_j = \frac{\pi}{\ep^{1/2} \re \ka}$ \quad (half-wave layer);} 
\\
 \label{e:x(s_0)=inf}
\text{besides, \qquad} &\text{$-x(b_j) = x(b_{j+1}) \in (0,\ep^{1/2})$ \quad in the case $x(s_0) = \infty$;}\\
&\text{$-x(b_j) = x(b_{j+1}) \in (\ep^{1/2},+\infty)$ \quad in the case $x(s_0) = 0$.}
\label{e:x(s_0)=0}
\end{align}
\end{thm}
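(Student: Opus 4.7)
My plan is to analyze the layer $I=(b_j,b_{j+1})$ through the ilog-phase $\vth(s) = \I(\ep^{-1/2}x(s))$ of (\ref{e:vth=ep}), which by (\ref{e:vth=evth}) satisfies
\[
\vth(s) = \vth_j\, \ee^{-2\ii\vka(s-b_j)}, \qquad \vka = \ep^{1/2}\ka, \qquad \vth_j := \I(\ep^{-1/2}x(b_j))
\]
on $I$. Since $\ka\in\CC_4$, the modulus $|\vth(s)| = |\vth_j|\ee^{2\ep^{1/2}(s-b_j)\im\ka}$ is strictly decreasing in $s$ and $\Arg\vth(s)$ is strictly decreasing at the constant rate $2\ep^{1/2}\re\ka$. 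The involution $\I$ sends $\RR$ bijectively onto $\wh\RR$ and $\ii\RR$ onto $\TT$, so $x(s)\in\ii\wh\RR \Leftrightarrow \vth(s)\in\TT$. By Proposition~\ref{p:AbnExt}(i) and Remark~\ref{r:SwitchPoints} (which excludes $x\in\{0,\infty\}$ at switch points), consecutive switch points correspond to the times at which $\vth\in(\wh\RR\cup\TT)\setminus\{0,\pm1,\infty\}$; stationary layers are implicit exclusions since the widths they produce are arbitrary.

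In Case (i), the hypothesis $x[\,\overline{I}\,]\cap\ii\wh\RR=\varnothing$ translates, via $\I$, to $\vth(s)\notin\TT$ on $[b_j,b_{j+1}]$; consequently $\vth_j = r_j\in\RR\setminus\{0\}$ with $\Arg r_j\in\{0,\pi\}$. Because $\Arg\vth$ rotates monotonically, the first time after $b_j$ at which $\vth\in\wh\RR$ is reached when the argument has decreased by exactly $\pi$, giving $2\ep^{1/2}(b_{j+1}-b_j)\re\ka = \pi$, i.e.\ (\ref{e:1/4}). A direct computation yields $\vth_{j+1} = -r_j\,\ee^{-\pi q}$. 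Substituting back through $x = \ep^{1/2}\I(\vth)$ and $r_j = \I(\ep^{-1/2}x(b_j))$ and using the identity $q_1 = (1+\ee^{-\pi q})/(1-\ee^{-\pi q})$, elementary algebra collapses the resulting rational expression into $x(b_{j+1}) = (x(b_j)+\ep^{1/2}q_1)/(\ep^{-1/2}q_1\,x(b_j)+1) = f_{M(\ep)}(x(b_j))$.

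In Case (ii), $x(s_0)\in\{0,\infty\}$ is equivalent to $\vth(s_0)\in\{1,-1\}$, so $|\vth(s_0)|=1$. Strict monotonic decay of $|\vth|$ then rules out $\vth_j\in\TT$, so again $\vth_j = r_j\in\RR\setminus\{0\}$ and hence $x(b_j)\in\RR$. Splitting $r_j\ee^{-2\ii\vka\tau_0} = \pm 1$ (with $\tau_0 := s_0-b_j$) into modulus and argument parts and taking the smallest admissible $\tau_0 > 0$ (any larger $\tau_0$ would let $\vth$ revisit $\wh\RR$ earlier, contradicting the absence of switch points in $(b_j,b_{j+1})$) gives
\[
\tau_0 = \frac{\pi}{2\ep^{1/2}\re\ka}, \qquad |r_j| = \ee^{\pi q}, \qquad \mathrm{sign}(r_j) = -\vth(s_0).
\]
A further quarter-wave rotation after $s_0$ then brings $\vth$ to the first real non-unimodular value and thus to $b_{j+1}$, yielding $b_{j+1}-s_0 = \tau_0$ and hence (\ref{e:1/2}) with $s_0 = (b_j+b_{j+1})/2$. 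Finally, explicit evaluation of $x(b_j) = \ep^{1/2}\I(\pm\ee^{\pi q})$ and $x(b_{j+1}) = \ep^{1/2}\I(\mp\ee^{-\pi q})$ via $\I(\ee^{\pi q}) = -1/q_1$ and $q_1>1$ produces the equalities $-x(b_j)=x(b_{j+1})$ and locates both in the intervals claimed in (\ref{e:x(s_0)=inf}) and (\ref{e:x(s_0)=0}).

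The step I expect to require most care is verifying that the candidate $b_{j+1}$ produced by the modulus/argument analysis is actually the \emph{immediately next} switch point -- that no earlier $s\in(b_j,b_{j+1})$ lies in the switch-point set. This is handled by combining the strict monotonicities of $|\vth|$ and $\Arg\vth$ with the exclusion of $\{0,\pm1,\infty\}$ coming from Proposition~\ref{p:AbnExt}(i) and Remark~\ref{r:SwitchPoints}. Once this is in place, everything else reduces to linear-fractional algebra made particularly clean by the substitution $q_1 = \I(-\ee^{-q\pi})$.
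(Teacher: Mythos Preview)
Your argument is correct for non-stationary layers and takes a genuinely different route from the paper. The paper proves both parts via Lemma~\ref{l:bj+1}, which tracks the rotation of the mode $y$ through the square-root phase $\Phi=\vth^{1/2}$ and the Kutta--Zhukovskii transform $\J$ (so that $y=A\,\J(\Phi)$), and in part~(ii) additionally invokes the reflection symmetry of $\E(y)$ about $s_0$ together with a contradiction argument to rule out $x(b_j)\in(-\infty,-\ep^{1/2}]$. You bypass all of this by exploiting Proposition~\ref{p:AbnExt}(i), which in the abnormal case characterises switch points directly through $x^2\in\RR\setminus\{0\}$; translating via $\I$ to $\vth\in\wh\RR\cup\TT$ then reduces everything to reading off argument and modulus of a single exponential. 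This is cleaner and more self-contained for the abnormal case, though the paper's Lemma~\ref{l:bj+1} has the advantage of applying uniformly to normal extremals as well (cf.\ Theorem~\ref{t:NextWidth}).

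There is one genuine slip: your remark that ``stationary layers are implicit exclusions since the widths they produce are arbitrary'' is incorrect. On a stationary layer $x\equiv\pm\ep^{1/2}$ one has $y(s)=c_\pm\ee^{\pm\ii\vka s}$, so $y^2$ still rotates at angular rate $2\ep^{1/2}\re\ka$ and the switch points of $\vep=\E(y)$ are still exactly $\pi/(2\ep^{1/2}\re\ka)$ apart; hence (\ref{e:1/4}) holds there too (and $f_{M(\ep)}$ fixes $\pm\ep^{1/2}$, so the M\"obius formula is consistent). The paper handles this case explicitly in two lines; you should do the same rather than dismiss it. In part~(ii) no correction is needed, since $x(s_0)\in\{0,\infty\}$ is incompatible with $x\equiv\pm\ep^{1/2}$.
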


\begin{proof}
(i) First, note that the combination of (\ref{e:1/4}) with (\ref{e:vth=evth}) gives 
$ \I (\ep^{-1/2} x(b_{j+1})) =   -\ee^{ -q\pi} \I (\ep^{-1/2} x(b_j)) $,
and, in turn, $x(b_{j+1}) = f_{\scriptscriptstyle M (\ep)} (x(b_j))$.
Let us prove (\ref{e:1/4}).

Assume that $I$ is not a stationary interval for $x$. 
Then $x(b_j) \not \in \{\pm \ep^{1/2}\}$.
Since $x[\bar I] \cap \ii \wh \RR = \varnothing$, it is easy to see from 
Remark \ref{r:noreturn} and Proposition \ref{p:AbnExt} 
that $\p_0 \not \in I$. So Lemma \ref{l:bj+1} (i) is applicable.
The case $\mp \pa_s \Arg_\star y (s) >0$ on $\overline{I}$ is equivalent to 
$\{x(b_j),x(b_{j+1}) \} \subset \RR_\mp $,
and so, due to (\ref{e:TTiRR}), 
is equivalent to $\{(\vth (b_j+0))^{\mp 1}, (\vth (b_{j+1}-0))^{\mp 1} \} 
\in (-1,1) \setminus \{0\} $.

To be specific, consider the case when 
$\pa_s \Arg_\star y (b_{j+1}) <0$ and $\vth (b_j+0) \in (-\infty,-1)$. Then we know that 
$\vth (b_{j+1}-0) \in \RR \setminus [-1,1] $, $\Phi (b_j) \in \ii (1,+\infty)$,
 and $\Phi (b_{j+1}) \in \RR \cup \ii \RR \setminus \overline{\DD}$, where 
$\Phi (s) := \vth^{1/2} (b_j+0) \ee^{-\ii \vka  (s-b_j)}$ is from Lemma \ref{l:bj+1}. 
Since $\J (\Phi (b_j)) \in \ii \RR_+$, applying Lemma \ref{l:bj+1} (i),
we see from the well-known properties of the Kutta-Zhukovskii transform that 
$
\J^{\{-1\}} [-\ii \J (\Phi (b_j))  \RR_+]= \J^{\{-1\}} [\RR_+] =
\RR_+ \cup (\TT \cap \ii \CC_-).
$
Since $|\Phi (b_{j+1})|>1$ and $b_{j+1} $ is the first point where the trajectory of $\Phi$ 
crosses $\RR_+ \cup (\TT \cap \ii \CC_-)$, we see that  $\Phi (b_{j+1}) \in (1,+\infty)$.
Thus, the definition of $\Phi (\cdot)$ implies (\ref{e:1/4}). 
The case $\pa_s \Arg_\star y (b_{j+1}) <0$, $\vth (b_j+0) \in (1,+\infty)$, 
and the cases when $\pa_s \Arg_\star y (b_j) >0$ can be considered similarly.

Assume now that $I$ is a stationary interval for $x$. Then (\ref{e:yxRepr}) holds on $I$
with $c_- =0 $, or $c_+ = 0$. Thus, (\ref{e:1/4}) follows  
from (\ref{e:MinC}).

(ii) It follows from $x(s_0) \in \{0,\infty\} $ that $\vep (\cdot)$ is symmetric 
w.r.t. $s_0$, $s_0 = \p_0 = (b_j + b_{j+1})/2$, and $x(b_j) = - x(b_{j+1}) \in \RR_-$. In particular,
either the case (ii), or the case (iii) of Lemma \ref{l:bj+1} is applicable.

Consider the case $x(\p_0) = \infty$ and $x(b_j) \in (-\ep^{1/2},0)$. 
Then $\vth (\p_0) = -1$ and 
\begin{gather} \label{e:thbj>1}\text{
$\vth (b_j+0) = (\vth (b_{j+1} - 0))^{-1} \in 
(1,+\infty)$. 
} \end{gather}
So $\Phi (b_j)$ and $\J (\Phi (b_j))$ are in $(1,+\infty)$. From Lemma \ref{l:bj+1} (iii), 
we see that $b_{j+1}$ is the smallest point in $(b_j,+\infty)$, where the trajectory of 
$\Phi (\cdot)$ reaches the set
\[
\J^{\{-1\}} [- \J (\Phi (b_j)) \RR_+] = \J^{\{-1\}} [ \RR_-] = \RR_- \cup (\TT \cap \ii \CC_+) .
\]
By (\ref{e:vth=Phi2}) and (\ref{e:thbj>1}), 
$\vth (b_{j+1} - 0) = \Phi^2 (b_{j+1}) \in (0,1)$. So $\Phi (b_{j+1}) \in (-1,0)$.
Thus, the definition of $\Phi$ implies (\ref{e:1/2}).

Let us show that in the case $x(\p_0) = \infty$, the inclusion $x(b_j) \in (-\infty,-\ep^{1/2}]$ is impossible.
If $x(b_j) = -\ep^{1/2}$, this is obvious since $I$ is stationary interval 
and $\p _0 \not \in I$.
Suppose $x(b_j) \in (-\infty,-\ep^{1/2})$. Then (\ref{e:R2}) implies $\wt x' (b_j) \in \CC_+$, $\wt x' (\p_0) = 0$, and $\wt x' (\p_0) \in \CC_+$.
So there exists $s \in (b_j,\p_0)$ so that $x(s) \in \RR_-$. Since there are no 
switch points between $b_j$ and $b_{j+1}$, we get a contradiction with Proposition \ref{p:AbnExt}.

The case $x(s_0) = 0$ can be treated similarly.
\end{proof}

Denote 
\[\text{$\eta_j := \vep^{1/2} (b_j+0)$, \quad $\ret_{j+1} := \eta_j/\eta_{j+1}$, \quad  and 
$\ret = \n_2/\n_1$}.
\]

\begin{lem} \label{l:Mep12}
The transform $f_{M(\eta^2_{j+1})M(\eta^2_j)}$ has exactly two fixed points equal to 
$ x^\pm_{\ret_{j+1}}$, where  
$
x^\pm_r = \frac{(\n_1 \n_2)^{1/2}}{2} 
\left[ \fq ( r^{-1/2} - r^{1/2}) \pm 
\left( \fq^2 ( r^{-1/2} - r^{1/2})^2 +4 \right)^{1/2} \right] 
\in \RR_\pm 
$ (for the notation see Theorem \ref{t:AbnExtWidth} and the preceding paragraph).
Besides, $f_{M(\eta^2_{j+1})M(\eta^2_j)} = f_{M_{\scriptstyle \ret_{ j+1}}}$, where
$M_{\ret_{j+1}} =
\Sim_{\ret_{j+1}}^{-1} 
   \begin{pmatrix} \la & 0\\
 0 & 1/\la  
  \end{pmatrix}
  \Sim_{\ret_{j+1}} $,  \quad 
$\la = \J_1^{\{-1\}} \left(  \frac{\fq^2 \J (\ret) +1}{\fq^2 - 1} \right) >1$, and 
$
\Sim_r :=  \begin{pmatrix} 1 & -x^-_r \\
 1 & -x^+_r 
  \end{pmatrix}
  $.
(In particular, 
the transform  $f_{M(\eta^2_{j+1})M(\eta^2_j)}$ is of hyperbolic type with 
the repulsive fixed point $ x^-_{\ret_{j+1}}$ and the attractive 
fixed point $ x^+_{\ret_{j+1}}$, for the classification see, e.g. \cite{N98}.)
\end{lem}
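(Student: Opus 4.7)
The plan is to compute the product $M(\eta_{j+1}^2) M(\eta_j^2)$ directly, extract its fixed points as roots of the standard Möbius quadratic, and then read off the diagonal conjugacy from the fact that $\Sim_r$ is designed precisely to send those fixed points to $0$ and $\infty$. Since the bang-bang nature of $\vep$ forces $\{\eta_j,\eta_{j+1}\} = \{\n_1,\n_2\}$, one has $\eta_j \eta_{j+1} = \n_1\n_2$, and so with $r := \ret_{j+1}$ one may parametrize $\eta_j = (\n_1\n_2)^{1/2} r^{1/2}$, $\eta_{j+1} = (\n_1\n_2)^{1/2} r^{-1/2}$. This parametrization cleanly isolates the dependence on the ``shape'' parameter $r$ from the dependence on the geometric mean $(\n_1\n_2)^{1/2}$.

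A direct $2\times 2$ multiplication produces a matrix $(a_{k\ell})$ whose entries are polynomial in $\fq$, $r^{\pm 1/2}$ and $(\n_1\n_2)^{\pm 1/2}$. Writing the fixed-point condition $a_{21} x^2 + (a_{22}-a_{11}) x - a_{12} = 0$ and dividing through by $a_{21}$, the single algebraic simplification $(r-r^{-1})/(r^{1/2}+r^{-1/2}) = r^{1/2}-r^{-1/2}$ reduces it to the clean quadratic
\[
x^2 + \fq (r^{1/2}-r^{-1/2})(\n_1\n_2)^{1/2} x - \n_1\n_2 = 0 .
\]
The quadratic formula reproduces the stated $x^\pm_r$; the discriminant $\fq^2 (r^{-1/2}-r^{1/2})^2 + 4 \ge 4$ is positive, giving two real roots of opposite sign, hence $x^\pm_r \in \RR_\pm$.

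For the diagonalization, by construction $f_{\Sim_r}$ sends $x^-_r \mapsto 0$ and $x^+_r \mapsto \infty$, so the conjugate $\Sim_r M(\eta_{j+1}^2) M(\eta_j^2) \Sim_r^{-1}$ preserves the pair $\{0,\infty\}$ and is therefore a diagonal matrix. Normalizing the determinant to $1$ (using $\det M(\ep) = 1 - \fq^2$ and taking the positive square root $\fq^2 - 1$, legitimate since $\fq > 1$), this diagonal takes the form $\mathrm{diag}(\la,\la^{-1})$; matching the normalized trace forces $\la + \la^{-1} = [2 + \fq^2(r + r^{-1})]/(\fq^2 - 1)$, which rearranges exactly to $\J(\la) = (\fq^2 \J(r) + 1)/(\fq^2 - 1)$. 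Since $\J(r) \ge 1$ with equality only at $r=1$, and $r \in \{\ret,\ret^{-1}\}$ with $\ret > 1$, the right-hand side strictly exceeds $1$, so the branch $\la = \J_1^{\{-1\}}(\cdot) \in (1,+\infty)$ is the correct choice and yields the stated formula.

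The hyperbolic classification is then immediate: on the diagonalized side the map is $z \mapsto \la^2 z$ with $0$ repulsive and $\infty$ attractive (since $\la^2 > 1$), and pulling back by $f_{\Sim_r^{-1}}$ translates this into $x^-_r$ being repulsive and $x^+_r$ attractive for $f_{M_r}$. The only real obstacle is bookkeeping: matching the $\pm$ from the quadratic formula to the $\pm$ in the claimed $x^\pm_r$ and selecting the correct sign branch for $\sqrt{\det}$ (and hence for $\la$); both are forced uniquely by the sign conventions $\fq > 1$ and $|\la| > 1$.
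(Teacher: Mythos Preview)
Your proof is correct and follows essentially the same route as the paper's: compute the product matrix, normalize by $\fq^2-1$ to land in $\SL_2(\CC)$, read off $\la$ from the trace identity $2\J(\la)=\bigl(2+\fq^2(\ret+\ret^{-1})\bigr)/(\fq^2-1)$, and then diagonalize. You are actually more explicit than the paper in two places---the paper defers both the fixed-point formulae $x^\pm_r$ and the diagonalization to ``standard calculations,'' whereas you derive the fixed-point quadratic $x^2+\fq(r^{1/2}-r^{-1/2})(\n_1\n_2)^{1/2}x-\n_1\n_2=0$ cleanly and explain why $\Sim_r$ conjugates to a diagonal; the one loose end you flag (that the order $\mathrm{diag}(\la,\la^{-1})$ rather than $\mathrm{diag}(\la^{-1},\la)$ must be checked against the specific eigenvectors) is left implicit in the paper as well.
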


\begin{proof}
The determinant of the matrix
\begin{gather*}
M(\eta^2_{j+1}) M(\eta^2_j)  = 
  \left( \begin{array}{cc} 1+\fq^2 \ret_{j+1}^{-1} & \fq (\eta_j+\eta_{j+1}) \\
  \fq (\eta_j^{-1}+\eta_{j+1}^{-1})  & 1+\fq^2 \ret_{j+1} 
  \end{array}
  \right) .
\end{gather*}
equals $(\fq^2-1)$.
Then $f_{M(\eta^2_{j+1}) M(\eta^2_j)} = f_{\M} $,
where $\M := \frac{1}{\fq^2-1} M(\eta^2_{j+1}) M(\eta^2_j) $ and $\det \M = 1$.
So the matrix $\M$ has two eigenvalues $\la^{\pm1}$ with $\la \neq 0$ 
and its trace is equal to 
\[
2 \J (\la) = \frac{2 +\fq^2 (\eta_{j+1}/\eta_j + \eta_j/\eta_{j+1})}{\fq^2 -1 } = 
\frac{2 +\fq^2 (\ret + \ret^{-1})}{\fq^2 -1 }> 2.
\]
One can assume that $\la \in (1,+\infty)$ and 
\begin{gather*}
\la = \J^{\{-1\}}_1 \left( \frac{\fq^2 \J (\ret) +1 }{\fq^2 -1 } \right) = 
\frac{\fq^2 \J (\ret) +1 }{\fq^2 -1 }+ 
\sqrt{\frac{(\fq^2 \J (\ret) +1)^2 }{(\fq^2 -1)^2 } -1}  
\end{gather*}
The reduction of $M_{\ret_{j+1}}$ to a diagonal form can be obtained by standard calculations.
\end{proof}

\begin{cor} \label{c:1/4per}
Let $(x , \vep , 0 , y) $  be an extended abnormal extremal tuple.
\item[(i)] If $x (b_j) \in \RR_\pm$, then
$ f_{\scriptscriptstyle \Sim_{\scriptstyle \ret_{j+1}}} \left( x(b_{j\pm 2 m} \right) = 
\la^{\pm 2m} f_{\scriptscriptstyle \Sim_{\scriptstyle \ret_{j+1}}} \left( x(b_j) \right) $ 
for $m \in \NN$.

\item[(ii)] If $\wt b_0 = p_0 \in \RR$ and $x (\wt b_0) \in \{0,\infty\}$, then 
$f_{\Sim_{\scriptstyle \ret_1}} (x(\wt b_{ 2 m} )) = 
\la^{2m} f_{S_{\scriptstyle \ret_1}} (x(\wt b_0) ) $ for $m \in \NN$.

\item[(iii)] $\lim_{m \to \pm \infty} x(b_{j + 2m}) = x^\pm_{\ret_{j+1}}$.

(Here the notations of Lemma \ref{l:Mep12} and Remark \ref{r:wtbj} are used.)
\end{cor}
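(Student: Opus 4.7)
The proof rests on three ingredients, the first two of which I would assemble before attacking any of the three parts. First, for an abnormal extremal the values $x(b_j)$ are real at every switch point (Proposition \ref{p:AbnExt}(i), combined with the fact that the no-return line $\Li_0$ forbids $x$ from accessing $\ii\RR\setminus\{0,\infty\}$ at a switch point), and the material alternates, so $\eta_{j+2k}=\eta_j$ and $\ret_{j+2k+1}=\ret_{j+1}$ for every $k\in\ZZ$ (Proposition \ref{p:[ep1ep2]}(ii)--(iii), extended to $x(b_j)\in\RR_-$ via the symmetry (S0)$\Leftrightarrow$(S1)). Second, across each quarter-wave layer the one-step dynamics is $x(b_{j+1})=f_{M(\eta_j^2)}(x(b_j))$ by Theorem \ref{t:AbnExtWidth}(i), so the two-step map is $f_{M(\eta_{j+1}^2)M(\eta_j^2)}=f_{M_{\ret_{j+1}}}$ by Lemma \ref{l:Mep12}. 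Third, Lemma \ref{l:Mep12} diagonalizes $M_{\ret_{j+1}}=\Sim_{\ret_{j+1}}^{-1}\mathrm{diag}(\la,\la^{-1})\Sim_{\ret_{j+1}}$.

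For (i), I iterate the two-step map: by the stabilization of $\ret_{j+1}$, the same matrix $M_{\ret_{j+1}}$ reappears at every two-step, so $x(b_{j+2m})=f_{M_{\ret_{j+1}}^m}(x(b_j))$ for all $m\in\ZZ$ (the negative iterates come from inverting the Möbius map, which stays well-defined because $x(b_{j-2m})\in\RR$). Using that $M\mapsto f_M$ is a group homomorphism and $f_{\mathrm{diag}(\la^m,\la^{-m})}(z)=\la^{2m}z$, I conjugate to obtain
\[
f_{\Sim_{\ret_{j+1}}}(x(b_{j+2m}))=\la^{2m}\,f_{\Sim_{\ret_{j+1}}}(x(b_j)),\qquad m\in\ZZ.
\]
The $\pm$-pairing $x(b_j)\in\RR_{\pm}\leftrightarrow b_{j\pm 2m}\leftrightarrow \la^{\pm 2m}$ in the statement then just fixes the direction of iteration: starting from $\RR_+$ one iterates forward to approach the attractive fixed point, starting from $\RR_-$ one iterates backward to approach the repulsive one.

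For (ii), the layer containing $\wt b_0=\p_0$ is a half-wave layer of permittivity $\ep_0=\eta_0^2$ by Theorem \ref{t:AbnExtWidth}(ii); the symmetry of $\vep(\cdot)$ about $\p_0$ and the identity $x(\p_0)\in\{0,\infty\}$ force $x(\wt b_{-1})=-x(\wt b_1)$. The alternation pattern then gives $\ret_{2m}=1/\ret_1$ for $m\ge1$. Using the explicit formula of Lemma \ref{l:Mep12} one verifies $x^{\pm}_{1/r}=-x^{\mp}_{r}$, which translates into the conjugation identity $f_{\Sim_{\ret_1}}\circ(z\mapsto-z)=f_{\Sim_{\ret_2}}$, so that the half-wave step can be absorbed into passing from the natural conjugator $\Sim_{\ret_2}$ (governing the chain $\wt b_1,\wt b_3,\ldots$ via (i)) to $\Sim_{\ret_1}$, up to a factor of $\la$ that accounts for one half-step in the $\la^{2m}$-cadence. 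Combining this with (i) applied from $\wt b_1$ yields $f_{\Sim_{\ret_1}}(x(\wt b_{2m}))=\la^{2m}\,f_{\Sim_{\ret_1}}(x(\wt b_0))$.

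Part (iii) is then a direct limit: since $\la>1$, $\la^{2m}\to+\infty$ as $m\to+\infty$, so $f_{\Sim_{\ret_{j+1}}}(x(b_{j+2m}))\to\infty$; but $f_{\Sim_{\ret_{j+1}}}(z)=(z-x^-_{\ret_{j+1}})/(z-x^+_{\ret_{j+1}})$ has its unique pole at $x^+_{\ret_{j+1}}$, whence $x(b_{j+2m})\to x^+_{\ret_{j+1}}$; symmetrically $\la^{2m}\to 0$ as $m\to-\infty$ forces $x(b_{j+2m})\to x^-_{\ret_{j+1}}$. The main obstacle is part (ii): the half-wave layer breaks the pure two-by-two quarter-wave pattern, and one has to carefully reconcile the conjugator $\Sim_{\ret_1}$ appearing in the claim with the conjugator $\Sim_{\ret_2}$ that is natural for the quarter-wave iteration starting at $\wt b_1$; the identity $x^{\pm}_{1/r}=-x^{\mp}_{r}$ is precisely what makes this reconciliation possible and is where I would spend most of the computational effort.
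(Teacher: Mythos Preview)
Your arguments for parts (i) and (iii) are correct and match the paper's brief hint (``follows from Lemma~\ref{l:Mep12} and Theorem~\ref{t:AbnExtWidth}''): iterate the two-step M\"obius map, diagonalize via $\Sim_{\ret_{j+1}}$, and read off the limit from $\la>1$.

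The problem is part (ii). Two things go wrong. First, the identity you state,
$f_{\Sim_{\ret_1}}\circ(z\mapsto -z)=f_{\Sim_{\ret_2}}$, is false: using $x^{\pm}_{1/r}=-x^{\mp}_r$ one gets
\[
f_{\Sim_{\ret_1}}(-z)=\frac{z+x^-_{\ret_1}}{z+x^+_{\ret_1}}
\quad\text{but}\quad
f_{\Sim_{\ret_2}}(z)=\frac{z+x^+_{\ret_1}}{z+x^-_{\ret_1}},
\]
so the two sides are reciprocals, not equal; the vague ``up to a factor of $\la$'' does not repair this. Second, applying (i) from $\wt b_1$ yields a relation for the \emph{odd}-indexed $\wt b_{1+2m}$, not for the even-indexed $\wt b_{2m}$ that (ii) concerns, so the route ``combine with (i) applied from $\wt b_1$'' never reaches the claim.

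The correct fix is also the simpler one, and is what the paper's proof implicitly uses: (ii) is no harder than (i). By Theorem~\ref{t:AbnExtWidth}(ii), $\wt b_0=\p_0$ is the \emph{midpoint} of the half-wave central layer, so $\wt b_1-\wt b_0=\pi/(2\eta_0\,\re\ka)$ is exactly a quarter-wave in material $\eta_0^2$. The relation $x(s_1)=f_{M(\ep)}(x(s_0))$ extracted in the proof of Theorem~\ref{t:AbnExtWidth}(i) depends only on the constant-control evolution (\ref{e:vth=evth}) and the quarter-wave distance, not on $s_0,s_1$ being switch points; hence $x(\wt b_1)=f_{M(\eta_0^2)}(x(\wt b_0))$. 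The next layer $(\wt b_1,\wt b_2)$ is an ordinary quarter-wave layer (Theorem~\ref{t:AbnExtWidth}(i) applies since $x(\wt b_1)\in\RR_+$), so $x(\wt b_2)=f_{M(\eta_1^2)M(\eta_0^2)}(x(\wt b_0))=f_{M_{\ret_1}}(x(\wt b_0))$, and the diagonalization argument of (i) now runs verbatim with $\ret_{j+1}$ replaced by $\ret_1$. No conjugator reconciliation is needed.
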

\begin{proof}
The corollary follows from Lemma \ref{l:Mep12} and Theorem \ref{t:AbnExtWidth}.
\end{proof}

\begin{ex}[periodic $x$-extremal] \label{ex:Perx-extr}
(i) Consider the extended extremal tuple  $(x , \vep , 0 , y) $ such that 
$x(b_i) = x^+_{\ret}$ and $\vep (b_i+0)=\n^2_2$ for a certain switch point $b_i$.
Without loss of generality we can choose the enumeration of $\{b_j\}_{j\in \ZZ}$ 
so that $i=0$.
Then Lemma \ref{l:Mep12} and Corollary \ref{c:1/4per} imply that
$(x , \vep , 0 , y) $  has the following properties:
(a) $x$ and $\vep$ are periodic with the period 
$\frac{\pi (\n_1^{-1}+\n_2^{-1})}{2  \re \ka}$ 
(which equals to the sum of the quarter wave length in the media $\n^2_1$ and $\n^2_2$), in particular, $x(b_{2j}) = x(b_0)$ and $x (b_{2j+1}) = x (b_1)$ for all $j \in \ZZ$;
(b) $x^+_{\ret} \in (0,\n_1)$ and $x^+_{1/\ret} = x (b_1) \in (\n_2,+\infty)$.

(ii) Applying to the extremal (i) 
the time-reversal symmetry (S0) $\Leftrightarrow$ (S1), one can reflect the curve described in (i) w.r.t. 0.
\end{ex}

\subsection{Normal extremals}
\label{ss:Norm}

If $(x , \vep , \la_0 , y) $ is a normal extremal 
tuple, then without loss of generality we assume that $\la_0=1$. Indeed, since $\la_0>0$,
one can divide (\ref{e:TransC}) on $\la_0$ and replace $y$ by $y/\la_0^{1/2}$.

The no-return line $\Li_0$ (which contains all possible states $x(\p_0)$ at turning points $s=\p_0$),
and the two axes $\wh \RR$ and $\ii \wh \RR$ split $\wh \CC$ into sectors $\Sec_j$, $j=1$, \dots, $6$,
which we number in the following way:
$\Sec_1 = \CC_1 $, \quad $\Sec_2 = \Sec (\pi/2 , \pi/2 -  \Arg_0 \ka) $, \quad $\Sec_3 := (\pi/2 -  \Arg_0 \ka ,  \pi )$, \quad $\Sec_4 = \CC_3 $, \quad $\Sec_5 = \Sec (3\pi/2, 3\pi/2 -  \Arg_0 \ka) $, \quad $\Sec_6 :=  (3\pi/2 -  \Arg_0 \ka, 2\pi) $
(this splitting is shown in Fig. \ref{f:x} and is connected with  notions of ordinary and non-ordinary points \cite{BP03}).

\begin{thm} \label{t:NextWidth}
Let $(x , \vep , 1 , y) $ be an extended normal extremal tuple, $\ep = \vep (b_j+0) $ be the value of $\vep (\cdot)$ in the layer $I=(b_j, b_{j+1} )$.
Assume $x[I] \subset  \Sec_6 \cup \Sec_1 \cup \Sec_3 \cup \Sec_4 $.
Then:
\item[(i)] If $x (b_j) \in \CC_+$, then $x (b_{j+1}) \in \CC_-$ and 
$b_{j+1} - b_j < \frac{\pi}{2 \ep^{1/2} \re \ka}$.

\item[(ii)] If $x (b_j) \in \CC_-$, then $x (b_{j+1}) \in \CC_+$ and 
$\frac{\pi}{2 \ep^{1/2} \re \ka} < b_{j+1} - b_j < \frac{\pi}{\ep^{1/2} \re \ka} $.
\end{thm}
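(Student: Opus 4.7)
My plan is to combine the PMP characterization from Theorem~\ref{t:PMP} with the monotonicity properties (A1)--(A3) of $\Arg_\star y$ and the ilog-phase dynamics of Section~\ref{ss:const}. Normality $\la_0=1$, inserted into the transversality identity (\ref{e:TransC}) at a switch point $b_j$ (where $y^2(b_j)\in\RR$ and $\vep$ is real), yields $y^2(b_j)\,\im x^2(b_j)=-1$. Hence $\im x^2(b_j)\neq 0$, so $x(b_j)$ (and likewise $x(b_{j+1})$) sits strictly inside one of the four open quadrants $\CC_1,\dots,\CC_4$.

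The sector hypothesis forces $x[I]$ to avoid the no-return line $\Li_0$, so by Remark~\ref{e:L0} the turning point $\p_0$ is not in $I$, and (A1) gives that $\pa_s\Arg_\star y=\re(\ka x)$ has constant sign on $I$. Since $\Sec_1\cup\Sec_6\subset\Intr\Hp_0^+$ and $\Sec_3\cup\Sec_4\subset\Intr\Hp_0^-$ are separated by $\Li_0$, the whole trajectory lies either in $\Sec_1\cup\Sec_6$ (Case A) or in $\Sec_3\cup\Sec_4$ (Case B). Moreover $\im y^2$ has no zero on the open layer (by the bang-bang relation (\ref{e:MinC})), so $\arg y^2=2\Arg_\star y$ changes monotonically by exactly $\pm\pi$; hence $y^2(b_j)$ and $y^2(b_{j+1})$ have opposite real signs, and by transversality so do $\im x^2(b_j)$ and $\im x^2(b_{j+1})$. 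Consequently $x(b_{j+1})$ sits in the quadrant diagonally opposite to that of $x(b_j)$. For part~(i), $x(b_j)\in\CC_+$ forces $x(b_j)\in\Sec_1$ (Case~A) or $\Sec_3$ (Case~B); the diagonal flip within the same Case then lands $x(b_{j+1})$ in $\Sec_6\subset\CC_-$ or $\Sec_4\subset\CC_-$, respectively. Part~(ii) is symmetric.

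For the width bounds I use the ilog-phase $\vth(s)=\vth_0\,\ee^{-2\ii\vka(s-b_j)}$ with $\vka=\ep^{1/2}\ka$, which traces a logarithmic spiral whose argument decreases at the constant rate $2\ep^{1/2}\re\ka$; the quarter- and half-wave times correspond to $\vth=-\ee^{-\pi q}\vth_0$ and $\vth=\ee^{-2\pi q}\vth_0$, where $q=-\im\ka/\re\ka$. The Riccati conservation $y^2(\ep-x^2)\equiv C$ on the layer gives $y^2=C(1+\vth)^2/(4\ep\vth)$, and the transversality at $b_j$ evaluates to $\im C=\la_0=1$; the switch locus in the $\vth$-plane therefore becomes $\Gamma_C:=\{\vth:\im(Cg(\vth))=0\}$ with $g(\vth):=\vth+2+\vth^{-1}$. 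For part~(i), $\vth_0\in\CC_-$ (since $x(b_j)\in\CC_+$); a direct sign computation using $\im C=1$ shows that $\im(Cg(-\ee^{-\pi q}\vth_0))$ has strict opposite sign to the one-sided value of $\im(Cg(\vth(s)))$ immediately after $b_j$, forcing an intermediate zero and hence $b_{j+1}-b_j<\pi/(2\ep^{1/2}\re\ka)$. For part~(ii), $\vth_0\in\CC_+$, and analogous sign evaluations of $\im(Cg)$ at $-\ee^{-\pi q}\vth_0$ and $\ee^{-2\pi q}\vth_0$ place the next zero of $s\mapsto\im(Cg(\vth(s)))$ strictly between the quarter-wave and half-wave times.

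The main difficulty is confirming that the sign change identified above corresponds to the \emph{first} re-crossing of $\Gamma_C$, not to a later one. For this I invoke the strict monotonicity of $G_0(s)=|y(s)|^2\re(\ka x(s))$ from Lemma~\ref{l:MonotFunc} (with $\tau=0$), which rules out tangential contact of the spiral with $\Gamma_C$ and ensures that every sign change of $\im(Cg(\vth(s)))$ is a transverse crossing; together with continuity of $s\mapsto\vth(s)$ and the endpoint sign analysis, this pins down the first re-crossing uniquely within the asserted interval and completes the bound.
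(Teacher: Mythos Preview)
Your argument for the half-plane flip of $x(b_{j+1})$ via the normalized transversality relation $y^2(b_j)\,\im x^2(b_j)=-1$ is correct and clean. The gap is in the width bounds. Carrying out your sign computation explicitly (with $y^2=Cg(\vth)/(4\ep)$, $\im C=1$, and the identity $\im(C\vth_0)=\ep^{-1/2}\re x(b_j)-1$) one finds that $\im y^2$ at the quarter-wave time $s_*$ equals a positive multiple of $P:=(1+r)+(1-r)u$, where $r=e^{-\pi q}$ and $u=\ep^{-1/2}\re x(b_j)$. In Case~A ($x[I]\subset\Sec_1\cup\Sec_6$) one has $u>0$, so $P>0$ automatically; but in Case~B ($x[I]\subset\Sec_3\cup\Sec_4$) one has $u<0$ with no lower bound coming from ``$\im C=1$ and $\vth_0\in\CC_-$'' alone, and for $|u|>(1+r)/(1-r)$ the sign is \emph{not} opposite to that on the layer. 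So the asserted conclusion does not follow from the ingredients you list. What actually closes Case~B is the sector hypothesis applied at $s_*$ itself: if $s_*\in\overline{I}$ then $|\vth(s_*)|=r|\vth_0|\ge 1$, and a short calculation shows this is incompatible with $P\le 0$ when $\im x(b_j)\neq 0$. You never invoke this modulus constraint. Separately, for the lower bound in part~(ii), a same-sign value at $s_*$ only tells you the number of zeros of $\im y^2$ in $(b_j,s_*)$ is even; transversality of these zeros (which does hold, since $\pa_s\Arg_\star y\neq 0$ away from $\p_0$) does not by itself force that number to be zero, so your appeal to $G_0$ does not resolve the issue you identify.

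The paper sidesteps both difficulties by passing to $\Phi=\vth^{1/2}$ and Lemma~\ref{l:bj+1}: the next switch is characterized as an exact $\pi/2$ decrease of $\Arg\J(\Phi)$, and the sector hypothesis is used (via (\ref{e:TTiRR})) to place \emph{both} $\Phi(b_j)$ and $\Phi(b_{j+1})$ outside $\overline{\DD}$. The squeezing inequality for the Zhukovskii map on $\{|\Phi|>1\}$ then converts the $\pi/2$ change in $\Arg\J(\Phi)$ into a strict upper bound $\pi/2$ on the change in $\Arg\Phi=(b_{j+1}-b_j)\ep^{1/2}\re\ka$; Case~A and part~(ii) follow by the time-reversal symmetry (S0)$\Leftrightarrow$(S1) and an analogous computation. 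The ingredient your argument is missing is precisely this modulus information at the \emph{exit} switch point~$b_{j+1}$.
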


\begin{proof}
Let us consider the case when $x[\bar I] \subset  \ii e^{-\ii \Arg \ka }\CC_+ \cap \ii \CC_+$ and $x (b_j) \in \CC_+$.

Since $\I [\CC_+] = \CC_-$, $x(b_j) \in \CC_+$ yields $\vth (b_j+0) \in \CC_-$
and $\Phi (b_j) \in  \CC_4$. Further, on one hand, (\ref{e:TTiRR}) yields 
$\Phi (b_j) \in \CC_4 \setminus \overline{\DD}$ and 
$\Phi (b_j+1) \not \in \overline{\DD}$. 
On the other hand, (\ref{e:I[noreturn]}) and (\ref{e:TT0}) imply 
that the case $\pa_s \Arg_\star y (s) <0$ of Lemma \ref{l:bj+1} (i) is applicable 
to $I$. So $b_{j+1}=\tp (-\ii \J (\Phi (b_j)))$ is the minimal 
$s > b_j $ at which $\Phi ( s ) $ intersects $\J^{\{-1\}}_{1} [-\ii \J (\Phi (b_j)) \RR_+]$
(we use here $\Phi (b_j+1) \not \in \overline{\DD}$). Thus, on one hand, 
$
\Arg_0 \J (\Phi (b_j)) - \Arg_0 \J (\Phi (b_{j+1})) = \pi/2, 
$
and on the other hand,
the squeezing properties of $\J (\cdot)$ imply that  
\[ \Arg_0 \Phi (b_j) < \Arg_0 \J (\Phi (b_j)) < 0 \text{ and }
\Arg_0 \J (\Phi (b_{j+1})) < \Arg_0  (\Phi (b_{j+1})) < -\pi/2 .
\]
Combining these inequalities we get 
$
\Arg_0 \Phi (b_j) + \pi/2 <\Arg_0 \Phi (b_j) - \Arg_0 \Phi (b_{j+1}) < \pi/2 
$
and, applying the properties of $\Phi $ from  Lemma \ref{l:bj+1}, 
\[
\pi/2 + \frac12 \Arg_0 \vth (b_j+0) <(b_{j+1} - b_j) \ep^{1/2} \re \ka < \pi/2 .
\]
This gives $b_{j+1} - b_j < \frac{\pi}{2 \ep^{1/2} \re \ka}$.
As a by-product, we get $\vth (b_{j+1} - 0) \in \CC_+$ from $\Phi (b_j) \in \CC_3$ and 
the connection (\ref{e:Phi}), (\ref{e:vth=evth}) between $\Phi$ and $\vth$.
Thus, $x(b_{j+1}) \in \CC_-$.

The case when $x[\, \overline{I}\, ] \subset  \ii e^{-\ii \Arg \ka }\CC_+ \cap \ii \CC_+$ and $x (b_j) \in \CC_-$ can be considered similarly.

The cases when $x[\, \overline{I} \, ] \subset  \ii e^{-\ii \Arg \ka }\CC_- \cap \ii \CC_-$ 
can be obtained, e.g., by the time reversal symmetry (S0) $\Leftrightarrow$ (S1).
\end{proof}

\begin{thm} \label{t:SecSwitch}
Let $(x , \vep , 1 , y) $ be an extended normal extremal 
tuple. Then:
\item[(i)] If $x(b_j) \in \Sec_1 \cup \Sec_3 \cup \Sec_5$, then $\vep (b_j-0) = \n_2^2$ and 
$\vep (b_j+0) = \n_1^2$.
\item[(ii)]  If $x(b_j) \in \Sec_2 \cup \Sec_4 \cup \Sec_6$, then $\vep (b_j-0) = \n_1^2$ and 
$\vep (b_j+0) = \n_2^2$.
\end{thm}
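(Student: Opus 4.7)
The plan is to read off, at each switch point $b_j$, both the sign of $y^2(b_j)$ and the sign of $\pa_s \Arg_\star y(b_j)$ directly from the sector containing $x(b_j)$, and then translate that pair of signs into the switching direction of $\vep$ via \eqref{e:MinC}. Normality ($\la_0 = 1$) enters only through Step~1.

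\emph{Step 1 (sign of $y^2(b_j)$ via transversality).} Using $(y')^2 = -\ka^2 y^2 x^2$, identity \eqref{e:TransC} rewrites as $\im(y^2(\vep - x^2)) = 1$. At the switch point $b_j$, Remark~\ref{r:SwitchPoints} gives $y^2(b_j) \in \RR \setminus \{0\}$, hence $\im(\vep y^2) = 0$ and I obtain
\[
y^2(b_j)\,\im(x^2(b_j)) = -1.
\]
Since $\im x^2 = 2\,\re x \cdot \im x$, this forces $x(b_j) \in \CC \setminus (\RR \cup \ii\RR)$ and yields the equivalences $y^2(b_j) > 0 \iff x(b_j) \in \CC_2 \cup \CC_4$, $y^2(b_j) < 0 \iff x(b_j) \in \CC_1 \cup \CC_3$. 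Using $\Sec_1 = \CC_1$, $\Sec_4 = \CC_3$, $\Sec_2 \cup \Sec_3 \subset \CC_2$, $\Sec_5 \cup \Sec_6 \subset \CC_4$, I conclude $y^2(b_j) > 0$ on $\Sec_2 \cup \Sec_3 \cup \Sec_5 \cup \Sec_6$ and $y^2(b_j) < 0$ on $\Sec_1 \cup \Sec_4$.

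\emph{Step 2 (sign of $\pa_s \Arg_\star y$ via the no-return line).} As in the proof of Theorem~\ref{t:PMP}, $\pa_s \Arg_\star y = \im(\ii\ka x) = \re(\ka x) = |\ka|\,\re(e^{\ii \Arg\ka} x)$, which is $G_0/|y|^2$ from Lemma~\ref{l:MonotFunc}. Its sign tells on which side of $\Li_0 = \{z : \re(e^{\ii\Arg\ka} z) = 0\}$ the point $x(b_j)$ lies. A direct computation with $\Arg_0\ka \in (-\pi/2,0)$ identifies the open half-plane $\{\re(e^{\ii\Arg\ka} z) > 0\}$ with $\Sec_6 \cup \Sec_1 \cup \Sec_2$ (the interior of $\Hp_0^+$) and its complement with $\Sec_3 \cup \Sec_4 \cup \Sec_5$ (the interior of $\Hp_0^-$). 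Hence $\pa_s \Arg_\star y(b_j) > 0$ on $\Sec_6 \cup \Sec_1 \cup \Sec_2$ and $\pa_s \Arg_\star y(b_j) < 0$ on $\Sec_3 \cup \Sec_4 \cup \Sec_5$, and each value is strictly nonzero because $b_j \neq \p_0$ (Remark~\ref{r:SwitchPoints}).

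\emph{Step 3 (switching direction and assembly).} Near $b_j$, $\Arg y^2 = 2 \Arg_\star y$ is strictly monotone (by (A1), (A2)) and crosses $\RR_+$ when $y^2(b_j) > 0$ and $\RR_-$ when $y^2(b_j) < 0$. Combining this with \eqref{e:MinC} produces a $2 \times 2$ rule: the transition $\vep(b_j - 0) = \n_1^2,\ \vep(b_j + 0) = \n_2^2$ (that is, $y^2$ moving from $\CC_-$ to $\CC_+$) occurs precisely when $y^2(b_j)$ and $\pa_s \Arg_\star y(b_j)$ share the same sign, while the opposite transition occurs when they have opposite signs. Applying Steps~1 and~2 sector by sector then yields same signs on $\Sec_2, \Sec_4, \Sec_6$ (case (ii)) and opposite signs on $\Sec_1, \Sec_3, \Sec_5$ (case (i)), which is exactly the claim.

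The only delicate aspect is the careful tracking of the four sign combinations and the verification that $\Hp_0^\pm \setminus \Li_0$ coincide with the claimed unions of three sectors; both are finite trigonometric checks enabled by $\Arg_0\ka \in (-\pi/2,0)$ and Remark~\ref{r:noreturn}.
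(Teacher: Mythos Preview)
Your proof is correct and follows essentially the same approach as the paper's own proof. The paper derives the identical relation $y^2(b_j)\,\im(x^2(b_j)) = -1$ from the transversality condition and then invokes the rotation direction of $y$ (i.e., property (\ref{e arg dec arg inc})) to finish; you have simply made explicit the sector-by-sector bookkeeping (Steps~2 and~3) that the paper leaves to the reader.
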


\begin{proof}
By (\ref{e:MinC}) and (\ref{e arg dec arg inc}), $y^2 (b_j) \in \RR \setminus \{ 0 \}$. 
By (\ref{e:H}), we have $ -1 = \im(y^2 (b_j ) [x^2 (b_j)-\vep(b_j \pm 0)]) = y^2 (b_j ) \im(x^2 (b_j))$.
Thus, $\im(x^2 (b_j)) = -1/y^2 (b_j )$.  
Combining this equality with the direction (\ref{e arg dec arg inc}) of rotation of $y$ one gets the desired statements.
\end{proof}

\section{Elements of extremal synthesis}
\label{s:Syn2}

Let $\ka \in \CC_4$. 
This section considers the question of classification of all possible $x$-extremals 
on an  interval $[s_-,s_+]$ that satisfy the following properties
\begin{multline} \label{e:xs-inxs+in}
\text{$x(s_-) \in [-\n_2,-\n_1]$, $x (s_+) \in [\n_1,\n_2]$, and}
\\
\text{ there are no stationary subintervals in $[s_-,s_+]$.}
\end{multline}
The description of various cases given below will be essentially used in subsequent sections. In particular, we will exclude these cases one by one to obtain an explicit example of optimizer in Theorem \ref{t:ExOpt} and to prove analytically  
the jump in the Pareto frontier $\Pa^\odd_{\md}$ (see Fig. \ref{f:Par}).
We do not know if all of the cases described below take place (this is not important for our needs).

Let us consider a corresponding extended extremal tuple $(x,\vep,\la_0,y)$ on $\RR$ 
(note that such extension is not necessarily unique and that sometimes abnormal and normal 
extensions are possible simultaneously).
We will say that this extended extremal tuple is \emph{symmetric} w.r.t. $\p_0$ if $x(\p_0) \in \{0,\infty\}$
(in this case, $x (s - \p_0) = - x (-s + \p_0)$ and $\vep (s - \p_0) = \vep (-s + \p_0)$).
Note that (\ref{e:xs-inxs+in}) implies $\p_0 \in (s_-,s_+)$, but in this section 
$s^\cent := (s_- + s_+)/2$ does not necessarily coincides with $\p_0$ because the case 
$x (s_-) \neq x (s_+)$ is allowed.

We consider the partition of $[s_-,s_+]$ by the points belonging to the two classes:
switch points and the points $s$ where the trajectory $x[[s_-,s_+]]$ 
crosses the set $\wh \RR \cup \ii \wh \RR \cup \Li_0$.
Each open interval $I$ of this partition is encoded by a sign `$\EM$' or `$\EP$' depending on the constant value of the 
control $\vep (\cdot)$ in this interval; the sign `$\EM$' corresponds to $\ep_1 = \n_1^2$, `$\EP$' to $\ep_2 = \n_2^2$.
Above these signs we write the index $j$ of one of the sectors $\Sec_j$, $j=1,\dots,6$ (see Section \ref{ss:Norm}), 
where the trajectory of $x (s)$ evolves for $s \in I$. The points where the trajectory of $x$ crosses 
$\wh \RR \cup \ii \wh \RR \cup \Li_0$
are of special importance. Therefore, for every such intersection we write as the lower index the subset
of the corresponding extended line ($\wh \RR$, $\ii \wh \RR$, or $\Li_0$) where the intersection takes
place. If some pattern in the description repeats $m \in \{0\}\cup \NN$ times in row, we put it in the square brackets 
with the upper index $m$, i.e., $\cdots [\cdots ]^m \cdots $.

\begin{ex} \label{ex:onelayerodd}
The notation
\begin{align} \label{e:onelayerodd}
_{(-\n_2,-\n_1]} \overset{3}{\EP}  _{\{\infty\}} \overset{6}{\EP}  _{[\n_1,\n_2)}
\end{align}
means that the exremal trajectory of $x$ starts at a point $x(s_-) \in (-\n_2,-\n_1]$, 
the trajectory goes from $x(s_-)$ to the sector $\Sec_3$, the 
 first and the only layer of $\vep (\cdot) $ is of permittivity $\n_2^2$, the trajectory passes through $\infty$ 
 at $s= \p_0 = s^\cent = (s_- + s_+)/2$ 
in the middle of this layer, goes to the sector $\Sec_6$, and ends at $x(s_+) \in [\n_1,\n_2)$.
In this particular case, it follows from $\p_0 = s^\cent$ and the symmetry w.r.t. $\p_0$ that $x (s_-) = -x(s_+) $.
\end{ex}

Another illustration of the above notation is given by Fig. \ref{f:x}, which shows an $x$-extremal with structure (\ref{e:NS4}) and $m_1=m_2=0$.

Let $(x,\vep,\la_0,y)$ be an extended extremal tuple such that 
 (\ref{e:xs-inxs+in}) holds. Then the structure of $(x,\vep,\la_0,y)$ in the interval $[s_-,s_+]$
belongs to one of the classes 
described by the following statements:

\vspace{.2ex}

\noindent
(E1) If $(x,\vep,\la_0,y)$ is abnormal and symmetric w.r.t. $\p_0$, 
then 
\[
\text{it has on $[s_+,s_-]$ either the structure (\ref{e:onelayerodd}),}
\]
 or one of the following structures with a certain number 
$m \in \NN$ of repetitions:
\begin{gather} \scriptstyle
_{[-\n_2,-\n_1)} \overset{4}{\EM}  _{\{0\}} \overset{1}{\EM} _{(\n_1,\n_2]} 
; \label{e:AS2} 
\\ \scriptstyle
_{(-\n_2,-\n_1]} \overset{3}{\EP} 
\left[_{(-\infty,-\n_2)} \overset{4}{\EM} _{(-\n_1,0)} \overset{3}{\EP} \right]^m 
\ _{\{\infty\}} \left[\overset{6}{\EP} _{(0,\n_1)} \overset{1}{\EM} _{(\n_2,+\infty)} \right]^m \overset{6}{\EP}  _{[\n_1,\n_2)} 
; \label{e:AS3} 
\\ \scriptstyle
_{[-\n_2,-\n_1)} \overset{4}{\EM} 
\left[_{(-\n_1,0)} \overset{3}{\EP} _{(-\infty,-\n_2)} \overset{4}{\EM} \right]^m 
\ _{\{0\}} \left[\overset{1}{\EM} _{(\n_2,+\infty)} \overset{6}{\EP} _{(0,\n_1)} \right]^m 
\overset{1}{\EM}  _{(\n_1,\n_2]} . \label{e:AS4}
\end{gather}
\noindent (E2) If $(x,\vep,\la_0, y^2)$ is normal and symmetric w.r.t. $\p_0$, 
then it has on $[s_+,s_-]$ one of the following structures with certain numbers 
$m_1, m_2 \in \{0\} \cup \NN$ of repetitions:
\begin{gather} \scriptstyle
 _{(-\n_2,-\n_1]} \overset{3}{\EP} 
\left[\overset{3}{\EM} _{(-\infty,-\n_1)} \overset{4}{\EM} \overset{4}{\EP} _{(-\n_2,0)} \overset{3}{\EP} 
\right]^{m_1}  
\overset{3}{\EM} _{\{\infty\}} \overset{6}{\EM} 
\left[ \overset{6}{\EP} _{(0,\n_2)} \overset{1}{\EP} \overset{1}{\EM} _{(n_1,+\infty)} \overset{6}{\EM}
\right]^{m_2} \overset{6}{\EP} _{[\n_1,\n_2)} 
; \label{e:NS1} 
\end{gather}
\vspace{-4ex}
\begin{multline} \scriptstyle
 _{(-\n_2,-\n_1]} \overset{3}{\EP} \overset{3}{\EM} _{(-\infty,-\n_1)} \overset{4}{\EM}
\left[ \overset{4}{\EP} _{(-\n_2,0)} \overset{3}{\EP} 
\overset{3}{\EM} _{(-\infty,-\n_1)} \overset{4}{\EM} \right]^{m_1}  
\overset{4}{\EP} 
_{\{0\}} 
\\ \scriptstyle
\overset{1}{\EP}  \left[ \overset{1}{\EM} _{(n_1,+\infty)} \overset{6}{\EM} \overset{6}{\EP} _{(0,\n_2)} 
\overset{1}{\EP} \right]^{m_2} \overset{1}{\EM} _{(n_1,+\infty)} \overset{6}{\EM} \overset{6}{\EP} _{[\n_1,\n_2)} 
; \label{e:NS2} 
\end{multline}
\vspace{-4ex}
\begin{multline} \scriptstyle
 _{[-\n_2,-\n_1)} \overset{4}{\EM} \overset{4}{\EP} _{(-\n_2,0)} \overset{3}{\EP} 
\left[ \overset{3}{\EM} _{(-\infty,-\n_1)} \overset{4}{\EM} 
\overset{4}{\EP} _{(-\n_2,0)} \overset{3}{\EP} 
\right]^{m_1}  
\overset{3}{\EM} _{\{\infty\}} 
\\
\overset{6}{\EM}
\left[ \overset{6}{\EP} _{(0,\n_2)} 
\overset{1}{\EP} \overset{1}{\EM} _{(\n_1,+\infty)} \overset{6}{\EM} 
\right]^{m_2} \overset{6}{\EP} _{(0,\n_2)} \overset{1}{\EP}  \overset{6}{\EM} _{(\n_1,\n_2]} 
; \label{e:NS3} 
\end{multline}
\vspace{-3ex}
\begin{gather} \scriptstyle
 _{[-\n_2,-\n_1)} \overset{4}{\EM} \left[\overset{4}{\EP} _{(-\n_2,0)} \overset{3}{\EP} 
 \overset{3}{\EM} _{(-\infty,-\n_1)} \overset{4}{\EM} \right]^{m_1}
\overset{4}{\EP} 
_{\{0\}} 
\overset{1}{\EP}  \left[ \overset{1}{\EM} _{(\n_1,+\infty)} \overset{6}{\EM} \overset{6}{\EP} _{(0,\n_2)} 
\overset{1}{\EP} \right]^{m_2} \overset{1}{\EM} _{(\n_1,\n_2]} 
; \label{e:NS4} 
\\ \scriptstyle
 _{(-\n_2,-\n_1]} \overset{3}{\EP} 
\left[\overset{3}{\EM} _{(-\infty,-\n_1)} \overset{4}{\EM} \overset{4}{\EP} _{(-\n_2,0)} \overset{3}{\EP} 
\right]^{m_1}  
 _{\{\infty\}}  
\left[ \overset{6}{\EP} _{(0,\n_2)} \overset{1}{\EP} \overset{1}{\EM} _{(n_1,+\infty)} \overset{6}{\EM}
\right]^{m_2} \overset{6}{\EP} _{[\n_1,\n_2)} 
; \label{e:NS5} 
\end{gather}
\vspace{-3ex}
\begin{multline} \scriptstyle
 _{[-\n_2,-\n_1)} \overset{4}{\EM} \overset{4}{\EP} _{(-\n_2,0)} \overset{3}{\EP} 
\left[ \overset{3}{\EM} _{(-\infty,-\n_1)} \overset{4}{\EM} 
\overset{4}{\EP} _{(-\n_2,0)} \overset{3}{\EP} 
\right]^{m_1}  
 _{\{\infty\}} 
\\ \scriptstyle
\left[ \overset{6}{\EP} _{(0,\n_2)} 
\overset{1}{\EP} \overset{1}{\EM} _{(\n_1,+\infty)} \overset{6}{\EM} 
\right]^{m_2} \overset{6}{\EP} _{(0,\n_2)} \overset{1}{\EP}  \overset{6}{\EM} _{(\n_1,\n_2]} 
; \label{e:NS6} 
\end{multline}

\noindent (E3)
If $(x,\vep,\la_0,\bar y^2)$ is abnormal and is not symmetric w.r.t. $\p_0$, 
then, on $[s_+,s_-]$, it has  in the case $x(\p_0) \in \CC_+ \cap \Li_0$ one of the following structures 
\begin{gather} \scriptstyle
_{(-\n_2,-\n_1]} \left[ \overset{3}{\EP}  
_{(-\infty,-\n_2)} \overset{4}{\EM} _{(-\n_1,0)}  \right]^{m_1} \overset{3}{\EP} _{\Li_0} \overset{2}{\EP} 
_{\ii \RR_+}  \left[ \overset{1}{\EM} _{(\n_2,+\infty)} \overset{6}{\EP} _{(0,\n_1)} \right]^{m_2} 
\overset{1}{\EM} _{(\n_1,\n_2]} ;
\label{e:ANS01}
\\ \scriptstyle
_{(-\n_2,-\n_1]} \left[ \overset{3}{\EP}  
_{(-\infty,-\n_2)} \overset{4}{\EM} _{(-\n_1,0)}  \right]^{m_1} \overset{3}{\EP} _{\Li_0} \overset{2}{\EP} 
_{\ii \RR_+}  \left[ \overset{1}{\EM} _{(\n_2,+\infty)} \overset{6}{\EP} _{(0,\n_1)} \right]^{m_2} 
\overset{1}{\EM} _{(\n_2,+\infty)} \overset{6}{\EP} _{[\n_1,\n_2)} ;
\label{e:ANS02}
\\ \scriptstyle
_{[-\n_2,-\n_1)} \left[\overset{4}{\EM} _{(-\n_1,0)} \overset{3}{\EP}  
_{(-\infty,-\n_2)} \right]^{m_1} \overset{4}{\EM} _{(-\n_1,0)}   \overset{3}{\EP} _{\Li_0} \overset{2}{\EP} 
_{\ii \RR_+}  \left[ \overset{1}{\EM} _{(\n_2,+\infty)} \overset{6}{\EP} _{(0,\n_1)} \right]^{m_2} 
\overset{1}{\EM} _{(\n_1,\n_2]} ;
\label{e:ANS03}
\\
 \scriptstyle
_{[-\n_2,-\n_1)} \left[\overset{4}{\EM} _{(-\n_1,0)} \overset{3}{\EP}  
_{(-\infty,-\n_2)} \right]^{m_1} \overset{4}{\EM} _{(-\n_1,0)} \overset{3}{\EP} _{\Li_0} \overset{2}{\EP} 
_{\ii \RR_+}  \left[ \overset{1}{\EM} _{(\n_2,+\infty)} \overset{6}{\EP} _{(0,\n_1)} \right]^{m_2} 
\overset{1}{\EM} _{(\n_2,+\infty)} \overset{6}{\EP} _{[\n_1,\n_2)} ; 
\label{e:ANS04}
\end{gather}
and in the case when $x(\p_0) \in \CC_- \cap \Li_0$ one of the following structures 
\begin{gather} \scriptstyle
  _{(-\n_2,-\n_1]} \left[ \overset{3}{\EP}  
_{(-\infty,-\n_2)} \overset{4}{\EM} _{(-\n_1,0)}  \right]^{m_1} 
\overset{3}{\EP} _{(-\infty,-\n_2)} \overset{4}{\EM} _{\ii \RR_-} \overset{5}{\EP} _{\Li_0} 
\left[ \overset{6}{\EP} _{(0,\n_1)}   \overset{1}{\EM} _{(\n_2,+\infty)} \right]^{m_2} 
\overset{6}{\EP} _{[\n_1,\n_2)} ;
\label{e:ANS05}
\\ 
\scriptstyle
 _{(-\n_2,-\n_1]} \left[ \overset{3}{\EP}  
_{(-\infty,-\n_2)} \overset{4}{\EM} _{(-\n_1,0)}  \right]^{m_1} 
\overset{3}{\EP} _{(-\infty,-\n_2)} \overset{4}{\EM} _{\ii \RR_-} \overset{5}{\EP} _{\Li_0} 
\left[ \overset{6}{\EP} _{(0,\n_1)}   \overset{1}{\EM} _{(\n_2,+\infty)} \right]^{m_2} 
\overset{6}{\EP} _{(0,\n_1)} \overset{1}{\EM} _{(\n_1,\n_2]} ;
\label{e:ANS06}
\\ \scriptstyle
_{[-\n_2,-\n_1)} \left[ \overset{4}{\EM} _{(-\n_1,0)} \overset{3}{\EP}  
_{(-\infty,-\n_2)}  \right]^{m_1} 
\overset{4}{\EM} _{\ii \RR_-} \overset{5}{\EP} _{\Li_0} 
\left[ \overset{6}{\EP} _{(0,\n_1)}   \overset{1}{\EM} _{(\n_2,+\infty)} \right]^{m_2} 
\overset{6}{\EP} _{[\n_1,\n_2)} ;
\label{e:ANS07}
\\ \scriptstyle
_{[-\n_2,-\n_1)} \left[ \overset{4}{\EM} _{(-\n_1,0)} \overset{3}{\EP}  
_{(-\infty,-\n_2)}  \right]^{m_1} \overset{4}{\EM} _{\ii \RR_-} \overset{5}{\EP} _{\Li_0} 
\left[ \overset{6}{\EP} _{(0,\n_1)}   \overset{1}{\EM} _{(\n_2,+\infty)} \right]^{m_2} 
\overset{6}{\EP} _{(0,\n_1)} \overset{1}{\EM} _{(\n_1,\n_2]} ,
\label{e:ANS08}
\end{gather}
with certain numbers 
$m_1, m_2 \in \{0\} \cup \NN$ of repetitions.

\noindent (E4)
If the extremal tuple $(x,\vep,\la_0, y)$ is normal, is not symmetric w.r.t. $\p_0$, and $x(\p_0) \in \CC_+ \cap \Li_0$, 
then it passes the no-return lines $\Li_0$ and $\ii \wh \RR$
in one of the two following ways: 
\begin{gather} \label{e:E41} \scriptstyle
 \dots \overset{3}{\EP} \overset{3}{\EM} _{\Li_0} 
 \overset{2}{\EM}  \overset{2}{\EP} _{\ii \RR_+} 
\overset{1}{\EP } \overset{1}{\EM } \dots 
\quad \text{\normalsize and } \quad
\dots  \overset{3}{\EP}  _{\Li_0} \overset{2}{\EP}   
 _{\ii \RR_+} \overset{1}{\EP } \overset{1}{\EM }  \dots 
\end{gather}
If the extremal tuple $(x,\vep,\la_0, y)$ is normal, is not symmetric w.r.t. $\p_0$, 
and $x(\p_0) \in \CC_- \cap \Li_0$, 
then it passes the no-return lines $\Li_0$ and $\ii \wh \RR$
in one the two following ways: 
\begin{gather}  \label{e:E42} \scriptstyle
 \dots  \overset{4}{\EM}
\overset{4}{\EP} _{\ii \RR_-} \overset{5}{\EP} \overset{5}{\EM} _{\Li_0}  \overset{6}{\EM} 
 \overset{6}{\EP}  \dots 
 \quad \text{\normalsize and } \quad
  \dots  \overset{4}{\EM}
\overset{4}{\EP} _{\ii \RR_-} \overset{5}{\EP} _{\Li_0}  \overset{6}{\EP} 
 \dots 
\end{gather}
All the possible structures of the extremal tuple $(x,\vep,\la_0, y)$ on $[s_-,s_+]$
in the case when it is normal and is not symmetric w.r.t. $\p_0$ can be obtained by concatenation of the 
4 sequences in (\ref{e:E41}), (\ref{e:E42}) with the beginning and ending sequences of various structures from (\ref{e:NS1})-(\ref{e:NS4})
according to the rules described in Theorems \ref{t:NextWidth} and \ref{t:SecSwitch}. 
 
The proof of (E1)-(E4) can be obtained from 
Lemma \ref{l:bj+1},
Theorems \ref{t:AbnExtWidth}, \ref{t:NextWidth}, \ref{t:SecSwitch}, and  Propositions \ref{p:AbnExt}, 
\ref{p:[ep1ep2]} taking into account 
the values of the vector fields  $f (x,\n_1^2)$ and $f (x,\n_2^2)$ at the points $x \in \RR \cup \ii \RR$,
and the equality $\wt f (0,\n_1^2) =\wt f (0,\n_2^2) = - \ii \ka$.

\begin{rem} \label{r:NSinfty}
A special clarification is needed for the sequence
\[ \scriptstyle
 \dots _{(-\infty,-\n_1)} \overset{4}{\EM} \overset{4}{\EP} _{(-\n_2,0)} \overset{3}{\EP} 
 _{\{\infty\}} \overset{6}{\EP} _{(0,\n_2)} \overset{1}{\EP} \overset{1}{\EM} _{(n_1,+\infty)} \dots
 \] 
 in the structures (\ref{e:NS5}) and (\ref{e:NS6}). When a normal $x$-extremal passes through 
 $\infty = \p_0 = \wt b_0 $ (we use the notation of Remark \ref{r:wtbj}), the values of $x(\wt b_{-1})$ and $x(\wt b_1)$ at the preceding 
 and the succeeding switch points do not necessarily belong to $\Sec_3$ and $\Sec_6$, resp.;
these values may also be in $\Sec_4$ and $\Sec_1$, respectively. The reason is that $y(\p_0)=0$, and so 
$\Arg_\star y (\p_0)$ does not exist. So the bounds on the positions of switch points $\wt b_{\pm1}$ are determined not by $\Arg_\star y (\p_0)$,
as in the case $x(\p_0) = 0$, but by the limits $\Arg_\star y (\p_0 \pm 0) $ (see (\ref{e:A3}) and 
Lemma \ref{l:bj+1}(iii)). 
\end{rem}

\section{Symmetric case: corollaries and an example}
\label{s:Corol}

 Each optimizer $\vep(\cdot)$ of one of symmetric problems (\ref{e:argminFsym}), (\ref{e:argminOddEven}) is an $\vep$-component of an extremal 
tuple $(x, \vep, \la_0, y)$  on an interval 
$[s_-,s_+]$ satisfying the following properties:
\begin{gather} \label{e:vepSym}
 \text{$\vep (\cdot) \in \F^\sym$, $s_+=-s_- = \pm s_\pm^{\vep}>0$,  and $x (s_\pm) = \pm \n_\infty$.}
\end{gather} 
Throughout this section, we assume that (\ref{e:vepSym}) is valid for an extremal on $[s_-,s_+]$ tuple $(x, \vep, \la_0, y)$  and summarize some of consequences of the above analysis for such extremals and so for optimizers of (\ref{e:argminFsym}) and (\ref{e:argminOddEven}).

First, note that $\p_0 = s^\cent = 0$ is not a switch point of $\vep$. Moreover,
either $x(0) = 0$, or $x(0) = \infty$. 
Let $\{\wt b_{j}\}_{j\in \ZZ}^{j \neq 0} = \{ b_j\}_{j \in \ZZ}$ 
be the set of switch point of the extended extremal tuple associated with 
$(x, \vep, \la_0, y)$
(we assume that the enumeration of Remark \ref{r:wtbj} is used for $\{\wt b_{j}\}_{j\in \ZZ}^{j \neq 0}$).

The control $\vep (\cdot)$ on the interval $(s_-,s_+)$ defines a layered resonator, 
which is symmetric w.r.t. $0$. 
Three special types of layers can be naturally considered in this resonator (cf. \cite{OW13}):
the central layer and two edge layers (for a formalization of the notion of a layer, see Section \ref{s:bang-bang}).  
They can be defined in the following way.

By definition, the \emph{central layer} $I^\cent$ of the resonator $\vep (\cdot)$ 
(on $(s_-,s_+)$) equals $(\wt b_{-1}, \wt b_1)$ in the case $\wt b_{\pm1} \in [s_-,s_+]$, and equals
$(s_-,s_+)$ in the case $\wt b_1\ge s_+$.
Note that $\p_0 = 0 \in I^\cent$.
The \emph{right (left) edge layer} $I^+$ (resp., $I^-$) of the symmetric resonator $\vep(\cdot)$ 
is a largest subinterval $I$ of $(s_-,s_+)$ that satisfies the following properties:
$I \ne I^\cent$, $I$ has the form $(s,s_+)$ (resp., the form $(s_-,s)$),
and $I$ does not contain switch points (see Fig. \ref{f:x} (b)). If a layers $I$ of $\vep (\cdot)$ in $(s_-,s_+)$ is not an edge layer and is not a central layer, we say that $I$ is an \emph{ordinary layer}.

By $|I|$ we denote the length of an interval $I$. 
The constant value of $\vep (\cdot)$ in $I^\cent$ is denoted by 
$\ep_\cent$, the constant value of $\vep (\cdot)$ in the layers $I^-$ and $I^+$ coincide and is denoted by $\ep_\edge$. Note that the edge layers $I^\pm$ do not exist exactly when $I^\cent = (s_-,s_+)$.

\begin{cor}\label{c:Icent} 
Assume (\ref{e:vepSym}), $\ka \in \CC_4$, and $I^\cent = (s_-,s_+)$. Then:
\item[(i)] There exists a complex constant $c \neq 0$ such that 
$(x, \vep, 0, cy)$ is an abnormal extremal tuple satisfying  (\ref{e:vepSym}).
\item[(ii)] $|I^\cent|/2 = s_+ = -s_- =  \frac{\pi}{2 \ep_\cent^{1/2} \re \ka}$ (i.e.,
$|I^\cent|$ is equal to half-wave).
\item[(iii)] If $x(0) = \infty$ ($x(0) = 0$), then $\ep_\cent = \n_2^2 \neq \n_\infty^2$ (resp., $\ep_\cent = \n_1^2 \neq \n_\infty^2$).
\end{cor}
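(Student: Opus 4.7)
The hypothesis $I^\cent=(s_-,s_+)$ together with (\ref{e:vepSym}) forces the bang-bang control of Theorem~\ref{t:PMP} to be constant on $(s_-,s_+)$ with value $\ep_\cent\in\{\n_1^2,\n_2^2\}$, and the turning point $\p_0=0$ lies in this open interval with $x(0)\in\{0,\infty\}$. My plan is to establish (ii), then (iii), and finally (i), using the explicit constant-coefficient form of $y$ from Section~\ref{ss:const}.

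For (ii), the initial conditions $y(0)=0$ (Case A: $x(0)=\infty$) or $y'(0)=0$ (Case B: $x(0)=0$) reduce (\ref{e:yxRepr}) to $y(s)=2\ii c_+\sin(\vka s)$ or $y(s)=2c_+\cos(\vka s)$, respectively, where $\vka:=\ep_\cent^{1/2}\ka$ and $c_+\in\CC\setminus\{0\}$. The associated ilog-phase from (\ref{e:vthep}) is $\vth_{\ep_\cent}(s)=\mp e^{-2\ii\vka s}$, and $x=\ep_\cent^{1/2}(1-\vth_{\ep_\cent})/(1+\vth_{\ep_\cent})$ lies in $\wh\RR$ iff $\vth_{\ep_\cent}\in\wh\RR$, i.e.\ iff $(\re\vka)\cdot s\in(\pi/2)\ZZ$. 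Since the layer $I^\cent$ contains no switch points in its interior, the trajectory $x(\cdot)$ crosses $\wh\RR$ only at $\p_0=0$ inside $(s_-,s_+)$; hence the smallest positive time at which $x(s)\in\wh\RR$ is exactly $s_+=\pi/(2\ep_\cent^{1/2}\re\ka)$, which is the half-wave identity.

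For (iii), write $\vka s_+=\pi/2-\ii\pi q/2$ with $q:=-\im\ka/\re\ka>0$. A short computation of $\cot(\vka s_+)$ in Case A and $\tan(\vka s_+)$ in Case B converts the boundary condition $x(s_+)=\n_\infty$ into $\ep_\cent^{1/2}\tanh(\pi q/2)=\n_\infty$ (Case A) or $\ep_\cent^{1/2}\coth(\pi q/2)=\n_\infty$ (Case B). Since $\tanh(\pi q/2)\in(0,1)$ and $\coth(\pi q/2)>1$, these strictly force $\ep_\cent^{1/2}>\n_\infty$ and $\ep_\cent^{1/2}<\n_\infty$ respectively; combined with $\ep_\cent\in\{\n_1^2,\n_2^2\}$ and $\n_1\le\n_\infty\le\n_2$, Case A yields $\ep_\cent=\n_2^2$ with $\n_\infty<\n_2$, and Case B yields $\ep_\cent=\n_1^2$ with $\n_\infty>\n_1$; in both cases $\ep_\cent\ne\n_\infty^2$.

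For (i), let $c_+\in\CC\setminus\{0\}$ be the constant attached to the PMP-given $y$. I would pick $c\in\CC\setminus\{0\}$ so that $\tilde c_+:=c\,c_+$ satisfies $\tilde c_+^2\in\RR_+$ in Case A and $\tilde c_+^2\in\RR_-$ in Case B. Using $\vka_1:=\re\vka>0$, $\vka_2:=\im\vka<0$ (from $\ka\in\CC_4$) and the identities $\im\sin^2(\vka s)=\tfrac12\sin(2\vka_1 s)\sinh(2\vka_2 s)$, $\im\cos^2(\vka s)=-\tfrac12\sin(2\vka_1 s)\sinh(2\vka_2 s)$, whose signs on $(0,s_+)$ are pinned down by $2\vka_1 s\in(0,\pi)$ (from the minimality of $s_+$ established in (ii)), a direct sign check shows $\im(cy)^2>0$ in Case A and $\im(cy)^2<0$ in Case B throughout $(s_-,s_+)\setminus\{0\}$, so $\E(cy)\equiv\ep_\cent$ and (\ref{e:MinC}) still holds for $cy$. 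Evaluating (\ref{e:TransC}) at $s=0$ kills the term $\vep(cy)^2$ in Case A (since $y(0)=0$) and the term $\ka^{-2}((cy)')^2$ in Case B (since $y'(0)=0$), reducing $\la_0^{\mathrm{new}}$ to $\mp 4\ep_\cent\im\tilde c_+^2=0$; hence $(x,\vep,0,cy)$ is an abnormal extremal tuple. The main obstacle will be precisely this consistency check: a generic complex $c$ would destroy the matching $\E(cy)=\vep$, and the specific choice above must be validated by the sign analysis, which is where the minimality of $s_+$ from (ii) feeds back into (i).
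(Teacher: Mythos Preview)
Your direct-computation approach is a legitimate alternative to the paper's structural proof, but step (ii) has a gap. You infer from the absence of switch points that ``the trajectory $x(\cdot)$ crosses $\wh\RR$ only at $\p_0=0$ inside $(s_-,s_+)$''. That does not follow: by Remark~\ref{r:SwitchPoints} switch points are where $y^2\in\RR$, not where $x\in\wh\RR$, and the two conditions coincide only for \emph{abnormal} extremals (Proposition~\ref{p:AbnExt}(i)), which is precisely what you postpone to (i). What the absence of switch points gives you, via (\ref{e:MinC}) for the original $y$, is only that $\im(y^2)$ keeps a fixed sign on $(0,s_+)$; you still need to convert this into $s_+=\pi/(2\vka_1)$ before your (iii) and your sign analysis in (i) are entitled to use $2\vka_1 s\in(0,\pi)$.

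The fix uses your own formulae. Write $s_k:=k\pi/(2\vka_1)$; you have shown $x(s)\in\wh\RR$ iff $s\in s_1\ZZ$, so $s_+=s_k$ for some $k\ge 1$. In Case~A, $\sin^2(\vka s_1)=\cosh^2(\vka_2 s_1)\in\RR_+$ while $\sin^2(\vka s_2)=-\sinh^2(\vka_2 s_2)\in\RR_-$, so $y^2(s_1)$ and $y^2(s_2)$ are real multiples of $c_+^2$ of opposite sign. If $c_+^2\in\RR$ then $y^2(s_1)\in\RR$ and $s_1$ is a switch point unless $s_1\ge s_+$; if $c_+^2\notin\RR$ then $\im(y^2)$ changes sign somewhere in $(s_1,s_2)$, forcing a switch point there. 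Either way $s_+<s_2$, hence $s_+=s_1$. Case~B is analogous with $\cos^2$ in place of $\sin^2$. Once (ii) is secured this way, your (iii) and your construction of $c$ in (i) go through as written.

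For contrast, the paper argues in the order (i)$\to$(ii)$\to$(iii): it picks $c$ so that $(cy)^2(s_\pm)\in\RR$, so that $s_\pm$ become switch points of the \emph{extended} abnormal tuple $(x,\vep,0,cy)$, and then reads off (ii) from Theorem~\ref{t:AbnExtWidth}(ii) and (iii) from the classification (E1). That route trades your explicit trigonometry for the abnormal-extremal machinery already in place.
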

\begin{proof}
Since there are no switch points in $(s_-,s_+)$, one can  
easily replace the $(-\n_\infty;\n_\infty)$-eigenfunction $y$ 
by another eigenfunction $y_1 = c y$ such that $y_1^2 (s_\pm) \in \RR$ and $y_1^2 (s) \not \in \RR$ for all $s \in (s_-,s_+)$.
Then $(x, \vep, 0, y_1)$ is an abnormal extremal tuple on $[s_-,s_+]$ and (ii) follows from 
Theorem \ref{t:AbnExtWidth}. Statement (iii) follows from statement (E1) of Section \ref{s:Syn2}. Note that 
$\ep_\cent \neq \n_\infty^2$ because $\pm \n_\infty$ are equilibrium solutions to the equation 
$x'=f(x,\n_\infty^2)$.
\end{proof}

\begin{rem}
The case $x(0) = \infty$ of Corollary \ref{c:Icent} takes place for the example of a minimal time 
extremal that is constructed below in Theorem \ref{t:ExOpt}.
\end{rem}

\begin{rem} \label{r:1/4stack}
Consider the case when the extremal tuple (\ref{e:vepSym}) is abnormal (i.e., $\la_0 = 0$) and 
$I^\cent \neq (s_-,s_+)$. Then the lengths of all layers are described by Theorem \ref{t:AbnExtWidth}. That is,
$|I^\cent| = \frac{\pi}{\ep_\cent^{1/2} \re \ka}$, $|I^\pm| = \frac{\pi}{2 \ep_\edge^{1/2} \re \ka}$,
and for the each ordinary layer $I=(b_j,b_{j+1}) $ we have 
$|I|=\frac{\pi}{2 \vep^{1/2} (b_j+0) \re \ka}$. The ordinary layers form two quarter-wave stacks, one on the left side 
of $I^\cent$, and the other on the right side. Since the edge layers in the abnormal case are also of quarter-wave width, they can also be included into these two quarter-wave stacks.
Under the additional condition $\n_\infty \in [\n_1,\n_2]$, statement (E1) of Section \ref{s:Syn2} implies 
the following interplay between $x(0)$ and 
the values $\ep_\cent$, $\ep_\edge$: in the case $x(0) = \infty$ (in the case $x(0) = 0$), 
one has $\n_2^2 = \ep_\cent=\ep_\edge \neq \n_\infty$ 
(resp., $\n_1^2=\ep_\cent=\ep_\edge \neq \n_\infty $).
\end{rem}

\begin{cor} \label{c:n1n2Norm}
Assume (\ref{e:vepSym}) and $\ka \in \CC_4$. 
\item[(i)] If $x(0) = \infty$ and $\n_\infty = \n_2$ (if $x(0) = 0$ and $\n_\infty = \n_1$), 
then $\la_0 >0$.
\item[(ii)] If $\la_0 >0$ and $x(0)=0$, then $\ep_\cent = \n_2^2$. 
\end{cor}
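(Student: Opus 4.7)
My plan is to prove (i) by contradiction and (ii) by direct inspection of the classification (E1)--(E4) from Section \ref{s:Syn2}.

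For (i), I handle the case $x(0) = \infty$, $\n_\infty = \n_2$; the case $x(0) = 0$, $\n_\infty = \n_1$ is entirely analogous with the roles of $\n_1$ and $\n_2$ interchanged. Suppose for contradiction that $\la_0 = 0$, so the tuple is abnormal. I split on whether $I^\cent = (s_-,s_+)$ or $I^\cent \subsetneq (s_-,s_+)$. In the first sub-case Corollary \ref{c:Icent}(iii) yields $\ep_\cent = \n_2^2 \ne \n_\infty^2$; combined with $\n_\infty = \n_2$ this gives $\n_2^2 \ne \n_2^2$, a contradiction. In the second sub-case the edge layers $I^\pm$ are present, and since $\n_\infty \in \{\n_1,\n_2\} \subset [\n_1,\n_2]$ I may invoke Remark \ref{r:1/4stack}, which gives $\ep_\cent = \ep_\edge = \n_2^2 \ne \n_\infty^2$, once again contradicting $\n_\infty = \n_2$. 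Either alternative is impossible, so $\la_0 > 0$.

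For (ii), the hypothesis $x(\p_0) = x(0) = 0$ places $x(\p_0)$ in $\{0,\infty\}$, so the extremal is symmetric with respect to $\p_0 = 0$, and $\la_0 > 0$ says it is normal. Thus the structure belongs to case (E2) of Section \ref{s:Syn2}. Of the six candidate patterns (\ref{e:NS1})--(\ref{e:NS6}), four have the central marker ${}_{\{\infty\}}$ and are excluded by the hypothesis $x(0)=0$; the remaining two, (\ref{e:NS2}) and (\ref{e:NS4}), carry central marker ${}_{\{0\}}$, and in both cases the neighbouring encoding is $\overset{4}{\EP} \, {}_{\{0\}} \, \overset{1}{\EP}$. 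Consequently the layer containing the turning point is of $\EP$-type and carries the control value $\ep_2 = \n_2^2$. Since this layer is $I^\cent = (\wt b_{-1}, \wt b_1)$ by definition, I conclude $\ep_\cent = \n_2^2$.

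The only nontrivial step is bookkeeping: verifying that the classification indeed forces the constant control on the layer straddling $\p_0 = 0$ to be $\ep_2$ rather than $\ep_1$. This is already built into the derivation of (E2) -- specifically via Theorem \ref{t:SecSwitch} applied in sectors $\Sec_1$ and $\Sec_4$ (the sectors of $\CC$ adjacent to $0$), which dictates $\vep(\p_0 \pm 0) = \n_2^2$ whenever a normal extremal passes through the origin. No further computation is required beyond reading off the relevant entries of the classification.
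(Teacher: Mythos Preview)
Your proof of (i) is correct and essentially parallels the paper's: the paper simply cites statement (E1) of Section \ref{s:Syn2}, while you invoke its already-packaged consequences Corollary \ref{c:Icent}(iii) and Remark \ref{r:1/4stack}. Same content, same mechanism.

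For (ii) your argument via the classification (E2) is valid, but the paper takes a markedly shorter route, citing only Theorem \ref{t:PMP}. The direct argument is this: $x(0)=0$ means $y'(0)=0$ and $y(0)\neq 0$; plugging $s=0$ into the transversality condition (\ref{e:TransC}) gives $\vep(0)\,\im(y^2(0))=\la_0>0$, hence $\im(y^2(0))>0$; then the switching law (\ref{e:MinC}) forces $\vep(0)=\n_2^2$. Since $\p_0=0$ is not a switch point, $\ep_\cent=\n_2^2$. This bypasses the full (E2) classification entirely and in particular does not rely on the standing hypothesis (\ref{e:xs-inxs+in}) of Section \ref{s:Syn2} (which includes $\n_\infty\in[\n_1,\n_2]$ and the absence of stationary subintervals --- the latter is automatic for normal extremals, but the former is not part of the hypotheses of (ii)). Your closing remark about Theorem \ref{t:SecSwitch} is slightly off target: that theorem speaks of switch points $b_j$, not of the turning point $\p_0$, so it does not directly give $\vep(\p_0\pm 0)$; what actually drives the conclusion is the PMP computation above, which is precisely what underlies the derivation of (E2) in the first place.
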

\begin{proof}
(i) follows from statement (E1) of Section \ref{s:Syn2}. (ii) follows from Theorem \ref{t:PMP}.
\end{proof}

Consider now the length of the layers in the normal case $\la_0 >0$.

\begin{cor} \label{c:Norm}
Assume  (\ref{e:vepSym}), $\ka \in \CC_4$, $\la_0 >0$, and $I^\cent \neq (s_-,s_+)$. 
\item[(i)] Let $I \subset (s_-,s_+)$ be an ordinary layer. Then 
$I = (b_j,b_{j+1})$ for a certain $j \in \ZZ$ and 
$|I|< \frac{\pi}{2 \n_1 \re \ka}$ ($\frac{\pi}{2 \n_2 \re \ka}<|I|< \frac{\pi}{\n_2 \re \ka}$) 
whenever $\vep (b_j+0) = \n_1^2$ (resp.,  whenever $\vep (b_j+0) = \n_2^2$).
\item[(ii)] If $\ep_\edge =\n_1^2$ (if $\ep_\edge =\n_2^2$), then $|I^\pm| < \frac{\pi}{2 \n_1 \re \ka}$ (resp., $|I^\pm| < \frac{\pi}{\n_1 \re \ka}$). 
In particular, this is the case when $\n_\infty = \n_2$ (resp., $\n_\infty = \n_1$).
\item[(iii)] If $x(0) = 0$, then $|I^\cent| < \frac{\pi}{\ep_\cent^{1/2} \re \ka}$. 
\item[(iv)] Let $x(0) = \infty$ and $\n_\infty \in [\n_1,\n_2]$. Assume that $\ep_\cent = \n_2^2$ (that $\ep_\cent= \n_1^2$). 
Then $\frac{\pi}{\n_2 \re \ka} < |I^\cent| < \frac{2\pi}{\n_2 \re \ka}$ (resp., $|I^\cent| < \frac{\pi}{\n_1 \re \ka}$).
\end{cor}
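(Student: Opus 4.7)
The plan is to apply Theorem \ref{t:NextWidth} to each layer of the symmetric extremal that does not contain $\p_0$, and to analyze the central layer $I^\cent$ directly via the explicit form of the $y$-extremal on $I^\cent$. The sector of $x$ at each switch point $b_j$ is read off from the structural classification (E2) of Section \ref{s:Syn2}, which also verifies the hypothesis $x[I] \subset \Sec_6 \cup \Sec_1 \cup \Sec_3 \cup \Sec_4$ of Theorem \ref{t:NextWidth}.

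For parts (i) and (ii), inspection of (\ref{e:NS1})--(\ref{e:NS4}) places every switch $b_j$ bounding an ordinary layer with $\vep = \n_1^2$ into $\Sec_1 \cup \Sec_3 \subset \CC_+$, and every such switch for $\vep = \n_2^2$ into $\Sec_4 \cup \Sec_6 \subset \CC_-$; Theorem \ref{t:NextWidth} then yields the bounds of (i). For edge layers one uses $|I^\pm| < b_{j+1} - b_j$, where $b_{j+1}$ is the next (possibly exterior) switch of the extended extremal, and applies the same single-layer bounds; the stated bound $\pi/(\n_1\re\ka)$ for $\ep_\edge = \n_2^2$ is weaker than the sharp $\pi/(\n_2\re\ka)$, valid since $\n_1 < \n_2$. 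The ``in particular'' clauses follow from $\ep_\edge \neq \n_\infty^2$: if $\ep_\edge$ coincided with $\n_\infty^2$, the effective length $s_+^\vep - s_-^\vep$ could be reduced, contradicting the minimality in Definition \ref{d:length}.

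For part (iii), symmetry about $\p_0 = 0$ and $x(0) = 0$ force $y'(0) = 0$ and $y(0) \neq 0$, so $y$ is even on $I^\cent$; Corollary \ref{c:n1n2Norm}(ii) gives $\ep_\cent = \n_2^2$, hence $y(s) = A\cos(\n_2\ka s)$ with $\im A^2 = 1/\n_2^2$ from (\ref{e:TransC}) at $s = 0$. A direct analysis of the trajectory of $\Arg \cos^2(\n_2\ka s)$ for $s \in (0,\pi/(2\n_2\re\ka))$, combined with $\ka \in \CC_4$ and $\Arg A^2 \in (0,\pi)$, shows that the first positive root of $A^2\cos^2(\n_2\ka s) \in \RR$ lies strictly below $\pi/(2\n_2\re\ka)$, whence $|I^\cent| = 2\wt b_1 < \pi/(\n_2\re\ka)$.

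Part (iv) is the main obstacle, since Theorem \ref{t:NextWidth} does not apply: $\p_0 \in I^\cent$. Here $x(0) = \infty$ forces $y(0) = 0$, $y$ odd, $y(s) = A\sin(\ep_\cent^{1/2}\ka s)$ with $\im(\ep_\cent A^2) = 1$, and the key computation is $x(s) = -\ii \ep_\cent^{1/2}\cot(\ep_\cent^{1/2}\ka s)$. Direct substitution shows that $x(s) \in \RR_+$ precisely at $s = \pi/(2\ep_\cent^{1/2}\re\ka)$ with value $-\ep_\cent^{1/2}\tanh(\pi (\im \ka)/(2 \re\ka)) \in (0, \ep_\cent^{1/2})$, and again at $s = \pi/(\ep_\cent^{1/2}\re\ka)$; for $s$ between these two values the trajectory lies in $\Sec_1$, and for $s$ below the first in $\Sec_6$. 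Combining these crossings with the sectors required for $x(\wt b_{\pm 1})$, as determined by (\ref{e:NS1}) and (\ref{e:NS3}) when $\ep_\cent = \n_1^2$ and by (\ref{e:NS5}) and (\ref{e:NS6}) when $\ep_\cent = \n_2^2$, together with the observation that switch points in the normal case cannot lie on $\RR$ (by (\ref{e:H}) in the proof of Theorem \ref{t:SecSwitch}), yields the bounds: $|I^\cent| < \pi/(\n_1\re\ka)$ when $\ep_\cent = \n_1^2$ (the trajectory must remain in $\Sec_6$ up to $\wt b_1$), and $\pi/(\n_2\re\ka) < |I^\cent| < 2\pi/(\n_2\re\ka)$ when $\ep_\cent = \n_2^2$ (the trajectory must first enter $\Sec_1$ by crossing $\RR_+$, then switch before exiting $\Sec_1$ via the next $\RR_+$ crossing). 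The strict lower bound in the $\ep_\cent = \n_2^2$ case is the most delicate point, since equality is realized precisely in the abnormal regime (Theorem \ref{t:AbnExtWidth}(ii)); its strictness uses $\la_0 > 0$ through the impossibility of switches on $\RR$.
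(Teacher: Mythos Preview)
Your proof is correct and the overall strategy matches the paper's: parts (i)--(ii) are handled via Theorem~\ref{t:NextWidth}, Theorem~\ref{t:SecSwitch}, and the structural list (E2), exactly as the paper does (the paper also mentions the extension trick for (ii) that you sketch).

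For (iii)--(iv) you take a somewhat different but equivalent route. The paper argues via the ilog-phase $\vth(s) = \I(\ep_\cent^{-1/2} x(s))$ and the rotation bounds implicit in Lemma~\ref{l:bj+1}: it reads off from (E2) that $x(s)\in\Sec_3$ (resp.\ passes through $\Sec_4$ into $\Sec_3$) on the left half of $I^\cent$ and translates this into a bound on the angular change of $\vth$. You instead write the explicit solution $y(s)=A\cos(\n_2\ka s)$ or $y(s)=A\sin(\ep_\cent^{1/2}\ka s)$ on $I^\cent$, compute $x(s)=-\ii\ep_\cent^{1/2}\cot(\ep_\cent^{1/2}\ka s)$ directly, and locate the real-axis crossings of $x$ at $s=n\pi/(2\ep_\cent^{1/2}\re\ka)$. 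These are the same computation in different coordinates. Your approach is more transparent on the central layer because the initial condition $x(0)\in\{0,\infty\}$ makes the closed form clean; the paper's ilog-phase approach has the advantage of recycling the machinery of Lemma~\ref{l:bj+1} uniformly across all layer types.

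One small gap: your sketch of (iii) asserts the conclusion of the ``direct analysis of $\Arg\cos^2$'' without showing it. The quickest way to close this is to evaluate at the endpoint: at $s=\pi/(2\n_2\re\ka)$ one has $\n_2\ka s = \pi/2 + \ii\pi\im\ka/(2\re\ka)$, hence $\cos^2(\n_2\ka s)=-\sinh^2(\pi\im\ka/(2\re\ka))\in\RR_-$, so $\im(A^2\cos^2)$ has changed sign from its value $\im A^2>0$ at $s=0$. Since $\Arg_\star y$ is monotone for $s>\p_0$ by (\ref{e arg dec arg inc}), this forces a unique zero of $\im y^2$ strictly before $\pi/(2\n_2\re\ka)$.
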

\begin{proof}
\emph{(i)} follows from the definition of a layer, the definitions of central and edge layers, and from 
Theorems \ref{t:NextWidth} and \ref{t:SecSwitch}. 

\emph{(ii)} follows from Theorems \ref{t:NextWidth} and \ref{t:SecSwitch}. Another proof can be obtained from (i).
Indeed, one can continue the normal extremal tuple $(x, \vep, \la_0, y)$ to a wider symmetric 
interval $[- \wt s_+, \wt s_+]$ such that $x(\wt s_+) \in \RR_+$ and denote $\wt \n_\infty = x(\wt s_+) $.
Then statement (i) is applicable to the extensions of the original edge layers, which become non-edge layers
for the extended extremal tuple.

\emph{(iii)} follows from the assumption that the extremal tuple is normal and 
the process of computation of the position of the switch point $\wt b_1$ from the position 
of the preceding switch point $\wt b_{-1}$ (see Lemma \ref{l:bj+1}).

\emph{Consider the case $\ep_\cent= \n_1^2$ of statement (iv).} It follows from (E2) that $\vep (\cdot)$
corresponds either the sequence (\ref{e:NS1}), or the sequence (\ref{e:NS3}).
This implies that $x(s) \in \Sec_3 \subset \CC_+$ for all $s$ in the left half 
$(\wt b_{-1},0)$ of the layer $I^\cent$.
Considering the corresponding part of the trajectory of $\vth (s)$ 
(see (\ref{e:vth=ep}) for the definition) in the way similar to the proof of Lemma \ref{l:bj+1},
one obtains $|\wt b_{-1}|< \frac{\pi}{2 \n_1 \re \ka}$. This implies $|I^\cent| < \frac{\pi}{\n_1 \re \ka}$.

\emph{Let us consider the case $\ep_\cent= \n_2^2$ of statement (iv).} It follows from (E2) that 
$x(\wt b_{-1}) \in \Sec_4 \subset \CC_-$. Thus, the trajectory of $x$ passes through $\RR_-$ to $\Sec_3 \subset \CC_+$ and going through $\Sec_3$
reaches $\infty$ at $s=0$. Considering the rotation of $\vth (s)$ for  $s \in (\wt b_{-1},0)$, we see 
that $ \frac{\pi}{2 \n_2 \re \ka}<|\wt b_{-1}|<\frac{\pi}{\n_2 \re \ka}$, and so $\frac{\pi}{\n_2 \re \ka}<|I^\cent| <\frac{2\pi}{\n_2 \re \ka}$.
\end{proof}

Let us give an explicit example of minimum-time control from $\infty$ to $\n_\infty \in [\n_1^2,\n_2^2)$.

\begin{thm} \label{t:ExOpt}
Let $\n_\infty  \in [\n_1,\n_2)$, $s_+ = - s_->0$, and 
\begin{equation} \label{e:k0}
\ka_0 = -\frac{\ii}{2s_+ \n_2} 
\Ln  \frac{\n_2 + \n_\infty}{\n_2 - \n_\infty}  + \frac{\pi}{(s_+-s_-) \n_2}.
\end{equation}
Then 
$\vep_0 (s)= 
\left\{ \begin{array}{rr} 
\n_2^2, & s \in [s_-,s_+] \\
\n_\infty^2,& s \in \RR \setminus [s_-,s_+]
\end{array} \right.
 $
is a unique minimizer of 
$\argmin_{\substack{\vep \in \F^\sym \quad \\\!\!\! \ka_0 \in \Si^{\odd } (\vep)}} \Lr (\vep)$, and 
the corresponding minimal value of $\Lr (\vep)$ is $\Lr_{\min}^{\odd} (\ka_0) = 2 s_+$.
\end{thm}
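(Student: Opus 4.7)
The plan is to translate the odd-mode length minimisation into a minimum-time control problem via Proposition \ref{p:eqv}(ii)—so that $\Lr_{\min}^\odd(\ka_0) = 2\, T^{\min}_{\ka_0}(\infty, \n_\infty)$—and then to exhaust the candidate extremals given by the classification of Section \ref{s:Syn2}, showing that only the constant-$\n_2^2$ trajectory of length $s_+$ attains the minimum.

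For the upper bound, I verify that $\vep_0 \equiv \n_2^2$ on $[0,s_+]$ with $x(0) = \infty$ produces $x(s_+) = \n_\infty$: representation (\ref{e:xRepr}) yields $x(s) = -\ii\n_2\cot(\n_2\ka_0 s)$, and from (\ref{e:k0}) one has $\n_2\ka_0 s_+ = \pi/2 - \ii t$ with $t = \tfrac{1}{2}\ln\tfrac{\n_2+\n_\infty}{\n_2-\n_\infty}$, so the identity $\cot(\pi/2 - \ii t) = \ii \tanh t = \ii\n_\infty/\n_2$ gives $x(s_+) = \n_\infty$. Hence $\vep_0$ is feasible, $\ka_0 \in \Si^\odd(\vep_0)$, and $\Lr_{\min}^\odd(\ka_0) \le 2 s_+$.

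For the lower bound, let $\tau := T^{\min}_{\ka_0}(\infty,\n_\infty)$. By Theorems \ref{t:GlContr} and \ref{t:ExOptContr} the infimum is attained, and Theorem \ref{t:PMP} provides a bang-bang extremal tuple on $[0,\tau]$. Extending symmetrically to $[-\tau,\tau]$ with $x(0) = \infty$ and $\p_0 = 0$, the structure must be one of those in (E1)--(E2) of Section \ref{s:Syn2} with $x(\p_0) = \infty$; this excludes (\ref{e:AS2}) and (\ref{e:AS4}). For abnormal (\ref{e:AS3}) with $m\ge 1$, Theorem \ref{t:AbnExtWidth} forces each layer to be half- or quarter-wave, giving $\tau = (m+2) s_+ + m s_+ \n_2/\n_1 > s_+$ via $\tfrac{\pi}{2\n_2\re\ka_0}=s_+$. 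For normal structures with $\ep_\cent = \n_2^2$ (namely (\ref{e:NS5}), (\ref{e:NS6})), the trajectory inside the central layer still equals $-\ii\n_2\cot(\n_2\ka_0 s)$ and hits $\n_\infty$ precisely at $s = s_+$; by Corollary \ref{c:Norm}(iv) the right half of the central layer extends strictly beyond $s_+$, so truncating at $s=s_+$ (which is the control $\vep_0$) would give a strictly smaller time, contradicting minimality unless the structure is already (\ref{e:onelayerodd}).

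The main obstacle is the normal case $\ep_\cent = \n_1^2$ (structures (\ref{e:NS1}) and (\ref{e:NS3})). Here the first switch $\wt b_1 \in (0,\tau)$ satisfies $x(\wt b_1) \in \Sec_6$, and in the extended extremal the subsequent $\n_2^2$ layer $(\wt b_1,\wt b_2)$ has full width exceeding $\tfrac{\pi}{2\n_2\re\ka_0} = s_+$ by Theorem \ref{t:NextWidth}(ii). I plan to combine this with a tracking of the ilog-phase $\vth$ of (\ref{e:vth=ep}): on $(0,\wt b_1)$ one has $\vth_{\n_1^2}(s) = -e^{-2\ii\n_1\ka_0 s}$ and on $(\wt b_1,\tau)$ one has $\vth_{\n_2^2}(s) = \vth_{\n_2^2}(\wt b_1)\, e^{-2\ii\n_2\ka_0(s-\wt b_1)}$; combining the transition identity $\vth_{\n_2^2}(\wt b_1) = \I(\n_2^{-1} x(\wt b_1))$ with $\vth_{\n_1^2}(\wt b_1) = \I(\n_1^{-1} x(\wt b_1))$, and using the monotone decay of $|\vth|$ in $s$ implied by $\im\ka_0 < 0$, should give $\wt b_1 + (\tau-\wt b_1) > s_+$. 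Since $\tau = s_+$ then forces the trajectory to be the single-$\n_2^2$-layer (\ref{e:onelayerodd}), the minimiser $\vep_0$ is unique.
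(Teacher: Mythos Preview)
Your overall strategy—reducing to minimum-time control and exhausting the extremal types from Section \ref{s:Syn2}—matches the paper's. Your upper bound, the abnormal case (\ref{e:AS3}), and your treatment of the normal structures with $\ep_\cent=\n_2^2$ via Corollary \ref{c:Norm}(iv) are all fine (your layer count $(m+2)s_+ + m s_+\n_2/\n_1$ in (\ref{e:AS3}) is off by one, but the conclusion $\tau>s_+$ stands). The genuine gap is in the hardest case, which in your right-half formulation is the structure (\ref{e:NS1}) with $m_2=0$: two layers $_{\{\infty\}}\overset{6}{\EM}\overset{6}{\EP}_{[\n_1,\n_2)}$, i.e.\ $\n_1^2$ on $(0,\wt b_1)$ followed by the $\n_2^2$ edge layer $(\wt b_1,\tau)$.

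Here your Theorem \ref{t:NextWidth}(ii) bound applies only to the \emph{extended} layer $(\wt b_1,\wt b_2)$, but $\tau<\wt b_2$, so you get no lower bound on $\tau-\wt b_1$. Your proposed fix—tracking the ilog-phase modulus through the transition at $\wt b_1$—does not obviously close the gap: while $|\vth_\ep|$ decays monotonically inside each layer, the transition $\vth_{\n_2^2}(\wt b_1)=\I\!\bigl((\n_1/\n_2)\,\I(\vth_{\n_1^2}(\wt b_1))\bigr)$ is a Möbius map that neither preserves modulus nor has a sign for its effect on $|\vth|$, and you have not shown (nor sketched how to show) that the combined effect forces $\tau>s_+$. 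Everything after ``should give'' is a hope, not an argument.

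The paper handles precisely this case by a completely different, geometric device. Working from $-\n_\infty$ to $\infty$, it builds a Jordan curve $\Jmf\subset\wh\CC$ out of the reference trajectory $x_0$ (i.e.\ the $\vep_0$-trajectory) together with a segment of $(-\infty,-\n_\infty]$. It then introduces an \emph{auxiliary abnormal} extremal $x_2$ with constant $\n_1^2$ control that agrees with the competing trajectory $x_1$ on the $\n_1^2$ sub-arc, and computes the angle between the tangent vectors $f(X_0,\n_1^2)$ and $f(X_0,\n_2^2)$ at the common point $X_0=x_1(s_0)\in\Sec_3$: since $\Arg_0\bigl[(f(X_0,\n_1^2)-f(X_0,\n_2^2))/f(X_0,\n_2^2)\bigr]=-\Arg_0(-\n_2^2+X_0^2)\in(0,\pi)$, the trajectory $x_2$ crosses $\Jmf$ from the $0$-component $\CJ_0$ to the other component $\CJ_\star$. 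A small perturbation $x_3$ of $x_2$ (with $x_3(t_2^-)=x_2(t_2^-)+\delta_0$, $\delta_0>0$ small) then forces a second crossing of $\Jmf$, this time from $\CJ_\star$ back into $\CJ_0$; but the same tangent-vector computation shows such a crossing is impossible in $\Sec_3$, giving a contradiction. This topological crossing argument is the missing ingredient—your ilog-phase plan does not supply it, and without it the two-layer normal competitor is not eliminated.
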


\subsection{Proof of Theorem \ref{t:ExOpt}}

It follows from (\ref{e:Si(ep)}) that $\ka_0 \in \Si^{s_-,0}_{-\n_\infty,\infty} (\vep_0)$.
By (\ref{e:xRepr}) and (\ref{e:vthep}), $\vth (\cdot)$ defined by (\ref{e:vth=ep}) for the control $\vep = \vep_0$
equals $\vth  (s)
= - \left( \frac{\n_2 + \n_\infty}{\n_2 - \n_\infty} \right)^{-s/s_+} \ee^{-\ii \pi s/s_+}$
for $|s| \le s_+$. 
This implies that $\vth (s) \in \CC_- $ and $x(s) \in \CC_+$ for $s \in [s_-,0)$. 
By Lemma \ref{l:bj+1} (iii), there exists an abnormal extremal tuple $(x_0, \vep_0, 0, y_0)$ on $[s_-,s_+]$
so that $x_0 (s_\pm) = \pm \n_\infty$. It is of the type (\ref{e:onelayerodd}).

Assume that an extended extremal control $\vep_1$ is such that 
$\vep_1 (\cdot) \not \equiv \n_2^2$ on a certain interval  $(s_-,s_-+t)$ and that $\vep_1$ 
steers $x(s_-) = - \n_\infty$ to $x(s_- +t) = \infty$.
Then $x(s_-+2t) = \n_\infty$, the corresponding extended extremal tuple 
$(x_1, \vep_1, \la_1, y_1)$ is symmetric w.r.t. $s_-+t$, and is of one of the types 
(\ref{e:AS3}), (\ref{e:NS1}), 
(\ref{e:NS3}), (\ref{e:NS5}), (\ref{e:NS6}) on $[s_-,s_-+2t]$ (with $m_1=m_2$ for 
(\ref{e:NS1}), 
(\ref{e:NS3}), (\ref{e:NS5}), (\ref{e:NS6})). 

We assume that $\vep_1$ is a minimum time control that steers $x_1 (s_-)=(- \n_\infty)$ to $ \infty$
in the minimum possible time $t$ (in particular, $t \in (0,s_+]$), and, considering each of the cases described above, 
show that this leads to a contradiction.
The case (\ref{e:NS1}) is special 
and require more involved arguments.

\emph{The case (\ref{e:AS3})} assumes $m \ge 1$ repetitions and is abnormal.
Due to Theorem \ref{t:AbnExtWidth}, its part 
$\scriptstyle _{(-\n_2,-\n_1]} \overset{3}{\EP} _{(-\infty,-\n_2)} $ requires the time 
$\frac{\pi}{2 \n_2 \re \ka_0} = s_+$ (the last equality follows from (\ref{e:k0})).
Since we have assumed that $\vep_1$ is minimum-time from $(- \n_\infty)$ to $ \infty$, we have $\vep_1 = \vep_0 $ on $(s_-,0)$, and so $x_1 (0) = x_1 (s_- + s_+) = \infty$. This contradicts to 
the fact that, according to (\ref{e:AS3}), $x_1 (s_- + s_+) \in (-\infty,-\n_2) $.

\emph{The case (\ref{e:NS3})} cannot correspond to the minimal time control from $(-\n_\infty)$ to $\infty$
 since Corollary \ref{c:Norm} (i) implies that the part $\scriptstyle \overset{4}{\EP} _{(-\n_2,0)} \overset{3}{\EP} $
 requires time greater than $s_+$ (note that this part corresponds to one ordinary layer), and so 
 $t>s_+$.  Almost the same arguments are applicable to \emph{the case (\ref{e:NS6})},  \emph{the sub-case $m_1=m_2 \ge 1$ of (\ref{e:NS5})}, and \emph{the sub-case $m_1 = m_2 \ge 1$ of (\ref{e:NS1})}.
 
Consider  \emph{the sub-case $m_1=m_2 = 0$ of (\ref{e:NS5})}. 
We have $\vep_0 = \vep_1 \equiv \n_2^2 $ and 
 $x_0 (\cdot) = x_1 (\cdot)$ on  $[s_-,s_+]$. Thus, this case leads to the same minimizer $\vep_0 (\cdot)$  (however, the extremal tuple is normal).
 

\emph{In the rest of this subsection we 
assume that the case (\ref{e:NS1}) take place and that $m_1 = m_2 = 0$}.
Then there exists maximal $s_0 \in (s_-, s_-+t)$ such that, for $s \in (s_-,s_0)$, we have
$\vep_0 (s) = \vep_1 (s) = \n_2^2$. So $x_0 (s_0) = x_1 (s_0) \in \Sec_3$,
$\vep_1 (s) = \n_1^2$ for $s \in (s_0,s_-+t)$,
$x_0 (s) \neq x_1 (s)$ for a.a. $s \in (s_0, 0)$.

Consider an extended abnormal extremal tuple $(x_2,\vep_2,0,y_2)$ such that for 
$s \in (s_0,s_-+t)$, we have $\vep_2 (s) = \vep_1 (s) =\n_1^2$ and $x_2 (s) = x_1 (s)$. Then 
$\vep_2 (s) = \n_1^2$ for $s \in (t^-_2,s_0)$, where $t^-_2 =s_-+t - \frac{\pi}{2 \n_1 \re \ka}$.
On the interval $(t^-_2,t^+_2)$, where $t^+_2= s_-+t + (s_-+t - t^-_2)$, 
the sequence corresponding to 
$(x_2,\vep_2,0,y_3)$ is
$\displaystyle _{(-\n_1,0)} \overset{3}{\EM}  _{\{\infty\}} \overset{6}{\EM}  _{(0,\n_1)}$.
In particular, $x_2 (t^-_2) \in (-\n_1,0)$. 

Consider a piece-wise smooth Jordan curve $\Jmf$ in $\wh \CC$ consisting of two pieces:
$\Jmf_1 : [s_-,0] \to \{-\n_\infty\}\cup \Sec_3 \cup \{\infty\} $, $\Jmf_1 (s) = x_0 (s)$,
and $\Jmf_\RR : [0,n_\infty^{-1}] \to \{\infty\} \cup (-\infty, -\n_\infty]$, $\Jmf_\RR (s)= -1/s$.
Its complement in $\wh \CC$ consists of two components $\CJ_0$ and $\CJ_\star$, where 
$\CJ_0$ is singled out by $0 \in \CJ_0 $. It is not difficult to see that
\begin{multline} \label{e:intx0x2}
\text{the trajectory $x_2 [[t^-_2,s_-+t]] $ intersects $\Jmf_1$ passing through the point}
\\
\text{$X_0 := x_2 (s_0) = x_1 (s_0) \in \Sec_3$  from  $\CJ_0$ to $\CJ_\star$.} 
\end{multline}


Indeed, the tangent vector $f(X_0,\n_2^2)$
to $\Jmf_1$ at $X_0$ 
differs from the tangent vector $f(X_0,\n_1^2)$ to the trajectory of $x_2$ at $X_0$
in the following way $f(X_0,\n_1^2)- f (X_0,\n_2^2)  = - \ii \ka (\n_2^2 -\n_1^2) $.
The angle between this difference and the the tangent vector to $\Jmf_1$ at $X_0$ is
$
\Arg_0 \frac{-\ii \ka (\n_2^2 -\n_1^2)}{f (X_0,\n_2^2)} = 
- \Arg_0 (-\n_2^2 + X_0^2) \in (0,\pi) $
since $X_0 \in \Sec_3 \subset \CC_2$.
This implies (\ref{e:intx0x2}).

Let us introduce one more abnormal extended extremal tuple 
$(x_3,\vep_3,0,y_3)$, which can be considered as a perturbation of 
$(x_2,\vep_2,0,y_2)$ because we put $x_3 (t^-_2)=x_2 (t^-_2) + \de_0$ 
with small enough $\de_0>0$ and assume that $\vep_3 (t^-_2+0) = \n_1^2 = \vep_2 (s)$ 
for $s \in (t^-_2,s_-+t)$.
Choosing small enough $\de_0$, we can ensure that:
(i) $x_2 (t^-_2) + \de_0 \in (x_2 (t^-_2),0)$, 
(ii) $x_3 [[t^-_2,s_-+t]] $ intersects $\Jmf_1$ at a certain point $X_1 \in \Sec_3$  passing from $\CJ_0$ to $\CJ_\star$.

Consider the point $s_3$ such that $x_3 (s_3) \in \ii \wh \RR$. 
Since $(x_3,\vep_3,0,y_3)$ is abnormal,
we have $\vep_3 (s) = \n_1^2$ for $s\in (t^-_2,s_3)$.
This implies that $x_2 [[t^-_2,s_-+t]] \cap x_3 [[t^-_2,s_3]] = \varnothing$
Hence, $x_3 (s_3) \in \ii \RR_+$. This and the condition (ii) on $\de_0$ implies that 
there exist  
$X_2 \in \Jmf_1 \cap \Sec_3$ such that $x_3 [[t^-_2,s_-+t]] $ intersects $\Jmf_1$ at $X_2 $ passing from $\CJ_\star$ to $\CJ_0$.

Considering at $X_2$ the values of the tangent vectors to $\Jmf_1$ and to $x_3 [[t^-_2,s_3]]$
(which are equal to $f (X_2,\n_2^2)$ and $ f(X_2,\n_1^2)$, resp.), 
one sees that the existence of $X_2$ leads to a contradiction.


\section{Back to the problem of decay rate minimization}
\label{s:DecRate}

The aim of this section is the reduction of 
the original problem \cite{K13,KLV17} of Pareto optimization of the decay rate 
to the dual problem of length minimization, and so, to the minimum-time control
in the case $\n_\infty \in (\n_1,\n_2)$, and can `partially reduced' in the case 
$\n_\infty \in \{\n_1,\n_2\}$. 
The results of Subsection \ref{ss:star} on maximal resonance free regions over $\F_{s_-,s_+}$ and on the star-convexity of this region 
serve as technical tools.


Let $s_\pm$ and $\eta_\pm$ be fixed such that $-\infty<s_- < s_+ <+\infty$ and $\eta_- \neq \eta_+$.
Recall that the set $\Si_{\eta_-,\eta_+}^{s_-,s_+} [\F_{s_-}]$ 
of achievable $(\eta_-,\eta_+)$-eigenvalues and the associated minimal modulus function $\rho_{\min} (\cdot) $ 
were introduced in Section \ref{s:DualRef} and that 
$\rho_{\min} (\ga) = +\infty $ if the complex argument $\ga$ is not achievable 
(i.e., $\dom \rho_{\min} = \Arg_0 \Si_{\eta_-,\eta_+}^{s_-,s_+} [\F_{s_-}] $).

Assume  that 
\begin{align} \label{e:eta-eta+as}
\text{$\eta_- \in \RR_- \cup \{0,\infty\}$ and $\eta_+ \in \RR_+ $}.
\end{align}
Then an  $(\eta_-,\eta_+)$-eigenvalue $\ka$ of $\vep (\cdot) \in \F_{s_-,s_+}$ 
lies in $\CC_-$ and can be interpreted  as a resonance of the operator 
$\frac{1}{\vep} \pa_x^2$, where $\vep (\cdot)$ is extended to the whole $\RR$ in a suitable way according to the values of 
$\eta_\pm$ \cite{KLV17} (the set of corresponding resonances may also contain points that are not 
$(\eta_-,\eta_+)$-eigenvalues, see Section \ref{ss:ResOpt}, \ref{ss:Rew} and \cite{KLV17} for details);  $\re \ka$ and $(-\im \ka)$ have the `physical meaning' of \emph{frequency}
and \emph{decay rate}, respectively.

The set of achievable $(\eta_-,\eta_+)$-eigenvalues $\Si_{\eta_-,\eta_+}^{s_-,s_+} [\F_{s_-}]$
is closed (see \cite{KLV17} and Proposition \ref{p:SiClosed} (i) below).
Similarly to Section \ref{ss:Rew}, we define 
the minimal decay rate $\beta_{\min} (\alpha)$ for $\alpha \in \RR$
by 
\begin{align} \label{e:betamin}
 \beta_{\min} (\alpha) := \ \inf \{ \beta \in \RR \ : \ \alpha - \ii \beta  \in \Si_{\eta_-,\eta_+}^{s_-,s_+} [\F_{s_-}] \} . 
 \end{align}
Since $\Si_{\eta_-,\eta_+}^{s_-,s_+} [\F_{s_-}]$ is closed and $\Si_{\eta_-,\eta_+}^{s_-,s_+} [\F_{s_-}] \subset \CC_-$, we see that
$\beta_{\min} (\al) : \RR \to (0,+\infty]$.

A frequency 
$\al \in \RR$ is  \emph{achievable} if 
$\al \in \re \Si_{\eta_-,\eta_+}^{s_-,s_+} [\F_{s_-}] \ ( = \dom \beta_{\min})$.
If $\al$ is achievable, then the closedness of $\Si_{\eta_-,\eta_+}^{s_-,s_+} [\F_{s_-}]$ implies that $\ka = \alpha - \ii \beta_{\min} (\alpha)$ is an $(\eta_-,\eta_+)$-eigenvalue 
for a certain $\vep (\cdot) \in \F_{s_-,s_+}$ (i.e., the minimum is achieved in (\ref{e:betamin})), and 
we say that $\ka $ and $\vep(\cdot)$ are of \emph{minimal decay for (the frequency)} $\alpha $.

The set $\Pa_{\Dr}^{\eta_-,\eta_+} := \{ \al - \ii \beta_{\min} (\al) \ : \ \al \in \dom \beta_{\min} \}$ is  
\emph{the Pareto frontier} for the problem of minimization of the decay rate $(-\im \ka)$ of an 
$(\eta_-,\eta_+)$-eigenvalue $\ka$ over $\F_{s_-,s_+}$. 
Comparing with the settings of Section \ref{ss:Rew}, one sees that 
$\Pa_{\Dr}^{-\n_\infty,\n_\infty} = \Pa_{\Dr}$. If, additionally, 
$ s_- = 0$, $s_+ = \ell $, then $\Pa_{\Dr}^{0,\n_\infty} = \Pa_{\Dr}^\even$ and 
$\Pa_{\Dr}^{\infty,\n_\infty} = \Pa_{\Dr}^\odd$.

\begin{thm} \label{t:MinDec1} 
Assume (\ref{e:eta-eta+as}) and $\eta_+ \in (\n_1,\n_2)$.
Let $\ga_0= \Arg_0 \ka \in (-\pi/2,0)$.
Then:
\item[(i)] The function $\rho_{\min} (\cdot) $ is continuous and $\RR$-valued on $(-\pi/2,0)$.
\item[(ii)] Assume additionally that $\ka$ is the $(\eta_-,\eta_+)$-eigenvalue of minimal decay for the frequency $\re \ka$. Then
\begin{align} \label{e:ka01}
 \text{$\ka = \rho_{\min} (\ga_0) e^{\ii \ga_0}$, \quad $T^{\min}_\ka (\eta_-,\eta_+) = s_+ - s_-$,}
\end{align} 
(i.e., $\ka$ is an $(\eta_-,\eta_+)$-eigenvalue of minimal modulus 
for $\ga_0$), and
\begin{multline}
\text{the family of minimum time controls for (\ref{e:R}), (\ref{e:R2}) steering $x(s_-) = \eta_-$ to $\eta_+$}
\\  
\text{coincides with the family of resonators of minimal decay for $\re \ka$.} 
\label{e:ka02}
\end{multline}
\end{thm}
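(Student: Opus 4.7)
The plan for (i) is to combine global controllability with the scaling identity (\ref{e:Tmin=rho}). Since $\eta_+ \in [\n_1,\n_2]$, Theorem \ref{t:GlContr} gives global controllability of (\ref{e:R})--(\ref{e:R2}) to $\eta_+$ for every $\ka \in \CC_4$; combined with Theorem \ref{t:ExOptContr}, this shows $(-\pi/2,0) \subset \dom\rho_{\min}$ and that the infimum in (\ref{e:RhoMin}) is attained. The lower bound $\rho_{\min}(\ga)>0$ will follow from $\eta_-\neq\eta_+$, which forces $T^{\min}_\ka(\eta_-,\eta_+)>0$ and hence $\rho_{\min}(\ga)>0$ via (\ref{e:Tmin=rho}). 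To establish continuity, I will rewrite (\ref{e:Tmin=rho}) as $\rho_{\min}(\ga)=(s_+-s_-)^{-1} T^{\min}_{\ee^{\ii\ga}}(\eta_-,\eta_+)$, reducing the task to continuity of $\ka \mapsto T^{\min}_\ka(\eta_-,\eta_+)$ on $\CC_4$. Lower semi-continuity comes from a standard Filippov-type compactness argument applied to near-optimal controls along $\ka_n \to \ka_0$. Upper semi-continuity is more subtle: for a given minimum-time control associated with $\ka_0$, continuous dependence of the trajectory on $\ka$ places the state at time $T^{\min}_{\ka_0}$ in a small ball around $\eta_+$, and the uniform small-time local controllability from Theorem \ref{t:STLC} (applicable because $\eta_+ \in (\n_1,\n_2)$) will allow me to steer that state to $\eta_+$ in time $o(1)$ uniformly in $\ka$ near $\ka_0$.

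The central tool for (ii) will be the following scaling-embedding principle, which I expect constitutes the content of Subsection \ref{ss:star}: \emph{if $\ka_1 \in \Si^{s_-,s_+}_{\eta_-,\eta_+}[\F_{s_-,s_+}]$ is achieved by a resonator $\vep$ and $\tau\ge1$, then $\tau\ka_1$ is also achievable.} Indeed, by the scaling (\ref{e:scalingSi}) the time-rescaled resonator $\vep(\tau\,\cdot)$ produces $\tau\ka_1$ on an interval of length $(s_+-s_-)/\tau$. Placing a shifted copy on the sub-interval $[s_-, s_- + (s_+-s_-)/\tau]$ and filling the remainder with the constant $\eta_+^2 \in [\n_1^2,\n_2^2]$ produces a member of $\F_{s_-,s_+}$. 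The right boundary condition at $s_+$ is preserved because $\eta_+^2$ is precisely the permittivity for which the outgoing mode $\ee^{\ii\ka\eta_+ s}$ is the unique solution matching the boundary value $\eta_+$, so the eigenfunction extends unchanged across the filler up to $s_+$. This establishes star-convexity of the achievable set with respect to $0$ along rays in $\CC_4$.

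To prove (ii) I will assume $\ka$ is of minimal decay for $\al_0:=\re\ka$ and argue by contradiction, supposing $|\ka|>\rho_{\min}(\ga_0)$, equivalently $\rho_{\min}(\ga_0)\cos\ga_0<\al_0$. Continuity of $\rho_{\min}$ from (i) will produce some $\ga' \in (\ga_0,0)$ close to $\ga_0$ with $\rho_{\min}(\ga')\cos\ga'<\al_0$, hence $\al_0/\cos\ga'>\rho_{\min}(\ga')$. The scaling-embedding step then supplies an achievable resonance $\ka' := (\al_0/\cos\ga')\ee^{\ii\ga'}$ with $\re\ka'=\al_0$ and $|\im\ka'|=\al_0\tan|\ga'|<\al_0\tan|\ga_0|=|\im\ka|$, contradicting the minimality of decay at $\al_0$. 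Thus $|\ka|=\rho_{\min}(\ga_0)$, which together with (\ref{e:Tmin=rho}) gives (\ref{e:ka01}). Finally, (\ref{e:ka02}) follows from the equivalence (C1) $\Leftrightarrow$ (C2) of Section \ref{ss:MinTime}: once $s_+-s_-=T^{\min}_\ka(\eta_-,\eta_+)$, a control $\vep\in\F_{s_-,s_+}$ steers $\eta_-$ to $\eta_+$ on $[s_-,s_+]$ if and only if $\ka\in\Si^{s_-,s_+}_{\eta_-,\eta_+}(\vep)$, which under (\ref{e:ka01}) is precisely the condition that $\vep$ be a resonator of minimal decay for $\al_0$.

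The principal obstacle I anticipate is the uniform-in-$\ka$ upper semi-continuity of $T^{\min}_\ka(\eta_-,\eta_+)$ in (i): this is exactly where the strict inclusion $\eta_+ \in (\n_1,\n_2)$ (and not merely $[\n_1,\n_2]$) enters through Theorem \ref{t:STLC}, and the argument must carefully quantify how fast the uniform reachability radius $\ra^{\max}_{\Om,\eta_+}(t)$ collapses to $0$. A secondary point to check is that the extension by $\eta_+^2$ in the scaling-embedding step preserves the right outgoing boundary condition, though that reduces to a short direct calculation.
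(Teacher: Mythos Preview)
Your proposal is correct and follows essentially the same route as the paper. For (i) the paper likewise obtains finiteness from global controllability, lower semicontinuity from closedness of the achievable set (Proposition~\ref{p:SiClosed}), and upper semicontinuity by running the $\ka_0$-optimal control for the perturbed parameter and closing the remaining gap via the uniform STLC of Theorem~\ref{t:STLC}; for (ii) the paper packages your star-convexity step as Proposition~\ref{p:star-like} and your contradiction argument as Proposition~\ref{p:Bd->MinDec} and Corollary~\ref{c:rho(ga)}~(iv), so your direct argument simply inlines those lemmas.
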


The proof is postponed to Section \ref{ss:prMinDec}.

The next three theorems explain the rigorous meaning of the statement that, in the case $\n_\infty \in \{\n_1,\n_2\}$, 
the decay rate minimization can be `partially reduced' to the problem of Pareto optimization of $|\ka|$,
and so to minimum-time control.

\begin{thm} \label{t:PF}
Assume (\ref{e:eta-eta+as}) and additionally $\eta_+ \in [\n_1,\n_2]$. Then:
\item[(i)] $(-\pi,0) \setminus \{-\pi/2\} \subset \dom \rho_{\min} \subset (-\pi,0)$ and 
\begin{gather} \label{e:star}
\Si_{\eta_-,\eta_+}^{s_-,s_+} [\F_{s_- }]  = \{ c e^{\ii \ga} \rho_{\min} (\ga)  \ : \ 
c \in [1,+\infty) \text{ and $\ga$ is achievable }\} 
\end{gather}
\item[(ii)] The Pareto frontier
$\Pa_{\Dr}^{\eta_-,\eta_+} $ of minimal decay 
can be found from  the Pareto frontier 
$\Pa_{\md} $ of minimal modulus 
via the formulae (\ref{e:star}) and (\ref{e:betamin}).
\end{thm}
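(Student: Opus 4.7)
The plan is to prove statement (i) in three parts --- the two domain inclusions and the star-shaped representation (\ref{e:star}) --- and then derive (ii) as a trivial corollary.

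The inclusion $\dom \rho_{\min} \subset (-\pi,0)$ is immediate from the basic fact, recalled at the start of Section \ref{s:DecRate}, that under (\ref{e:eta-eta+as}) every $(\eta_-,\eta_+)$-eigenvalue of a $\vep \in \F_{s_-,s_+}$ lies in $\CC_-$. The reverse inclusion $(-\pi,0)\setminus\{-\pi/2\} \subset \dom \rho_{\min}$ splits by quadrant. For $\ga \in (-\pi/2,0)$ choose any $\ka$ with $\Arg_0 \ka = \ga$, so $\ka \in \CC_4$; since $\eta_+ \in [\n_1,\n_2]$, Theorem \ref{t:GlContr} gives $\Contr_{[0,+\infty)}(\eta_+,\ka) = \wh \CC$, so in particular $\eta_- \in \Contr_{[0,+\infty)}(\eta_+,\ka)$, and the equivalence (i)$\Leftrightarrow$(iii) of Theorem \ref{t:ExOptContr} yields $\ga \in \dom \rho_{\min}$. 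For $\ga \in (-\pi,-\pi/2)$ use the symmetry (S0)$\Leftrightarrow$(S3) of Section \ref{ss:Monotonicity}: since $\eta_\pm \in \wh \RR$, the achievable set $\Si_{\eta_-,\eta_+}^{s_-,s_+}[\F_{s_- }]$ is invariant under the reflection $\ka \mapsto -\overline{\ka}$ across $\ii \RR$, which maps $(-\pi/2,0)$ onto $(-\pi,-\pi/2)$ at the level of arguments.

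The representation (\ref{e:star}) reduces to the star-shapedness property: if $\ka \in \Si_{\eta_-,\eta_+}^{s_-,s_+}[\F_{s_- }]$ and $c \ge 1$, then $c \ka \in \Si_{\eta_-,\eta_+}^{s_-,s_+}[\F_{s_- }]$ (the reverse inclusion being trivial from the definition of $\rho_{\min}$). Let $\vep_0 \in \F_{s_-,s_+}$ realize $\ka$. Applying scaling (\ref{e:scalingSi}) with $\tau=c$, the function $\vep_1(s) := \vep_0(cs)$ is feasible on the shorter interval $[c^{-1}s_-, c^{-1}s_+]$ and admits $c\ka$ as an $(\eta_-,\eta_+)$-eigenvalue. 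The crucial step --- and the only place where the hypothesis $\eta_+ \in [\n_1,\n_2]$ enters --- is to pad on the right by the constant $\eta_+^2$: because $\eta_+$ is a stable equilibrium for the Riccati ODE $x' = \ii c\ka(\eta_+^2 - x^2)$ when $c\ka \in \CC_-$ (see Section \ref{ss:const}), and the right boundary condition forces $x(c^{-1}s_+) = \eta_+$, extending $\vep_1$ by $\eta_+^2$ on $(c^{-1}s_+, c^{-1}s_- + (s_+-s_-)]$ keeps the trajectory pinned at $\eta_+$, so the boundary condition at the new right endpoint is preserved. A translation returns this extended resonator to the interval $[s_-,s_+]$, giving $c\ka \in \Si_{\eta_-,\eta_+}^{s_-,s_+}[\F_{s_- }]$.

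Part (ii) is then immediate: given $\rho_{\min}$ (equivalently $\Pa_\md$), formula (\ref{e:star}) reconstructs $\Si_{\eta_-,\eta_+}^{s_-,s_+}[\F_{s_- }]$, and (\ref{e:betamin}) computes $\beta_{\min}$ and hence $\Pa_{\Dr}^{\eta_-,\eta_+}$. The main obstacle in the whole argument lies in the padding construction in (i): admissibility hinges on $\eta_+^2 \in [\n_1^2,\n_2^2]$, while correctness hinges on $\eta_+$ being a stable equilibrium of the Riccati flow. Note that no corresponding hypothesis on $\eta_-$ is needed, because the padding is performed only on the right, which is consistent with the theorem's asymmetric assumptions $\eta_+ \in [\n_1,\n_2]$ versus $\eta_- \in \RR_- \cup \{0,\infty\}$.
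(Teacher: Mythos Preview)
Your proof is correct and follows essentially the same route as the paper, which simply cites Proposition \ref{p:star-like} (the padding-by-$\eta_+^2$ argument combined with scaling and Theorem \ref{t:GlContr}) together with Corollary \ref{c:rho(ga)} for the domain inclusions. Two minor remarks: for the endpoint $c=1$ in (\ref{e:star}) you implicitly need that $\rho_{\min}(\ga)e^{\ii\ga}$ is itself achievable, which follows from the closedness of $\Si_{\eta_-,\eta_+}^{s_-,s_+}[\F_{s_-}]$ (Proposition \ref{p:SiClosed}) or equivalently Theorem \ref{t:ExOptContr}(iv); and the \emph{stability} of the equilibrium $\eta_+$ is irrelevant to the padding step---only the equilibrium property $f(\eta_+,\eta_+^2)=0$ is used.
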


The proof is postponed to Section \ref{ss:prMinDec}.

\begin{thm} \label{t:MinDec2}
Suppose (\ref{e:eta-eta+as}). Assume that either $\eta_+ = \n_1$, or $\eta_+=\n_2$. 
Assume that  
\[
\text{$\ga_0= \Arg_0 \ka_0 \in (-\pi/2,0)$, \qquad  $\rho_0 := \rho_{\min} (\ga_0)$, \qquad 
$\rho_1 := \inf_{\ga \in (\ga_0,0)} \frac{\rho_{\min} (\ga) \cos \ga}{\cos \ga_0}$,}
\]
and that $\ka_0$ is the $(\eta_-,\eta_+)$-eigenvalue of minimal decay for the frequency $\re \ka_0$.
Then $\rho_0 \le \rho_1$ and $\ka_0 \in \ee^{\ii \ga_0} [\rho_0 ,\rho_1] $ (in particular, 
in the case $\rho_0 =\rho_1$, we have $\ka_0 =  \ee^{\ii \ga_0} \rho_0$).

Moreover, each of the numbers $\ka = \rho \ee^{\ii \ga_0}$ with $\rho \in [\rho_0, |\ka_0|] \cup (\rho_0 ,\rho_1) $ 
is the $(\eta_-,\eta_+)$-eigenvalue of minimal decay for the frequency $\rho \cos \ga_0$
and one of the two following cases 
take place for such $\ka$:
\item[(i)] In the case $|\ka| =\rho_0$, statements 
(\ref{e:ka01}) and (\ref{e:ka02}) hold.
\item[(ii)] In the case $|\ka| \in (\rho_0, |\ka_0|] \cup (\rho_0 ,\rho_1)$, 
we have $T^{\min}_{\ka} (\eta_-,\eta_+) < s_+ - s_-$ and, for each 
minimum time control $\vep_{\ga_0}^{\min}$ that steer $x(s_-) = \eta_-$ to $x (s_-+T^{\min}_{\ka} (\eta_-,\eta_+)) = \eta_+$,
the function $\wt \vep \in \F_{s_-,s_+}$ defined by 
\begin{align} \label{e:ResMinDecCont}
\wt \vep (s) := \left\{ \begin{array}{cc}
\vep_{\ga_0}^{\min} (s) & \text{ if } s \in [s_-,s_- + T^{\min}_\ka (\eta_-,\eta_+) ), \\
\eta_+^2 & \text{ if } s \in (s_- + T^{\min}_\ka (\eta_-,\eta_+) , s_+] ,
\end{array} \right.
\end{align}
is a resonator of minimal decay for frequency $\re \ka$ and, simultaneously, is an 
abnormal  $\vep$-extremal on $[s_-,s_+]$ such that the corresponding abnormal x-extremal (with the initial point 
$x(s_-) = \eta_-$) has $[s_- + T^{\min}_\ka (\eta_-,\eta_+) , s_+]$ as its stationary interval.
\end{thm}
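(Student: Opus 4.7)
The plan rests on three ingredients: the star-shaped representation (\ref{e:star}) of Theorem \ref{t:PF}, the scaling identity (\ref{e:scalingSi}) combined with Theorem \ref{t:ExOptContr}, and the fact that $\eta_+\in\{\n_1,\n_2\}$ is an equilibrium of $x'=f(x,\eta_+^2)$ with $\eta_+^2\in[\n_1^2,\n_2^2]$, so that any trajectory reaching $\eta_+$ can be held there on a final subinterval by the admissible constant control $\eta_+^2$. This explains the concatenation formula (\ref{e:ResMinDecCont}) and is the mechanism allowing a minimum-time trajectory shorter than $s_+-s_-$ to be promoted to an element of $\F_{s_-,s_+}$.

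First, for any $\rho\ge\rho_0$ I would set $\ka=\rho\,\ee^{\ii\ga_0}$ and deduce from Theorem \ref{t:ExOptContr} and (\ref{e:scalingSi}) that $T^{\min}_\ka(\eta_-,\eta_+)=(s_+-s_-)\rho_0/\rho\le s_+-s_-$, with equality iff $\rho=\rho_0$. Picking any minimum-time control $\vep^{\min}_{\ga_0}$ on $[s_-,s_-+T^{\min}_\ka]$ and extending by the constant $\eta_+^2$ yields the function $\wt\vep\in\F_{s_-,s_+}$ of (\ref{e:ResMinDecCont}) with $\ka\in\Si_{\eta_-,\eta_+}^{s_-,s_+}(\wt\vep)$, so $\ka$ is achievable. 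To check that its decay is minimal, I would use Theorem \ref{t:PF}(i): any achievable competitor has the form $\wt\ka=c\,\rho_{\min}(\wt\ga)\ee^{\ii\wt\ga}$ with $c\ge 1$; imposing $\re\wt\ka=\rho\cos\ga_0$ forces $c=\rho\cos\ga_0/(\rho_{\min}(\wt\ga)\cos\wt\ga)$ and $\im\wt\ka=\rho\cos\ga_0\tan\wt\ga$. The strict inequality $\im\wt\ka>\im\ka=\rho\sin\ga_0$, combined with monotonicity of $\tan$ on $(-\pi/2,0)$, requires $\wt\ga>\ga_0$, while $c\ge 1$ forces $\rho_{\min}(\wt\ga)\cos\wt\ga\le\rho\cos\ga_0$; for $\rho<\rho_1$ these are incompatible by definition of $\rho_1$. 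Applying the same argument to $\ka_0$ itself gives $|\ka_0|\le\rho_1$, while $|\ka_0|\ge\rho_{\min}(\ga_0)=\rho_0$ is trivial, so $\rho_0\le|\ka_0|\le\rho_1$; in particular $\rho_0\le\rho_1$ and $\ka_0\in\ee^{\ii\ga_0}[\rho_0,\rho_1]$. The boundary value $\rho=\rho_1$ appears in the stated range only when $|\ka_0|=\rho_1$, in which case minimality is part of the hypothesis.

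The case split then records the structure of the optimizers. When $|\ka|=\rho_0$, no padding is required and the coincidence of the two families asserted in (\ref{e:ka02}) is immediate: every minimal-decay resonator in $\F_{s_-,s_+}$ steers $\eta_-$ to $\eta_+$ in time exactly $s_+-s_-=T^{\min}_\ka$, hence is a minimum-time control, and conversely every minimum-time control on $[s_-,s_+]$ realizes $\ka$ as an eigenvalue and is minimal-decay by the argument above. When $|\ka|>\rho_0$ the padded interval $[s_-+T^{\min}_\ka,s_+]$ is a non-degenerate stationary interval of the $x$-trajectory. To show that $\wt\vep$ is an abnormal $\vep$-extremal, I would define $y$ on the stationary interval by the explicit exponential $y(s)=y(s_-+T^{\min}_\ka)\ee^{\ii\ka\eta_+(s-s_--T^{\min}_\ka)}$; then $(y')^2=-\ka^2\eta_+^2 y^2$ and therefore $\wt\vep\,y^2+\ka^{-2}(y')^2\equiv 0$ on the padded part. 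Since this expression is a first integral of (\ref{e:ep}) with no jumps across switch points, it vanishes identically on $[s_-,s_+]$, forcing $\la_0=0$ in (\ref{e:TransC}). The sign condition (\ref{e:MinC}) on the minimum-time portion is then inherited from the PMP adjoint of $\vep^{\min}_{\ga_0}$ after the real-phase normalization at $s_-+T^{\min}_\ka$ prescribed by the value $y(s_-+T^{\min}_\ka)$; alternatively it is obtained directly by invoking Proposition \ref{p:AbnExt}(ii) at any switch point $b_j$ adjacent to the stationary interval, at which $x^2(b_j)\in\RR$.

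The principal difficulty I anticipate is precisely this abnormality step: one must verify that the backward extension of the explicit exponential $y$ from the stationary interval is compatible with (\ref{e:MinC}) along a suitably chosen $\vep^{\min}_{\ga_0}$. If some minimum-time controls to the singular equilibrium $\eta_+\in\{\n_1,\n_2\}$ are PMP-normal while others are abnormal, one must argue that an abnormal one can always be selected, or equivalently use the structural classification of Section \ref{s:Syn2} to show that minimum-time controls terminating at the singular equilibria $\n_1$ or $\n_2$ are automatically abnormal, because the matching condition of the adjoint at the boundary equilibrium forces the transversality constant to vanish.
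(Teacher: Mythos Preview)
Your treatment of the inequalities $\rho_0\le|\ka_0|\le\rho_1$, of the minimal-decay property of each $\ka=\rho\,\ee^{\ii\ga_0}$ with $\rho\in[\rho_0,|\ka_0|]\cup(\rho_0,\rho_1)$, and of the construction of $\wt\vep$ by holding the trajectory at the equilibrium $\eta_+$ is essentially the paper's route; the paper packages the $\tan$-monotonicity computation into Propositions \ref{p:star-like}--\ref{p:Bd->MinDec} and Corollary \ref{c:rho(ga)}, but the content is the same.

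The genuine gap is in the abnormality step, and neither of your proposed fixes closes it. Your claim that $\im(\wt\vep\,y^2+\ka^{-2}(y')^2)$ is a first integral of (\ref{e:ep}) ``with no jumps across switch points'' is not correct for an arbitrary eigenfunction: differentiating gives $\vep'\,y^2$, so at a switch point $b_j$ the imaginary part jumps by $(\wt\vep(b_j+)-\wt\vep(b_j-))\,\im y^2(b_j)$, which vanishes only when $\im y^2(b_j)=0$, i.e.\ only when (\ref{e:MinC}) already holds for \emph{this particular} $y$. The backward extension of the exponential is determined up to a complex scalar, whereas the PMP adjoint $y_0$ from Theorem \ref{t:PMP} on the minimum-time subinterval is determined up to a real scalar; after matching phases so that (\ref{e:MinC}) transfers, the value of the conserved quantity on $[s_-,s_-+T^{\min}_\ka]$ is $c^2\la_0^{(0)}$ with $\la_0^{(0)}$ the PMP constant of the subproblem, and you still have to show $\la_0^{(0)}=0$. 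Invoking Proposition \ref{p:AbnExt}(ii) is circular for the same reason: it presupposes an extremal tuple in which (\ref{e:MinC}) is already satisfied. Your final conjecture that minimum-time controls terminating at the singular equilibria are automatically abnormal is plausible but is an additional statement, not a consequence of anything already established.

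The paper sidesteps this entirely by applying the necessary conditions a second time, but now to $\wt\vep$ on all of $[s_-,s_+]$ rather than to the truncated minimum-time problem. Having already shown that $\wt\vep$ is a resonator of minimal decay for $\re\ka$, hence that $\ka\in\Bd\Si^{s_-,s_+}_{\eta_-,\eta_+}[\F_{s_-,s_+}]\setminus\ii\RR$, the paper invokes the Euler--Lagrange result of \cite{KLV17} to obtain directly an eigenfunction $y$ on $[s_-,s_+]$ with $\wt\vep=\E(y)$; condition (\ref{e:MinC}) is then built in globally, the conserved quantity is genuinely constant across every switch, and evaluating it at any point of the stationary interval gives $\la_0=0$. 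The lesson is to apply the optimality machinery to the already-optimal $\wt\vep$ on the full interval, not to try to glue the subproblem adjoint to an ad hoc extension.
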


The proof is given in Section \ref{ss:prMinDec}.

The case (ii) of Theorem \ref{t:MinDec2} and statements (iii)-(v) of Corollary \ref{c:rho(ga)} in the next subsection 
consider the 
situation when the minimal-modulus function $\rho_{\min} (\cdot)$ is discontinuous at a certain $\ga \in (-\pi/2,0)$.
The next result shows that this situation takes place for $\ga_0 = \Arg_0 \ka_0$ with $\ka_0$ from Theorem \ref{t:ExOpt}.

\begin{thm} \label{t:ExOpt2}
Let $\eta_- = \infty$, $\eta_+ = \n_1$, $s_- = 0$, $s_+>0$, $\ka_0 = -\frac{\ii}{2 s_+ \n_2} 
\Ln  \frac{\n_2 + \n_1}{\n_2 - \n_1}  + \frac{\pi}{2 s_+ \n_2}$, $\ga_0 = \Arg_0 \ka_0$,
$\al_0=\frac{\pi}{2 s_+ \n_2}$, $\rho_0 = \rho_{\min} (\ga_0)$, 
and $\rho_1 := \inf_{\ga \in (\ga_0,0)} \frac{\rho_{\min} (\ga) \cos \ga}{\cos \ga_0}$.
Then
\begin{align} \label{e:=rho=k<}
\lim_{\ga \to \ga_0 - 0} \rho_{\min} (\ga) = \rho_0 = |\ka_0|< 
 \rho_1, \qquad \rho_0 < \lim_{\ga \to \ga_0 + 0} \rho_{\min} (\ga) ,
\end{align}
and the following statements hold:
\item[(i)] The number $\ka_0$ is simultaneously $(\eta_-,\eta_+)$-eigenvalue of minimal decay 
for the frequency $\al_0$ and $(\eta_-,\eta_+)$-eigenvalue of minimal modulus for the complex argument $\ga_0$.
Moreover, $\vep^{\min}_{\ga_0} (\cdot)$ defined by $\vep^{\min}_{\ga_0} (s) = \n_2^2$ for all 
$s \in (0,s_+)$ is the unique function in $\F_{0,s_+}$ that generate an $(\eta_-,\eta_+)$-eigenvalue
at $\ka_0$.
\item[(ii)]  Each of the numbers $\ka = \rho \ee^{\ii \ga_0}$ with $\rho \in [\rho_0, \rho_1) $ 
is the $(\eta_-,\eta_+)$-eigenvalue of minimal decay for the frequency $\rho \cos \ga_0$. One of 
associated resonators $\vep (\cdot) \in \F_{0,s_+}$ of minimal decay can be constructed 
by the rule (\ref{e:ResMinDecCont}).
\end{thm}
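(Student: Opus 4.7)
The plan is to anchor everything on Theorem \ref{t:ExOpt}, applied with the identification $\n_\infty = \n_1$: Proposition \ref{p:eqv}(ii) translates its odd-mode length identity $\Lr^\odd_{\min}(\ka_0) = 2s_+$ into $T^{\min}_{\ka_0}(\infty, \n_1) = s_+$, with unique minimum-time control $\vep^{\min}_{\ga_0}(\cdot) \equiv \n_2^2$ on $[0, s_+]$. The scaling identity (\ref{e:Tmin=rho}) of Theorem \ref{t:ExOptContr} then gives $\rho_0 = |\ka_0|$, and its concluding sentence transfers uniqueness to the set of resonators of minimal modulus at $\ga_0$, yielding both the uniqueness claim in statement (i) and the ``minimal modulus'' half of (i).

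For the left limit in (\ref{e:=rho=k<}), I will use the single-layer resonators $\vep \equiv \ep$ with $\ep \in [\n_1^2, \n_2^2]$: the explicit formula (\ref{e:Si(ep)}) shows that $\ep \mapsto \Arg_0 k_0(\ep)$ is a strictly increasing homeomorphism from $[\n_1^2, \n_2^2]$ onto $(-\pi/2, \ga_0]$ with $k_0(\n_2^2) = \ka_0$. This produces a continuous upper bound $\rho_{\min}(\ga) \leq |k_0(\ep(\ga))|$ that tends to $\rho_0$ as $\ga \to \ga_0 - 0$; the matching lower bound comes from lower semicontinuity of $\rho_{\min}$ (a consequence of the closedness of the achievable set, established in \cite{HS86, K13, KLV17}).

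The crux is the jump $\rho_0 < \liminf_{\ga \to \ga_0+0} \rho_{\min}(\ga)$, approached by contradiction. Suppose $\ga_n \searrow \ga_0$ and $\rho_n := \rho_{\min}(\ga_n) \to \rho_0$, with witnesses $\vep_n \in \F_{0, s_+}$ generating $\ka_n = \rho_n e^{\ii \ga_n}$. Weak-$\star$ compactness of $\F_{0, s_+}$ and weak-$\star$ continuity of the eigenvalue problem produce a subsequential limit $\vep_\ast$ with $\ka_0 \in \Si^{0, s_+}_{\infty, \n_1}(\vep_\ast)$, so by the uniqueness from Paragraph~1, $\vep_\ast \equiv \n_2^2$. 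Because $\n_2^2$ is the upper endpoint of the constraint range, weak-$\star$ convergence upgrades to $L^1$-convergence, and $\delta_n := 1 - \vep_n/\n_2^2 \to 0$ in $L^1(0, s_+)$. Introducing $\phi_n(s) := \I(\n_2^{-1} x_n(s))\, e^{2\ii\n_2\ka_n s}$ reduces the Riccati equation (\ref{e:R}) for $x_n$ to $\phi_n' = \tfrac{\ii\n_2\ka_n}{2}\delta_n(1 + \phi_n e^{-2\ii\n_2\ka_n s})^2 e^{2\ii\n_2\ka_n s}$, and Gronwall together with $\phi_n(0) = -1$ and $\|\delta_n\|_{L^1} \to 0$ forces $\phi_n \to -1$ uniformly on $[0, s_+]$. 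The boundary identity $\phi_n(s_+) = \tfrac{\n_2 - \n_1}{\n_2 + \n_1} e^{2\ii\n_2\ka_n s_+}$, combined with $e^{2\ii\n_2\ka_0 s_+} = -\tfrac{\n_2+\n_1}{\n_2-\n_1}$ and a first-order expansion, yields
\begin{align*}
\frac{\ka_n - \ka_0}{\ka_0} = \frac{1}{s_+}\int_0^{s_+} \delta_n(\sigma) \sin^2(\n_2\ka_0\sigma)\,d\sigma + o(\|\delta_n\|_{L^1}).
\end{align*}
A direct computation exploiting $\im \ka_0 < 0$ gives $\im\sin^2(\n_2\ka_0\sigma) = -\tfrac{1}{2}\sin(\pi\sigma/s_+)\sinh\!\bigl((\sigma/s_+)\Ln\tfrac{\n_2+\n_1}{\n_2-\n_1}\bigr) < 0$ on $(0, s_+)$, so because $\delta_n \geq 0$ the right side has strictly negative imaginary part unless $\delta_n \equiv 0$. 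On the other hand, $\im\bigl[(\ka_n - \ka_0)/\ka_0\bigr] = (\rho_n/\rho_0)\sin(\ga_n - \ga_0) > 0$ for $\ga_n > \ga_0$, contradicting the expansion. The main obstacle is making this linearization quantitatively rigorous uniformly in $n$, since $\rho_n$ may drift from $\rho_0$ by an amount comparable to $\ga_n - \ga_0$; the Gronwall error must be tracked carefully against the leading term.

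With the jump in hand, combining it with $\rho_{\min}(\ga) \to +\infty$ as $\ga \to 0 - 0$ (real positive $\ka$ is never achievable, by Remark \ref{r:C4}) and lower semicontinuity of $\Psi(\ga) := \rho_{\min}(\ga)\cos\ga$ on compact subintervals of $(\ga_0, 0)$ yields $\inf_{(\ga_0, 0)} \Psi > \al_0$, that is, $\rho_0 < \rho_1$. Statement (i) then reduces to an elementary dichotomy: any achievable $\ka = re^{\ii\ga}$ with $\re\ka = \al_0$ satisfies $r \geq \rho_{\min}(\ga)$, so $\ga > \ga_0$ contradicts $\rho_{\min}(\ga)\cos\ga > \al_0 = r\cos\ga$, while $\ga < \ga_0$ forces $|\im\ka| = \al_0\tan(-\ga) > \al_0\tan(-\ga_0) = |\im\ka_0|$. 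For statement (ii), with $\rho \in [\rho_0, \rho_1)$ and $\ka = \rho e^{\ii\ga_0}$, the resonator $\wt\vep$ defined by (\ref{e:ResMinDecCont}) belongs to $\F_{0, s_+}$: the minimum-time piece on $[0, T^{\min}_\ka(\infty, \n_1)]$ equals $\vep^{\min}_{\ga_0} \equiv \n_2^2$ by the scaling (\ref{e:scalingSi}) applied to $\vep^{\min}_{\ga_0}$, and the stationary extension at the equilibrium $\n_1$ preserves $\ka$ as an $(\infty, \n_1)$-eigenvalue; the same sector-based comparison, now using $\rho < \rho_1$ at the frequency $\rho\cos\ga_0$, establishes minimality of decay.
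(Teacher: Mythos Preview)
Your approach is genuinely different from the paper's, and the perturbation idea is attractive, but there are two gaps, the second of which appears unfixable within your framework.

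\textbf{Gap 1 (acknowledged but not closed).} Your first-variation identity has kernel $\im\sin^2(\n_2\ka_0\sigma)$, which vanishes like $\sigma^2$ near $\sigma=0$. If $\delta_n$ concentrates on $[0,h_n]$ (which is allowed: the extremal classification permits a first $\n_1^2$-layer of arbitrarily small width, see Remark~\ref{r:NSinfty}), then the main term is $O(h_n^3)$ while your stated error $o(\|\delta_n\|_{L^1})=o(h_n)$ swamps it. A more refined bound exploiting $|1+\psi_n(\sigma)|\lesssim\sigma$ near $0$ can probably rescue this, but your sketch does not carry it out. Note that concentration near $\sigma=s_+$ is excluded, since the last layer of a minimum-time control to $\n_1$ cannot be $\n_1^2$ (that would create a stationary interval, contradicting~(\ref{e:noSelfint})).

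\textbf{Gap 2 (the real obstruction).} Even granting the jump $\liminf_{\ga\to\ga_0+0}\rho_{\min}(\ga)>\rho_0$, you cannot conclude $\rho_0<\rho_1$. Your claim that lower semicontinuity of $\Psi(\ga)=\rho_{\min}(\ga)\cos\ga$ together with the endpoint behaviour forces $\inf_{(\ga_0,0)}\Psi>\al_0$ is false: lower semicontinuity and large boundary values place no restriction on $\Psi$ at interior points of $(\ga_0,0)$. Nothing in your argument prevents some $\ga^\ast$ well inside $(\ga_0,0)$ from satisfying $\rho_{\min}(\ga^\ast)\cos\ga^\ast\le\al_0$, which by star-convexity (Proposition~\ref{p:star-like}) would produce an achievable eigenvalue at frequency $\al_0$ with decay strictly smaller than $|\im\ka_0|$, destroying statement~(i) and the strict inequality $\rho_0<\rho_1$ simultaneously. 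The paper avoids this by proving $\rho_0<\rho_1$ \emph{first and directly}: it fixes $|\ka|=\rho_0$ and shows, via the extremal classification (E1)--(E4) and the quarter-wave bounds of Theorem~\ref{t:AbnExtWidth} and Corollary~\ref{c:Norm}, that for \emph{every} $\ga\in(\ga_0,0)$ any extremal steering $-\n_1$ to $\infty$ must contain either an extra quarter-wave layer or an initial segment bounded below uniformly in $\ga$ (the Lemma in the paper's proof). This yields a uniform additive gap $\delta_1>0$ in $T^{\min}_{\wt\ka(\ga)}\cos\ga$, hence $\rho_0<\rho_1$; the jump and statement~(i) then follow from Corollary~\ref{c:rho(ga)} and Proposition~\ref{p:Bd->MinDec}. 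Your perturbation argument, being intrinsically local at $\ga_0$, cannot replace this global step.
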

The proof is given in Section \ref{ss:prMinDec}.

\subsection{Maximal star-like resonance free regions}
\label{ss:star}

Recall that a set $\Om \subset \CC$ containing $0$ is called star-shaped w.r.t. $0$ if 
$z \in \Om$ implies $[z,0] \subset \Om$. The set $\CC \setminus  \Si_{s_-,s_+}^{-\n_\infty,\n_\infty} [\F_{s_-}]  $ can be perceived as \emph{the resonance free region over} 
$\{ \vep \in \F \ : \ s_-^{\vep} = s_-  , \  s_+^{\vep} = s_+  \}.$
(We would like to notice that the estimation of resonance free strips for Schrödinger equations 
was originally \cite{HS86} one of the main motivation for resonance optimization.)
Then 
\begin{equation} \label{e:qeigfree}
\{0\} \cup \{ \ka \in \CC \ : \ \ka \neq 0  \text{ and } 
|\ka | < \rho_{\min} (\Arg_0 (\ka), -\n_\infty,\n_\infty) \} 
\end{equation}
is the maximal star-shaped (w.r.t. $0$) part of the resonance free region.
If, additionally, $\n_1 \le \n_\infty \le \n_2$, 
the star-shaped set (\ref{e:qeigfree}) exactly equals the resonance free region  
as it is shown by the following statement.

\begin{prop} \label{p:star-like}
Let $\eta_- \neq \eta_+$ and $\eta_+ \in [\n_1, \n_2]$ (or $\eta_- \in [-\n_2,-\n_1]$).  Then
$(-\pi,0) \cup (0,\pi) \subset \Arg_0 \Si_{\eta_-,\eta_+}^{s_-,s_+} [\F_{s_-}] $ 
and formula (\ref{e:star}) holds true.
\end{prop}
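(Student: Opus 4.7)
The crucial feature of the hypothesis is that $\eta_+^2 \in [\n_1^2,\n_2^2]$ makes $\vep\equiv\eta_+^2$ a feasible constant control, and that $x\equiv\eta_+$ is an equilibrium of the Riccati dynamics $x'=\ii\ka(-x^2+\eta_+^2)$ (because $f(\eta_+,\eta_+^2)=0$). This equilibrium is what lets one pad any shorter resonator on the right with a constant $\eta_+^2$-layer without disturbing the boundary condition $x(s_+)=\eta_+$. The symmetric case $\eta_-\in[-\n_2,-\n_1]$ uses the analogous left equilibrium at $x\equiv\eta_-$.

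My plan for (\ref{e:star}) is to isolate first a star-likeness key lemma: for every $\ka_0\in\Si_{\eta_-,\eta_+}^{s_-,s_+}[\F_{s_-,s_+}]$ and every $c\ge 1$ one has $c\ka_0\in\Si_{\eta_-,\eta_+}^{s_-,s_+}[\F_{s_-,s_+}]$. Given a witness $\vep_0\in\F_{s_-,s_+}$ for $\ka_0$, I would apply the scaling identity (\ref{e:scalingSi}) with $\tau=c$ to obtain, after translation, the control $\wt\vep(s):=\vep_0(c(s-s_-)+s_-)$ on the shorter interval $[s_-,s_-+(s_+-s_-)/c]$ witnessing $c\ka_0$. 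Extending $\wt\vep$ to all of $[s_-,s_+]$ by the constant value $\eta_+^2$ on the remaining subinterval gives a function still in $\F_{s_-,s_+}$, and since $\eta_+$ is an equilibrium the eigenfunction continues as a scalar multiple of $\exp(\ii\ka\eta_+ s)$ in this tail, preserving the right-hand boundary condition. Hence $c\ka_0$ remains an $(\eta_-,\eta_+)$-eigenvalue on $[s_-,s_+]$.

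Formula (\ref{e:star}) should then follow in two short steps. For the inclusion $\supset$, closedness of $\Si_{\eta_-,\eta_+}^{s_-,s_+}[\F_{s_-,s_+}]$ (available from the compactness/Filippov-type argument already quoted in the paper, cf.\ the proof of Theorem \ref{t:ExOptContr}) guarantees that for each achievable $\ga$ the infimum in the definition of $\rho_{\min}(\ga)$ is attained, so $e^{\ii\ga}\rho_{\min}(\ga)\in\Si$; the key lemma then adds every $ce^{\ii\ga}\rho_{\min}(\ga)$ with $c\ge1$. The reverse inclusion $\subset$ is tautological from the definition of $\rho_{\min}$: any $\ka\in\Si$ with $\Arg_0\ka=\ga$ satisfies $|\ka|\ge\rho_{\min}(\ga)$, so $\ka=(|\ka|/\rho_{\min}(\ga))\cdot e^{\ii\ga}\rho_{\min}(\ga)$ fits the right-hand side.

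For the covering claim $(-\pi,0)\cup(0,\pi)\subset\Arg_0\Si$, I would exhibit explicit eigenvalues of constant controls via formula (\ref{e:Si(ep)}), writing $k_n(\ep)=A(\ep)+\pi n/((s_+-s_-)\ep^{1/2})$ with $A(\ep)$ independent of $n$. As $\ep\to\eta_+^2$ one has $\vth_\ep(s_+)\to 0$, hence $|\im A(\ep)|\to\infty$, while $\Arg_0 k_n(\ep_0)\to 0$ as $n\to+\infty$ for any fixed $\ep_0$. Varying $\ep$ continuously in $[\n_1^2,\n_2^2]\setminus\{\eta_+^2\}$ together with the discrete parameter $n$, a density argument in the spirit of the proof of Theorem \ref{t:GlContr} sweeps through a full open set of arguments; the symmetry $\ka\mapsto -\overline\ka$ from (S0)$\Leftrightarrow$(S3) reflects across $\ii\RR$ and doubles the coverage. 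The main obstacle will be a case distinction on the sign of $\im A(\ep)$, which depends on whether the ratio $\vth_\ep(s_-)/\vth_\ep(s_+)$ is positive or negative as $\ep$ sweeps across $\eta_+^2$, and on the asymptotics as $n\to\pm\infty$; this sign analysis is what ensures that arguments on both sides of $\ii\RR$ (and in particular in the upper half-plane, whenever such eigenvalues occur for the given $(\eta_-,\eta_+)$) are reached.
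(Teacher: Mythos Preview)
Your proposal is correct and takes essentially the same approach as the paper. For formula (\ref{e:star}), both arguments rest on the same key observation: since $\eta_+^2$ is an admissible constant control and $\eta_+$ is an equilibrium of $x'=\ii\ka(-x^2+\eta_+^2)$, a witness on a shorter interval can be padded on the right by a constant $\eta_+^2$-layer without disturbing the terminal condition; combined with the scaling (\ref{e:scalingSi}) this gives the star property. You are somewhat more explicit than the paper in separating the two inclusions and in invoking closedness of $\Si_{\eta_-,\eta_+}^{s_-,s_+}[\F_{s_-,s_+}]$ to ensure that $\rho_{\min}(\ga)$ is actually attained (the paper just writes $\vep^{\min}_\ga$, presupposing this).

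The only real difference is in the coverage statement: the paper disposes of it in one line by citing the global controllability result (Theorem \ref{t:GlContr}), whereas you redo the constant-control computation from (\ref{e:Si(ep)}) that already constitutes the second proof of that theorem. This is not wrong, just less economical; citing Theorem \ref{t:GlContr} directly and then appealing to the symmetries (S1)--(S3) to extend beyond $\CC_4$ would shorten your argument considerably. One small caution on your use of (S3): it maps $(\eta_-,\eta_+)$ to $(\overline{\eta_-},\overline{\eta_+})$, so reflecting across $\ii\RR$ changes the initial datum to $\overline{\eta_-}$ when $\eta_-\notin\RR$; this is harmless here because the controllability input (Theorem \ref{t:GlContr}) is global in the initial state, but it is worth saying explicitly.
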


\begin{proof}
Let $\ga $ be achievable. 
 Then (\ref{e:ep^min_ga}) and  
 $\n_1 \le \eta_+ \le \n_2$ imply that an extension $\vep (\cdot) $ of $\vep^{\min}_{\ga} (\cdot)$ 
to $ (s_+,+\infty)$ by the constant value $\eta_+^2$ 
 belongs to $\F_{s_-,s_0}$  for any $s_0 \in [s_+,+\infty)$. For every 
such $\vep (\cdot) $, we have $x(s) = x(s_+) = \eta_+$ for all $s > s_+$.
 This and the scaling (\ref{e:scalingSi}) imply that  
 $ c \ee^{\ii \ga} \rho_{\min} (\ga) $ is an achievable  $(\eta_-,\eta_+)$-eigenvalue
 for $c \ge 1$. 
The global controllability (Theorem \ref{t:GlContr}) implies the first statement.
\end{proof}

\begin{prop} \label{p:SiClosed}
Let $\eta_- \neq \eta_+$. Then 
\item[(i)] The set $\Si^{\eta_-,\eta_+}_{s_-,s_+} [\F]$ 
is closed in $\CC$.
\item[(ii)] $\rho_{\min} (\cdot) $ is lower semicontinuous at each $\ga \in \RR$. 
\end{prop}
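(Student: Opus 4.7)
The plan is to establish (i) by the weak-$*$ compactness scheme of \cite{HS86,K13,KLV17}, preceded by a uniform separation of $\Si_{\eta_-,\eta_+}^{s_-,s_+}[\F_{s_-,s_+}]$ from $0$, and then to deduce (ii) from (i) as a standard consequence of closedness.

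For (i), take $\ka_n \to \ka_0$ with $\ka_n \in \Si_{\eta_-,\eta_+}^{s_-,s_+}[\F_{s_-,s_+}]$ and choose $\vep_n \in \F_{s_-,s_+}$ and nontrivial $y_n \in W^{2,\infty}[s_-,s_+]$ satisfying (\ref{e:ep}) with $\ka=\ka_n$ and (\ref{e:BCxpm}). The first step is to rule out $\ka_0 = 0$: rescaling time by $\tau_n = 1/|\ka_n|$ as in (\ref{e:scalingSi}) turns $x_n(\cdot) = y_n'/(\ii \ka_n y_n)$ into a trajectory of (\ref{e:R})--(\ref{e:R2}) driven by the unit-modulus parameter $\wh \ka_n := \ka_n/|\ka_n|$ that connects $\eta_-$ to $\eta_+$ in time $|\ka_n|(s_+-s_-)$; since the control field $F_{\wh \ka}$ is uniformly bounded on the compact manifold $\wh \CC$ in any smooth background metric (by inspection of (\ref{e:R}) in the chart $|x| \le \n_2$ and of (\ref{e:R2}) in the chart $|\wt x| \le 1/\n_2$), the minimum travel time between the distinct points $\eta_-, \eta_+$ is bounded below by some $c_0 > 0$ uniformly in $|\wh \ka|=1$, so $|\ka_n| \ge c_0/(s_+-s_-)$ and $\ka_0 \ne 0$. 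Next I normalise $y_n$ by $|y_n(s_-)|^2 + |y_n'(s_-)/\ka_n|^2 = 1$, which is admissible both when $\eta_- \in \CC$ and when $\eta_-=\infty$ (then $y_n(s_-) = 0$ and one normalises by $|y_n'(s_-)/\ka_n|=1$). Grönwall's inequality then bounds $y_n$ uniformly in $W^{2,\infty}[s_-,s_+]$, so a subsequence converges to some $y$ in $C^1[s_-,s_+]$, while Banach--Alaoglu yields a weak-$*$ limit $\vep \in \F_{s_-,s_+}$ of $\vep_n$ (the pointwise constraints $\n_1^2 \le \vep \le \n_2^2$ are weak-$*$ closed). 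Testing (\ref{e:ep}) against $\phi \in C_c^\infty(s_-,s_+)$ and combining the uniform convergence of $\ka_n^2 y_n$ with the weak-$*$ convergence of $\vep_n$ passes the equation to the limit; the boundary conditions (\ref{e:BCxpm}) pass pointwise because $\ka_0 \ne 0$; and the normalisation is preserved so $y$ is nontrivial. Hence $\ka_0 \in \Si_{\eta_-,\eta_+}^{s_-,s_+}(\vep) \subset \Si_{\eta_-,\eta_+}^{s_-,s_+}[\F_{s_-,s_+}]$.

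For (ii), fix $\ga \in \RR$ and take $\ga_n \to \ga$ with $\rho_{\min}(\ga_n) \to \rho^* := \liminf_{\ga' \to \ga}\rho_{\min}(\ga')$; the case $\rho^*=+\infty$ is trivial, so assume $\rho^* < +\infty$. Then $\ga_n$ is eventually achievable, and the uniform lower bound $|\ka| \ge c_0/(s_+-s_-)$ combined with closedness from (i) forces the intersection of the ray $\{r e^{\ii \ga_n}:r>0\}$ with $\Si_{\eta_-,\eta_+}^{s_-,s_+}[\F_{s_-,s_+}]$ to be a nonempty closed subset of $[c_0/(s_+-s_-),+\infty)$; the infimum $\rho_{\min}(\ga_n)$ is thereby attained, yielding $\ka_n := \rho_{\min}(\ga_n) e^{\ii \ga_n} \in \Si_{\eta_-,\eta_+}^{s_-,s_+}[\F_{s_-,s_+}]$. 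Now $\ka_n \to \rho^* e^{\ii \ga} \in \Si_{\eta_-,\eta_+}^{s_-,s_+}[\F_{s_-,s_+}]$ by (i); using continuity of $\Arg_0$ in a neighbourhood of $e^{\ii \ga}$ (choosing a coherent branch on an open sector containing all $\ga_n$ to cover degenerate cases near the cut $\overline{\RR_-}$) one obtains $\Arg_0(\rho^* e^{\ii \ga}) = \ga$, and hence $\rho_{\min}(\ga) \le \rho^*$.

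The main obstacle is the preliminary step of (i), namely the uniform separation of the achievable set from $0$; without it the case $\ka_0 = 0$ would be admissible, the normalisation of $y_n$ would degenerate, and the boundary conditions (\ref{e:BCxpm}) would fail to pass to the limit. Once this separation is established through the scaling (\ref{e:scalingSi}) and the compactness of the unit circle of parameters, the remainder of (i) is routine weak-$*$ compactness and (ii) is a standard consequence of closedness together with the attainment of the infimum on each nonempty ray.
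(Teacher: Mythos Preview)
Your proof is correct and follows exactly the weak-$*$ compactness route that the paper cites (the paper's own proof is a two-line reference to \cite{HS86,K13,KLV17} for (i) and the remark that (ii) is a simple corollary). Your write-up supplies the details the paper omits, and your explicit treatment of the case $\ka_0=0$ via the rescaling to $|\wh\ka|=1$ and the uniform velocity bound on the compact manifold $\wh\CC$ is a clean self-contained substitute for what in the references is handled by analytic-function arguments on the characteristic determinant; both approaches yield the needed positive lower bound on $|\ka|$ uniformly over $\F_{s_-,s_+}$.
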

\begin{proof}
(i) follows from local weak*-continuity of the set-valued map 
$\vep (\cdot) \mapsto \Si^{\eta_-,\eta_+}_{s_-,s_+} (\vep)$ \cite{KLV17}, 
or from weak*-compactness arguments \cite{HS86,K13}. 
Statement (ii) is a simple corollary of (i).
\end{proof}

So $\Si^{\eta_-,\eta_+}_{s_-,s_+} [\F]$ contains 
$\Bd \Si^{s_-,s_+}_{\eta_-,\eta_+} [\F_{s_-}]$, $\Pa^{\eta_-,\eta_+}_{\Dr}$ and $\Pa_{\md}$.

\begin{prop} \label{p:Bd->MinDec}
Assume that (\ref{e:eta-eta+as}) holds and  $\eta_+ \in [\n_1,\n_2]$ (or $\eta_- \in [-\n_2,-\n_1]$).
Let $\Arg_0 \ka_0 = \ga_0 \in (-\pi/2,0)$ and $\ka_0 \in \Bd \Si^{s_-,s_+}_{\eta_-,\eta_+} [\F_{s_-}]$.
Then $\ka_0$ is an $(\eta_-,\eta_+)$-eigenvalue of minimal decay 
if and only if $\re \ka_0 < \rho_{\min} (\ga) \cos (\ga)$ for all $\ga \in (\ga_0,0)$.
\end{prop}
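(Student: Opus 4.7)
My plan is to translate the condition of minimal decay into a direct statement about the star-like representation of $\Si := \Si^{s_-,s_+}_{\eta_-,\eta_+}[\F_{s_-}]$ furnished by Proposition \ref{p:star-like}, formula (\ref{e:star}). The hypothesis $\ga_0 \in (-\pi/2, 0)$ yields $\re \ka_0 = |\ka_0| \cos \ga_0 > 0$ and $-\im \ka_0 > 0$, while the closedness of $\Si$ (Proposition \ref{p:SiClosed}) together with $\ka_0 \in \Bd \Si$ gives $\ka_0 \in \Si$ and hence $\beta_{\min}(\re \ka_0) \le -\im \ka_0$.

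First I would unpack the definition: $\ka_0$ is of minimal decay for the frequency $\re \ka_0$ iff no $\ka_\star$ satisfies simultaneously $\ka_\star \in \Si$, $\re \ka_\star = \re \ka_0$, and $0 < -\im \ka_\star < -\im \ka_0$. Then I would parametrize the candidate vertical segment $\{\re \ka_0 - \ii \beta : 0 < \beta < -\im \ka_0\}$ by its complex argument: the map $\beta \mapsto \ga := \Arg_0(\re \ka_0 - \ii \beta)$ is a bijection onto $(\ga_0, 0)$, sending $\ga$ to $\ka_\star = (\re \ka_0 / \cos \ga)\, \ee^{\ii \ga}$ with $|\ka_\star| = \re \ka_0/\cos \ga$.

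The decisive step is to invoke (\ref{e:star}): such a $\ka_\star$ lies in $\Si$ iff $|\ka_\star| \ge \rho_{\min}(\ga)$, i.e., iff $\re \ka_0 \ge \rho_{\min}(\ga) \cos \ga$. The convention $\rho_{\min}(\ga) = +\infty$ for non-achievable $\ga$, combined with $\cos \ga > 0$ on $(-\pi/2, 0)$, keeps this equivalence uniform. Chaining the two reductions, $\ka_0$ \emph{fails} to be of minimal decay iff there exists $\ga \in (\ga_0, 0)$ with $\re \ka_0 \ge \rho_{\min}(\ga) \cos \ga$; taking the contrapositive gives the claimed equivalence.

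The only nuance, rather than a genuine obstacle, is the strict versus non-strict character of the inequality: a $\ga$ with $\re \ka_0 = \rho_{\min}(\ga) \cos \ga$ already produces an achievable eigenvalue $\ka_\star = \rho_{\min}(\ga) \ee^{\ii \ga} \in \Si$ whose decay is strictly smaller than $-\im \ka_0$, which is precisely why the \emph{strict} inequality $\re \ka_0 < \rho_{\min}(\ga) \cos \ga$ for all $\ga \in (\ga_0, 0)$ characterizes minimality. Since every substantive ingredient is already packaged in Propositions \ref{p:star-like} and \ref{p:SiClosed}, the rest is bookkeeping.
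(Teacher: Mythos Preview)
Your proposal is correct and follows essentially the same route as the paper: both reduce the minimal-decay condition to the emptiness of the open vertical segment $(\ka_0,\re\ka_0)$ inside $\Si$, and then invoke the star-shaped description of $\Si$ from Proposition~\ref{p:star-like} (formula~(\ref{e:star})). Your version is simply more explicit about the parametrization by $\ga$ and the role of the strict inequality.
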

\begin{proof}
Since $\ka_0 \in \Si^{s_-,s_+}_{\eta_-,\eta_+} [\F_{s_-}]$, we see that 
$\ka_0 = \re \ka_0 - \ii \beta_{\min} (\re \ka_0)$ if and only if the $\CC$-interval 
$(\ka_0,\re \ka_0)=\{\la \ka_0 +(1-\la)\re \ka_0 : \la \in (0,1) \}$ does not intersect $\Si^{\eta_-,\eta_+}_{s_-,s_+} [\F_{s_-}]$.
Proposition \ref{p:star-like} completes the proof.
\end{proof}

\begin{cor} \label{c:rho(ga)}
Assume (\ref{e:eta-eta+as}) and $\ga_0 \in (-\pi,\pi)$. Then:
\item[(i)] $\lim_{\ga \to 0-0} \rho_{\min} (\ga) = +\infty$ and 
$\rho_{\min} (\ga) = +\infty$ for all $\ga \in [0,\pi]$.
\item[(ii)] If $\liminf_{\ga \to \ga_0-0} \rho_{\min} (\ga) \neq \rho_{\min} (\ga_0) \neq \liminf_{\ga \to \ga_0+0} \rho_{\min} (\ga)$,
then $\ga_0 = - \pi/2$ and \linebreak $\rho_{\min} (-\pi/2) < +\infty$.
\item[(iii)] Let $\ga_0 \in (-\pi/2,0)$ be a point of discontinuity of $\rho_{\min}$. 
Then either  $\lim\limits_{\ga \to \ga_0-0} \rho_{\min} (\ga) = \rho_{\min} (\ga_0)$, or 
$\lim\limits_{\ga \to \ga_0+0} \rho_{\min} (\ga) = \rho_{\min} (\ga_0)$.
\item[(iv)] Assume that $\eta_+ \in [\n_1,\n_2]$ (or $\eta_- \in [-\n_2,-\n_1]$), $\Arg_0 \ka_0 = \ga_0 \in (-\pi/2,0)$,  
$\ka_0 \in \Bd \Si^{s_-,s_+}_{\eta_-,\eta_+} [\F_{s_-}]$ and $\ka_0 \neq \rho_{\min} (\ga_0) e^{\ii \ga_0}$.
Then 
\begin{align} \label{e:lim-<lim+}
\text{either } \quad & \lim_{\ga \to \ga_0-0} \rho_{\min} (\ga) = \rho_{\min} (\ga_0)  < |\ka_0| \le 
\liminf_{\ga \to \ga_0+0}  \rho_{\min} (\ga) , \\
\text{or } \quad & \lim_{\ga \to \ga_0+0} \rho_{\min} (\ga) = \rho_{\min} (\ga_0)  < |\ka_0| \le 
\liminf_{\ga \to \ga_0-0}  \rho_{\min} (\ga) . \label{e:lim+<lim-}
\end{align}
Moreover, $e^{\ii \ga_0} [\rho_{\min} (\ga_0), |\ka_0|] \subset \Bd \Si^{s_-,s_+}_{\eta_-,\eta_+} [\F_{s_-}]$.
\item[(v)] Let the assumptions of statement (iv) hold and, additionally,
 $\ka_0 $ is the $(\eta_-,\eta_+)$-eigen\-value of minimal decay for the frequency $\re \ka_0$.
Then (\ref{e:lim-<lim+}) takes place and $\re \ka_0  < \rho (\ga) \cos \ga$ for all
$\ga \in (\ga_0,0)$.
Moreover, each $\ka \in e^{\ii \ga_0} [\rho_{\min} (\ga_0), |\ka_0|]$
is the $(\eta_-,\eta_+)$-eigenvalue of minimal decay for the frequency $\re \ka$.
\end{cor}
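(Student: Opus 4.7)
The plan is to treat (i) via the closedness and half-plane inclusion of $\Si[\F_{s_-,s_+}]$, (ii)--(iii) via first-order perturbation analysis of an optimizer together with lower semicontinuity, and (iv)--(v) as geometric consequences of the star-shape of Proposition \ref{p:star-like} combined with Proposition \ref{p:Bd->MinDec}. For (i), the inclusion $\Si[\F_{s_-,s_+}]\subset\CC_-$ (Remark \ref{r:C4}) gives $\rho_{\min}\equiv+\infty$ on $[0,\pi]$, and $\lim_{\ga\to 0^-}\rho_{\min}(\ga)=+\infty$ follows because a finite limit would produce, via $\ka_n=\rho_{\min}(\ga_n)e^{\ii\ga_n}$ with $\ga_n\to 0^-$ and closedness of $\Si[\F_{s_-,s_+}]$ (Proposition \ref{p:SiClosed}(i)), a limit point in $\RR_+\cap\Si[\F_{s_-,s_+}]=\varnothing$.

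For (ii) and (iii), let $\ka_0=\rho_{\min}(\ga_0)e^{\ii\ga_0}$ be realized by an optimizer $\vep_0\in\F_{s_-,s_+}$ with mode $y$ (existence from Proposition \ref{p:SiClosed}(i)). Standard first-order perturbation of (\ref{e:ep}),(\ref{e:BCxpm}) shows that admissible variations $\vep_0+\de\phi\in\F_{s_-,s_+}$ produce analytic one-parameter families of eigenvalues $\ka_\de=\ka_0+\de\ka_0'[\phi]+O(\de^2)$, where $\ka_0'[\phi]$ is proportional, by a nonzero complex constant $c$ depending only on $\ka_0$, $y$, $\vep_0$, and $\eta_\pm$, to $\int_{s_-}^{s_+}\phi(s)\,y^2(s)\,ds$. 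Since $\vep_0$ is bang-bang by Theorem \ref{t:PMP}, the admissible $\phi$ form the union of two opposite convex cones, and the combined image of reachable tangent directions $\de\ka_0'[\phi]$ is exactly the half-plane $c\cdot(\CC_-\cup\RR)$, consistent with the Pareto property of $\ka_0$ (the complementary half-plane would correspond to forbidden inward directions and contradict lower semicontinuity). Parametrising the Pareto frontier locally, the curve $\ga\mapsto\rho_{\min}(\ga)e^{\ii\ga}$ stays on the boundary $c\RR$ of this half-plane; a tangent analysis shows $\rho_{\min}$ is continuous on at least one side of $\ga_0$, giving (iii). A two-sided strict jump requires the tangent $c\RR$ to be radial ($c\parallel\ka_0$); this degeneracy forces $y^2\in\RR$ a.e., and via the ODE $y''=-\ka_0^2\vep_0\,y$ forces $\ka_0^2\in\RR$, so $\ga_0=-\pi/2$. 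The hypothesis of (ii) moreover gives $\rho_{\min}(\ga_0)<+\infty$ automatically, completing (ii).

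Statements (iv) and (v) then follow from Propositions \ref{p:star-like} and \ref{p:Bd->MinDec}. For (iv), formula (\ref{e:star}) gives $e^{\ii\ga_0}[\rho_{\min}(\ga_0),+\infty)\subset\Si[\F]$, whence $|\ka_0|>\rho_{\min}(\ga_0)$ and $e^{\ii\ga_0}[\rho_{\min}(\ga_0),|\ka_0|]\subset\Si[\F]$. The hypothesis $\ka_0\in\Bd\Si[\F]$ yields a sequence $\ka_n\to\ka_0$ with $\ka_n\notin\Si[\F]$; by (\ref{e:star}) these satisfy $|\ka_n|<\rho_{\min}(\Arg\ka_n)$, so one of the one-sided liminfs of $\rho_{\min}$ at $\ga_0$ is $\ge|\ka_0|$, while (iii) forces the opposite side-limit to equal $\rho_{\min}(\ga_0)$; this produces the dichotomy (\ref{e:lim-<lim+})/(\ref{e:lim+<lim-}), and the same argument applied at every interior point of the segment places it in $\Bd\Si[\F]$. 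For (v), Proposition \ref{p:Bd->MinDec} applied to $\ka_0$ gives $\rho_{\min}(\ga)\cos\ga>\re\ka_0$ for all $\ga\in(\ga_0,0)$; letting $\ga\to\ga_0^+$ yields $\liminf_{\ga\to\ga_0^+}\rho_{\min}(\ga)\ge|\ka_0|$, selecting (\ref{e:lim-<lim+}) among the two alternatives of (iv). The same strict inequality with $\re\ka\le\re\ka_0$ then gives, again through Proposition \ref{p:Bd->MinDec}, the minimal-decay property of every $\ka\in e^{\ii\ga_0}[\rho_{\min}(\ga_0),|\ka_0|]$.

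The main obstacle is the radial-tangent analysis behind (ii): making rigorous the implication ``$c\parallel\ka_0\Rightarrow y^2\in\RR\text{ a.e.}\Rightarrow\ka_0^2\in\RR$'', which requires carefully unpacking the explicit perturbation formula (including its boundary contributions from $\eta_\pm$) and combining the sign pattern of (\ref{e:MinC}) with the time-reversal symmetry (S0)$\Leftrightarrow$(S3). A further subtlety arises in the boundary case $\eta_+\in\{\n_1,\n_2\}$, where one edge of the admissible perturbation cone can degenerate because $\vep_0$ saturates the constraint near $s_+$; this is precisely the mechanism that allows a one-sided jump of $\rho_{\min}$ at interior $\ga_0$ yet rules out a two-sided one.
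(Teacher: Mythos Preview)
Your treatment of (i), (iv), and (v) is correct and essentially matches the paper: (i) via $\Si[\F_{s_-,s_+}]\subset\CC_-$ together with closedness, (iv)--(v) via the star-shape of Proposition~\ref{p:star-like} and Proposition~\ref{p:Bd->MinDec}.

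For (iii) your perturbation idea is the right one, but two points need correction. First, for a bang-bang optimizer $\vep_0$ the admissible perturbations $\phi$ satisfy a one-sided sign constraint at every point ($\phi\ge 0$ where $\vep_0=\n_1^2$, $\phi\le 0$ where $\vep_0=\n_2^2$), so they form a \emph{single} convex cone, not a union of two opposite cones. Second, using (\ref{e:MinC}) one sees $\phi\,\im y^2\le 0$ a.e., so $\int\phi\,y^2$ lies in $\overline{\CC_-}$; but that the image is the \emph{full} half-plane, rather than a proper sector, is not automatic and depends on the sign pattern of $y^2$ at switch points. What is actually needed (and is enough for (iii)) is merely that the image contains a sector of positive opening. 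The paper obtains exactly this by citing the perturbation result of \cite[Appendix~A]{KLV17}: at any achievable $\ka_0\in\CC_4$ there is a non-degenerate closed triangle contained in $\Si[\F_{s_-,s_+}]$ with a vertex at $\ka_0$. This immediately yields $\limsup_{\ga\to\ga_0\pm 0}\rho_{\min}(\ga)\le\rho_{\min}(\ga_0)$ for at least one sign, and lower semicontinuity upgrades it to a one-sided limit, which is (iii).

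The genuine gap is in your argument for (ii). The ``radial tangent'' chain $c\parallel\ka_0\Rightarrow y^2\in\RR$ a.e.\ $\Rightarrow\ka_0^2\in\RR$ is neither justified nor needed. Once (iii) is established, (ii) follows without any further analysis: under the hypothesis of (ii), lower semicontinuity forces $\rho_{\min}(\ga_0)<+\infty$, hence $\ga_0\in(-\pi,0)$ by (i); for $\ga_0\in(-\pi/2,0)$ statement (iii) directly contradicts the hypothesis, and the case $\ga_0\in(-\pi,-\pi/2)$ reduces to the previous one by the symmetry $\ka\mapsto-\bar\ka$ of $\Si[\F_{s_-,s_+}]$. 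Only $\ga_0=-\pi/2$ survives. This is exactly how the paper argues (``(ii) follows from (iii)''), and your self-identified ``main obstacle'' simply does not arise.
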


\begin{proof}
\emph{(i)} If $\ga \in [0,\pi]$, the equality $\rho_{\min} (\ga) = +\infty$ follows from
$\Si^{s_-,s_+}_{\eta_- ,\eta_+} [\F_{s_-}] \subset \CC_-$.
Then $\lim_{\ga \to 0-0} \rho_{\min} (\ga) = +\infty$ follows 
from the lower semicontinuity of $\rho_{\min} $ (see Proposition \ref{p:SiClosed}).

\emph{(ii)-(iii)} The lower semicontinuity of $\rho_{\min} $ also proves that $\rho_{\min}$ is continuous 
at each $\ga_0 \not \in \dom \rho_{\min} $ (as a map to the topological space $[0,+\infty]$).
Now, let $\ga_0 \in (-\pi,-\pi/2) \cup (-\pi/2,0)$ be a point of discontinuity of $\rho_{\min}$.
In this case, (ii) follows from (iii). \emph{Let us prove (iii).} Since $\ga_0$ is a point of discontinuity of $\rho_{\min}$, there exists 
the resonance $\ka_0 = \rho_{\min} (\ga_0) e^{\ii \ga_0} \in \CC_4$ of minimum modulus 
for $\ga_0$. It follows from the perturbation arguments 
of \cite[Appendix A]{KLV17} that there exists a non-degenerate triangle $\Tr \subset \Si^{s_-,s_+}_{\eta_-,\eta_+} [\F_{s_-}]$ 
with a vertex at $\ka_0$.
So, at least one of inequalities 
$\limsup_{\ga \to \ga_0 \pm 0} \rho_{\min} (\ga) \le \rho_{\min} (\ga_0)$ holds.
The lower semicontinuity completes the proof of (iii), and, in turn, of (ii). 

\emph{(iv)-(v)} By Proposition \ref{p:star-like}, the resonance free region 
$\CC \setminus \Si^{\eta_-,\eta_+}_{s_-,s_+} [\F_{s_- }]$
is star-shaped w.r.t. $0$ and open. So $\ka_0 \in \Bd \Si^{s_-,s_+}_{\eta_-,\eta_+} [\F_{s_-}]$
implies 
$e^{\ii \ga_0} [\rho_{\min} (\ga_0), |\ka_0|] \subset \Bd \Si^{s_-,s_+}_{\eta_-,\eta_+} [\F_{s_-}]$ 
(indeed, assuming converse we easily get a contradiction to the star property). The combination of these 
arguments with (iii) implies that exactly one of the formulae 
(\ref{e:lim-<lim+}), (\ref{e:lim+<lim-}) holds. Finally, statement (v) easily follows from (iv) and 
Proposition \ref{p:Bd->MinDec}.
\end{proof}

\subsection{Proofs of Theorems \ref{t:MinDec1}-\ref{t:ExOpt2}}
\label{ss:prMinDec}

\begin{proof}[Proof of Theorem \ref{t:MinDec1}]
(i) It follows from Proposition \ref{p:star-like} that $\dom \rho_{\min} \supset (-\pi/2,0)$, 
and so $\rho_{\min}$ is $\RR$-valued in $(-\pi/2,0)$.
Due to Proposition \ref{p:SiClosed}, to prove $\rho_{\min} (\cdot) \in C_\loc (-\pi/2,0)$,
it remains to show that $\rho_{\min} $ is upper semicontinuous at each $\ga_0 \in (-\pi/2,0)$.
This fact follows from the uniform STLC established in Theorem \ref{t:STLC} (and so uses essentially the assumption $\eta_+ \in (\n_1,\n_2)$).  
Indeed, for $\ga \in [\ga_0 -\de,\ga_0+\de] \subset (-\pi/2,0)$ 
(with small enough $\de$)
we take $\ka \in \CC_4$ with $\Arg_0 \ka = \ga_0$ and put $\wt \ka (\ga) := e^{\ii \ga} |\ka|$.
Then (\ref{e:Tmin=rho}) takes the form
$
T^{\min}_{\wt \ka (\ga)} (\eta_-,\eta_+) = (s_+-s_-) \rho_{\min} (\ga) / |\ka|
$
and it is enough to prove that 
$\limsup_{\ga \to \ga_0} T^{\min}_{\wt \ka (\ga)} (\eta_-,\eta_+) \le t_0$, 
where $t_0 = T^{\min}_\ka (\eta_-,\eta_+) $.

Let $\vep_0 (\cdot) $  be a control that steers the system (\ref{e:R}), (\ref{e:R2})
from $x(s_-) = \eta_-$ to $\eta_+$ in the minimal time 
$t_0 $ for the spectral parameter $\ka$.
Since $f$ and $\wt f$ in (\ref{e:R}), (\ref{e:R2}) are analytic in $\ka$, 
one sees that $x_{\eta_-} (s_-+t_0, \wt \ka(\ga), \vep_0) \to \eta_+$ as $\ga \to \ga_0$
(we use the notation of Theorem \ref{t:STLC}).
For small enough $|\ga - \ga_0|$, let us define $t_1 (\ga)$ by 
$| \eta_+ - x_{\eta_-} (s_-+t_0,\wt \ka(\ga), \vep_0) | = \frac12 
\ra^{\max}_{\Om,\eta_+} (t_1 (\ga))$ assuming that $\Om$
is a small enough neighborhood of $\ka $, where the uniform STLC holds 
by Theorem \ref{t:STLC}.
Then $T^{\min}_{\wt \ka(\ga)} (\eta_-,\eta_+) < t_0 + t_1 (\ga)$.
From $\ra^{\max}_{\Om,\eta_+} (0) = 0$ and the continuity of $\ra^{\max}_{\Om,\eta_+} $,
one gets $\lim_{\ga \to \ga_0} t_1 (\ga) = 0$, and, in turn, 
$\limsup_{\ga \to \ga_0} T^{\min}_{\wt \ka (\ga)} (\eta_-,\eta_+) \le t_0$.

(ii) follows from (i) and Corollary \ref{c:rho(ga)} (iv).
\end{proof}

\begin{proof}[Proof of Theorem \ref{t:PF}]
The description of $\dom \rho_{\min}$ is given in Corollary \ref{c:rho(ga)}. Statement (i) follows from Proposition
\ref{p:star-like}. Statement (ii) from statement (i).
\end{proof}

\begin{proof}[Proof of Theorem \ref{t:MinDec2}.]
Proposition \ref{p:Bd->MinDec} implies $\rho_0 \le |\ka_0| \le \rho_1$. So 
$\ka_0 \in \ee^{\ii \ga_0} [\rho_0 ,\rho_1] $.
If $\ka = \rho \ee^{\ii \ga_0}$ with $\rho \in [\rho_0, |\ka_0|]$, Corollary \ref{c:rho(ga)} implies 
that $\ka$ is of minimal decay. To obtain the same statement for $\rho \in (|\ka_0|,\rho_1)$ 
(in the case where this interval is nonempty), one can combine Propositions 
\ref{p:star-like}-\ref{p:Bd->MinDec}.

In the case (i), (\ref{e:ka01}) and (\ref{e:ka02}) are obvious because $\ka$ is of minimal modulus.

The case (ii) means that $\ka$ is of minimal decay, but is not of minimal modulus.
Let $t_0 := T^{\min}_{\ka} (\eta_-,\eta_+)$ and $s_0 :=s_- + t_0$.
Consider any control $\vep_{\ga_0}^{\min} (\cdot) \in \F_{s_-,s_0}$ 
that steers the system (\ref{e:R}),(\ref{e:R2}) from $x(s_-) = \eta_-$ to $\eta_+$ 
in the minimum possible time $t_0$.
Since $\ka$ is not of minimal modulus, (\ref{e:Tmin=rho}) implies $t_0 < s_+ -s_-$.
The control $\wt \vep \in \F_{s_-,s_+}$ defined by (\ref{e:ResMinDecCont}) is a continuation of 
$\vep_{\ga_0}^{\min} (\cdot) $ to the interval $(s_0,s_+)$ by the constant 
value $\eta_+^2$. Since $\eta_+$ is an equilibrium solution to  
$x' = \ii \ka (-x^2 + \eta_+^2)$, we see that the trajectory of 
$\wt x(s)$ corresponding to the control $\wt \vep$ stays 
at $\eta_+$ for $s \in (s_0,s_+)$. Thus, 
$\ka \in \Si^{s_-,s_+}_{\eta_-,\eta_+} (\wt \vep)$ and so $\wt \vep$ is the resonator of minimal decay 
for the frequency $\re \ka$.

It remains to show that there exists a solution $y$ to the equation (\ref{e:Eys}) 
on $[s_-,s_+]$ such that $(\wt x,\wt \vep, 0, y)$ is an extremal tuple on $[s_-,s_+]$.
Indeed, since $\wt \vep$ is the resonator of minimal decay generating $\ka$,
it follows from \cite{KLV17} that there exists a nontrivial solution $y$ to (\ref{e:Eys}) 
such that $\wt \vep (\cdot) = \E (y) (\cdot)$ on $(s_-,s_+)$ and the two boundary conditions
(\ref{e:BCxpm}) are satisfied. It follows from Section \ref{ss:MinTime} that 
$\wt x  = \frac{y'}{\ii \ka y}$ is a solution to (\ref{e:R}), (\ref{e:R2}) (with $\vep(\cdot) = \wt \vep (\cdot)$) on $[s_-,s_+]$. 
It is easy to see from (\ref{e:Eys}) that 
$\im (\wt \vep (s) y^2 (s)+\ka^{-2} (y' (s))^2) = \im (y^2 (s) [\wt \vep (s) - x^2 (s)])$ 
is a constant independent of $s$. To see that this constant (and so $\la_0$) are equal to $0$, it is enough to take any $s \in (s_0,s_+)$.
\end{proof}

\emph{Let us pass to the settings of Theorem \ref{t:ExOpt2} and give its proof.}

To prove (\ref{e:=rho=k<}) it is enough to show that $\rho_0 < \rho_1$.
Indeed, the example considered in this theorem can be seeing as `the right half' of the symmetric example of 
Theorem \ref{t:ExOpt} with $\n_\infty =\n_1$. This implies $\rho_0 = |\ka_0|$.
On the other side $\rho_0 < \rho_1$, implies $\rho_0 < \lim_{\ga \to \ga_0 + 0} \rho_{\min} (\ga) $,
and so $\lim_{\ga \to \ga_0 - 0} \rho_{\min} (\ga) = \rho_0$ follows 
from Corollary \ref{c:rho(ga)}. 

\emph{Let us show that $\rho_0 < \rho_1$ holds} using the method of the proof of Theorem \ref{t:ExOpt}
and its symmetric settings. 
For this we put $\wt s_+ = - \wt s_- = s_+$, $\wt \eta_- = -\n_1$, 
and $\wt \ka (\ga) = |\ka_0| e^{\ii \ga}$ for $\ga \in (\ga_0,0)$.
From $\rho_0 = |\ka_0| = |\wt \ka (\ga)|$ and the equalities
$T^{\min}_{\ka_0} (-\n_1,\infty)  = s_+  $,
$T^{\min}_{\wt \ka (\ga)} (-\n_1,\infty)     
=\frac{s_+ \rho_{\min}  (\ga , -\n_1, \infty)}{|\wt \ka (\ga)|} 
$, 
it is easy to see that $\rho_0<\rho_1$ is equivalent to 
\begin{align}
\text{$T^{\min}_{\ka_0} (-\n_1,\infty) \cos \ga_0 < \inf_{\ga \in (\ga_0,0)} \left( T^{\min}_{\wt \ka (\ga) } (-\n_1,\infty) \cos \ga \right)$.}
\end{align}
This inequality follows from 
$ 
T^{\min}_{\ka_0} (-\n_1,\infty)  =   \frac{\pi}{2 \n_2 \rho_0 \cos \ga_0} 
$
and the following lemma.

\begin{lem}
There exists $\de_1>0$ such that 
$\frac{\pi}{2 \n_2 \rho_0 \cos \ga} + \de_1< t_1 $
for every $\ka = \wt \ka (\ga)$ with $\ga \in (\ga_0,0)$ and 
for every extremal tuple $(x,\vep,\la_0,y)$ on $[s_-,s_-+2 t_1]$ satisfying $x(s_-)=-\n_1$
and  
 $x(s_-+t_1) = \infty$.
\end{lem}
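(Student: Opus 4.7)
The plan adapts the structural analysis from the proof of Theorem \ref{t:ExOpt} with quantification in $\ga$. Since $y(s_-+t_1)=0$, the Euler--Lagrange equation (\ref{e:Eys}) extends $(x,\vep,\la_0,y)$ symmetrically to $[s_-,s_-+2t_1]$ with $x(s_-+2t_1)=\n_1$; combined with $x(s_-)=-\n_1$, the structural list E1--E2 of Section \ref{s:Syn2} restricts the extremal tuple to one of (\ref{e:onelayerodd}), (\ref{e:AS3}), (\ref{e:NS1}), or (\ref{e:NS5}) (with $m_1=m_2$ in the normal cases). I would treat each case.

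For the single-layer case (\ref{e:onelayerodd}), $\vep\equiv\n_2^2$ and (\ref{e:yxRepr}) with $c_-/c_+=(\n_2+\n_1)/(\n_2-\n_1)$ reduces $x(s_-+t_1)=\infty$ to $e^{2\ii\n_2\ka t_1}=-(\n_2+\n_1)/(\n_2-\n_1)$; writing $\ka=\rho_0 e^{\ii\ga}$ and separating real and imaginary parts gives $t_1=(2k+1)\pi/(2\n_2\rho_0\cos\ga)$ together with $(2k+1)(-\tan\ga)=-\tan\ga_0$ for some integer $k\geq 0$. The case $k=0$ occurs only at $\ga=\ga_0$, so for $\ga\in(\ga_0,0)$ only $k\geq 1$ is admissible, yielding $t_1\geq 3\pi/(2\n_2\rho_0\cos\ga)$ with uniform margin at least $\pi/(\n_2\rho_0)$. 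For the abnormal stack (\ref{e:AS3}) with $m\geq 1$, Theorem \ref{t:AbnExtWidth} forces every ordinary/edge layer to be a quarter-wave in its material and the central a $\n_2^2$-half-wave; summing the first $\n_2^2$-quarter-wave, the adjoining $\n_1^2$-quarter-wave, and the half-central gives $t_1\geq \pi/(\n_2\re\ka)+\pi/(2\n_1\re\ka)$ and uniform margin at least $\pi/(2\n_1\rho_0)$. For the normal structures (\ref{e:NS1}) with $m_1\geq 2$ and (\ref{e:NS5}) with $m_1\geq 3$, direct counting of pre-$\{\infty\}$ layers shows at least two ordinary $\n_2^2$-layers are present; Corollary \ref{c:Norm}(i) bounds each from below by $\pi/(2\n_2\re\ka)$, yielding margin at least $\pi/(2\n_2\rho_0)$.

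The small-$m$ sub-cases --- (\ref{e:NS5}) with $m_1\in\{0,1,2\}$ and (\ref{e:NS1}) with $m_1\in\{0,1\}$ --- require a compactness argument. The sub-case (\ref{e:NS5}) with $m_1=0$ coincides with the single-layer case, and (\ref{e:NS1}) with $m_1=0$ is excluded by the Jordan-curve construction in the proof of Theorem \ref{t:ExOpt}, whose transversality condition $\Arg_0[-\ii\ka(\n_2^2-\n_1^2)/f(X_0,\n_2^2)]\in(0,\pi)$ at the exit point $X_0\in\Sec_3$ is open in $\ka$ and so persists uniformly on a one-sided neighborhood of $\ga_0$. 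For (\ref{e:NS5}) with $m_1\in\{1,2\}$ and (\ref{e:NS1}) with $m_1=1$, I would argue by contradiction: any sequence of such extremals at $\ga_n\to\ga_0+0$ with $t_{1,n}-\pi/(2\n_2\rho_0\cos\ga_n)\to 0$ would, by weak-$\ast$ compactness, produce a limit extremal at $\ka_0$ with $t_1=s_+$, and the only such limit is the single-layer $\vep\equiv\n_2^2$; this forces the non-$\n_2^2$-layers of $\vep_n$ to collapse to zero width, incompatible with the switch-point geometry at $\ka_0$ (the same transversality argument as in the Jordan-curve step). For $\ga$ bounded away from $\ga_0$, the trivial bound $t_1\geq T^{\min}_{\wt\ka(\ga)}(-\n_1,\infty)=s_+\rho_{\min}(\ga)/\rho_0$ combined with Corollary \ref{c:rho(ga)}(i) (forcing $\rho_{\min}(\ga)\to+\infty$ as $\ga\to 0-0$) yields a uniform margin. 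Taking $\de_1$ as the minimum of the margins above completes the proof. The main obstacle is this last compactness step: rigorously excluding the degenerate limits requires careful uniform control of switch-point geometry in $\ka$ near $\ka_0$, which ultimately reduces to continuous dependence of extremal trajectories on $\ka$ together with the openness of the transversality condition at $X_0\in\Sec_3$.
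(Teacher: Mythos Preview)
Your case split and the treatment of the abnormal cases are correct and match the paper. The real difficulty, as you note, is the normal cases (\ref{e:NS1}), (\ref{e:NS5}) with small $m_1$, and here your argument has a genuine gap in its coverage of $\ga$.

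Your compactness step only handles sequences $\ga_n\to\ga_0+0$, and your ``far from $\ga_0$'' step via $t_1\ge T^{\min}_{\wt\ka(\ga)}(-\n_1,\infty)=s_+\rho_{\min}(\ga)/\rho_0$ is only effective as $\ga\to 0-0$ (where Corollary~\ref{c:rho(ga)}(i) forces $\rho_{\min}(\ga)\to\infty$). For $\ga$ in a compact subinterval of $(\ga_0,0)$ this use of $T^{\min}$ is circular: bounding $T^{\min}_{\wt\ka(\ga)}(-\n_1,\infty)$ from below by $\pi/(2\n_2\rho_0\cos\ga)+\de_1$ is precisely the lemma applied to the minimum-time extremal. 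Even at $\ga_0$, the weak-$*$ limit of extremal controls need not be an extremal control, and the switch-point structure of the approximating $\vep_n$ need not survive the limit, so the identification of the limit with the single layer is not automatic. A secondary issue: the Jordan-curve exclusion of (\ref{e:NS1}) with $m_1=0$ in Theorem~\ref{t:ExOpt} is built from the $\vep_0\equiv\n_2^2$ trajectory from $-\n_1$ to $\infty$ \emph{inside $\Sec_3$}, and for $\ga>\ga_0$ this trajectory crosses $\RR_-$ (into $(-\infty,-\n_2)$) before reaching $\infty$, so the curve $\Jmf_1$ is not available as stated and the transversality condition you quote has no obvious carrier.

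The paper avoids all of this by treating every normal $m_1\ge 1$ at once, uniformly in $\ga$. The observation you are missing is that the \emph{starting segment} $_{(-\n_2,-\n_1]}\,\overset{3}{\EP}\,\overset{3}{\EM}\,_{(-\infty,-\n_1)}\,\overset{4}{\EM}$, which is present in both (\ref{e:NS1}) and (\ref{e:NS5}) whenever $m_1\ge 1$, already takes time $t_3\ge\de$ uniformly in $\ga\in(\ga_0,0)$: the initial point $-\n_1$ is a \emph{singular} equilibrium (not STLC by \cite{S83}), and the structure forces the trajectory to leave $-\n_1$ into $\Sec_3$ and return to $\RR_-$ at a point strictly to the left of $-\n_1$; making this uniform in $\ga$ uses only that the angular rate in the linearization near $-\n_1$ is $2\n_1\rho_0\cos\ga\le 2\n_1\rho_0$, together with the layer-width information of Theorem~\ref{t:NextWidth}. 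Pairing this with the single $\n_2^2$-layer bound $t_2>\pi/(2\n_2\re\ka)$ for the part $\overset{4}{\EP}\,_{(-\n_2,0)}\,\overset{3}{\EP}$ (Corollary~\ref{c:Norm}(i) for (\ref{e:NS1}), (iv) for the half-central in (\ref{e:NS5})) gives $t_1>t_2+t_3>\pi/(2\n_2\rho_0\cos\ga)+\de$ directly, with no limiting procedure and no appeal to $\rho_{\min}$.
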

\begin{proof}
Let $(x,\vep,\la_0,y)$ be an extremal tuple corresponding to 
$\ka = \wt \ka (\ga)$ with $\ga \in (\ga_0,0)$ and satisfying $x(s_-)=-\n_1$, 
 $x(s_-+t_1) = \infty$.

\emph{Case 1.} Assume that $(x,\vep,\la_0,y)$ is abnormal. Then it 
corresponds to the sequence (\ref{e:AS3}) with $m \in \NN$ repetitions (due to $\ga \in (\ga_0,0)$, 
the case (\ref{e:onelayerodd}) can easily excluded by simple calculations using the ilog-phase $\vth_{\n_2^2} (\cdot)$, see Section \ref{ss:const}).
Then Theorem \ref{t:AbnExtWidth} implies $t_1 > \frac{\pi (m+1)}{2 \n_2 \rho_0 \cos \ga}$,
and so, implies $\frac{\pi}{2 \n_2 \rho_0 \cos \ga} + \de_1< t_1 $.

\emph{Case 2.} Assume that $(x,\vep,\la_0,y)$ is normal. Since $x(s_-)=-\n_1$ and $x(s_-+t_1) = \infty$,
we see that only the types (\ref{e:NS1}) and (\ref{e:NS5}) of extremals are possible.  
Arguments similar to that of 
the proof of Theorem \ref{t:ExOpt} imply that $m_1 \ge 1$.
Using Corollary \ref{c:Norm} it is easy to show that the part 
$\scriptstyle \overset{4}{\EP} _{(-\n_2,0)} \overset{3}{\EP}$ requires the time $t_2 > \frac{\pi}{2 \n_2 \re \ka } 
= \frac{\pi}{2 \n_2 \rho_0 \cos \ga}$ (statements (i) and (iv) of Corollary \ref{c:Norm} 
are needed for the cases (\ref{e:NS1}) and (\ref{e:NS5}), respectively).
The starting part $\scriptstyle  _{(-\n_2,-\n_1]} \overset{3}{\EP} \overset{3}{\EM} _{(-\infty,-\n_1)} \overset{4}{\EM}$ requires certain time $t_3>0$.
This time $t_3$ can be bounded from below by a positive number 
$\de>0$ that does not depend on $\ga \in (\ga_0,0)$ and on the choice of the extremal tuple. 
To prove this fact, one can use the arguments of \cite{S83} about the absence of STLC to 
a singular equilibrium point, and make them uniform over $\ga \in (\ga_0,0)$ using the additional information 
about the structure of normal extremals provided by Theorem \ref{t:NextWidth}. This complete the proof of the lemma
and of the fact that $\rho_0<\rho_1$.
\end{proof}

\emph{Let us prove statement (i) of Theorem \ref{t:ExOpt2}.}
The facts that $\ka_0$ is  $(\eta_-,\eta_+)$-eigenvalue of minimal modulus and that 
the constant function $\vep^{\min}_{\ga_0} (\cdot)$ 
is the unique control in $\F_{0,s_+}$ such that $\ka_0 \in \Si^{0,s_+}_{\eta_-,\eta_+} (\vep^{\min}_{\ga_0})$
follow from Theorem \ref{t:ExOpt} and the symmetry w.r.t. $0$.
The fact that $\ka_0$ is $(\eta_-,\eta_+)$-eigenvalue of minimal decay follows from $\rho_0 < \rho_1$
and Proposition \ref{p:Bd->MinDec}.

\emph{Statement (ii) of Theorem \ref{t:ExOpt2}} now follows from Theorem \ref{t:MinDec2}.

\section{Numerical minimum-time method}
\label{s:N}

The derivation of the HJB equation in Section \ref{ss:HJB} 
for the problem of minimization of the resonator length makes it possible to apply 
the Dynamic Programming approach (see \cite{F97}) to approximations of equation (\ref{e:HJB}).

On the other hand, the point of view of minimum-time control allows us to improve substantially in the case of symmetric resonators 
 the shooting method for (\ref{e:Eys}) passing from 
the search of zeroes of a $\CC$-valued characteristic determinant $F_\nl (\xi,\ka)$ on a fixed interval \cite{KLV17}
to much cheaper and accurate process of finding a turning point $p_0$ of an $x$-extremal  as a zero of the $\RR$-valued monotone function $G_0 $ (see Lemma \ref{l:MonotFunc}).

In the numerical experiments of Sections \ref{ss:shooting}-\ref{ss:Pareto}
we show that this makes the shooting method effective.
In particular, for 
the values of the quality factor $Q=10^6$ and $Q=1.1 \cdot 10^6$ 
and the realistic permittivity constraints 
$ \ep_1 = 1$ (vacuum), $ \ep_2 = \ep_\infty = 11.9716 $ (silicon),
we compute symmetric resonators of minimal length 
that have 27 layers (see Fig. \ref{fig3}). 
The computed widths of layers have 11 significant digits.

Varying  $\Arg \ka$ with a fixed $|\ka|$, we compute in  Section \ref{ss:Pareto}
 parts of the two Pareto optimal frontiers of minimal decay and of minimal modulus for 
 the case $ \ep_1 =  \ep_\infty = 1$, $ \ep_2  = 11.9716 $, 
and investigate numerically the effect when the latter frontier have a jump described 
by Theorem \ref{t:ExOpt2} (see Fig. \ref{f:Par}).

The following analytical result is the base for 
our computations.
Let $\ka \in \CC_4$ and the left end $s_-$ of the resonator be fixed.
For $\xi \in \RR$,  let $\Th_\xi (s)$ be a solution to 
(\ref{e:Eys})
satisfying the initial conditions 
\begin{equation}
\Th_\xi (s_-) = \ee^{\ii \xi},  \ \ \  \pa_s \Th_\xi (s_-) = -\ii \ka \n_\infty \   e^{\ii \xi}  
\label{e:Th}
\end{equation}
and so satisfying (\ref{e:BCpm}) at the left end $s_-$.
Such a solution exists and is unique due to \cite[Theorem 6.1]{KLV17}.
Note that $\Th_\xi (\cdot)$ is the solution to (\ref{e:ep}) with 
$\vep (\cdot) = \E (\Th_\xi ) (\cdot)$. Let $p (\xi)$ be 
the turning point $p_0$ of $\Th_\xi (\cdot)$ if it exists, otherwise we put $p_0 (\xi) = +\infty$
(note that in each of these cases $\p_0 (\xi) > s_-$).
Let 
\begin{multline} \label{e:A}
l (\xi) := p (\xi) - s_- , \qquad 
\A^\odd := \{ \xi \in [0,\pi] \, : \, \Th_\xi (\p(\xi)) = 0 \} , \text{ and }\\
 \qquad
\A^\even := \{ \xi \in [0,\pi]\, : \, \pa_s \Th_\xi (\p (\xi)) = 0 \} .
\end{multline}

\begin{cor} \label{c:Lp0}
Let $\ka \in \CC_4$ and $\xi_0 \in [0,\pi]$.
Then the following statements are equivalent:
\item[(i)] $\xi_0 \in \argmin_{\xi \in \A^{\odd (\even)}} l (\xi)$;
\item[(ii)] $\Lr_{\min}^{\odd (\even)} (\ka) = 2 l (\xi_0)$ and $\xi_0 \in \A^{\odd (\even)}$;
\item[(iii)] the resonator $\vep (s) := \left\{ \begin{array}{ll} 
\E (\Th_{\xi_0} ) (s),  & s \in [s_- -p(\xi), p (\xi) -s_-] \\
\n_\infty^2 , & s \in \RR \setminus [s_- - p (\xi), p (\xi) -s_-] 
\end{array} \right.
$
is  a minimizer for the odd-mode (resp., even-mode) problem in (\ref{e:argminOddEven}).
\end{cor}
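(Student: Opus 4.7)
The plan is to reduce the symmetric odd- (resp.\ even-)mode length minimization problem to a one-dimensional shooting problem parametrized by the phase $\xi \in [0,\pi]$ of the initial condition at $s_-$, with the role of a matching condition played by the turning point $p(\xi)$.

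First, I would establish the equivalence (ii)$\Leftrightarrow$(iii). Since $\Th_\xi$ satisfies (\ref{e:Th}), the resonator $\vep$ defined in (iii) lies in $\F^\sym$ after a shift placing $s^\cent = 0$, and $\Th_{\xi_0}$, extended by reflection, satisfies (\ref{e:ep}) with coefficient $\vep$ together with the outgoing condition (\ref{e:BCpm}) at both endpoints. The condition $\xi_0 \in \A^\odd$ (resp.\ $\A^\even$) becomes the internal Dirichlet (resp.\ Neumann) condition (\ref{as:EvenOdd}), making $\ka$ an odd-mode (resp.\ even-mode) resonance by Remark \ref{r:sym}. By Definition \ref{d:length} the effective length of this resonator is $2 l(\xi_0)$, so (iii) holds iff $2 l(\xi_0) = \Lr_{\min}^{\odd(\even)}(\ka)$, i.e.\ iff (ii) holds.

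For (i)$\Leftrightarrow$(ii), the task is to show that the map $\xi \mapsto \Th_\xi$ parametrizes all candidate left halves of symmetric minimizers. By Corollary \ref{c:exist} a minimizer $\vep^*$ of (\ref{e:argminOddEven}) exists; after shifting so that $s^\cent = 0$ one has $s_\pm^{\vep^*} = \pm \Lr(\vep^*)/2$. By Proposition \ref{p:eqv}, $\vep^*$ on $[s_-^{\vep^*},0]$ is a minimum-time control from $-\n_\infty$ to $\infty$ (odd) or to $0$ (even), hence by Theorem \ref{t:PMP} it is of extremal form $\vep^*(s) = \E(y^*)(s)$ for some nontrivial solution $y^*$ of (\ref{e:Eys}) satisfying (\ref{e:BCpm}) at $s_-^{\vep^*}$. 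Since $\E(cy) = \E(y)$ for all $c \in \CC\setminus\{0\}$ with $c^2 \in \RR_+$ is not invariant, but $\E(y) = \E(-y)$, we may normalize $y^*$ so that $y^*(s_-^{\vep^*}) = e^{\ii \xi}$ with $\xi \in [0,\pi]$; uniqueness in \cite[Theorem 6.1]{KLV17} then yields $y^* = \Th_\xi$ on the appropriately translated interval. The symmetry of $\vep^*$ forces either $y^*(0) = 0$ or $\pa_s y^*(0) = 0$, and in each case Lemma \ref{l:MonotFunc} (applied with $\tau = 0$) forces $0 = p(\xi)$, so $\xi \in \A^{\odd(\even)}$ and $l(\xi) = \Lr(\vep^*)/2$. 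Conversely every $\xi \in \A^{\odd(\even)}$ produces via (iii) a feasible symmetric resonator of length $2l(\xi)$, so the two infima coincide and their argmin-sets correspond.

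The main obstacle will be the last step of the previous paragraph, namely verifying that the symmetry center of any optimizer in $\F^\sym$ coincides with the turning point $p(\xi)$ of the underlying mode $\Th_\xi$. The key input is the uniqueness of the turning point in Lemma \ref{l:MonotFunc}: the center is a point where $y^* y^{*\prime}$ vanishes (by (\ref{as:EvenOdd})), and the only such point, if any, is the turning point. One also needs to check that the reflected extension of the bang-bang control $\E(\Th_\xi)$ remains a valid bang-bang extremal across $p(\xi)$; this is automatic because $\im(\Th_\xi)^2$ and $\im(\Th_\xi(-\cdot))^2$ differ only by sign exactly when required for $\E$ to produce the even reflection of the left-side pattern, and the turning point itself is not a switch point by Remark \ref{r:SwitchPoints}.
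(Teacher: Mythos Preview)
Your approach is essentially the paper's: Theorem \ref{t:PMP} reduces any minimizer of (\ref{e:argminOddEven}) to the form $\E(\Th_\xi)$ after the real-scaling normalization $\E(cy)\equiv\E(y)$ for $c\in\RR\setminus\{0\}$, and Proposition \ref{p:eqv} handles the translation between minimum-length and minimum-time, with the identification of the symmetry center and the turning point coming (as you note) from Lemma \ref{l:MonotFunc}. Two cosmetic points: your normalization sentence is garbled (the content is just that $\E$ is invariant under $y\mapsto cy$ for real $c\neq 0$, so one may take $|y^*(s_-)|=1$ and $\xi\in[0,\pi]$), and the claim that $\im(\Th_\xi)^2$ and its reflection ``differ only by sign'' is backwards---under either the odd or the even reflection of the mode one has $y^2$ even, so $\E(y)$ is even outright---but neither issue affects the argument.
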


\begin{proof} 
By Theorem \ref{t:PMP}, minimizers $\vep (\cdot)$ to the problems 
(\ref{e:argminOddEven}) have on the interval $(s_-^{\vep},s_+^{\vep})$ 
the form 
$\E (y) (\cdot)$ with $y = c \Th_{\xi}$, where $c \in \RR $ and $\xi \in [0,\pi)$, 
assuming that $s_-=s_-^{\vep}$ (by technical reasons, it is convenient sometimes 
to consider the value $\xi =\pi$ instead of $\xi =0$, see Remark \ref{r:pi/2pi}). Proposition \ref{p:eqv} and the fact that 
$\E (c \Th_{\xi}) (\cdot) \equiv \E (\Th_{\xi}) (\cdot)$ complete the proof.
\end{proof}

 \subsection{Numerical experiments with the shooting method}
 \label{ss:shooting}
 
Symmetric resonators of minimal length for a given $\ka \in \CC_4$
can be obtained from Corollary \ref{c:Lp0}  computing
$\argmin\limits_{\xi \in \A^{\odd (\even)}} l (\xi)$ 
with the use of the shooting method for the Euler-Lagrange equation.
The values of $\xi$ pass through the discretized version $\{ n \pi/\nu \ : \ 0 \le n \le \nu \}$ 
of the interval $[0,\pi]$ with large enough $\nu$.
For each particular $\xi \in [0,\pi]$, the initial values for shooting are given by (\ref{e:Th}), the position of the turning point 
$p_0 (\xi)$ is computed numerically as the zero of the monotone function $G_0 (\cdot)$ from 
Lemma \ref{l:MonotFunc} taken for $y(\cdot) = \Th_\xi (\cdot)$.
This gives the approximate value $\wt l (\xi) $ of the function $l (\xi)$.

\vspace{-1ex}
\begin{figure}[ht]
\noindent \centering
\begin{minipage}[t]{0.03\linewidth}
\vspace{5ex}
\footnotesize \textbf{a)}

\vspace{15ex}
\textbf{b)}
\end{minipage}
\begin{minipage}[t][][b]{0.60\linewidth}
\centering
\includegraphics[width=\textwidth]{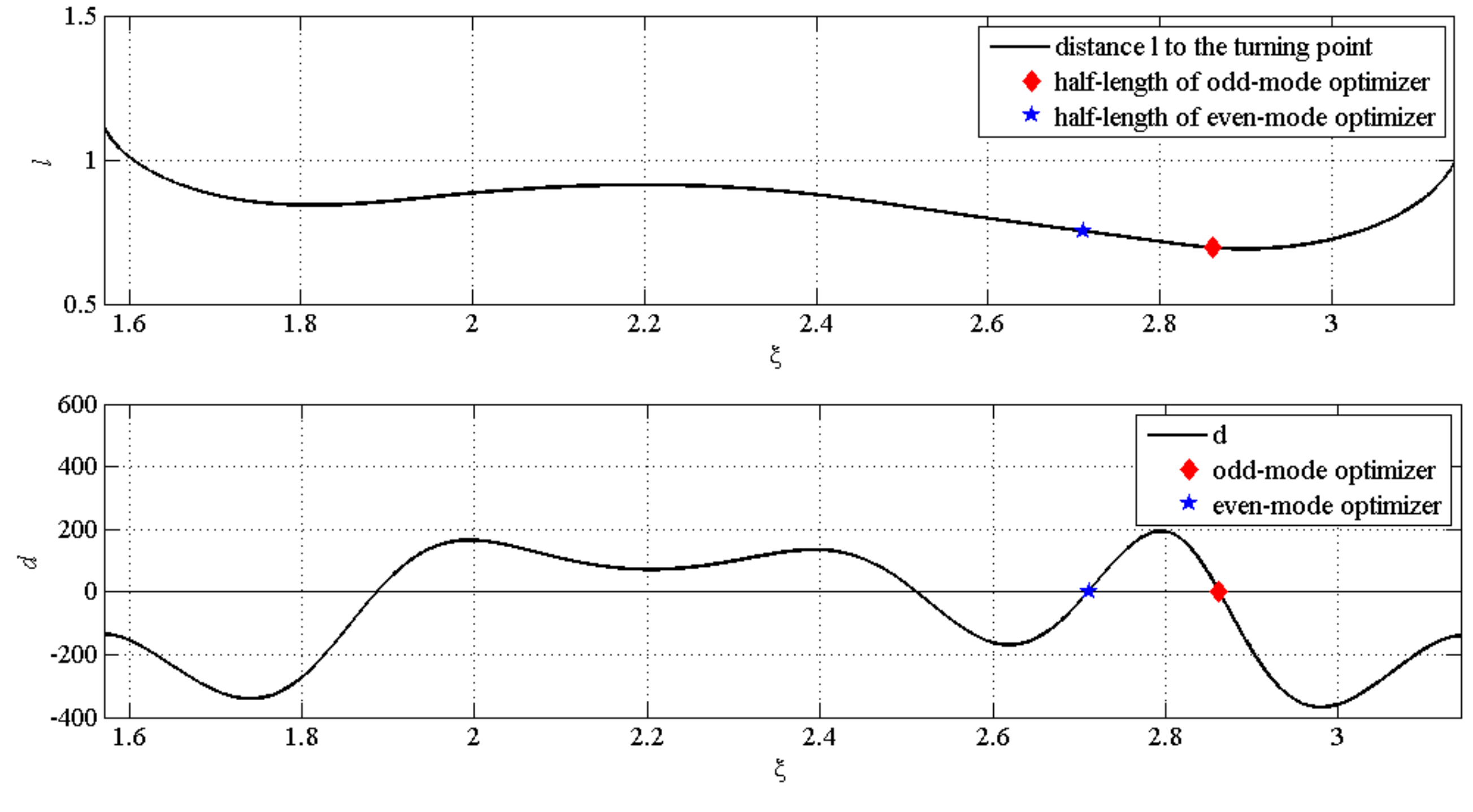}
\end{minipage}\\[2ex]
\vspace{-1ex}
\caption{\footnotesize \textbf{(a)} The distance $l=p(\xi)-s_-$ between the left end $s_-$ of the resonator and the turning point $p(\xi)$
for $\ka=\ka_1$ given by (\ref{e:ka1}). \ 
\textbf{(b)} The value of the corresponding product 
$d = \overline{\Th_\xi (p (\xi))} \pa_s \Th_\xi (p (\xi))$.
The zeros of $d (\cdot)$ in $[\pi/2,\pi]$ give the values of $\xi$ that correspond 
to symmetric w.r.t. $p(\xi)$ resonators $\vep (\cdot) = \E (\Th_\xi) (\cdot)$.
The marks `$\blackdiamond$' and `$\star$' correspond to the values of $\xi$ for 
the odd-mode and even-mode resonators of minimal length for $\ka_1$.
The symmetric resonator of minimal length has an odd mode. 
\label{fig1}}
\end{figure}

The iterative computation of switch points of $\E (\Th_\xi) (\cdot)$ and computation 
of $\Th_\xi (\cdot)$ does not require finite difference approximation because on 
each interval $(b_j,b_{j+1})$ of constancy of $\E (\Th_\xi) (\cdot)$, the explicit analytic expression of $\Th_\xi (\cdot)$
via the values of $\Th_\xi (b_j)$ and $\pa_s \Th_\xi (b_j)$ at the left end-point $b_j$ are available.

 The shooting provides also the discretized versions of the functions 
$d_0 (\xi) = \overline{\Th_\xi (p_0 (\xi))}$ and $d_1 (\xi) = \pa_s \Th_\xi (p_0 (\xi))$, 
the sets of zeroes $\{ \xi_j^\odd \}_{j=1}^{J_\odd}$ and $\{ \xi_j^\even \}_{j=1}^{J_\even}$
of which are the sets $\A^{\odd}$ and $\A^{\even}$ defined by (\ref{e:A}).
Practically, we find first the approximate zeroes $\wt \xi_j$, $j=1,\dots, J$, of the product of the two above functions
$d (\xi) = d_0 (\xi) d_1 (\xi) = \overline{\Th_\xi (p (\xi))} \pa_s \Th_\xi (p (\xi))$. 
So $\{\wt \xi_j \}_{j=1}^{J}$
approximate the set $\A^\sym := \A^{\odd} \cup \A^{\even}$. 

\begin{rem} Note that for every $\xi$ such that $p(\xi) <+\infty$ the value $d (\xi)$ is real. This follows from the fact 
that $p(\xi)$ is the turning point for the corresponding solution. While it follows from Example \ref{ex:Perx-extr} (ii)
that for certain $\n_\infty \not \in [\n_1,\n_2]$ and $\xi \in [0,\pi)$, one has
$p(\xi) = +\infty$, all numerical experiments of this section assume that 
$\n_\infty = \n_1$ or $\n_\infty = \n_2$ and we do not observe in them the case $p(\xi) = +\infty$.
\end{rem}

\vspace{-2ex}
\begin{table}[h]
\footnotesize
\centering
\caption{Odd- and even-mode resonators of minimal length for $\ka=\ka_1$.}
\vspace{-2ex}
\begin{center}
\label{tab1}
\begin{tabular}{|c|c|c|c|c|c|c|c|}\hline
\multicolumn{4}{|c|}{Odd-mode optimal resonator}&\multicolumn{4}{|c|}{Even-mode optimal resonator}\\
\cline{1-8}
$n$    & $\vep$ & Layer left & Layer width            & $n$    & $\vep$        & Layer left & Layer  width  \tabularnewline
       &             &edge $b_n$, $\mathrm{\mu}$m & $W(n)$,$\mathrm{\mu}$m&        &             &edge $b_n$, $\mathrm{\mu}$m& $W(n)$, $\mathrm{\mu}$m\\ \hline
-3     &$\epsilon_1$ &-0.6946797015&0.2121868748       &-5      &$\epsilon_1$ &-0.7521786176&0.1512335680 \\ \hline
-2     &$\epsilon_2$ &-0.4824928267&0.1437719651       &-4      &$\epsilon_2$ &-0.6009450496&0.1514248849 \\ \hline
-1     &$\epsilon_1$ &-0.3387208616&0.1875186766       &-3      &$\epsilon_1$ &-0.4495201647&0.1603442551 \\ \hline
0      &$\epsilon_2$ &-0.1512021850&0.3024043700       &-2      &$\epsilon_2$ &-0.2891759096&0.1568410635 \\ \hline
1      &$\epsilon_1$ &0.1512021850 &0.1875186766       &-1      &$\epsilon_1$ &-0.1323348461&0.0599824929 \\ \hline
2      &$\epsilon_2$ &0.3387208616 &0.1437719651       &0       &$\epsilon_2$ &-0.0723523532&0.1447047064 \\ \hline
3      &$\epsilon_1$ &0.4824928267 &0.2121868748       &1       &$\epsilon_1$ &0.0723523532 &0.0599824929 \\ \hline
4      &             &0.6946797015 &                   &2       &$\epsilon_2$ &0.1323348461 &0.1568410635 \\ \hline
       &             &             &                   &3       &$\epsilon_1$ &0.2891759096 &0.1603442551 \\ \hline
       &             &             &                   &4       &$\epsilon_2$ &0.4495201647 &0.1514248849 \\ \hline
       &             &             &                   &5       &$\epsilon_1$ &0.6009450496 &0.1512335680 \\ \hline
       &             &             &                   &6       &             &0.7521786176 &             \\ \hline
\multicolumn{4}{|c|}{Total length $\Lr_{\min}^{\odd} (\ka_1)$ = 1.389359403 = $\Lr_{\min}^{\sym} (\ka_1)$}&
\multicolumn{4}{|c|}{Total length $\Lr_{\min}^{\even} (\ka_1)$= 1.5043572352} \\ \hline
\end{tabular} 
\end{center}
\vspace{-1ex}
\end{table}

Finding $\wt \xi_{j_*} $  such that  
\begin{align} \label{e:minwtL}
\wt l (\wt \xi_{j_*} ) = \min_{1\le j \le J} \wt l (\wt \xi_{j}),
\end{align} 
we obtain 
the approximate value $2 \wt l (\wt \xi_{j_*})$ for the minimal length $\Lr_{\min}^\sym (\ka)$ 
of a symmetric resonator generating the desired resonance $\ka$. 
Similarly, we find the approximate values $\wt \xi_{j_*^\odd}$ and $\wt \xi_{j_*^\even}$ 
of the parameter $\xi$ that correspond to the minimizers
of odd-mode and even-mode problems (\ref{e:argminOddEven}).

\begin{rem}
In the numerical experiments of this subsection, we observe only one minimizer in the discretized 
minimization problem (\ref{e:minwtL}),
while we expect that there are exceptional values of $\ka \in \CC_4$ so that the original problem (\ref{e:argminFsym})
has more than one minimizer (see Section \ref{s:dis}).
In the experiment of the next subsection for the value $\ka = \ka_0$, 
the situation is different because the exists and interval of values of $\xi$ where  $l (\xi)$ achieve its minimal value
(this follows from Remark \ref{r:NSinfty}). However, all these values of $\xi$ lead to the same 
symmetric resonator of minimal length.
\end{rem}

In Physics settings, equation (\ref{e:ep}) has the form 
$y''(s)=-\frac{\om^2}{c^2}\vep (s) y^2(s)$, where 
$c$ 
is the speed of light in vacuum and 
$\om$ is the complex frequency.
These means that $\re \om$ is the angular frequency  and $(-\im \om)$ is the rate of decay
of the corresponding eigenoscillations $e^{-\ii \om t} y(s)$ of the electric field in the cavity.
So $\ka = \frac{\om}{c}$ is generalized complex wave number 
(in the settings of previous sections it was assumed that the system of units is such that $c=1$, we do not keep
keep this convention here).

As before $\vep (s)$ ($\ep_\infty$, $\ep_1$, and $\ep_2$) is the dielectric permittivity 
of the cavity at the layer containing the point $s$ (resp., 
of the homogeneous outer medium and of the two extreme permittivities 
allowed in the optimization process). The numbers $\n_j = \ep_j^{1/2}$, $j=1,2,\infty$, are refractive indices in the corresponding
materials. In this section we  take that of air $ \n_1 = 1$ and of silicon $ \n_2 = 3.46$
(the magnetic permeability of all involved materials is assumed to be equal to 1). 
In this subsection we assume that the outer homogeneous 
medium consists of silicon, i.e., $\ep_\infty = \ep_2$ and $\n_\infty = \n_2$. 

\begin{rem} \label{r:pi/2pi}
Since $\ep_\infty = \ep_2$, the values of $\xi \in (0,\pi/2]$ cannot correspond to 
a resonator of minimal length (otherwise the first layer has the same permittivity as the outer medium).
The computations in this subsection can be restricted to the interval $\xi \in (\pi/2,\pi]$.
\end{rem}

First, we give the results of a numerical experiment for 
the case of relatively small quality factor value $Q = 50$ because this leads to a smaller 
number of layers in the resonator and it is easier to explain for this example 
our notation and the way of presentation of the numerical results.
The symmetric resonator of minimal length is calculated for the value 
\begin{gather} \label{e:ka1}
\kappa_1 = 3.653014713476505\cdot 10^{6} - \ii \; 3.653014713476505\cdot 10^{4},
\end{gather}
which corresponds to $\omega_1 = A_1 - \ii B_1$, where  
$A_1 = 1.095146260063287\cdot 10^{15}$
is the angular frequency and 
$B_1 =  A_1 \cdot 10^{-2}$ 
is the rate of decay, and  
to the wavelength in vacuum $\lambda_1 = 1720$ nm
(which is in the infrared range).

\begin{figure}[h]
\begin{minipage}[t]{0.49\linewidth}
\centering \includegraphics[width=\textwidth]{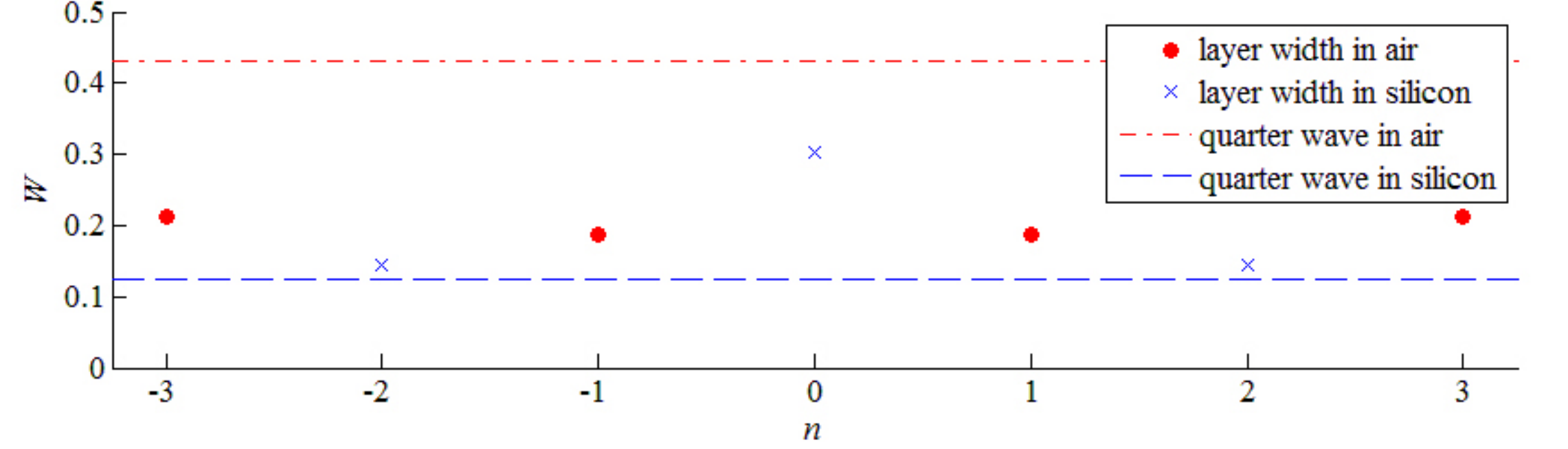}\\
\footnotesize \textbf{a)}
\end{minipage}
\begin{minipage}[t]{0.49\linewidth}
\centering
\includegraphics[width=\textwidth]{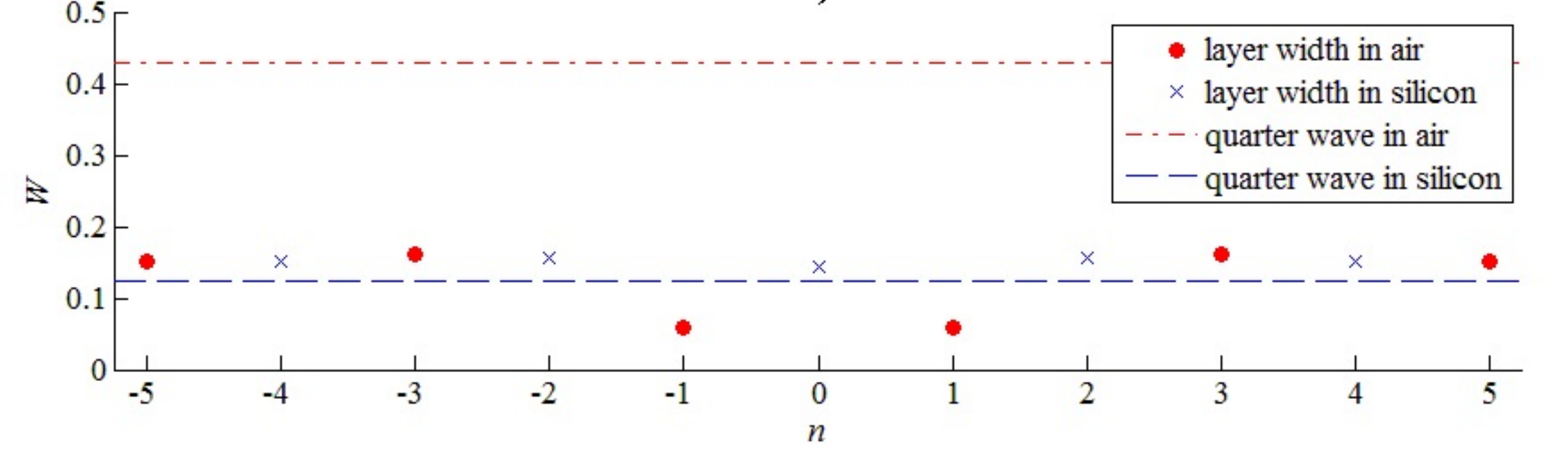}
\footnotesize \textbf{b)}
\end{minipage}
\vspace{-2ex}
\caption{\footnotesize Layers' widths for (a) the odd-mode and (b) the even-mode  optimal resonators for $\ka_1$.\label{fig2}}
\end{figure}

The graphs of computed approximated values of the functions $L (\xi)= p (\xi) - s_-$, $\xi \in [\pi/2,\pi]$,
and $d (\xi) = \overline{\Th_\xi (p (\xi))} \pa_s \Th_\xi (p (\xi))$
are given in Fig. \ref{fig1} (a) and (b), respectively.
We are interested only in the approximate values of $\xi$, where $d (\xi)  = 0$
since only these values correspond to resonators symmetric w.r.t the turning point $p (\xi)$.
Among such values the two values of $\xi$ corresponding to the two shortest resonators are marked.
It occurs that one of them is odd-mode resonator and the other is even-mode resonator.
So these two resonators give the numerical solution to the two corresponding odd- and even-mode 
minimum length problems in (\ref{e:argminOddEven}).

The odd-mode minimal length is reached at $\xi_*^{\odd} \approx 2.861410149502555$, the even-mode minimal length at $\xi_*^{\even} \approx 2.709972790535748$.
The corresponding to $\xi_*^{\odd}$ resonator $\vep^{\odd}_* (\cdot)$ has 
the length $\Lr_{\min}^{\odd} (\ka_1) = 2 l (\xi_*^{\odd}) \approx 1.389359403$ $\mu$m
and consists of $N = 7$ layers (excluding the outer medium $(-\infty,s_-) \cup (s_+,+\infty)$, where $s_+ = 2 p (\xi) - s_-$).
The shortest even-mode resonator $\vep^{\even}_* (\cdot)$ has 
$N = 11$ layers and the length $\Lr_{\min}^{\even} (\ka_1) \approx 1.5043572352$ $\mu$m.

The structure of the function $\vep^{\odd}_* (\cdot)$ (the function $ \vep^{\even}_* (\cdot)$)
is shown in Table \ref{tab1} by the approximate values of its switch points $b_n$.
Note that the resonator 
is shifted on the $s$-line such that $(b_0, b_1)$ is the the central layer $I^{\cent}$ 
defined in Section \ref{s:Corol}, $s=0$ corresponds to 
the center $p_0$ of the resonator, $b_{-3} = s_-$ (resp., $b_{-5} = - s_-$) 
is the initial shooting point,  and $b_4 = s_+ = -s_-$ (resp., $b_6 = s_+ = - b_{-5}$) 
is the reflection of $s_-$ w.r.t. $0$ (the first and the last of the points $b_n$ 
correspond to the edges $s_\mp$  of the resonator, where $x$ takes the desired values $\mp \n_\infty$,
and so are not necessarily switch points of the corresponding extended $\vep$-extremal 
in the sense of Section \ref{s:Syn}).

The 2nd (resp., the 6s) column gives the values of $\vep(s)$ 
for $s$ in the layer $(b_n,b_{n+1})$, the 4th (resp., the 8s) column the layer widths $W(n) = b_{n+1} - b_n$.
A visualization of the layer widths $W(n)$ convenient from the point of view of their 
comparison with quarter wave lengths and each other is plotted in Fig. \ref{fig2}.
The optimal resonator for $\ka_1$ is of the odd-mode type with length $\Lr_{\min}^{\odd} = 1.389359403$ $\mu$m. 
There are no layers with width equal to the quarter wave in the corresponding 
material. 

The next two numerical experiments are performed in a similar way 
for much higher values $Q$ of the quality-factor,  $Q_2 = 10^6$ and $Q_3 = 1.1 \cdot 10^6$
with the real part of $\ka$ the same as in the previous experiment $\re \ka_2 = \re \ka_3 = \re \ka_1$.
The result for case $Q = 10^6$ (and so $\ka_2 = A_1 - \ii \frac{A_1}{2 \cdot 10^6} $)
is plotted in Fig. \ref{fig3} (a), which shows the layer widths $W(n)$ for the right half of the computed 
optimal symmetrical resonator and the width $W(0)  $ of the central layer $(b_0,b_1)$. 
The optimal symmetric resonator for $\ka_2$ 
is of even-mode type.

However, for slightly different quality factor $Q_3$ and the resonance 
$\ka = \ka_3 = A_1 - \ii \frac{A_1 }{2.2 \cdot 10^6} $ the optimal symmetric resonator is of the odd-mode type.
The corresponding widths of layers are shown in Fig. \ref{fig3} (b).

\begin{figure}[h]
\begin{minipage}[t]{0.495\linewidth}
\centering \includegraphics[width=\textwidth]{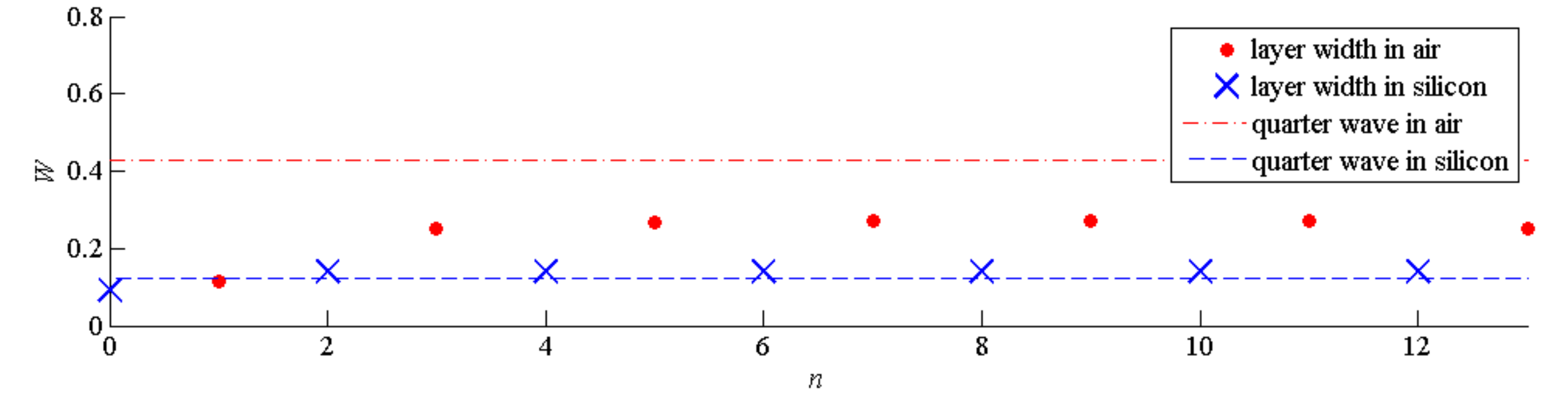}\\
\footnotesize \textbf{a)}
\end{minipage}
\begin{minipage}[t]{0.495\linewidth}
\centering
\includegraphics[width=\textwidth]{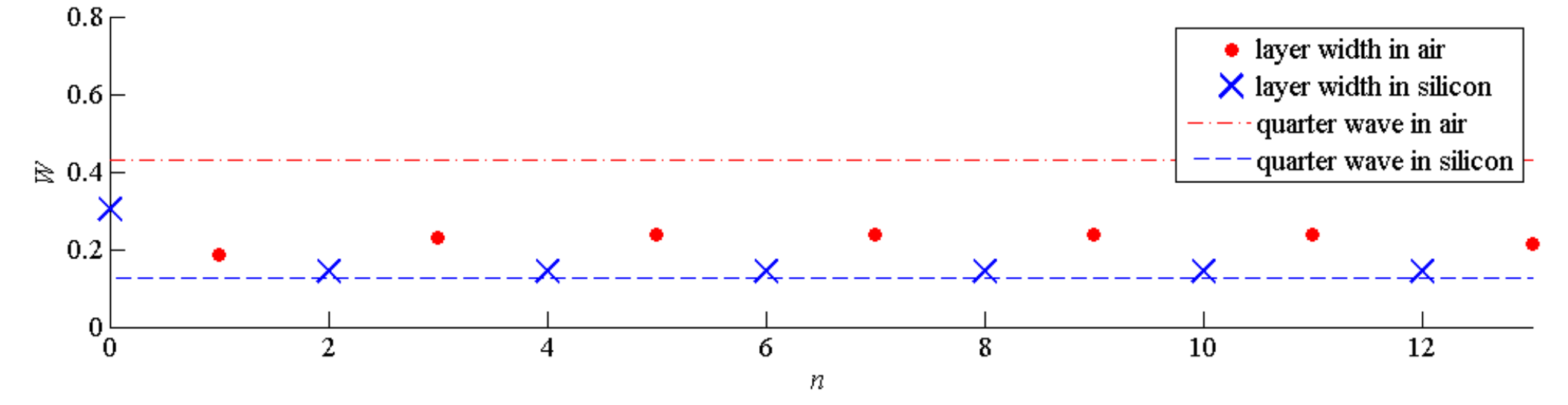}
\footnotesize \textbf{b)}
\end{minipage}
\vspace{-2ex}
\caption{\footnotesize Widths of layers for symmetric resonators of minimum length with high $Q$-factor: 
\textbf{(a)} $Q = 10^6$, $\ka=\ka_2$, the optimizer has an even mode; \ \textbf{(b)} $Q = 1.1 \cdot 10^6$, $\ka = \ka_3$, the optimizer has 
an odd mode.\label{fig3}}
\end{figure} 

\subsection{Computation of Pareto frontiers}
\label{ss:Pareto}

Varying $\Arg \ka \in (-\pi/2,0)$, finding the approximate value of 
$\Lr_{\min}^{\odd (\even)} (\ka)$ 
for each $\ka$ with the above shooting procedure, and using the rescaling (\ref{e:Tmin=rho}) to a fixed symmetric interval $[-\ell,\ell]$,
it is possible to compute parts of the even-mode  $\Pa^\even_{\md} := \{ \ee^{\ii \ga}\rho_{\min} (\ga, 0, \n_\infty)  :  \ga \in \dom \rho_{\min} (\cdot, 0, \n_\infty) \}$
and odd-mode  $\Pa^\odd_{\md} :=\{\ee^{\ii \ga} \rho_{\min} (\ga, \infty, \n_\infty)  :  \ga \in \dom \rho_{\min} (\cdot, \infty, \n_\infty) \}$
Pareto frontiers of minimal modulus (here $[s_-,s_+] = [0,\ell]$ is the right half of the symmetric resonator).

Taking $\n_1 = 1$, $\n_2 = 3.46$ as before, but $\n_\infty = \n_1$ (so that 
the outer medium is vacuum), we perform this computation near the jump of the odd-mode frontier described by Theorem \ref{t:ExOpt2} for the value $\ell = 0.1243$ $\mathrm{\mu}$m.
The result is plotted in Fig. \ref{f:Par}. 
Then, using Theorem \ref{t:PF}, its is possible to find 
the corresponding Pareto frontiers $\Pa_{\Dr}^{\even (\odd)}$  of minimal decay 
($\Pa_{\Dr}^{\odd}$ is drawn on Fig \ref{f:Par} (b)). 


\section{Conclusions and discussion}
\label{s:dis}

Analytic and numerical methods available in earlier studies have not given a clear answer about the structure of optimal resonators. 
On one side, the size modulated 1-D stack design, which was suggested in \cite{NKT08} 
on the base of simulations  with several modulation parameters and further studied in \cite{BPSZ11}, 
introduces defects to 27 silicon layers in the alternating periodic structure. 
The widths of these defects are given by 
a quadratic polynomial function of the layer's number counting from the center of the cavity. The width of each of 
these defects is small in comparison with the period of the original structure without defects. On the other side, resonances of 
periodic structures with a defect in the center were  considered in \cite{AASN03,KS08,HBKW08,LS13,OW13}. 
This defect is not necessarily small in comparison with the period.
The numerical experiments of \cite{KLV17} suggested that Pareto optimal resonators 
may involve combination of the both types of defects mentioned above. 

Consider now this question from the point of view of analytical and numerical results of the present paper.  
There are the following common features for the three experiments of Section \ref{ss:shooting}:
\begin{itemize}
\item[(i)] The central layer has the permittivity $\ep_2$ (silicon).
Combining this with the analytic result of Remark \ref{r:1/4stack}, we see that corresponding extremals are normal, and so, by Corollary 8.3, do not contain quarter-wave stacks
(this is confirmed also by the observation (iv) below).

\item[(ii)] For optimal odd-mode resonators, the central layer's width $|I^\cent| = W(0)$ is  approximately twice wider 
than the width $W (2 n)$, $1 \le  n \le (N-3)/4$, of ordinary silicon layers in the right half  of the resonator (and the widths of the layers symmetric to them w.r.t. $s^\cent$).
For the even-mode problem, we observe that $W(0) < W (2n)$, $1 \le  n \le (N-3)/4$.

\item[(iii)] For the even-mode problem, $W(1)$ is essentially narrower than the other regular air layers $W (2n-1)$, $2 \le  n \le (N-3)/4$.
\item[(iv)] Except  the case of $W(1)$ in (iii), widths of the other ordinary layers in each of materials vary very slightly, but are not equal to each other and are not 
equal to the quarter wave.
\end{itemize}
We have observed effects (i)-(iii) for the other computations of optimal odd- and even-mode resonators performed by the method described above.

It is known that for $\al = 0$ there exist values of $\n_1$, $\n_2$, and $\n_\infty \in (\n_1,\n_2)$ such that the problem 
\begin{gather} \label{e:argminDrSym}
\argmin\limits_{\substack{\ka \in \Si (\vep) , \  \re \ka = \al \\ 
\vep \in \F_{s_-,s_+}}} \Dr (\ka,\vep)  \quad \text{(where $\Dr (\ka, \vep) = - \im \ka$)}
\end{gather}
has two different optimizers $\vep_j (s) \equiv \n_j$, $j=1,2$ \cite{KLV17_MFAT}. However, the result of \cite{KLV17_MFAT} is  very exceptional because it requires a special relation  between the parameters $\n_1$, $\n_2$, $\n_\infty$, and because, in the case $\ka \in \ii \RR$, the equation (\ref{e:ep}) and boundary conditions (\ref{e:BCpm}) are real-valued and the trajectory $x(\cdot)$ of control system (\ref{e:R}), (\ref{e:R2}) that satisfies the initial condition $x(s^-) = - \n_\infty$ stays on $\wh \RR$ for all $s$. 
It also follows from \cite{KLV17_MFAT} that if $\al = 0$ and $\n_\infty \in \{\n_1,\n_2\}$ then (\ref{e:argminDrSym}) has a unique solution.

Up to know there are no results on the uniqueness or symmetry of minimizers for (\ref{e:argminDrSym}) if $\al \in \RR_+$ (see also the discussion in \cite{AK17}).
The uniqueness and the symmetry of optimizers $\vep$ are obviously connected, e.g., by the next statement.

\begin{prop} Let $\ell = s_+ = -s_-$.
Assume that $\ka \in \Pa_{\Dr}^\even \cap \Pa_{\Dr}^\odd$ and $\al = \re \ka$.
Then there exists at least two different (in $L^\infty (s_-,s_+)$-sense) minimizers for 
(\ref{e:argminDrSym}).
\end{prop}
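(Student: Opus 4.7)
The idea is to argue by contradiction. Suppose (\ref{e:argminDrSym}) has a unique minimizer $\vep_0 \in \F_{-\ell,\ell}$, necessarily generating the resonance $\ka_0 := \al - \ii \beta_{\min}(\al)$; existence of at least one minimizer follows from the closedness of $\Si[\F_{-\ell,\ell}]$ recalled in Section \ref{ss:Rew}. I will construct a second minimizer, contradicting uniqueness.

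The first step is a reflection symmetry. If $\vep \in \F_{-\ell,\ell}$ and $y$ is a mode of $\vep$ at $\wt \ka \in \Si(\vep)$, then the reflected coefficient $\vep^{\mathrm{refl}}(s) := \vep(-s)$ lies in $\F_{-\ell,\ell}$, and $\wt y (s) := y(-s)$ satisfies both (\ref{e:ep}) for $\vep^{\mathrm{refl}}$ and the outgoing boundary conditions (\ref{e:BCpm}) at $\pm \ell$ (the map $s \mapsto -s$ interchanges the two endpoints together with the two signs in (\ref{e:BCpm})), so $\wt \ka \in \Si (\vep^{\mathrm{refl}})$. Consequently $\vep_0^{\mathrm{refl}}$ is also a minimizer of (\ref{e:argminDrSym}), and uniqueness forces $\vep_0 = \vep_0^{\mathrm{refl}}$ in $L^\infty (-\ell,\ell)$; that is, $\vep_0 \in \F_\ell^\sym$.

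The second step identifies $\ka_0$ with $\ka$. Since $\vep_0$ is symmetric, the discussion in Section \ref{ss:Rew} around (\ref{as:EvenOdd}) gives $\Si(\vep_0) = \Si^\even(\vep_0) \cup \Si^\odd(\vep_0)$, while Remark \ref{r:sym} shows the two sets are disjoint. Without loss of generality assume $\ka_0 \in \Si^\even(\vep_0)$ (the odd case is symmetric). Then $\beta_{\min}^\even(\al) \le -\im \ka_0 = \beta_{\min}(\al)$, and the reverse inequality holds since $\F_\ell^\sym \subset \F_{-\ell,\ell}$. Combined with the hypothesis $-\im \ka = \beta_{\min}^\even(\al)$ (because $\ka \in \Pa^\even_{\Dr}$) and $\re \ka_0 = \al = \re \ka$, this forces $\ka_0 = \ka$.

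The final step invokes the remaining hypothesis $\ka \in \Pa^\odd_{\Dr}$: there exists $\vep^\odd \in \F_\ell^\sym$ with $\ka \in \Si^\odd(\vep^\odd)$ and $-\im \ka = \beta_{\min}^\odd(\al)$. By the previous step $\beta_{\min}(\al) = -\im \ka$, so $\vep^\odd$ is also a minimizer of (\ref{e:argminDrSym}). If $\vep^\odd = \vep_0$ in $L^\infty (-\ell,\ell)$, then $\ka \in \Si^\even(\vep_0) \cap \Si^\odd(\vep_0)$, contradicting Remark \ref{r:sym}; hence $\vep^\odd \neq \vep_0$, giving the required second minimizer. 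No step is delicate: the argument reduces to combining the reflection symmetry of (\ref{e:ep})--(\ref{e:BCpm}), the parity dichotomy for symmetric resonators, and the definitions of $\Pa^\even_{\Dr}$ and $\Pa^\odd_{\Dr}$.
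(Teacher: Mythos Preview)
Your proof is correct and uses the same two ingredients as the paper's argument: the reflection symmetry $\vep \mapsto \vep(-\cdot)$ and the parity dichotomy $\Si^\odd(\vep)\cap\Si^\even(\vep)=\varnothing$ from Remark~\ref{r:sym}. The organization differs slightly: the paper proceeds by a direct two-case split (either $\ka$ is the resonance of minimal decay for $\al$, in which case the even- and odd-mode optimizers give two distinct minimizers via the parity dichotomy; or it is not, in which case any optimizer must be non-symmetric and its reflection gives a second one), whereas you argue by contradiction, using reflection to force the unique minimizer to be symmetric and then the parity dichotomy to manufacture a second minimizer. Your contradiction framing neatly absorbs the paper's second case into the first, but the content is the same.
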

\begin{proof}
Assume that $\ka$ is the resonance of minimal decay for $\alpha$ for problem (\ref{e:argminDrSym}). 
Since $\ka \in \Pa_{\Dr}^\even \cap \Pa_{\Dr}^\odd$, there exist 
optimizers $\vep_1 (\cdot)$ and $\vep_2 (\cdot)$ for the problems \linebreak
$
\argmin_{\substack{\ka \in \Si^\even (\vep) \\  \re \ka = \al,  \  
\ep \in \F_\ell^\sym}} \Dr (\ka,\vep)$ 
and 
$ \argmin_{\substack{\ka \in \Si^\odd (\vep) \\ \re \ka = \al, \ 
\vep \in \F_\ell^\sym}} \Dr (\ka,\vep)$, respectively.
Then $\vep_1$ and $\vep_2$ are optimizers for 
(\ref{e:argminDrSym}). Since $\Si^\odd (\vep) \cap \Si^\even (\vep) = \varnothing$ (see Remark \ref{r:sym}), we have $\vep_1 \neq \vep_2 $. Hence, we obtained the desired statement.

Assume that $(-\im \ka) > \beta_{\min} (\al)$ (i.e., $\ka$ is not a resonance of minimal decay for problem (\ref{e:argminDrSym})). Then an optimizer $\vep (\cdot)$ for (\ref{e:argminDrSym}) is not an even function. So $\wt \vep (s) := \vep (-s)$ is different from $\vep$. However, $\wt \vep $ is also an optimizer for (\ref{e:argminDrSym}). 
\end{proof}

In the computed figure Fig. \ref{f:Par} (a) we observe that $\Pa_{\Dr}^\even$  and $\Pa_{\Dr}^\odd$ intersect each other at certain points $\ka$ with $\re \ka >0$.
This provides a numerical evidence that,  in the case $\n_1 =\n_\infty =1$, $\n_2 =  3.46$, Pareto resonators of minimal decay corresponding to (\ref{e:argminDrSym}) are not necessarily unique for some of frequencies $\al>0$.


\quad\\
\noindent
\emph{Acknowledgments.} 
During various parts of this research, IK and IV 
were supported by the EU-financed projects
AMMODIT (grant agreement MSCA-RISE-2014-645672-AMMODIT),
IK was supported by the Alexander von Humboldt Foundation 
and by the VolkswagenStiftung project “Modeling, Analysis, and Approximation 
Theory toward applications in tomography and inverse problems”.
HK has been supported by the DFG through the Hausdorff Center for Mathematics.

\let\oldthebibliography\thebibliography
\let\endoldthebibliography\endthebibliography
\renewenvironment{thebibliography}[1]{
  \begin{oldthebibliography}{#1}
    \setlength{\itemsep}{0ex}
    \setlength{\parskip}{1ex}
}
{
  \end{oldthebibliography}
}

\end{document}